\newcommand{\pushout}{\rightthreetimes}
\newcommand{\pullback}{\mathbin{\rotatebox{180}{$\rightthreetimes$}}}
\newcommand{\mdwhtsquare}{\boxdot}
\newcommand{\mcal}[1]{\mathcal{#1}}
\newtheorem{thm}{Theorem}[section]
\newtheorem{cor}[thm]{Corollary}
\newtheorem{prop}[thm]{Proposition}
\newtheorem{lem}[thm]{Lemma}
\newtheorem{ques}[thm]{Question}
\newtheorem*{thm*}{Theorem}
\newtheorem*{cor*}{Corollary}
\theoremstyle{definition}
\newtheorem{defn}[thm]{Definition}
\newtheorem*{defn*}{Definition}
\newtheorem{exam}[thm]{Example}
\newtheorem{rmk}[thm]{Remark}
\theoremstyle{remark}
\newcommand{\Set}{\mathcal{S}et}
\newcommand{\Spaces}{\mathcal{S}}
\newcommand{\Spectra}{\mathrm{Sp}}
\DeclareMathOperator{\Grp}{\mathcal{G}rp}
\DeclareMathOperator{\Grpd}{\mathcal{G}pd}
\DeclareMathOperator{\Alg}{Alg}
\newcommand{\BiMod}[2]{{}_{#1}\mathrm{BiMod}_{#2}}
\DeclareMathOperator{\Fun}{Fun}
\newcommand{\Cat}{\mathrm{Cat}_\infty}
\DeclareMathOperator{\Catnoinf}{Cat}
\newcommand{\Nrml}{\mathrm{Nml}}
\newcommand{\Ass}{\mathcal{A}ssoc}
\newcommand{\pSpaces}{\mcal{S}_\ast}
\DeclareMathOperator{\Mon}{Mon}
\DeclareMathOperator{\RMod}{RMod}
\DeclareMathOperator{\RMon}{RMon}
\newcommand{\Eff}{\mathrm{Eff}}
\newcommand{\Mono}{\mathrm{Mono}}
\newcommand{\im}{\mathrm{im}}
\DeclareFontFamily{U}{min}{}
\DeclareFontShape{U}{min}{m}{n}{<-> udmj30}{}
\DeclareMathOperator{\colim}{colim}
\DeclareMathOperator{\fib}{fib}
\DeclareMathOperator{\cof}{cof}
\newcommand{\op}{\mathrm{op}}
\newcommand{\cech}{\check{C}}
\newcommand{\Barc}{\mathrm{B}_\bullet}
\newcommand{\Barcplus}{\mathrm{B}_\bullet^+}
\newcommand{\Act}{\mathrm{Act}_\bullet}
\newcommand{\Actplus}{\mathrm{Act}^+_\bullet}
\newcommand{\Cut}{\mathrm{Cut}}
\newcommand{\CutR}{\mathrm{Cut}^R}
\newcommand{\Res}{\mathrm{Res}}
\newcommand{\Ind}{\mathrm{Ind}}
\newcommand{\BB}{\mathrm{B}}
\newcommand{\ncl}{\mathrm{ncl}}
\newcommand{\CC}{\mathcal{C}}
\newcommand{\DD}{\mathcal{D}}
\newcommand{\EE}{\mathbb{E}}
\newcommand{\ZZ}{\mathbb{Z}}
\newcommand{\XX}{\mathfrak{X}}
\newcommand{\YY}{\mathfrak{Y}}
\newcommand{\KK}{\mathcal{K}}
\newcommand{\PP}{\mathcal{P}}
\newcommand{\sph}{\mathbb{S}}
\renewcommand{\epsilon}{\varepsilon}
\title{Higher Groups and Higher Normality}
\author{Jonathan Beardsley and Landon Fox}
\begin{document}
	
	\maketitle
	
	\begin{abstract}
		In this paper we continue Prasma's homotopical group theory program by considering homotopy normal maps in arbitrary $\infty$-topoi. We show that maps of group objects equipped with normality data, in Prasma's sense, are algebras for a ``normal closure'' monad in a way which generalizes the standard loops-suspension monad. We generalize a result of Prasma by showing that monoidal functors of $\infty$-topoi preserve normal maps or, equivalently, that monoidal functors of $\infty$-topoi preserve the property of ``being a fiber'' for morphisms between connected objects. We also formulate Noether's Isomorphism Theorems in this setting, prove the first of them, and provide counterexamples to the other two. 
		
		Accomplishing these goals requires us to spend substantial time synthesizing existing work of Lurie so that we may rigorously talk about group objects in $\infty$-topoi in the ``usual way.'' One nice result of this labor is the formulation and proof of an Orbit-Stabilizer Theorem for group actions in $\infty$-topoi.
	\end{abstract}
	
	\tableofcontents

	\section{Introduction}
	
	A general theme of homotopical algebra is to take structures typically carried by sets, e.g.~group, ring or module structures, and replace the sets with homotopy types, or so-called $\infty$-groupoids. One formalism for this approach is to work with $\infty$-categories as developed by Joyal, Lurie, Rezk and many others. We heavily use Lurie's work on the topic as recorded in \cite{htt,ha}. For the sake of the involved reader who may be going back and forth between our document and those references, we also follow Lurie's notational conventions whenever possible.

	An orthogonal generalization of ``set-based mathematics'' is replace the category of sets with a topos. Recall that a (Grothendieck) topos is, roughly, a category of \textit{sheaves} of sets. This latter perspective has been fruitful, for instance, in algebraic geometry, topology, and mathematical logic. The joint generalization of these two ideas is to do mathematics in $\infty$-topoi as defined in \cite{htt}, which behave something like $\infty$-categories of (presheaves) of $\infty$-groupoids. The $\infty$-category of $\infty$-groupoids is of course an $\infty$-topos, as it is sheaves over the point. However, one can also build $\infty$-topoi from Quillen model categories of simplicial (pre)sheaves of the sort that arise in, say, algebraic geometry and motivic homotopy theory.
	
	In all of these settings, one is often interested in objects that are \textit{acted on} by whatever the relevant analogue of a group is. For instance, in homotopy theory we are often interested in actions of loop spaces, e.g.~$S^1$, on other spaces. In algebraic geometry, especially in Galois theory, we can frequently encode useful mathematical structure via actions of group schemes or group stacks. Classically, understanding groups themselves aids in the understanding of their representations and the ways in which they can act on sets, spaces, or schemes. However, a general \textit{theory of higher groups} has not yet been fully developed. Arguably, as this paper will indicate, this is due to the fact that much of classical group theory ``breaks'' in the higher setting. Nonetheless, important steps have been made in understanding higher group theory, primarily by Prasma and collaborators \cite{prasmahmtpynorm, prasma-segalgroups, prasmaSchlank-sylow}. Higher group theory, both in $\infty$-groupoids and in other $\infty$-topoi, has also been investigated by authors interested in mathematical physics, especially from the perspective of higher \textit{principal $G$-bundles} and topological field theories e.g.~in \cite{nikolausSchreiberStevenson-principal1,nikolausSchreiberStevenson-principal2, gripaios-generalized-symmetries, bunk-smooth_string}. See \cite{bunk-inftybundles_survey} for an overview of these sorts of applications.
	
	The work of Prasma's which is most relevant to us is \cite{prasmahmtpynorm}, which introduces the idea of a \textit{normality datum} for a map of loop spaces. That work is in turn based on earlier work of Farjoun and Segev \cite{farjounSegev-homotopynormal} and augmented by concomitant work of Hess and Farjoun \cite{hessfarjoun-conormal}. The crucial observation there is that in higher group theory one should replace the \textit{property} of being a normal subgroup with \textit{data}. This data is, essentially, the data of a group structure on the quotient by the ``subgroup.'' We will see that in this setting, unlike in discrete group theory, there may be more than one such group structure, necessitating the use of data instead of property. Another significant departure from the classical theory is that ``sub-objects'' are not particularly well behaved in homotopical mathematics and ``normal subgroups'' are replaced by ``normal maps.'' 
	
	In this paper, we take Prasma's definition of normality seriously but generalize it to an arbitrary $\infty$-topos and explore its ramifications in higher group theory. Some of these ramifications include: formulating and proving a higher Orbit-Stabilizer Theorem (Theorem \ref{thm:orbit stabilizer}); proving that group maps equipped with normality data are the algebras for the ``normal closure'' monad (Theorem \ref{thm:normal closure functor}); formulating and proving Noether's First Isomorphism Theorem (Theorem \ref{thm:first iso theorem}); and providing counterexamples to (natural generalizations of) her other two isomorphism theorems (Section \ref{sec:Noether Iso Theorems}). 
	
	We warn the reader that group theory in this setting should not be thought of as a ``topological'' generalization of group theory. For instance, even the group objects in the $\infty$-topos of ``spaces'' do not have an underlying topological space at all, but rather an underlying homotopy type. Instead, we should think of higher group theory as a kind of \textit{deformation} of discrete group theory (in the same way that the sphere spectrum is a deformation of the integers). One piece of evidence for this intuition is the fact that our constructions all become the standard group-theoretic constructions after applying $\pi_0$.

	\subsubsection*{Acknowledgements}
	
	The authors thank Jake Bian, Bastiaan Cnossen, Aras Ergus, Sonja Farr, Kiran Luecke, Eric Peterson, Maxime Ramzi, Lorenzo Riva, and Chris Rogers for many helpful conversations regarding the material of this paper. Kiran Luecke in particular is responsible for many of the counterexamples. The first author would also like to extend a special thanks to Matan Prasma for a number of delightful conversations about higher group theory that occurred during the 2015 Young Topologists' Meeting at EPFL. Though it has been almost 10 years since then, those ideas have never fully disappeared. While working on this paper, the first author was partially supported by a Simons Foundation collaboration grant, Award ID \#853272. Much of the work of this paper is based on the second author's master's thesis at the University of Nevada, Reno.

	\section{Background on  $\infty$-topoi}
	
	We briefly remind the reader of some of the central concepts in the theory of $\infty$-topoi as developed in \cite{htt}. Throughout, $\XX$ and $\YY$ will always denote $\infty$-topoi. Recall that for any $\infty$-category $\CC$ we can form the $\infty$-category of presheaves (valued in $\infty$-groupoids $\Spaces$) which we will denote $\PP(\CC)$. An $\infty$-topos is defined to be an $\infty$-category $\XX$ which admits a fully faithful right adjoint functor $\XX\hookrightarrow \PP(\CC)$ whose left adjoint $L\colon \PP(\CC)\to\XX$ is \textit{accessible} and \textit{left exact}. We will avoid discussing accessibility in this paper as it will not be immediately relevant but we remind the reader that being left exact means that $L$ preserves finite limits (the reader should be reminded of the left exact functors of abstract algebra that preserve kernels, which are finite limits). 
	
	Central to the theory of $\infty$-categories is the notion of \textit{effective epimorphisms}, which should be thought of as covers. These will occur frequently throughout this work so we recall them now.
	
	\begin{defn}[{\cite[Corollary 6.2.3.5]{htt}}]\label{defn: cech nerve and effective epis}
		Let $f\colon X\to Y$ be a morphism in $\XX$, thought of as an object of $\XX^{\Delta^{1}}$. Noticing that there is an inclusion $\Delta^{1}\simeq\{[0]\to[-1]\}\subset\Delta_+^{\op}$ of $\Delta^1$ into the augmented simplex category, we call the right Kan extension of $f$ along this inclusion the \textit{augmented \v Cech nerve of $f$}. The (unaugmented) \v Cech nerve of $f$ is then taken to be the restriction of the augmented \v Cech nerve to the subcategory $\Delta^{\op}\subset\Delta^{\op}_+$. We say that $f$ is an effective epimorphism of $\XX$ if the colimit of the \v Cech nerve of $f$ is equivalent to $f$. 
	\end{defn} 
	
	It may be useful for the reader to have the following (standard) schematic depiction of the \v Cech nerve of a morphism $f\colon X\to Y$:
	\[\begin{tikzcd}
		Y & X & {X\times_{Y}X} & {X\times_Y X\times_Y X} & \cdots
		\arrow["f"', from=1-2, to=1-1]
		\arrow[shift right, from=1-3, to=1-2]
		\arrow[shift left, from=1-3, to=1-2]
		\arrow[from=1-4, to=1-3]
		\arrow[shift left=2, from=1-4, to=1-3]
		\arrow[shift right=2, from=1-4, to=1-3]
		\arrow[shift right, from=1-5, to=1-4]
		\arrow[shift left, from=1-5, to=1-4]
		\arrow[shift right=3, from=1-5, to=1-4]
		\arrow[shift left=3, from=1-5, to=1-4]
	\end{tikzcd}\]
	
	We will later need a restricted version of the \v Cech nerve whose codomain will be the group objects of interest in this paper. We will rarely, in this paper, take the \v Cech nerve of arbitrary morphisms in $\XX$. For that reason we reserve the symbol $\cech$, which often denotes the general \v Cech nerve functor in the literature, for later use.
	
	An important characterization $\infty$-topoi is given in \cite[Theorem 6.1.0.6]{htt} as $\infty$-categories satisfying a version of Giraud's axioms. This characterization implies, among other things, that $\infty$-topoi admit all small limits and colimits and we will regularly use this fact without comment. We will also need the fact that every groupoid object in an $\infty$-topos is effective, but because this potentially requires some explanation, we will discuss it further when necessary.
	
	Just as classical topoi generalize the theory of sets, $\infty$-topoi generalize the the theory of $\infty$-groupoids, or spaces. It is not surprising then that they can be equipped with an internal homotopy theory including things like homotopy groups and Postnikov towers. Some of the work in this paper is ``obviously true'' in $\Spaces$, the $\infty$-topos of $\infty$-groupoids, but requires some work to re-prove in full generality. To aid us, and the reader, in this generalization process, we recall in this section some of the ways in which an $\infty$-topos can be made to behave like $\Spaces$. The material of this section can all be found in \cite[Section 6.5.1]{htt}.
	
	To begin we must get clear on what we mean by $n$-connected, $n$-connect\textit{ive} and $n$-truncated, for both objects and morphisms. This terminology is not entirely standardized in the literature. Because so much of this work uses \cite{htt} and \cite{ha} as its foundational references, we follow the conventions therein. 
	
	We will say that an $\infty$-groupoid $X$ is $n$-\textit{connective} if, for every point $x\in X$, the homotopy sets $\pi_i(X,x)$ are trivial for $0\leq i\leq n-1$. For intuition regarding this terminology, we suggest the reader keep the case of spectra or chain complexes in mind, where ``connective'' refers to those objects which have trivial homotopy groups in negative degrees. Therefore, assuming ``$0$-connective'' should correspond to ``connective,'' every $\infty$-groupoid should be $0$-connective. This then requires that $n$-connective $\infty$-groupoids can have non-trivial homotopy in degree $n$.
	
	We will then say that a \textit{morphism} $f\colon X\to Y$ of $\infty$-groupoids is $n$-\textit{connective} if its fibers are $n$-connective. In traditional algebraic topology, a morphism of spaces is called \textit{$n$-connected} if it has fibers which have trivial homotopy groups in degrees $0\leq i\leq n-1$. Therefore an $n$-connective morphism in our sense is the same as an $n$-connected morphism in the traditional sense. It is only that the terminology differs in indexing for individual spaces. One advantage of our  terminology (which is actually Lurie's, of course) is that the connectivity of the fiber of a map, and the map itself, have the same index. 
	
	In the other direction, we say that an $\infty$-groupoid $X$ is $n$-truncated if, for every $x\in X$, the homotopy sets $\pi_i(X,x)$ are trivial for all $i>n$. We think of an $n$-truncated $\infty$-groupoid as having its homotopy groups \textit{cut down to} degree $n$, i.e.~they still include degree $n$. Then, similarly to above, we say that a morphism of $\infty$-groupoids $X\to Y$ is $n$-truncated if each of its fibers is an $n$-truncated $\infty$-groupoid. 
	
	Note that the definitions of $n$-connective and $n$-truncated morphisms can also be formulated in terms of the long exact sequence in homotopy. Suppose that a morphism $f\colon X\to Y$ is $n$-connective. Then its fiber has trivial homotopy sets up through degree $n-1$ and thus the long exact sequence implies that $\pi_i(f)$ is an isomorphism for $0\leq i\leq n-1$ and that $\pi_n(f)$ is a surjection. Similarly, if $f$ is $n$-truncated then $\pi_i(f)$ is an isomorphism for $i>n+1$ and an injection for $i=n+1$. In other words, if $f$ is $n$-truncated then $X$ and $Y$ are ``the same'' \textit{after} degree $n$. 
	
	We now follow \cite{htt} in making the corresponding definitions for an arbitrary $\infty$-topos. They are necessarily more complicated in this setting because objects of $\infty$-topoi are more like \textit{(pre)sheaves of $\infty$-groupoids} and therefore things like homotopy groups are substantially more complicated. An important difference is that connectivity and truncatedness of morphisms in an $\infty$-topoi are \textit{not} determined, in general, by connectivity or truncatedness of the fibers of that morphism.
	
	\begin{defn}
		Let $\XX$ be an $\infty$-topos and let $-1\leq n$. Then we define the category of $n$-truncated objects $\XX^{\leq n}$ to be the full subcategory of $\XX$ spanned by objects with the property that $\XX(Z,X)$ is an $n$-truncated $\infty$-groupoid for every $Z\in \XX$. We write $\tau_{\leq n}\colon \XX\to \XX^{\leq n}$ for the left adjoint to the inclusion $\XX^{\leq n}\hookrightarrow\XX$, as described in \cite[Proposition 5.5.6.18]{htt}. For $n=-2$ we make the convention that the only object of $\XX^{\leq -2}$ is the terminal object and $\tau_{\leq -2}$ is the terminal functor. We call a morphism $f\colon X\to Y$ of $\XX$ $n$-truncated if for every $Z\in\XX$ the induced morphism of $\infty$-groupoids $\XX(Z,X)\xrightarrow{f_\ast}\XX(Z,Y)$ is $n$-truncated.
	\end{defn}
	
	\begin{rmk}\label{rmk:truncation is symm monoidal}
		Later, we will use the fact that $\infty$-topoi are symmetric monoidal with respect to the Cartesian product (cf.~\cite[Section 2.4.1]{ha}) and that the truncation functors are symmetric monoidal with respect to this structure, by virtue of \cite[Lemma 6.5.1.2]{htt}.
	\end{rmk}
	
	For the following definition it is necessary to recall that if $\XX$ is an $\infty$-topos and $X\in\XX$ then the slice category $\XX_{/X}$ is again an $\infty$-topos (cf.~\cite[Proposition 6.3.5.1]{htt}). 
	
	\begin{defn}\label{defn:homotopygroups}
		Let $\XX$ be an $\infty$-topos and $X\in \XX$. Choosing a base point $p\colon\ast\to S^k$ of the $k$-sphere in $\Spaces$ induces, by cotensoring, a morphism $p^\ast\colon X^{S^k}\to X$ given by precomposition with $p$. We write $\pi_k(X)$ for $\tau_{\leq 0}(p^\ast)\in\XX_{/X}$. Note that if $f\colon X\to Y$ is a morphism of $\XX$ then it is an object of $\XX_{/Y}$ which is again an $\infty$-topos. Therefore is makes sense to define $\pi_k(f)\in(\XX_{/Y})_{/f}\simeq \XX_{/f}\simeq \XX_{/X}$.
		
		If $\eta\colon \ast\to X$ is a base point for $X$ then pulling back induces a functor $\eta^\ast\colon\XX_{/X}\to\XX_{/\ast}\simeq \XX$ and we could write $\eta^\ast\pi_k(X)$ for the resulting object of $\tau_{\leq 0}\XX$. Typically, we will have already assumed $X$ is a pointed object and in that case, so long as it will not create confusion, we will simply write $\pi_k(X)$.
	\end{defn}

	\begin{rmk}
		When $\XX=\Spaces$ the straightening functor of \cite[Theorem 3.2.0.1]{htt} implies that $\pi_k(X)$ determines a functor $X\to\Spaces$ which factors through $\Set$. At a point $x\colon\ast\to X$ the value of this functor is the set of homotopy classes of morphisms $S^k\to X$ which take the chosen base point of $S^k$ to the point $x\in X$. In the case of $X$ pointed, pulling back along the base point gives a functor $\ast\to \Spaces$ that factors through $\Set$. In other words, the choice of base point for $X$ causes $\pi_k(X)$ to simply be a set. It is straightforward to verify \cite[Remark 6.5.1.6]{htt} which asserts that $\pi_k(X)$, for $X\in\Spaces_\ast$, recovers the usual $k^{th}$ homotopy group of $X$. 
	\end{rmk}
	
	\begin{rmk}
		Note that for $f\colon X\to Y$ in $\XX$, the ``homotopy sets of $f$'' are not morphisms of $\tau_{\leq 0}\XX$ but, rather, objects of $\tau_{\leq 0}\XX_{/X}$. These can be thought of as sets ``indexed by $X$.'' In the case that $\XX=\Spaces$, the homotopy set $\pi_k(f)$ at each $x\in X$ is precisely the $k^{th}$ homotopy set of the fiber of $f$ over $x$. This justifies the following definition.
	\end{rmk}

	\begin{defn}
		Let $f\colon X\to Y$ be a morphism of an $\infty$-topos $\XX$ and let $1\leq n$. Then we say that $f$ is $n$-connective if it is an effective epimorphism in $\XX$ and $\pi_i(f)\simeq\ast$ for all $0\leq i<n$. A morphism is called $0$-connective exactly when it is an effective epimorphism. By convention, every morphism of $\XX$ is considered to be $(-1)$-connective. We say that an object $X\in\XX$ is $n$-connective if the terminal morphism $X\to \ast$ is $n$-connective. Write $\XX^{\geq n}$ for the full subcategory of $n$-connective objects of $\XX$. For $n=1$ we will simply refer to objects of $\XX^{\geq 1}$ as \textit{connected} rather than $1$-connective. Notice that if an object $X\in\XX$ is $0$-connective then it must have $\pi_0(X)\neq\varnothing$, i.e.~it must be ``inhabited'' in a certain sense. This is, of course, not an issue when we restrict ourselves to pointed objects.
	\end{defn}

	\begin{defn}
		Let $\XX$ be an $\infty$-topos. Then we define the category of pointed objects of $\XX$, denoted $\XX_\ast$, to be the slice category $\XX^{\backslash\ast}$. We write $\XX_\ast^{\geq n}$ for the full subcategory of $\XX_\ast$ spanned by objects which are $n$-connective after application of the forgetful functor $\XX_\ast\to\XX$. 
	\end{defn}

	\begin{defn}
		Let $\XX$ be an $\infty$-topos and write $K$ for the full subcategory of the augmented simplex category $\Delta_+^{\op}$ spanned by $[-1]$, $[0]$ and $[1]$, sometimes called $(\Delta^{\op}_+)_{\leq 1}$. Let $i\colon\Delta^1\to K$ be the inclusion of the subcategory $[-1]\leftarrow [0]$ and let $j\colon \Delta^1\to K$ be the inclusion of the subcategory $[0]\to[1]$ (as the degeneracy map). Then we define a pair of adjoint functors as follows:
		\begin{enumerate}
			\item Thinking of $\XX_\ast$ as a full subcategory of $\XX^{\Delta^1}$ define the loops functor $\Omega_{\XX}\colon \XX_\ast\to\XX_\ast$ to be the composite of right Kan extension along $i$ followed by restriction along $j$. 
			\item By the general yoga of Kan extensions this functor has a left adjoint $\Sigma_{\XX}\colon \XX_\ast\to\XX_\ast$ given by left Kan extension along $i$ followed by restriction along $j$.
		\end{enumerate}
		The ambient $\infty$-topos will always be clear from context so we will drop the subscript from the suspension and loops functors and simply write $\Sigma$ and $\Omega$.
	\end{defn}

	\begin{prop}\label{prop:loops suspension calculations}
		If $X\in\XX_\ast$ then $\Sigma X$ is equivalent to the pushout of the span $\ast\leftarrow X\to\ast$ and $\Omega X$ is equivalent to the pullback of the cospan $\ast\to X\leftarrow\ast$, both computed in $\XX_\ast$. 
	\end{prop}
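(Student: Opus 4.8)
The plan is to handle the two statements separately: to compute $\Omega$ by unwinding the pointwise formula for the defining right Kan extension, and to obtain $\Sigma$ formally from the fact that it is left adjoint to $\Omega$.

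First I would compute $\Omega X$. Realize the pointed object $X$ as the functor $P\colon\Delta^1\to\XX$ with $P(0)=\ast$ and $P(1)=X$; the loops construction regards this as a diagram on $i(\Delta^1)=\{[0]\to[-1]\}\subset K$, placing $\ast$ at $[0]$ and $X$ at $[-1]$. Because $i$ is fully faithful, the right Kan extension only introduces a genuinely new value at $[1]$, and the pointwise formula (cf.\ \cite[Section 4.3]{htt}) computes $(\mathrm{Ran}_iP)([1])$ as the limit of $P$ over the comma category $[1]\downarrow i$. I would then unwind this comma category: it has two objects over $[0]$, arising from the two face maps $[1]\rightrightarrows[0]$, and one object over $[-1]$, and its nontrivial morphisms organize it into the cospan $\bullet\to\bullet\leftarrow\bullet$ with values $\ast\to X\leftarrow\ast$, where both legs are forced to be the basepoint of $X$. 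The limit is thus the pullback $\ast\times_X\ast$, and restricting the extension along $j$ (i.e.\ reading off the vertices $[0]$ and $[1]$) returns the pointed object $\ast\to\ast\times_X\ast$, as claimed.

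For $\Sigma$ I would avoid a direct colimit computation and instead use its defining property as the left adjoint of $\Omega$, so that it is enough to verify that the pushout functor $X\mapsto\ast\sqcup_X\ast$ is left adjoint to $\Omega$ and then appeal to uniqueness of adjoints. Using the formula for $\Omega$ established above, I would compare $\XX_\ast(\ast\sqcup_X\ast,\,Y)$ with $\XX_\ast(X,\,\ast\times_Y\ast)$. Mapping out of the pushout rewrites the former as $\XX_\ast(\ast,Y)\times_{\XX_\ast(X,Y)}\XX_\ast(\ast,Y)$, while mapping into the pullback rewrites the latter as $\XX_\ast(X,\ast)\times_{\XX_\ast(X,Y)}\XX_\ast(X,\ast)$. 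The decisive point is that $\ast$ is a zero object of $\XX_\ast$: this forces $\XX_\ast(\ast,Y)$ and $\XX_\ast(X,\ast)$ to be contractible and both of the resulting maps into $\XX_\ast(X,Y)$ to select the zero map, so that each side is naturally the pullback $\ast\times_{\XX_\ast(X,Y)}\ast$. This natural equivalence is the desired adjunction, whence $\Sigma X\simeq\ast\sqcup_X\ast$.

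The step I expect to be the main obstacle is the explicit analysis of the comma category $[1]\downarrow i$: one must check that the two distinct face maps $[1]\rightrightarrows[0]$ contribute two separate legs of the cospan and that both are pinned to the basepoint $\ast\to X$, since otherwise one would wrongly obtain $X\times X$ in place of $\ast\times_X\ast$. A secondary subtlety, flagged by the phrase ``computed in $\XX_\ast$'' in the statement, is that the relevant (co)limits live in $\XX_\ast$ and not in $\XX$; for the pullback this changes nothing, as $\XX_\ast\to\XX$ creates limits, but for the suspension it is precisely the zero-object behavior of $\ast$ in $\XX_\ast$ that makes the adjunction, and hence the pushout description, come out correctly.
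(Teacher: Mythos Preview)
Your argument for $\Omega X$ is essentially identical to the paper's: both compute the right Kan extension pointwise at $[1]$, identify the relevant comma category with the cospan $\{[0]\to[-1]\leftarrow[0]\}$, and read off the pullback $\ast\times_X\ast$.

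Your treatment of $\Sigma X$ is correct but genuinely different from the paper's. The paper computes the left Kan extension directly: it first extends from $\{[0]\to[-1]\}$ to $\Delta_{\leq 1}^{\op}$, then further to $(\Delta_+^{\op})_{\leq 1}=(\Delta_{\leq 1}^{\op})^{\triangleright}$, and finally invokes a cofinality result \cite[Lemma 1.2.4.17]{ha} showing that the span $S^{\op}\to\Delta_{\leq 1}^{\op}$ is cofinal, so that the colimit reduces to the desired pushout. You instead bypass the Kan extension entirely by exhibiting the pushout functor $X\mapsto\ast\sqcup_X\ast$ as a left adjoint to $\Omega$ via a direct mapping-space computation exploiting that $\ast$ is a zero object in $\XX_\ast$, and then appeal to uniqueness of adjoints. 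Your route is more elementary and self-contained, avoiding the cofinality citation; the paper's route has the advantage of verifying the explicit Kan-extension description of $\Sigma$ given in the definition, rather than relying on the adjunction $\Sigma\dashv\Omega$ asserted there by ``general yoga.'' Both are valid, and your observation about the role of the zero object is exactly the conceptual point that makes the formal adjunction argument go through.
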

	
	\begin{proof}
	The case of $\Omega$ is well known (to those who know it well). The pointwise formula for right Kan extensions computes the value of $Ran_i(\ast\to X)$ at $[1]\in\Delta_+^{\op}$ to be the limit over the comma category $(\Delta_+^{\op})_{\leq 0}^{\backslash [1]}$. This last category is isomorphic to the cospan category $\{[0]\to[-1]\leftarrow[0]\}$. Because, by assumption, $[0]$ is sent to $\ast$, this gives the result.

	Now we prove the suspension case. Let $S=\{a\to b\leftarrow c\}$ denote the cospan category. In \cite[Lemma 1.2.4.17]{ha}, Lurie proves that the functor $\sigma\colon S\to\Delta_{\leq 1}$, given by $a,c\mapsto [0]$ and $b\mapsto[1]$, is ``right cofinal'' in the sense of the definition given on \cite[p. 13]{ha}. This is equivalent to saying that $\sigma^{\op}\colon S^{\op}\to\Delta_{\leq 1}^{\op}$ is cofinal in the sense of \cite[Section 4.1.1]{htt}. We warn the reader that this terminology is the opposite of the terminology used in \cite[Tag  02NQ]{kerodon}.
	
	Now consider an object $X\in\XX_\ast$ thought of as a functor $\{[0]\to[1]\}\to\XX_\ast$ with $[0]\mapsto\ast$ and $[1]\mapsto X$. Left Kan extending along the inclusion $\{[0]\to[1]\}\hookrightarrow \Delta^{\op}_{\leq 1}$ necessarily gives a diagram in which both of the induced morphisms $X\to\ast$ are the trivial ones (there is nothing else they can be).  Because Kan extensions paste, the left Kan extension of the diagram $\ast\to X$ along the inclusion into $(\Delta_+^{\op})_{\leq 1}$ is equivalent to first left Kan extending to $\Delta_{\leq 1}^{\op}$ and then left Kan extending again along the inclusion $\Delta^{\op}_{\leq 1}\hookrightarrow (\Delta_+^{\op})_{\leq 1}$. But $(\Delta_+^{\op})_{\leq 1}$ is precisely the cocone $(\Delta_{\leq 1}^{\op})^{\triangleright}$. Therefore the desired left Kan extension can be computed by simply taking the colimit of the previously obtained diagram $\Delta_{\leq 1}^{\op}\to\XX_\ast$. We now use that $S^{\op}\to\Delta_{\leq 1}^{\op}$ is cofinal, along with \cite[Proposition 4.1.1.8]{htt}, to deduce that this colimit is equivalent to the desired pushout. 
	\end{proof}

	We will repeatedly use the following result of \cite{htt} so we reproduce it here.

	\begin{lem}[{\cite[Proposition 6.5.1.20]{htt}}]\label{lem:section is -1 connected}
		Let $\XX$ be an $\infty$-topos and assume $n\geq 0$. Let $f\colon X\to Y$ be a morphism of $\XX$ which admits a section $s\colon Y\to X$:
		\[Y\xrightarrow{s} X\xrightarrow{f} Y\]
		Then $f$ is $n$-connective if and only if $s$ is $(n-1)$-connective. 
	\end{lem}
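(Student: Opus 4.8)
The plan is to reduce to a statement about pointed objects and then induct on $n$, using the recursive description of connective morphisms by diagonals rather than any fiberwise reasoning. First I would record the one cheap input: because $f$ admits a section it is a split, hence effective, epimorphism, so $f$ is automatically $0$-connective and the effective-epimorphism clause in the definition of $n$-connectivity comes for free for every $n$. This also settles the base case $n=0$, where the claim reads ``$f$ is an effective epimorphism iff $s$ is $(-1)$-connective,'' both sides holding vacuously.

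The key structural move is to pass to the slice. Since $fs=\mathrm{id}_Y$, the object $f\colon X\to Y$ of the $\infty$-topos $\XX_{/Y}$ (an $\infty$-topos by \cite[Proposition 6.3.5.1]{htt}) is \emph{pointed}, the basepoint being the map $\mathrm{id}_Y\to f$ whose underlying morphism is $s$. The forgetful functor $\XX_{/Y}\to\XX$ is conservative and preserves finite limits, colimits, and effective epimorphisms (the last because effective epimorphisms are colimits of \v{C}ech nerves, which are built from pullbacks); consequently, by the diagonal recursion discussed below, it neither creates nor destroys $n$-connectivity of morphisms. Under it the terminal map $f\to\ast_{\XX_{/Y}}$ recovers $f$ and the basepoint $\ast_{\XX_{/Y}}\to f$ recovers $s$. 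Hence the Lemma is equivalent to the following assertion in an arbitrary $\infty$-topos $\eE$ (apply it to $\eE=\XX_{/Y}$): \emph{for a pointed object $(A,a)$, the object $A$ is $n$-connective if and only if the basepoint $a\colon\ast\to A$ is $(n-1)$-connective.} Verifying that morphism-connectivity transfers faithfully across the forgetful functor is routine but genuinely necessary, since the terminal object of the slice is not preserved; isolating it is what keeps the rest clean.

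I would then prove the pointed statement by induction on $n$, the engine being the standard recursive characterization of connective morphisms from \cite[Section 6.5.1]{htt}: for $m\ge 0$, a morphism $g\colon U\to V$ is $(m+1)$-connective iff it is an effective epimorphism and its diagonal $U\to U\times_V U$ is $m$-connective. Applied to $a\colon\ast\to A$, the diagonal is the constant-loop basepoint $\ast\to\ast\times_A\ast=\Omega A$, and $a$ is an effective epimorphism precisely when $A$ is connected (i.e.\ $\tau_{\leq 0}A\simeq\ast$). Thus $a$ is $n$-connective iff $A$ is connected and the basepoint of $\Omega A$ is $(n-1)$-connective. The inductive hypothesis, applied to the pointed object $\Omega A$, rewrites the latter clause as ``$\Omega A$ is $n$-connective''; finally the loop shift on homotopy sheaves $\pi_i(\Omega A)\cong\pi_{i+1}(A)$ (again \cite[Section 6.5.1]{htt}) yields that ``$A$ connected and $\Omega A$ is $n$-connective'' is equivalent to ``$A$ is $(n+1)$-connective.'' Since every step is a biconditional, chaining them proves both directions simultaneously.

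The main obstacle, and the reason this is not the one-line fiber argument one would write in $\Spaces$, is exactly the warning recorded earlier in this section: in a general $\infty$-topos the connectivity of the morphism $a$ is \emph{not} governed by the connectivity of its fiber $\Omega A$ viewed as an object. This forces every step to be routed through effective epimorphisms, diagonals, and homotopy sheaves rather than through fibers, and it is the diagonal characterization of connectivity that legitimizes the loop-by-loop induction. The secondary point to pin down carefully is the transfer of morphism-connectivity along $\XX_{/Y}\to\XX$; fortunately the same diagonal recursion that powers the induction also powers this transfer, since a conservative finite-limit-preserving functor that preserves effective epimorphisms automatically reflects the recursively defined connectivity.
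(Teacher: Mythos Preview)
The paper does not prove this lemma at all: it is quoted verbatim from \cite[Proposition~6.5.1.20]{htt} with the citation in the lemma header, and no proof environment follows. So there is nothing in the paper to compare your argument against.

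Your proof is essentially correct and self-contained. The slice reduction to a pointed object is clean, and the induction via the diagonal characterization \cite[Proposition~6.5.1.18]{htt} is the right engine. One point deserves a caveat: you invoke as a standard fact that the basepoint $a\colon\ast\to A$ is an effective epimorphism precisely when $A$ is connected. That statement is exactly the $n=1$ case of the lemma you are proving, so if you feed it into the inductive step $n=0\Rightarrow n=1$ the argument is circular at that step. This is not fatal, because the fact in question is genuinely more elementary than the full lemma and is established independently in \cite[Section~6.5.1]{htt} (for instance via the characterization of effective epimorphisms through $0$-truncation, cf.\ \cite[Proposition~7.2.1.14]{htt}); but you should either cite that independent source explicitly or give the two-line direct argument, rather than leaving it implicit. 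With that clarification, the proof stands.
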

	
	\begin{lem}
		If~$\XX$ is an $\infty$-topos and $X\in\XX_\ast^{\geq n}$ then $\Sigma X\in\XX_\ast^{\geq n+1}$. In particular, $\Sigma X$ is connective for every $X\in\XX_\ast$.
	\end{lem}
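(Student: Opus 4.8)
The plan is to reduce the claim to a single application of Lemma~\ref{lem:section is -1 connected}. By Proposition~\ref{prop:loops suspension calculations}, $\Sigma X$ is the pushout of the span $\ast\leftarrow X\to\ast$ taken in $\XX_\ast$; since the forgetful functor $\XX_\ast\to\XX$ is the projection from a coslice $\infty$-category and hence creates colimits, this is equally a pushout square in $\XX$ with corners $X$, $\ast$, $\ast$, and $\Sigma X$. In this square the bottom map $s\colon\ast\to\Sigma X$ is, by the definition of a pushout, the cobase change of $X\to\ast$ along $X\to\ast$. Moreover $s$ agrees with the basepoint of the pointed object $\Sigma X$: both corner copies of $\ast$ are the zero object of $\XX_\ast$, so the two cone inclusions coincide with the structure map $\ast\to\Sigma X$.

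First I would argue that $s$ is $n$-connective. The hypothesis $X\in\XX_\ast^{\geq n}$ says exactly that the terminal morphism $X\to\ast$ is $n$-connective, and $n$-connective morphisms are stable under cobase change: they form the left class of an orthogonal factorization system on $\XX$ (paired with the $(n-1)$-truncated morphisms, cf.~\cite[Section 6.5.1]{htt}), and left classes of such systems are always closed under pushout. Hence the cobase change $s$ of $X\to\ast$ is again $n$-connective.

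Next, observe that $s\colon\ast\to\Sigma X$ is a section of the terminal morphism $\Sigma X\to\ast$, since $\ast$ is terminal. Applying Lemma~\ref{lem:section is -1 connected} with its index taken to be $n+1$, the map $\Sigma X\to\ast$ is $(n+1)$-connective if and only if its section $s$ is $n$-connective, which is what we have just shown. Therefore $\Sigma X\in\XX_\ast^{\geq n+1}$. For the final assertion, every pointed object is inhabited and thus lies in $\XX_\ast^{\geq 0}$, so specializing to $n=0$ yields that $\Sigma X$ is $1$-connective, i.e.\ connected, for every $X\in\XX_\ast$.

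The step I expect to require the most care is the identification of the basepoint $s$ with the cobase change of $X\to\ast$, which rests on the interplay between colimits in $\XX_\ast$ and in $\XX$ and on checking that the two cone points really are the basepoint rather than two a priori distinct maps out of $\ast$. The stability of $n$-connective morphisms under cobase change is essentially formal once the factorization system is invoked, but I would want to confirm the precise indexing conventions (connective versus connected, and the $(n,n-1)$ pairing) against \cite{htt} before committing to an exact citation.
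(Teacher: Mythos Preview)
Your proof is correct and follows essentially the same route as the paper: pushout stability of $n$-connective morphisms applied to $X\to\ast$, followed by Lemma~\ref{lem:section is -1 connected} with the section $\ast\to\Sigma X$. The paper cites \cite[Corollary 6.5.1.17]{htt} directly for the pushout stability rather than going through the factorization system, and it does not spell out the ``in particular'' clause or the identification of the basepoint with the pushout leg, but these are just differences in level of detail.
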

	
	\begin{proof}
		Suppose that $X$ is $n$-connective, i.e.~the terminal morphism $X\to\ast$ is $n$-connective. By \cite[Corollary 6.5.1.17]{htt} we have that $n$-connective morphisms are stable under pushout, so the right vertical morphism in the following commutative diagram is also $n$-connective.
		\[\begin{tikzcd}
			X & \ast \\
			\ast & {\Sigma X}
			\arrow[from=1-1, to=1-2]
			\arrow[from=1-1, to=2-1]
			\arrow[from=1-2, to=2-2]
			\arrow[from=2-1, to=2-2]
			\arrow["\lrcorner"{anchor=center, pos=0.125, rotate=180}, draw=none, from=2-2, to=1-1]
		\end{tikzcd}\]
		That morphism fits into a retract $\ast\to\Sigma X\to \ast$ and therefore, by Lemma \ref{lem:section is -1 connected}, the morphism $\Sigma X\to \ast$ is $(n+1)$-connective.
	\end{proof}

	\begin{lem}\label{lem:omegasigma adjunction}
		Let $\Omega^{\geq 1}$ denote the restriction of $\Omega$ to $\XX_\ast^{\geq 1}$. The reduced suspension functor $\Sigma\colon \pSpaces\to\pSpaces$ is left adjoint to $\Omega$ and corestricts to a left adjoint of~~$\Omega^{\geq 1}$.
	\end{lem}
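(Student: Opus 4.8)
The plan is to treat the two assertions separately: first the adjunction $\Sigma\dashv\Omega$ on all of $\XX_\ast$, and then its corestriction along the inclusion $\XX_\ast^{\geq 1}\hookrightarrow\XX_\ast$. Although the adjunction is already built into the definition via adjoint Kan extensions, I would prefer to rederive it from the explicit (co)limit formulas of Proposition~\ref{prop:loops suspension calculations}, since these make the corestriction entirely transparent. By that proposition $\Sigma X$ is the pushout of $\ast\leftarrow X\to\ast$ and $\Omega Y$ is the pullback of $\ast\to Y\leftarrow\ast$, both formed in $\XX_\ast$. Since corepresentable functors send colimits in the source to limits and representable functors preserve limits in the target, and since $\ast$ is a zero object of $\XX_\ast=\XX^{\backslash\ast}$ (so $\mathrm{Map}_{\XX_\ast}(\ast,Y)\simeq\mathrm{Map}_{\XX_\ast}(X,\ast)\simeq\ast$, with each leg into $\mathrm{Map}_{\XX_\ast}(X,Y)$ selecting the zero morphism $X\to\ast\to Y$), both mapping spaces collapse to the loop space of $\mathrm{Map}_{\XX_\ast}(X,Y)$ at its zero point:
\[
\mathrm{Map}_{\XX_\ast}(\Sigma X, Y)\;\simeq\;\ast\times_{\mathrm{Map}_{\XX_\ast}(X,Y)}\ast\;\simeq\;\mathrm{Map}_{\XX_\ast}(X,\Omega Y).
\]
This is natural in both variables, and by the mapping-space criterion for adjunctions it exhibits $\Sigma\dashv\Omega$.

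For the corestriction, the key input is the preceding lemma: $\Sigma X\in\XX_\ast^{\geq 1}$ for every $X\in\XX_\ast$. (Every pointed object is $0$-connective, since its basepoint is a section of $X\to\ast$, so Lemma~\ref{lem:section is -1 connected} with $n=0$ makes $X\to\ast$ an effective epimorphism; the connectivity lemma then applies with $n=0$.) Hence $\Sigma$ factors through the fully faithful inclusion $\iota\colon\XX_\ast^{\geq 1}\hookrightarrow\XX_\ast$, while by definition $\Omega^{\geq 1}=\Omega\circ\iota$. I would then invoke the general fact that a left adjoint factoring through a fully faithful inclusion corestricts to a left adjoint of the correspondingly restricted right adjoint: for $X\in\XX_\ast$ and $Y\in\XX_\ast^{\geq 1}$, full faithfulness of $\iota$ together with the first part gives
\[
\mathrm{Map}_{\XX_\ast^{\geq 1}}(\Sigma X, Y)\;\simeq\;\mathrm{Map}_{\XX_\ast}(\Sigma X, \iota Y)\;\simeq\;\mathrm{Map}_{\XX_\ast}(X,\Omega\iota Y)\;=\;\mathrm{Map}_{\XX_\ast}(X,\Omega^{\geq 1} Y),
\]
naturally in both variables, which is exactly the claimed adjunction.

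The only genuinely delicate point is the basepoint bookkeeping in the first paragraph: I must verify that the two maps $\mathrm{Map}_{\XX_\ast}(\ast,Y)\to\mathrm{Map}_{\XX_\ast}(X,Y)$ arising from the pushout computation and the two maps $\mathrm{Map}_{\XX_\ast}(X,\ast)\to\mathrm{Map}_{\XX_\ast}(X,Y)$ arising from the pullback computation all select the same zero morphism, so that the two fiber products are taken over the identical cospan $\ast\to\mathrm{Map}_{\XX_\ast}(X,Y)\leftarrow\ast$ and hence literally coincide rather than merely resemble one another. This is immediate once one notes that $\ast$ is a zero object, but it is what forces the two sides to agree and it is what I would check most carefully. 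Everything else—preservation of (co)limits by (co)representable functors, naturality of the intermediate loop-space description, and the formal corestriction of adjoints—is routine, so I expect no substantive obstacle beyond this identification.
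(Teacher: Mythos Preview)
Your proposal is correct. The paper's proof is terser: for the first adjunction it simply says ``follows from definitions,'' meaning the Kan-extension description (left and right Kan extension along the same inclusion, then restriction along the same map, are automatically adjoint), whereas you rederive it from the explicit pushout/pullback formulas of Proposition~\ref{prop:loops suspension calculations} via mapping spaces. For the corestriction, the paper argues that since $\Sigma$ factors through $\XX_\ast^{\geq 1}$ the unit $id\Rightarrow\Omega\Sigma$ already serves as a unit for the corestricted pair, while you give the equivalent mapping-space argument using full faithfulness of the inclusion. Your route is more explicit and makes the basepoint bookkeeping visible; the paper's is shorter but leans on the Kan-extension setup. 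Both are standard and either would serve.
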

	
	\begin{proof}
		The fact that $\Sigma$ is left adjoint to $\Omega$ follows from definitions. Because $\Sigma$ factors through the full subcategory $\XX^{\geq 1}_\ast$, the unit morphism $id_{\XX^\ast}\Rightarrow \Omega\Sigma$ defining the $\Omega\vdash\Sigma$ adjunction restricts to a unit morphism $id_{\XX^{\geq 1}_{\ast}}\Rightarrow \Omega^{\geq 1}\Sigma$.
	\end{proof}

	\section{Group Objects in $\infty$-topoi}
	
	The concept of ``group object in an $\infty$-category'' is defined in \cite[Example 7.2.2.2]{htt}. These are the objects that we will study for the remainder of the paper. This goal of this section is to carefully lay out the necessary definitions and to prove that things like delooping and taking quotients behave ``as expected.''
	
	It is important to note that in \cite{ha} Luried introduces ``grouplike associative monoids'' and ``grouplike associative algebras'' as well. All of these deserve to be called ``group objects,'' and Lurie shows them to be equivalent for a Cartesian monoidal $\infty$-category, but we will primarily work with the first notion in this paper. Nonetheless, in Section \ref{sec:base change} we review the equivalences between each of these models and prove that they are compatible with quotients. 
	
	\subsection{Simplicial Models for $\infty$-Groups and Actions}
	
	\begin{defn}
		Let $\XX$ be an $\infty$-topos. We say that a simplicial object $Z_\bullet\colon \Delta^\op\to \XX$ is a \textit{category object} if the canonical Segal maps $Z_n\to Z_1\times_{Z_0}\cdots\times_{Z_0}Z_1$ are equivalences for all $n$ (cf.~\cite[Remark 2.4.4]{gepnerhaugsengenriched}). We say that a category object $Z_\bullet$ is a \textit{monoid object} if $Z_0$ is terminal in $\XX$ (cf.~\cite[Definition 4.1.2.5]{ha}). We will write $\Catnoinf(\XX)$ for the full subcategory of $\XX^{\Delta^\op}$ spanned by category objects and $\Mon(\XX)$ for the full subcategory of $\XX^{\Delta^\op}$ spanned by monoid objects.
	\end{defn}

	\begin{defn}\label{defn:groupoid objects}
		Let $C_\bullet\colon\Delta^\op\to\XX$ be a category object of $\XX$. We say that $C_\bullet$ is a \textit{groupoid object} of $\XX$ if it satisfies the following condition (cf.~\cite[Proposition 6.1.2.6, Definition 6.1.2.7]{htt}):
		\begin{itemize}
			\item Let $k,m,n\geq 0$ with $m\leq n$ and $k\leq n$. Suppose additionally that there is a pushout diagram in $\Delta$
			\[\begin{tikzcd}
				{[0]} & {[m]} \\
				{[k]} & {[n]}
				\arrow[from=1-1, to=1-2]
				\arrow[from=1-1, to=2-1]
				\arrow[from=1-2, to=2-2]
				\arrow[from=2-1, to=2-2]
				\arrow["\lrcorner"{anchor=center, pos=0.125, rotate=180}, draw=none, from=2-2, to=1-1]
			\end{tikzcd}\] i.e.~that the union of the images of $[m]$ and $[k]$ is all of $[n]$ and that their intersection is only one element. Then the image of this diagram under $Z$:
			\[
			\begin{tikzcd}
				C_n\ar[r]\ar[d] & C_m\ar[d]\\
				C_k\ar[r] & C_0
			\end{tikzcd}
			\] is a pullback diagram in $\XX$. We write $\Grpd(\XX)$ for the full subcategory of $\XX^{\Delta^\op}$ spanned by groupoid objects.
		\end{itemize}
	\end{defn}
	
	\begin{rmk}
		Note that asking for $C_\bullet$ to be a category object in Definition \ref{defn:groupoid objects} is superfluous. Indeed, the condition given there implies that $C_n\simeq C_{n-1}\times_{C_0}C_1$ for every $n$ from which the ``Segal condition'' inductively follows. 
	\end{rmk}
	
	\begin{rmk}
		The reader is encouraged to check that a groupoid object in $\Set$, in terms of Definition \ref{defn:groupoid objects}, is precisely a (discrete) category in which every morphism is invertible. This follows from the fact that we have required the Segal condition for all partitions of $[n]$ which overlap at one element, as opposed to \textit{ordered} partitions alone.
	\end{rmk}
	
	\begin{defn}\label{defn:group objects}
		Given a simplicial object $Z_\bullet\colon \Delta^\op\to \XX$, we say that $Z_\bullet$ is a \textit{group object} if it is both a groupoid object and a monoid object (cf.~\cite[Definition 7.2.2.1]{htt}). We write $\Grp(\XX)$ for the full subcategory of $\XX^{\Delta^\op}$ spanned by group objects. 
	\end{defn}
	
	\begin{rmk}
		The above definitions are homotopically coherent ways of saying ``a monoid is a category with one object,'' ``a groupoid is a category in which every morphism is invertible,'' and ``a group is a groupoid with one object.''
	\end{rmk}
	
	\begin{defn}\label{defn:group forgetful functor}
		Define the (pointed) forgetful functor $s_0^\ast\colon\Grp(\XX)\to\XX_\ast$ to be restriction along the zeroth degeneracy $\{[0]\xrightarrow{s_0}[1]\}\hookrightarrow \Delta^{\op}$. Define the (unpointed) forgetful functor $U_{\Grp}\colon \Grp(\XX)\to\XX$ as the further restriction along $[1]\hookrightarrow\Delta^{\op}$. If $G_\bullet\in\Grp(\XX)$, we will sometimes write $G_1$ for $U_{\Grp}(G)$. 
	\end{defn}

	\begin{defn}
		Let $\XX$ be an $\infty$-topos. Following \cite[Definition 4.2.2.2]{ha} we define a \textit{right action object} of $\XX$ to be a functor $A\colon\Delta^\op\times\Delta^1\to \XX$ satisfying the following two properties:
		\begin{enumerate}
			\item The restriction of $A$ along $\Delta^\op\times\{1\}\hookrightarrow \Delta^\op\times\Delta^1$ is a monoid object of $\XX$.
			\item The iterated face map $A([n],0)\to A(\{0\},0)\simeq A([0],0)$ and the map $A([n],0)\to A([n],1)$ exhibit $A([n],0)$ as the product $A([0],0)\times A([n],1)$. 
		\end{enumerate}
		
		We will write $\RMon(\XX)$ for the full subcategory of $\Fun(\Delta^{\op}\times \Delta^1,\XX)$ spanned by right action objects. If $A$ is a right action object with $A([1],1)=M$ and $A([0],0)=X$, we will sometimes write $\Act(X,M)$ instead of $A$ to indicate that $A$ is encoding an action of a monoid $M$ on an object $X$.
	\end{defn}
	
	\begin{rmk}
		A right action of a monoid $M_\bullet\simeq A(\Delta^{op},1)$ on $X\simeq A([0],0)$ can be depicted schematically as
		\[\begin{tikzcd}
			\vdots & \vdots \\
			{X\times M\times M} & {M\times M} \\
			{X\times M} & M \\
			X & \ast
			\arrow[from=4-1, to=4-2]
			\arrow[shift right, from=3-1, to=4-1]
			\arrow[shift left, from=3-2, to=4-2]
			\arrow[shift left, from=3-1, to=4-1]
			\arrow[shift right, from=3-2, to=4-2]
			\arrow[from=2-1, to=3-1]
			\arrow[from=2-2, to=3-2]
			\arrow[shift left=2, from=2-1, to=3-1]
			\arrow[shift right=2, from=2-1, to=3-1]
			\arrow[shift left=2, from=2-2, to=3-2]
			\arrow[shift right=2, from=2-2, to=3-2]
			\arrow[from=3-1, to=3-2]
			\arrow[from=2-1, to=2-2]
			\arrow[shift right, from=1-1, to=2-1]
			\arrow[shift left, from=1-1, to=2-1]
			\arrow[shift right=3, from=1-1, to=2-1]
			\arrow[shift left=3, from=1-1, to=2-1]
			\arrow[shift right, from=1-2, to=2-2]
			\arrow[shift left, from=1-2, to=2-2]
			\arrow[shift left=3, from=1-2, to=2-2]
			\arrow[shift right=3, from=1-2, to=2-2]
		\end{tikzcd}\]
	\end{rmk}
	
	\begin{defn}
		Restricting along the inclusion $\Delta^\op\times\{1\}\hookrightarrow \Delta^{\op}\times\Delta^1$ induces a forgetful functor $\RMon(\XX)\to\Mon(\XX)$. We write $\RMon_M(\XX)$ for the pullback of this functor along $\{M\}\hookrightarrow\Mon(\XX)$:
		\[\begin{tikzcd}
			{\RMon_M(\XX)} & {\RMon(\XX)} \\
			{\{M\}} & {\Mon(\XX)}
			\arrow[from=1-1, to=1-2]
			\arrow[from=1-1, to=2-1]
			\arrow["\lrcorner"{anchor=center, pos=0.125}, draw=none, from=1-1, to=2-2]
			\arrow[from=1-2, to=2-2]
			\arrow[from=2-1, to=2-2]
		\end{tikzcd}\]
	\end{defn}

	\begin{defn}\label{defn:barconstruction}
		Let $M_\bullet\in\Mon(\XX)$ and let $\Act(X,M)\in \RMon_M(\XX)$. Then we define the \textit{bar construction for the action of $M$ on $X$} to be the simplicial object $\Delta^{op}\to \XX$ obtained from $X$ by precomposing with the inclusion $\Delta^{\op}\times\{0\}\hookrightarrow \Delta^{\op}\times\Delta^1$. When the action object $\Act(X,M)$ is specified by context we write $\Barc(X,M)$ for this simplicial object.
	\end{defn}
	
	\begin{rmk}
		Let $M_\bullet\in\Mon(\XX)$ be a monoid object, hence an object of the functor category $\XX^{\Delta^{\op}}$. By considering the identity morphism of $M_\bullet$ in $(\XX^{\Delta^\op})^{\Delta^1}$ as an object of $\XX^{\Delta^\op\times\Delta^1}$, we obtain a right action object displaying an action of $M$ on $\ast$. Note that $\Barc(\ast,M)\simeq M_\bullet$. In other words, this method of thinking about monoids in an $\infty$-category has the bar construction \textit{built in}.
	\end{rmk}
	
	\subsection{Augmented Simplicial Objects and Quotients}

	\begin{defn}\label{defn:the simplicial functors}
		Let $\XX$ be an $\infty$-topos with $M_\bullet\in\Mon(\XX)$ and $\Act(X,M)\in\RMon_M(\XX)$. We define several functors relating $\RMon(\XX)$, $\Mon(\XX)$ and $\XX$, as well as extensions of these categories to ``augmented'' versions. Write $i_+\colon \Delta^{\op}\to\Delta_+^{\op}$ for the inclusion of the unaugmented simplex category into the augmented simplex category and $i_+\times\Delta^1\colon \Delta^{\op}\times\Delta^1\hookrightarrow \Delta_+^{\op}\times\Delta^1$ for its obvious extension to $\Delta^{\op}\times\Delta^1$.
		\begin{enumerate}
			\item Write $\BB\colon \Mon(\XX)\to \XX$ for the geometric realization functor. Given $M\in\Mon(\XX)$ we will write $\BB M$ for its image in $\XX$.
			
			\item Write $\BB_\bullet^+\colon \Mon(\XX)\to \XX^{\Delta_+^{\op}}$ for the functor which left Kan extends $M_\bullet$ along $i_+$. Note that $\BB_\bullet^+M$ recovers $\BB M$ when restricted to $[-1]\in \Delta_+^{\op}$.
			
			\item Write $\BB_\ast\colon \Mon(\XX)\to \XX_\ast$ for the functor which restricts $\BB_\bullet^+$ to the terminal morphism $\{[0]\to[-1]\}\in\Delta^{\op}_+$.
			
			\item Write $-/-\colon \RMon(\XX)\to \XX$ for the functor that takes a right action object $\Act(X,M)$ to the geometric realization of $\Barc(X,M)$. Given $\Act(X,M)$ we will write $X/M$ for its image under this functor. 
			
			\item Write $\Actplus\colon \RMon(\XX)\to\XX^{\Delta_+^{\op}\times\Delta^1}$ for the left Kan extension of a right action object $\Act(X,M)$ along $i_+\times\Delta^1$. Write $\Actplus(X,M)$ for the image of $\Act(X,M)$ under this functor. 
			\item Write $\Barcplus(-,-)\colon \RMon(\XX)\to \XX^{\Delta^{\op}_{+}}$ for the restriction of $\Actplus$ along $\Delta_+^{\op}\times\{0\}\hookrightarrow \Delta^{\op}_+\times\Delta^1$. We will write $\Barcplus(X,M)$ for the image of $\Act(X,M)\in\RMon(\XX)$ under this functor. 
		\end{enumerate}
		We will write $\Mon^+(\XX)$ and $\RMon^+(\XX)$ for the essential images of $\BB^+_\bullet$ and $\mathrm{Act}^+_\bullet$ respectively, and $\RMon^+_M(\XX)$ for obvious restriction of $\RMon^+(\XX)$ to a fixed monoid. 
	\end{defn}
	
	\begin{rmk}
		The values of $\Actplus(X,M)$ at $([-1],0)$ and $([-1],1)$ are respectively the geometric realizations of $\Barc(X,M)$ and $\Barc(\ast,M)\simeq M_\bullet$. This can be visualized as the following diagram:
		\[\begin{tikzcd}
			\vdots & \vdots \\
			{X\times M\times M} & {M\times M} \\
			{X\times M} & M \\
			X & \ast \\
			{X/M} & {\ast/M}
			\arrow[shift right, from=1-1, to=2-1]
			\arrow[shift left, from=1-1, to=2-1]
			\arrow[shift right=3, from=1-1, to=2-1]
			\arrow[shift left=3, from=1-1, to=2-1]
			\arrow[shift right, from=1-2, to=2-2]
			\arrow[shift left, from=1-2, to=2-2]
			\arrow[shift left=3, from=1-2, to=2-2]
			\arrow[shift right=3, from=1-2, to=2-2]
			\arrow[from=2-1, to=2-2]
			\arrow[from=2-1, to=3-1]
			\arrow[shift left=2, from=2-1, to=3-1]
			\arrow[shift right=2, from=2-1, to=3-1]
			\arrow[from=2-2, to=3-2]
			\arrow[shift left=2, from=2-2, to=3-2]
			\arrow[shift right=2, from=2-2, to=3-2]
			\arrow[from=3-1, to=3-2]
			\arrow[shift right, from=3-1, to=4-1]
			\arrow[shift left, from=3-1, to=4-1]
			\arrow[shift left, from=3-2, to=4-2]
			\arrow[shift right, from=3-2, to=4-2]
			\arrow[from=4-1, to=4-2]
			\arrow[from=4-1, to=5-1]
			\arrow[from=4-2, to=5-2]
			\arrow[from=5-1, to=5-2]
		\end{tikzcd}\]
		In the case that $M_\bullet$ is a group object, we will see that the bottom square will be a pullback, recovering the idea of a principal $G$-bundle as described, for instance, in \cite{nikolausSchreiberStevenson-principal1}.
	\end{rmk}

	\subsection{Action Groupoids}\label{sec: action groupoids}
	
	We now specialize to the case of group actions, rather than monoid actions. This section is primarily an exposition of \cite[Remark 5.2.6.28]{ha}. First we need the following definitions. 
	
	\begin{defn}[{\cite[Definition 6.1.3.1]{htt}}]
		Let $F,G\colon \CC\to\DD$ be functors of $\infty$-categories. We say that a natural transformation $\eta\colon F\Rightarrow G$ is \textit{Cartesian} if, for every morphism $\phi\colon c\to d$ in $\CC$, the following is a pullback diagram in $\DD$:
		\[\begin{tikzcd}
			{F(c)} & {G(c)} \\
			{F(d)} & {G(d)}
			\arrow["{\eta_c}", from=1-1, to=1-2]
			\arrow["{F(\phi)}"', from=1-1, to=2-1]
			\arrow["\lrcorner"{anchor=center, pos=0.125}, draw=none, from=1-1, to=2-2]
			\arrow["{G(\phi)}", from=1-2, to=2-2]
			\arrow["{\eta_d}"', from=2-1, to=2-2]
		\end{tikzcd}\]
	\end{defn}

	\begin{defn} Write $\RMon_{gp}(\XX)$ for the pullback of the restriction functor $\RMon(\XX)\to\Mon(\XX)$ along the inclusion $\Grp(\XX)\hookrightarrow\Mon(\XX)$. 
		All of the categories and functors of Definition \ref{defn:the simplicial functors} can be restricted along the inclusions $\RMon_{gp}(\XX)\hookrightarrow \RMon(\XX)$ and $\Grp(\XX)\hookrightarrow\Mon(\XX)$, and they continue to have the same interpretations as Kan extensions and restrictions. Write $\Grp^+(\XX)$ and $\RMon_{gp}^+(\XX)$ for the essential images of these restrictions. 
	\end{defn}

	\begin{lem}[{\cite[Remark 5.2.6.28]{ha}}]\label{lem:restrict to K}
		Let $\KK$ denote the full subcategory of $\Delta_+^{\op}\times\Delta^1$ spanned by the objects $([-1],0)$, $([-1], 1)$ and $([0],1)$. Let $\mathcal{E}$ denote the full subcategory of~ $\XX^{\Delta^{op}_+\times\Delta^1}$ spanned by functors $F$ with the following properties:
		\begin{enumerate}
			\item The functor $F$ is a right Kan extension of its restriction to $\mathcal{K}\subset \Delta_+^{\op}\times\Delta^1$. 
			\item The object $F([0],1)$ is terminal.
			\item The object $F([-1],1)$ is connected (equivalently, $F([0],1)\to F([-1],1)$ is an effective epimorphism). 
		\end{enumerate}
		Then restriction along the inclusion of $\mathcal{K}$ induces an equivalence $\mathcal{E}\to \XX^{\Delta^1}\times_{\XX^{\{1\}}}\XX^{\geq 1}_\ast$. 
	\end{lem}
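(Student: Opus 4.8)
The plan is to reduce the statement to the general theory of right Kan extensions along a fully faithful inclusion, e.g.~\cite[Proposition 4.3.2.15]{htt}, and then to unwind the two pointwise conditions (2) and (3) into the defining data of the pullback. First I would pin down the shape of $\KK$. Its three objects receive only the morphisms $([-1],0)\to([-1],1)$ (the identity in $\Delta_+^{\op}$ together with the nondegenerate edge of $\Delta^1$) and $([0],1)\to([-1],1)$ (the unique map $[0]\to[-1]$ in $\Delta_+^{\op}$ together with the identity of $\Delta^1$); there is no morphism out of $([-1],1)$ besides its identity, and none from $([-1],0)$ to $([0],1)$ since $\Delta_+$ contains no map $[0]\to[-1]$. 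Hence $\KK$ is the cospan category, and a functor $\KK\to\XX$ is exactly a diagram $F([-1],0)\to F([-1],1)\leftarrow F([0],1)$. Under conditions (2) and (3) this is a cospan $A\to B\leftarrow\ast$ with $B$ connected, which is visibly an object of the target pullback.

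Next I would invoke the Kan extension equivalence for the inclusion $\KK\hookrightarrow\Delta_+^{\op}\times\Delta^1$ with target $\XX$: restriction along this inclusion is fully faithful on the full subcategory of $\Fun(\Delta_+^{\op}\times\Delta^1,\XX)$ spanned by functors that are right Kan extensions of their restriction to $\KK$, which is precisely condition (1), and its essential image is the full subcategory of $\Fun(\KK,\XX)$ consisting of diagrams admitting a right Kan extension. The point making this essential image all of $\Fun(\KK,\XX)$ is that $\XX$, being an $\infty$-topos, admits all small limits. Since $\KK$ has finitely many objects and morphisms and the Hom-sets of $\Delta_+^{\op}\times\Delta^1$ are finite, each comma category $\KK\times_{\Delta_+^{\op}\times\Delta^1}(\Delta_+^{\op}\times\Delta^1)_{d/}$ computing the extension at $d$ is finite, so the pointwise right Kan extension always exists. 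Thus restriction yields an equivalence between the functors satisfying (1) and the whole cospan category $\Fun(\KK,\XX)$.

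It then remains to match the conditions. Conditions (2) and (3) constrain only the values at $([0],1)$ and $([-1],1)$, both objects of $\KK$, so they are detected on the restriction and cut out corresponding full subcategories on either side of the above equivalence. Writing $\KK\simeq\Delta^1\sqcup_{\{1\}}\Delta^1$ as two arrows sharing a target gives $\Fun(\KK,\XX)\simeq\XX^{\Delta^1}\times_{\XX^{\{1\}}}\XX^{\Delta^1}$. Imposing that the source $F([0],1)$ of the second leg be terminal identifies that factor with the pointed objects $\XX_\ast=\XX^{\backslash\ast}$, since an arrow $\ast\to B$ is the same datum as a pointing of $B$, and this identification is compatible with evaluation at the target. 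Imposing connectivity of $B=F([-1],1)$ — equivalently that $\ast\to B$ be an effective epimorphism — further restricts that factor to $\XX^{\geq 1}_\ast$, while the first leg $F([-1],0)\to F([-1],1)$ stays an unconstrained object of $\XX^{\Delta^1}$. The result is exactly $\XX^{\Delta^1}\times_{\XX^{\{1\}}}\XX^{\geq 1}_\ast$, completing the chain of equivalences.

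The step I expect to require the most care is the second one: confirming that condition (1) is precisely the right Kan extension hypothesis and that the essential image really is all of $\Fun(\KK,\XX)$, which hinges on verifying that the comma categories computing the extension are finite (hence that completeness of $\XX$ guarantees existence of the relevant limits). The final bookkeeping — that the slice $\XX^{\backslash\ast}$ embeds in $\XX^{\Delta^1}$ compatibly with evaluation at the target, so that the two legs are glued along the same copy of $B$ — is routine but must be tracked carefully to land on the stated pullback rather than a variant with the legs glued on the wrong side.
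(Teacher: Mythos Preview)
Your proposal is correct and follows essentially the same approach as the paper: both invoke \cite[Proposition 4.3.2.15]{htt} to identify the full subcategory of right Kan extensions with $\Fun(\KK,\XX)$, then read off conditions (2) and (3) as cutting out the stated pullback. Your argument is more detailed than the paper's (which is quite terse and largely defers to Lurie's remark), in particular in explicitly identifying $\KK$ as a cospan and verifying that the right Kan extensions exist; these elaborations are sound.
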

	
	\begin{proof}
		This proof is simply an elaboration of Lurie's argument in the cited remark. Note that the second and third conditions above imply that all of the functors in $\mcal E$ restrict along $\mathcal{K}$ to objects of $\XX^{\Delta^1}\times_{\XX^{\{1\}}}\XX_\ast^{\geq 1}$. It remains to show that the resulting functor is an equivalence.  This follows by applying (the opposite of) \cite[Proposition 4.3.2.15]{htt}. In the statement of that proposition, we should set $\CC=\Delta_+^{\op}\times\Delta^1$, $\DD=\XX$, $\CC^0=\mathcal{K}$ and $\DD'=\ast$. The proposition may be interpreted as saying that functors out of $\mathcal{K}$ which admit right Kan extensions to all of $\Delta_+^{\op}\times\Delta^1$ are equivalent to functors which \textit{are} right Kan extensions. 
	\end{proof}
	
	\begin{rmk}
		The above lemma shows that functors in $\mathcal{E}$ can be thought of as cospans $X\to Y\leftarrow \ast$ in $\XX$ with $Y$ connected. 
	\end{rmk}
	
	\begin{lem}[{\cite[Remark 5.2.6.28]{ha}}]\label{lem:koszul duality diagram for actions}
		The $\infty$-category $\mathcal{E}$ of Lemma \ref{lem:restrict to K} is equivalent to $\RMon_{gp}^+(\XX)$. Moreover, it fits into the following commutative diagram in which the horizontal morphisms are equivalences:
		\[\begin{tikzcd}
			{\RMon_{gp}(\XX)} && {\RMon^+_{gp}(\XX)} & {\XX^{\Delta^1}\times_{\XX^{\{1\}}}\XX^{\geq 1}_\ast} \\
			{\Grp(\XX)} && {\Grp^+(\XX)} & {\XX_\ast^{\geq 1}}
			\arrow["{\mathrm{Act}^+_\bullet}", from=1-1, to=1-3]
			\arrow[from=1-1, to=2-1]
			\arrow["{res_{\mathcal{K}}}", from=1-3, to=1-4]
			\arrow[from=1-3, to=2-3]
			\arrow[from=1-4, to=2-4]
			\arrow["{\BB^+_\bullet}"', from=2-1, to=2-3]
			\arrow["{res_{[0]\to[-1]}}"', from=2-3, to=2-4]
		\end{tikzcd}\]
	\end{lem}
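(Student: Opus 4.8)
The plan is to deduce the result from Lemma \ref{lem:restrict to K}, which already identifies $\mathcal{E}$ with $\XX^{\Delta^1}\times_{\XX^{\{1\}}}\XX^{\geq 1}_\ast$ via restriction to $\KK$; the genuine work is to identify $\RMon^+_{gp}(\XX)$ with $\mathcal{E}$. I would organize this around three claims: (a) the functor $\Actplus$ of Definition \ref{defn:the simplicial functors}, restricted to group actions, is fully faithful with essential image $\RMon^+_{gp}(\XX)$; (b) $\RMon^+_{gp}(\XX)=\mathcal{E}$ as full subcategories of $\XX^{\Delta_+^{\op}\times\Delta^1}$; and (c) the two squares commute. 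Claim (a) is formal: since $i_+\times\Delta^1$ is fully faithful, left Kan extension along it is fully faithful, and its essential image is $\RMon^+_{gp}(\XX)$ by definition, so $\Actplus$ is an equivalence onto $\RMon^+_{gp}(\XX)$. Granting (b), the top horizontal equivalences follow by composing with Lemma \ref{lem:restrict to K}. The bottom row is the delooping equivalence $\Grp(\XX)\simeq\XX^{\geq 1}_\ast$: again $\BB^+_\bullet$ is fully faithful onto $\Grp^+(\XX)$, and $res_{[0]\to[-1]}$ is an equivalence because an augmented group object is the right Kan extension of the pointed connected object $\ast\to\BB G$ it restricts to, i.e.\ $G_\bullet$ is the \v Cech nerve of $\ast\to\BB G$ (effectivity of groupoid objects, \cite[Theorem 6.1.0.6]{htt}).

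The heart of the argument is claim (b), and within it the inclusion $\RMon^+_{gp}(\XX)\subseteq\mathcal{E}$. Here I would fix a group action $\Act(X,G)$ and verify the three conditions of Lemma \ref{lem:restrict to K} for $F=\Actplus(X,G)$. Condition (2), that $F([0],1)=G_0=\ast$ is terminal, is the monoid condition; condition (3), that $F([-1],1)=\BB G$ is connected, holds because $G$ is a group. The hard part will be condition (1): that $F$ is the right Kan extension of its restriction to $\KK$. Using the pointwise formula, I would compute the comma categories $(([n],\epsilon)\downarrow\KK)$ explicitly. Since $\Delta^1$ admits no arrow $1\to 0$, the object $([n],1)$ only sees $([0],1)$ and $([-1],1)$, and its comma category is an $(n{+}1)$-legged fan over the base $\BB G$; the limit is the $(n{+}1)$-fold fiber power $\ast\times_{\BB G}\cdots\times_{\BB G}\ast$, which equals $G^n=F([n],1)$ exactly because $G_\bullet$ is the \v Cech nerve of $\ast\to\BB G$. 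For $([n],0)$ the fan acquires one extra leg $X/G\to\BB G$, so the limit is $\ast^{\times_{\BB G}(n+1)}\times_{\BB G}(X/G)$, which reduces to $X\times G^n=F([n],0)$ provided the single ``mixed'' pullback $X\simeq\ast\times_{\BB G}(X/G)$ holds.

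Establishing that mixed pullback --- equivalently, that the bottom square of the diagram following Definition \ref{defn:the simplicial functors} is a pullback, i.e.\ that $X\to X/G$ is the principal $G$-bundle classified by $X/G\to\BB G$ --- is the genuine obstacle. My plan is to obtain it from descent: the evident map of groupoid objects from the action groupoid $X\times G^\bullet\rightrightarrows X$ to $G^\bullet\rightrightarrows\ast$ is a Cartesian transformation (each naturality square is a pullback, since the horizontal maps are projections away from $X$), and since groupoid objects in an $\infty$-topos are effective and colimits are universal (\cite[Theorem 6.1.0.6]{htt}), realizing a Cartesian transformation of groupoid objects yields a pullback square on geometric realizations and exhibits each level as a pullback of it. Applied to our map this gives $X\simeq\ast\times_{\BB G}(X/G)$, and more generally the compatibility of all the pullbacks needed for condition (1).

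Finally, for the reverse inclusion $\mathcal{E}\subseteq\RMon^+_{gp}(\XX)$ I would take $F\in\mathcal{E}$, restrict it to $\Delta^\op\times\Delta^1$, and check this restriction is a group action object: its restriction to the $\{1\}$-row is the \v Cech nerve of the connected pointed object $\ast\to F([-1],1)$, hence a group object, and the product condition in the definition of a right action object falls out of the same pullback squares. That $F$ is then recovered as $\Actplus$ of this restriction follows once more from effectivity (the colimit of an effective groupoid recomputes its augmentation), together with the forward direction of (b), which shows $\Actplus$ of a group action lies in $\mathcal{E}$ and is therefore determined by its $\KK$-restriction, which matches $F|_\KK$. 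Commutativity of the two squares in (c) is then a naturality statement: forgetting the acted-on object commutes with the relevant Kan extensions and restrictions, since these are all taken along (products with) the inclusion $i_+$. I expect the mixed-pullback/descent step of the third paragraph to be where essentially all of the mathematical content sits.
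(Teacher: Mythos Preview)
The paper's proof of the key identification $\RMon^+_{gp}(\XX)=\mathcal E$ (your claim (b)) is a one-line citation: it invokes the penultimate sentence of \cite[Remark~5.2.6.28]{ha}, which directly characterises $\mathcal E$ as those functors in $\XX^{\Delta_+^{\op}\times\Delta^1}$ that are left Kan extensions of their restrictions to $\Delta^{\op}\times\Delta^1$ and whose $\{1\}$-row is a group object. Your (a), (c), and treatment of the bottom row agree with the paper. Your direct argument for (b)---reducing condition~(1) of Lemma~\ref{lem:restrict to K} to explicit fan limits and then to a single ``mixed pullback'' established by descent---is the correct unpacking of what Lurie's remark actually proves, and is more informative than the bare citation.

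There is, however, a genuine gap at the Cartesian-transformation step. You assert that $\Barc(X,G)\to G_\bullet$ is Cartesian ``since the horizontal maps are projections away from $X$,'' but knowing that each component $X\times G^n\to G^n$ is $\pi_2$ does not force the naturality squares to be pullbacks: the simplicial structure maps of $\Barc(X,G)$ are not all of the form $\mathrm{id}_X\times(\text{structure map of }G_\bullet)$. The critical case is the face map $d_0$, where the left vertical map is the action $a\colon X\times G\to X$ and the comparison to the pullback is the shear $(a,\pi_2)\colon X\times G\to X\times G$. This is an equivalence precisely because $G$ is grouplike---it fails for monoid actions in general---so your argument must invoke the grouplike hypothesis explicitly (for instance via the equivalences $(\mu,\pi_i)\colon G\times G\to G\times G$ of \cite[Remark~5.2.6.7]{ha}, which furnish an inverse to the shear). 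Note also that in the paper's logical order, Remark~\ref{rmk:pullback square at bottom of action} and Lemma~\ref{lem:action groupoid is a groupoid} are \emph{consequences} of the present lemma, so you may not cite them here. Once Cartesianness is established directly from grouplikeness, your descent argument and the remainder of the proof go through.
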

	
	\begin{proof}
		Recall that left Kan extending against a fully faithful functor (in this case $i_+\colon \Delta^{\op}\times\Delta^1\hookrightarrow \Delta^{\op}_+\times\Delta^1$) is fully faithful. Hence the functor $\mathrm{Act}_\bullet^+$ is an equivalence onto its essential image, which is defined to be $\RMon_{gp}^+(\XX)$. The fact that $\RMon_{gp}^+(\XX)\simeq \mathcal{E}$ follows from the penultimate sentence of \cite[Remark 5.2.6.28]{ha} which implies that $\mathcal{E}$ is precisely the full subcategory of $\XX^{\Delta_+^{\op}\times\Delta^1}$ spanned by functors which are left Kan extensions of their restrictions to $\Delta^{\op}\times\Delta^1$ and whose restriction to $\Delta^{\op}\times\{1\}$ is a group object. It is straightforward to check that the restriction of $\mathrm{Act}_\bullet^+$ to $\Grp(\XX)$, i.e.~the functor $\BB_\bullet^+$, is an equivalence and that both squares above commute. The fact that the two right horizontal morphisms are equivalences follows from Lemma \ref{lem:restrict to K}.
	\end{proof}
	
	\begin{cor}\label{cor:G modules are BG comodules}
		By taking fibers over a fixed $G\in\Grp(\XX)$ in the diagram of Lemma \ref{lem:koszul duality diagram for actions}, we obtain equivalences \[\RMon_G(\XX)\simeq \RMon_G^+(\XX)\simeq \XX_{/\BB G}\]
	\end{cor}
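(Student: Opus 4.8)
The plan is to obtain all three equivalences at once by pulling back the two commuting squares of Lemma \ref{lem:koszul duality diagram for actions} along the inclusion $\{G\}\hookrightarrow\Grp(\XX)$ and its images under the bottom equivalences $\Grp(\XX)\xrightarrow{\BB^+_\bullet}\Grp^+(\XX)\xrightarrow{res}\XX^{\geq 1}_\ast$. Since each bottom horizontal functor is an equivalence, $G$ corresponds to $\BB^+_\bullet G\in\Grp^+(\XX)$ and to $\BB_\ast G\in\XX^{\geq 1}_\ast$; and since each square commutes with its top horizontal functor also an equivalence, base change along these points carries the top equivalences to equivalences of fibers. By the definition of $\RMon_G(\XX)$ as the pullback of $\RMon(\XX)\to\Mon(\XX)$ along $\{G\}$ --- which, as $G$ is a group, agrees with the fiber of $\RMon_{gp}(\XX)\to\Grp(\XX)$ over $G$ --- the leftmost fiber is $\RMon_G(\XX)$, and by Definition \ref{defn:the simplicial functors} the middle fiber is $\RMon^+_G(\XX)$. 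This immediately yields the first equivalence $\RMon_G(\XX)\simeq\RMon^+_G(\XX)$.

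For the second equivalence I would identify the fiber of the rightmost vertical functor $p\colon\XX^{\Delta^1}\times_{\XX^{\{1\}}}\XX^{\geq 1}_\ast\to\XX^{\geq 1}_\ast$ over $\BB_\ast G$. Writing the domain as the pullback $\XX^{\Delta^1}\times_\XX\XX^{\geq 1}_\ast$ along $\mathrm{ev}_1\colon\XX^{\Delta^1}\to\XX$ and the basepoint-forgetting functor $U\colon\XX^{\geq 1}_\ast\to\XX$, the pasting law for pullbacks gives
\[
\fib_{\BB_\ast G}(p)\simeq\{\BB_\ast G\}\times_{\XX^{\geq 1}_\ast}\left(\XX^{\geq 1}_\ast\times_\XX\XX^{\Delta^1}\right)\simeq\{U(\BB_\ast G)\}\times_\XX\XX^{\Delta^1}=\{\BB G\}\times_\XX\XX^{\Delta^1},
\]
and the last expression is a model for the slice $\XX_{/\BB G}$. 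Combined with the fiberwise equivalence induced by $res_\KK$ from the previous paragraph, this gives $\RMon^+_G(\XX)\simeq\XX_{/\BB G}$, completing the chain.

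The routine but essential point to get right is the bookkeeping of basepoints along the bottom equivalences: one must check that $U(\BB_\ast G)\simeq\BB G$, so that forgetting the basepoint leaves exactly the slice over $\BB G$ rather than some pointed variant. I expect the only genuine subtlety to be justifying that the fibers of $p$ really compute as above, i.e.\ that $p$ is (modeled by) the projection off a pullback so that the pasting law applies; this follows directly from the description of $\XX^{\Delta^1}\times_{\XX^{\{1\}}}\XX^{\geq 1}_\ast$ as an iterated pullback. Everything else is formal: equivalences are stable under base change, so fibering the commuting diagram over $G$ transports the horizontal equivalences to the claimed equivalences of fibers.
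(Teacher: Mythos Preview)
Your proposal is correct and follows exactly the approach the paper intends: the corollary is stated without proof because fibering the commuting diagram of Lemma~\ref{lem:koszul duality diagram for actions} over $G$ (and its images under the bottom equivalences) immediately transports the horizontal equivalences to the claimed equivalences of fibers. Your explicit identification of the rightmost fiber via the pasting law, $\{\BB_\ast G\}\times_{\XX^{\geq 1}_\ast}(\XX^{\geq 1}_\ast\times_\XX\XX^{\Delta^1})\simeq\{\BB G\}\times_{\XX}\XX^{\Delta^1}\simeq\XX_{/\BB G}$, is exactly the computation the paper leaves implicit.
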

	
	Corollary \ref{cor:G modules are BG comodules} allows us to produce right action objects without having to explicitly construct simplicial objects, as in the following examples.
	
	\begin{exam}\label{exam:translation action}
		If $G_\bullet\in\Grp(\XX)$ then applying $\BB_\ast$ gives an object $\ast\to\BB G$ of $\XX_{/\BB G}$ whose fiber is $G_1$. Passing to $\RMon_G(\XX)$ gives a right action object of $G_\bullet$ on $G_1$ that we refer to as the \textit{translation action} of $G_\bullet$ on itself.
	\end{exam}
	
	\begin{exam}
		The identity morphism $\BB G\to \BB G$ has trivial fiber and therefore the associated right action object is the \textit{trivial action} of $G_\bullet$ on the terminal object of $\XX$. Note that because $\BB G$ is always pointed there is a morphism $X\to\ast\to\BB G$ for any $X\in\XX$. The associated right action object is the trivial action of $G_\bullet$ on $X$.
	\end{exam}
	
	\begin{exam}\label{exam:conjugation action}
		Choosing a base point $p\colon\ast\to S^1$, we obtain an evaluation morphism $\BB G^{S^1}\to\BB G$ whose fiber is equivalent to $\Omega\BB G\simeq G_1$. We refer to the resulting right action object as the \textit{conjugation action} of $G_\bullet$ on itself. 
	\end{exam}
	
	\begin{rmk}
		Because $\Barcplus(-,-)\colon \RMon(\XX)\to\RMon^+(\XX)$ is defined as a left Kan extension, it has a restriction as its right adjoint. Similarly, $res_{\mathcal{K}}$ has a right Kan extension as its right adjoint. This gives explicit formulae for the inverse equivalences of the horizontal arrows in Lemma \ref{lem:koszul duality diagram for actions}.
	\end{rmk}

	\begin{defn}
		We will write $\beta\colon \RMon_{gp}(\XX)\to\XX^{\Delta^1}\times_{\XX^{\{1\}}}\XX_\ast^{\geq 1}$ for the equivalence obtained as the composite of $res_\mathcal{K}$ with $\Actplus$. For fixed $G\in\Grp(\XX)$, write $\beta_G$ for the restriction of $\beta$ to the fiber of $\RMon_{gp}(\XX)$ over $G$. Write $\omega$ and $\omega_G$ for inverses of $\beta$ and $\beta_G$. 
		
		Note that the composite $res_{[0]\to[-1]}\circ\BB_\bullet^+$ is precisely the functor $\BB_\ast$ of Definition \ref{defn:the simplicial functors} restricted to $\Grp(\XX)$. Write $\cech$ for an inverse of $res_{[0]\to[-1]}\circ\BB_\bullet$. Going forward we will use $\BB_\ast$ exclusively as a functor with domain $\Grp(\XX)$ (rather than $\Mon(\XX)$). 
	\end{defn}

	\begin{rmk}\label{rmk:pullback square at bottom of action}
		The arguments of \cite[Remark 5.2.6.28]{ha} imply that every functor in $\RMon_{gp}^+(\XX)$ is a Cartesian natural transformation. In other words, if we think of $\Actplus(X,G)$ as a natural transformation $\Delta^1\to \XX^{\Delta_+^{\op}}$ then it has the property that every naturality square is a pullback. In particular there is always a fiber sequence $X\to X/G\to \BB G$ coming from $\Actplus(X,G)$ restricted to $(\Delta_+^{\leq 0})^{\op}\times\Delta^1$.	
	\end{rmk}
	
	\begin{cor}\label{cor:omega gives loop space action}
		Given $f\colon X\to Y\in\XX_{/Y}$ for $Y$ a pointed and connected object of $\XX$ the action object $\omega(f)$ displays a group structure on $\Omega Y$ and an action of it on $\fib(f)$.  Moreover, $Y\simeq \BB\Omega Y$. 
	\end{cor}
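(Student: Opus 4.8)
The plan is to unwind the definition of $\omega$ as the inverse of the equivalence $\beta = res_{\KK}\circ\Actplus$ and to read off the three pieces of data claimed by the corollary: the group structure on $\Omega Y$, the action on $\fib(f)$, and the equivalence $Y\simeq\BB\Omega Y$. Write $\omega(f) = \Act(W,G)$ for the resulting object of $\RMon_{gp}(\XX)$, so that by definition $G\in\Grp(\XX)$ is a genuine group object acting on an object $W\in\XX$. It then suffices to produce equivalences $G_1\simeq\Omega Y$ (as the underlying object of the group), $W\simeq\fib(f)$ (compatibly with the action), and $\BB G\simeq Y$.

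First I would identify the underlying group. The right-hand square of the diagram in Lemma \ref{lem:koszul duality diagram for actions} commutes, so $\beta$ is compatible with the vertical forgetful functors: the underlying group of $\Act(W,G)$ on the left corresponds to the pointed connected object $Y$ on the right, via the bottom equivalence $res_{[0]\to[-1]}\circ\BB_\bullet^+ = \BB_\ast$ (restricted to $\Grp(\XX)$). Hence $\BB_\ast G\simeq Y$ in $\XX_\ast^{\geq 1}$; in particular $\BB G\simeq Y$ on underlying objects, equivalently $G\simeq\cech(Y)$. The underlying object $G_1$ is then computed by the translation action of Example \ref{exam:translation action}, whose associated fiber sequence is $G_1\to\ast\to\BB G$. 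Since the fiber of the basepoint $\ast\to Y\simeq\BB G$ is by definition $\Omega Y$ (Proposition \ref{prop:loops suspension calculations}), this yields $G_1\simeq\Omega\BB G\simeq\Omega Y$. This already equips $\Omega Y$ with the asserted group structure and establishes the last claim, since $\BB\Omega Y = \BB G\simeq Y$.

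Next I would identify the object being acted on. Tracing $\beta = res_{\KK}\circ\Actplus$ and using the description of $\mathcal{E}$ from Lemma \ref{lem:restrict to K} (its objects are the cospans $F([-1],0)\to F([-1],1)\leftarrow F([0],1)$), the underlying morphism of $\beta(\Act(W,G))$ in $\XX^{\Delta^1}$ is precisely the quotient map $W/G\to\BB G$. Since $\beta(\omega(f))\simeq f$, this identifies $f\colon X\to Y$ with $W/G\to\BB G$, so $X\simeq W/G$ and $Y\simeq\BB G$ as before. Finally, Remark \ref{rmk:pullback square at bottom of action} supplies the fiber sequence $W\to W/G\to\BB G$, i.e.\ $W\to X\xrightarrow{f} Y$, whence $W\simeq\fib(f)$ compatibly with the $G$-action. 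Combined with the previous paragraph, this shows $\omega(f)$ displays an action of $\Omega Y$ on $\fib(f)$ and that $Y\simeq\BB\Omega Y$.

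The hard part is organizational rather than conceptual: one must carefully match the individual components $F([-1],0)$, $F([-1],1)$, $F([0],1)$ of the augmented action object against the three pieces of data $(f,\, Y,\, \ast\to Y)$ under the chain of equivalences in Lemma \ref{lem:koszul duality diagram for actions}, and in particular confirm that the underlying group $G$ of $\omega(f)$ really does classify $Y$. Once that bookkeeping is settled, the two fiber sequences (from Example \ref{exam:translation action} and Remark \ref{rmk:pullback square at bottom of action}) together with the definition of $\Omega$ make all three conclusions immediate.
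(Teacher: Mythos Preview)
Your proposal is correct and follows essentially the same route as the paper: both use Remark~\ref{rmk:pullback square at bottom of action} to identify the object acted on as $\fib(f)$, and both deduce $Y\simeq\BB\Omega Y$ from the bottom equivalence in Lemma~\ref{lem:koszul duality diagram for actions}. The only wrinkle is your appeal to Example~\ref{exam:translation action} for the fiber sequence $G_1\to\ast\to\BB G$: that example appears \emph{after} the corollary and its content (that $\Omega\BB G\simeq G_1$) is precisely what is being established here, so the citation is forward-referencing. The paper avoids this by arguing directly that the group underlying $\omega(f)$ is the \v Cech nerve of $\ast\to Y$ (since $\omega$ is built from the right Kan extension along $\KK\hookrightarrow\Delta_+^{\op}\times\Delta^1$), whose level-$1$ object is $\ast\times_Y\ast\simeq\Omega Y$ by Proposition~\ref{prop:loops suspension calculations}; you can make the same move once you have written $G\simeq\cech(Y)$.
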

	
	\begin{proof}
		Remark \ref{rmk:pullback square at bottom of action} implies that $\omega(f)$ describes an action of a group object on $\fib(f)$. Note that $\omega$ factors through $\RMon_{gp}^+(\XX)$ by first taking the right Kan extension along the inclusion $\KK\hookrightarrow \Delta_+^{\op}\times\Delta^1$. It then takes the resulting augmented object to its restriction to $\Delta^{\op}\times\{1\}$. From \cite[Remark 5.2.6.28]{ha} (specifically item $i_0$) we have that $\omega(f)(\Delta^{op},\{1\})$ is the right Kan extension of $\ast\to Y$ along the inclusion $\{[0]\to[-1]\}\hookrightarrow \Delta_+^\op$ followed by its restriction to $\Delta^{\op}$. In other words, $\omega(f)(\Delta^{\op},\{1\})$ is the \v Cech nerve of the pointing $\ast\to Y$. It follows from Proposition \ref{prop:loops suspension calculations} (and the fact that Kan extensions paste along full subcategory inclusions) that $\omega(f)([1],\{1\})\simeq\Omega Y$. The last statement follows from the bottom row of the diagram of Lemma \ref{lem:koszul duality diagram for actions} being an equivalence. 
	\end{proof}

	\begin{lem}\label{lem:action groupoid is a groupoid}
		Let $G_\bullet\in\Grp(\XX)$ and $\Act(X,G)\in\RMon_G(\XX)$. Then $\Barc(X,G)$ is a groupoid object of $\XX$ in the sense of \cite[Definition 6.1.2.7]{htt}.
	\end{lem}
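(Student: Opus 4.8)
The plan is to exhibit $\Barc(X,G)$ as the source of a Cartesian natural transformation whose target is the group object $G_\bullet$, and then to transport the groupoid condition from $G_\bullet$ to $\Barc(X,G)$ by a cube-pasting argument. Throughout I will write $C_\bullet := \Barc(X,G)$ to match the notation of Definition \ref{defn:groupoid objects}.

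First I would assemble the two inputs. Since $G_\bullet\in\Grp(\XX)$, it is in particular a groupoid object (Definition \ref{defn:group objects}), so for every pushout square $[0],[m],[k],[n]$ in $\Delta$ of the form in Definition \ref{defn:groupoid objects} the induced square with corners $G_n, G_m, G_k, G_0$ is a pullback in $\XX$. Next, the action object $\Act(X,G)\colon\Delta^\op\times\Delta^1\to\XX$ restricts along $\Delta^\op\times\{0\}$ to $C_\bullet$ and along $\Delta^\op\times\{1\}$ to the monoid $G_\bullet$ (equivalently $\Barc(\ast,G)$, cf.\ the remark after Definition \ref{defn:barconstruction}), and the morphism $0\to 1$ of $\Delta^1$ induces a natural transformation $\eta\colon C_\bullet\to G_\bullet$. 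By Remark \ref{rmk:pullback square at bottom of action}, the augmentation $\Actplus(X,G)$ is a Cartesian natural transformation of augmented simplicial objects; restricting the indexing category from $\Delta_+^\op$ back to $\Delta^\op$ simply forgets some of the naturality squares, so $\eta$ is itself Cartesian, i.e.\ for every morphism of $\Delta^\op$ the corresponding square from $C_\bullet$ to $G_\bullet$ is a pullback.

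Now I would fix a pushout square $[0],[m],[k],[n]$ in $\Delta$ as above and apply $\eta$ to the four resulting objects, producing a commutative cube whose top face is the $C_\bullet$-square (corners $C_n,C_m,C_k,C_0$), whose bottom face is the $G_\bullet$-square, and whose four vertical faces are the naturality squares of $\eta$ attached to the maps $[0]\to[m]$, $[0]\to[k]$, $[m]\to[n]$, $[k]\to[n]$. The bottom face is a pullback by the groupoid condition for $G_\bullet$, and all four vertical faces are pullbacks since $\eta$ is Cartesian. Applying the pasting lemma, one gets that a cube whose bottom face and all vertical faces are pullbacks has a pullback top face: concretely, the vertical face over $[m]\to[n]$ identifies $C_n\simeq C_m\times_{G_m}G_n$, then collapsing the bottom pullback $G_n\simeq G_m\times_{G_0}G_k$ and the vertical face over $[0]\to[k]$ rewrites this as $C_m\times_{C_0}C_k$. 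Thus the top face is a pullback, which is precisely the groupoid condition of Definition \ref{defn:groupoid objects} for $C_\bullet=\Barc(X,G)$.

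The only genuinely nontrivial step is this cube pasting, for which one invokes the two-out-of-three behaviour of pullback squares twice (equivalently, stability of limit cospans under Cartesian natural transformations); everything else is bookkeeping with restrictions of Kan extensions. I note that one could instead try to identify $\Barc(X,G)$ directly with the \v{C}ech nerve of the effective epimorphism $X\to X/G$ furnished by Remark \ref{rmk:pullback square at bottom of action} and then invoke the general fact that \v{C}ech nerves in an $\infty$-topos are groupoid objects; however, verifying the levelwise identification $C_n\simeq X\times_{X/G}\cdots\times_{X/G}X$ reduces to essentially the same pullback computation, so I expect the Cartesian-transformation route to be the more economical one.
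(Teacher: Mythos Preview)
Your proof is correct and follows essentially the same route as the paper's: both form the commutative cube with $\Barc(X,G)$ on top and $G_\bullet$ on the bottom, use that the vertical faces are pullbacks because the action object is a Cartesian natural transformation (Remark \ref{rmk:pullback square at bottom of action}) and the bottom is a pullback because $G_\bullet$ is a groupoid object, and then paste to conclude the top face is a pullback. Your version is slightly more explicit about why the unaugmented transformation $\eta$ inherits Cartesianness from the augmented one, but the argument is otherwise identical.
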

	
	\begin{proof}
		For clarity in this proof we simply write $G^n$ instead of $G_1^n$ or, equivalently, $G_n$. Suppose we have a commutative square in which each morphism is a face map of $\Barc(X,G)$: 
		\[\begin{tikzcd}
			{G^n\times X} & {G^m\times X} \\
			{G^r\times X} & X
			\arrow["{d_j}", from=1-1, to=1-2]
			\arrow["{d_i}"', from=1-1, to=2-1]
			\arrow["{d_k}", from=1-2, to=2-2]
			\arrow["{d_l}"', from=2-1, to=2-2]
		\end{tikzcd}\]
		This diagram projects onto the corresponding diagram of $G_\bullet$
		\[\begin{tikzcd}
			{G^n} & {G^m} \\
			{G^r} & \ast
			\arrow["{d_j}", from=1-1, to=1-2]
			\arrow["{d_i}"', from=1-1, to=2-1]
			\arrow["\lrcorner"{anchor=center, pos=0.125}, draw=none, from=1-1, to=2-2]
			\arrow["{d_k}", from=1-2, to=2-2]
			\arrow["{d_l}"', from=2-1, to=2-2]
		\end{tikzcd}\]
		which is a pullback, since $G_\bullet$ is a group(oid) object as in Definition \ref{defn:groupoid objects}. The first diagram forms the ``top'' and the second diagram forms the ``bottom'' of the following commutative cube:
		\[\begin{tikzcd}
			{G^n\times X} && {G^m\times X} \\
			& {G^r\times X} && X \\
			{G^n} && {G^m} \\
			& {G^r} && \ast
			\arrow[from=1-1, to=1-3]
			\arrow[from=1-1, to=2-2]
			\arrow[from=1-1, to=3-1]
			\arrow[from=1-3, to=2-4]
			\arrow["{~}"{description}, from=1-3, to=3-3]
			\arrow["{~}"{description}, from=3-1, to=3-3]
			\arrow[from=2-2, to=2-4]
			\arrow[from=2-2, to=4-2]
			\arrow[from=2-4, to=4-4]
			\arrow[from=3-1, to=4-2]
			\arrow[from=3-3, to=4-4]
			\arrow[from=4-2, to=4-4]
		\end{tikzcd}\]
		Because $\Act(X,G)$ is a Cartesian natural transformation, as in Remark \ref{rmk:pullback square at bottom of action}, and $G_\bullet$ is a group object, every face in the above cube is a pullback by assumption except for the top one. However this implies that the top face must also be a pullback.  
	\end{proof}
	
	\begin{cor}
		Let $\Act(X,G)$ be a right action object in $\RMon_{gp}(\XX)$. Then the restriction of $\Act(X,G)$ to $\Delta^{\op}\times\{0\}$ is a groupoid resolution of $X\to X/G$ in the sense of \cite[p. 556]{htt}.
	\end{cor}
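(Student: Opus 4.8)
The plan is to observe that this corollary assembles Lemma~\ref{lem:action groupoid is a groupoid} with the definition of the quotient functor $-/-$, once the notion of groupoid resolution is unwound. Recall that a groupoid resolution of a morphism $f\colon X\to X/G$ in the sense of \cite[p.~556]{htt} consists of a groupoid object $U_\bullet$ with $U_0\simeq X$, together with an augmentation to $X/G$ realizing $f$, such that the augmented object exhibits $X/G$ as the geometric realization $|U_\bullet|$; equivalently, $f$ is an effective epimorphism whose \v Cech nerve recovers $U_\bullet$. Thus the corollary has almost no content of its own: it merely repackages facts already established.

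First I would note that, by Definition~\ref{defn:barconstruction}, the restriction of $\Act(X,G)$ to $\Delta^\op\times\{0\}$ is precisely the bar construction $\Barc(X,G)$, and that Lemma~\ref{lem:action groupoid is a groupoid} already shows $\Barc(X,G)$ to be a groupoid object of $\XX$. This supplies the underlying groupoid object with $U_0\simeq X$. For the augmentation, I would use that $X/G$ is by Definition~\ref{defn:the simplicial functors} the geometric realization of $\Barc(X,G)$, and that $\Barcplus(X,G)$ is the left Kan extension of $\Barc(X,G)$ along $i_+\colon\Delta^\op\hookrightarrow\Delta_+^\op$. Since left Kan extension along $i_+$ computes the colimit in degree $[-1]$ and produces a colimit diagram, $\Barcplus(X,G)$ exhibits $X/G\simeq|\Barc(X,G)|$ as the colimit, and its structure map out of degree $[0]$ is the canonical cocone component $f\colon X\to X/G$. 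This identifies the augmentation with the named morphism.

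Finally, I would invoke the fact, recalled in the background section, that every groupoid object in an $\infty$-topos is effective; consequently the \v Cech nerve of $f\colon X\to X/G$ recovers $\Barc(X,G)$ and $f$ is an effective epimorphism, completing the list of conditions required by the definition. The only genuine bookkeeping is the identification of the colimit cocone map with the morphism $X\to X/G$ appearing in the statement, and I expect this---rather than any substantive argument---to be the point requiring care, since both the groupoid condition and the realization are handed to us directly by Lemma~\ref{lem:action groupoid is a groupoid} and Definition~\ref{defn:the simplicial functors} respectively.
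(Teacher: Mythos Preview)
Your proposal is correct and takes essentially the same approach as the paper: both invoke Lemma~\ref{lem:action groupoid is a groupoid} to verify that $\Barc(X,G)$ is a groupoid object and then identify $X/G$ with its geometric realization. The paper's proof is terser, citing Lemma~\ref{lem:koszul duality diagram for actions} rather than Definition~\ref{defn:the simplicial functors} for the colimit identification, and it defers the effectiveness discussion to the subsequent corollary; your additional unpacking of the augmentation and effective epimorphism conditions is fine but not strictly needed here.
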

	
	\begin{proof}
		By Lemma \ref{lem:action groupoid is a groupoid}, we have that $\Barc(X,G)$ is a groupoid object of $\XX$. Moreover, by Lemma \ref{lem:koszul duality diagram for actions} we have that $X/G$ is the colimit of $\Barc(X,G)$. 
	\end{proof}
	
	\begin{cor}
		The morphism $X\to X/G$ in $\Act^+(X,G)$ is an effective epimorphism.
	\end{cor}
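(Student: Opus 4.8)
The plan is to recognize that "$X \to X/G$ is an effective epimorphism" should follow immediately from the fact, just established, that $\Barc(X,G)$ is a groupoid object whose colimit is $X/G$, together with the general principle in an $\infty$-topos that groupoid objects are effective. Recall that by definition (Definition \ref{defn: cech nerve and effective epis}) a morphism is an effective epimorphism precisely when it is the colimit of its own \v{C}ech nerve; and in an $\infty$-topos every groupoid object is \emph{effective}, meaning it arises as the \v{C}ech nerve of its own geometric realization. The excerpt explicitly flags (just before Definition \ref{defn:homotopygroups}) that "every groupoid object in an $\infty$-topos is effective," which is the key input.

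\medskip

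\noindent First I would invoke the previous corollary: by Lemma \ref{lem:action groupoid is a groupoid}, $\Barc(X,G)$ is a groupoid object of $\XX$, and by Lemma \ref{lem:koszul duality diagram for actions} (via the functor $-/-$ of Definition \ref{defn:the simplicial functors}) its geometric realization is exactly $X/G$. Next I would apply effectivity of groupoid objects in an $\infty$-topos (one of Giraud's axioms, \cite[Theorem 6.1.0.6]{htt}): this identifies $\Barc(X,G)$ with the \v{C}ech nerve of the canonical augmentation $\Barc(X,G)_0 = X \to |\Barc(X,G)| = X/G$. In particular, the augmented simplicial object $\Barcplus(X,G)$, which Remark \ref{rmk:pullback square at bottom of action} already tells us is a Cartesian (hence, on the relevant subcategory, \v{C}ech-type) diagram, realizes $X \to X/G$ as a morphism whose \v{C}ech nerve has colimit equal to itself.

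\medskip

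\noindent Concretely, I would argue as follows. The zeroth object of $\Barc(X,G)$ is $X$, and the augmentation map to the colimit $X/G$ is precisely the morphism in question. Since the groupoid object $\Barc(X,G)$ is effective, it agrees with the \v{C}ech nerve of this augmentation; the colimit of that \v{C}ech nerve is therefore $X/G$, and the induced comparison map is an equivalence. By Definition \ref{defn: cech nerve and effective epis}, this is exactly the statement that $X \to X/G$ is an effective epimorphism. Alternatively, and perhaps more cleanly, I would cite \cite[Corollary 6.2.3.5]{htt} or \cite[Proposition 6.2.3.4]{htt}, which characterize effective epimorphisms as precisely those maps arising as the augmentation of an effective groupoid object (equivalently, maps $f$ such that $f$ is the colimit of its \v{C}ech nerve); the groupoid $\Barc(X,G)$ witnesses this for $X \to X/G$.

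\medskip

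\noindent \textbf{The main obstacle} is bookkeeping rather than mathematics: one must check that the canonical augmentation supplied by the colimit functor $-/-$ genuinely coincides, as a morphism $X \to X/G$, with the $([0],0) \to ([-1],0)$ edge of $\Barcplus(X,G)$, and that the groupoid-effectivity identification is compatible with this edge. This is guaranteed by the construction of $\Actplus$ as a left Kan extension along $i_+ \times \Delta^1$ (Definition \ref{defn:the simplicial functors}), since left Kan extension along the inclusion of $\Delta^{\op}$ into $\Delta_+^{\op}$ computes exactly the colimit augmentation. So the proof reduces to assembling these already-established facts, and I expect it to be only a few lines.
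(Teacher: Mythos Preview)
Your proposal is correct and follows essentially the same argument as the paper: invoke that $\Barc(X,G)$ is a groupoid object with colimit $X/G$, apply effectivity of groupoid objects in an $\infty$-topos (\cite[Theorem 6.1.0.6]{htt}) to identify $\Barc(X,G)$ with the \v Cech nerve of $X\to X/G$, and conclude via Definition~\ref{defn: cech nerve and effective epis}. Your additional bookkeeping remarks about the augmentation map are accurate but not strictly necessary, and the paper omits them.
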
 
	
	\begin{proof}
		By \cite[Theorem 6.1.0.6]{htt} we know that every groupoid object of $\XX$ is effective, as in \cite[Definition 6.1.2.14]{htt}, which means every groupoid object is equivalent to the \v Cech nerve of the augmentation map to its colimit. Hence $\BB_\bullet(X,G)$ is equivalent to the \v Cech nerve of $X\to X/G$. Therefore $X\to X/G$ satisfies the conditions of Definition \ref{defn: cech nerve and effective epis}.
	\end{proof}
	
	\subsection{Free Group Objects}

	\begin{lem}\label{lem:free group is cech sigma}
		The pointed forgetful functor $s_0^\ast\colon \Grp(\XX)\to\XX_\ast$ of Definition \ref{defn:group forgetful functor} admits a left adjoint given by the composite $\cech\Sigma$. 
	\end{lem}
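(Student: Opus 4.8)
The plan is to exhibit $\cech\Sigma$ as a composite of left adjoints and then identify the resulting right adjoint with $s_0^\ast$. First I would observe that the composite is well-defined: the lemma establishing that $\Sigma X\in\XX_\ast^{\geq 1}$ for every $X\in\XX_\ast$ shows that $\Sigma$ corestricts to a functor $\XX_\ast\to\XX_\ast^{\geq 1}$, which is exactly the domain of $\cech$. Now Lemma \ref{lem:omegasigma adjunction} gives that this corestricted $\Sigma\colon\XX_\ast\to\XX_\ast^{\geq 1}$ is left adjoint to $\Omega^{\geq 1}$, while the functor $\cech\colon\XX_\ast^{\geq 1}\to\Grp(\XX)$ is by construction an inverse to $\BB_\ast$ (via the bottom equivalence of Lemma \ref{lem:koszul duality diagram for actions}) and hence, being an equivalence, is in particular left adjoint to $\BB_\ast$. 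Composing the two adjunctions yields $\cech\Sigma\dashv\Omega^{\geq 1}\BB_\ast=\Omega\BB_\ast$ with no further argument.

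It therefore remains to produce a natural equivalence $\Omega\BB_\ast\simeq s_0^\ast$ of functors $\Grp(\XX)\to\XX_\ast$. Since $\cech$ and $\BB_\ast$ are mutually inverse, we have $s_0^\ast\simeq s_0^\ast\cech\BB_\ast$, so it suffices to show $s_0^\ast\cech\simeq\Omega$ on $\XX_\ast^{\geq 1}$; that is, the underlying pointed object of the group $\cech(Y)$ is the loop object $\Omega Y$. By construction $\cech(Y)$ is the group object whose delooping is $Y$, and because every group object is an effective groupoid object by \cite[Theorem 6.1.0.6]{htt} (as recorded in the final corollaries of Section \ref{sec: action groupoids}), the simplicial object $\cech(Y)_\bullet$ is the \v Cech nerve of the pointing $\ast\to Y$. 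Consequently $s_0^\ast\cech(Y)\simeq\cech(Y)_1\simeq\ast\times_Y\ast$, and by Proposition \ref{prop:loops suspension calculations} this pullback is precisely $\Omega Y$, with base point the degeneracy, i.e.~the constant loop, matching the unit. This identification is exactly the content of the computation $\omega(f)([1],\{1\})\simeq\Omega Y$ in Corollary \ref{cor:omega gives loop space action}, and it is natural in $Y$.

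Stringing the equivalences together produces natural equivalences $\Grp(\XX)(\cech\Sigma X,G)\simeq\XX_\ast(X,\Omega\BB_\ast G)\simeq\XX_\ast(X,s_0^\ast G)$, establishing $\cech\Sigma\dashv s_0^\ast$. I expect the main obstacle to lie in the identification $s_0^\ast\cech\simeq\Omega$: one must verify not merely that the underlying objects in $\XX$ agree, but that the base points (degeneracy/unit) and the naturality in $Y$ are respected, so that the equivalence holds at the level of functors landing in $\XX_\ast$ rather than objectwise in $\XX$. Once this is in hand, the adjunction itself is a purely formal consequence of composing adjunctions along the equivalence $\cech$. As a sanity check, in $\XX=\Spaces$ the induced monad $s_0^\ast\cech\Sigma\simeq\Omega\Sigma$ is the classical loops--suspension monad, confirming the intended generalization.
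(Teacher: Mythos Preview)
Your proposal is correct and follows essentially the same route as the paper: both arguments compose the adjunction $\Sigma\dashv\Omega^{\geq 1}$ of Lemma~\ref{lem:omegasigma adjunction} with the equivalence $\cech\dashv\BB_\ast$, and then identify the composite right adjoint $\Omega^{\geq 1}\BB_\ast$ with $s_0^\ast$ via the equivalence $s_0^\ast\cech\simeq\Omega^{\geq 1}$. The paper simply asserts this last identification ``by construction,'' whereas you supply the extra detail (via Corollary~\ref{cor:omega gives loop space action} and Proposition~\ref{prop:loops suspension calculations}) and flag the naturality-in-$\XX_\ast$ concern; this is a reasonable elaboration but not a different argument.
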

	
	\begin{proof}
		By construction there is an equivalence of functors $s_0^\ast\cech\simeq\Omega^{\geq 1}\colon \XX^{\geq 1}_\ast\to\XX_\ast$. By Lemma \ref{lem:omegasigma adjunction} we have that $s_0^\ast\cech$ is right adjoint to $\Sigma\colon \XX_\ast\to\XX_\ast^{\geq 1}$. Because $\BB_\ast$ and $\cech$ are equivalences, they are both left adjoints and both right adjoints. Therefore we may form the composite adjunction:
\[\begin{tikzcd}
	\XX_\ast &&& \XX_\ast^{\geq 1} && \Grp(\XX)
	\arrow[""{name=0, anchor=center, inner sep=0}, "\Sigma"', curve={height=12pt}, from=1-1, to=1-4]
	\arrow[""{name=1, anchor=center, inner sep=0}, "{\Omega^{\geq 1}}"', curve={height=12pt}, from=1-4, to=1-1]
	\arrow[""{name=2, anchor=center, inner sep=0}, "\cech"', curve={height=12pt}, from=1-4, to=1-6]
	\arrow[""{name=3, anchor=center, inner sep=0}, "\beta"', curve={height=12pt}, from=1-6, to=1-4]
	\arrow["\dashv"{anchor=center, rotate=90}, draw=none, from=0, to=1]
	\arrow["\dashv"{anchor=center, rotate=90}, draw=none, from=2, to=3]
\end{tikzcd}\]
		However, $\Omega^{\geq 1}\circ \BB_\ast\simeq s_0^\ast\circ \cech\circ\BB_\ast\simeq s_0^\ast$. Therefore $\cech\Sigma$ is left adjoint to the forgetful functor $\Grp(\XX)\to \XX_\ast$.
	\end{proof}
	
	\begin{lem}\label{lem:B shifts homotopy}
		For any $G_\bullet\in\Grp(\XX)$ there is an isomorphism $\pi_i(s_0^\ast G)\cong \pi_{i+1}(\BB G)$. 
	\end{lem}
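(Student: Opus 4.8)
The plan is to reduce the statement to two facts: first, that the underlying pointed object $s_0^\ast G$ is the based loop object of its delooping $\BB G$; and second, a general ``loops shift homotopy'' principle $\pi_i(\Omega Y)\cong\pi_{i+1}(Y)$ valid for every pointed $Y$. The first fact has effectively already been established in the proof of Lemma \ref{lem:free group is cech sigma}, where it is shown that $s_0^\ast\cech\simeq\Omega^{\geq 1}$ as functors $\XX_\ast^{\geq 1}\to\XX_\ast$ and that $\cech$ and $\BB_\ast$ are mutually inverse equivalences between $\Grp(\XX)$ and $\XX_\ast^{\geq 1}$. Applying these to $G$ yields
\[ s_0^\ast G \simeq s_0^\ast\cech\BB_\ast G \simeq \Omega^{\geq 1}(\BB_\ast G) \simeq \Omega(\BB G), \]
an equivalence of pointed objects, where I use that $\BB_\ast G$ is just $\BB G$ regarded as a pointed connected object so that $\Omega^{\geq 1}$ agrees with $\Omega$ on it. Since $\pi_i(-)$ is functorial and carries equivalences of pointed objects to isomorphisms in $\tau_{\leq 0}\XX$, it then suffices to identify $\pi_i(\Omega\BB G)$ with $\pi_{i+1}(\BB G)$.

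The heart of the argument is therefore the general claim that for a pointed object $X$ one has $\pi_k(X)\cong\pi_0(\Omega^k X):=\tau_{\leq 0}(\Omega^k X)$. To prove it I would unwind Definition \ref{defn:homotopygroups}: the basepoint $\eta\colon\ast\to X$ induces a pullback functor $\eta^\ast\colon\XX_{/X}\to\XX$, and $\pi_k(X)$ is by definition $\eta^\ast\tau_{\leq 0}(p^\ast)$, where $p^\ast\colon X^{S^k}\to X$ is evaluation at the chosen point $p\colon\ast\to S^k$. The key geometric observation is that the fiber of $p^\ast$ over $\eta$ is exactly the based cotensor, i.e.\ the object of basepoint-preserving maps $S^k\to X$, which is $\Omega^k X$; for $k=1$ this is the defining pullback $\ast\times_X\ast$ of Proposition \ref{prop:loops suspension calculations}, and the general case follows inductively from $S^k\simeq\Sigma S^{k-1}$ together with the fact that cotensoring by a space converts colimits of spaces into limits in $\XX$. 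Because $\eta^\ast$ is a left-exact left adjoint (being a pullback functor, it is simultaneously a left and a right adjoint and in particular preserves finite limits), it commutes with $\tau_{\leq 0}$, so
\[ \pi_k(X)\;=\;\eta^\ast\tau_{\leq 0}(p^\ast)\;\simeq\;\tau_{\leq 0}\bigl(\eta^\ast p^\ast\bigr)\;\simeq\;\tau_{\leq 0}\bigl(\fib_\eta(p^\ast)\bigr)\;\simeq\;\tau_{\leq 0}(\Omega^k X), \]
which is the claim.

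With this lemma in hand the result is immediate: combining $s_0^\ast G\simeq\Omega\BB G$ with $\Omega^i\circ\Omega\simeq\Omega^{i+1}$ gives
\[ \pi_i(s_0^\ast G)\cong\pi_0(\Omega^i\,s_0^\ast G)\cong\pi_0(\Omega^i\Omega\,\BB G)\cong\pi_0(\Omega^{i+1}\BB G)\cong\pi_{i+1}(\BB G). \]
I expect the main obstacle to be the middle step of the second paragraph, namely making the identification $\fib_\eta(X^{S^k}\to X)\simeq\Omega^k X$ fully rigorous in an arbitrary $\infty$-topos: one must handle the cotensor by spheres with care, tracking basepoints through the smash decomposition $S^k\simeq S^1\wedge\cdots\wedge S^1$, and must justify that $\eta^\ast$ commutes with $\tau_{\leq 0}$. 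Everything else is bookkeeping, since the delooping equivalence $s_0^\ast G\simeq\Omega\BB G$ is essentially already recorded in the proof of Lemma \ref{lem:free group is cech sigma}.
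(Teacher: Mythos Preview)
Your argument is correct and closely parallel to the paper's, but you take an \emph{internal} route where the paper takes an \emph{external} one. The paper's proof stays at the level of mapping spaces: from the adjunction $\cech\Sigma\dashv s_0^\ast$ of Lemma~\ref{lem:free group is cech sigma} and the equivalence $\BB_\ast\colon\Grp(\XX)\simeq\XX_\ast^{\geq 1}$ it reads off
\[
\XX_\ast(S^i,s_0^\ast G)\;\simeq\;\Grp(\XX)(\cech\Sigma S^i,G)\;\simeq\;\XX_\ast^{\geq 1}(S^{i+1},\BB_\ast G)\;\simeq\;\XX_\ast(S^{i+1},\BB_\ast G),
\]
and then simply applies truncation. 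You instead extract the pointwise consequence $s_0^\ast G\simeq\Omega\BB G$ of that same adjunction and prove a separate ``loops shift homotopy'' identity $\pi_k(X)\cong\tau_{\leq 0}(\Omega^k X)$ by unwinding Definition~\ref{defn:homotopygroups}. What this buys you is a direct engagement with the paper's own definition of $\pi_k$ as an object of $\tau_{\leq 0}\XX$: the two verifications you flag --- that $\eta^\ast$ commutes with $\tau_{\leq 0}$ (true, since $\eta^\ast$ is a left-exact left adjoint) and that $\fib_\eta(X^{S^k}\to X)\simeq\Omega^k X$ --- are precisely what one needs to reconcile the paper's mapping-space computation with that internal definition. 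The paper's version is shorter but leaves that reconciliation implicit.
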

	
	\begin{proof}
		From the preceding lemmas, we have a string of adjunction equivalences:
		\[\XX_\ast(S^i,s_0^\ast G)\simeq \Grp(\XX)(\cech\Sigma S^i, G)\simeq \XX_\ast^{\geq 1}( S^{i+1},\BB_\ast G)\simeq\XX_\ast(S^{i+1},\BB_\ast G)\]
		Applying the truncation functor gives the result.
	\end{proof}
	
	\begin{lem}\label{lem:BC is a right adjoint}
		Write $\cech_+\colon \XX_\ast\to\Grp(\XX)$ for the functor obtained by right Kan extending along the inclusion $\{[0]\to[-1]\}\hookrightarrow\Delta_+^{\op}$ and then restricting along $\Delta^{\op}\hookrightarrow\Delta_+^{\op}$. Then the composite $\BB_\ast\cech_+\colon \XX_\ast\to\XX_\ast^{\geq 1}$ is a right adjoint to the inclusion $\XX_\ast^{\geq 1}\hookrightarrow\XX_\ast$. 
	\end{lem}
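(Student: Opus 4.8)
The plan is to observe that $\BB_\ast$ and $\cech_+$ are each built from a pair of Kan extensions which are \emph{adjoint} to one another, so that $\cech_+$ is forced to be the right adjoint of $\BB_\ast\colon\Grp(\XX)\to\XX_\ast$; I would then transport this adjunction across the equivalence $\BB_\ast\colon\Grp(\XX)\xrightarrow{\sim}\XX_\ast^{\geq 1}$ of Lemma \ref{lem:koszul duality diagram for actions} to read off a right adjoint for the inclusion $\iota\colon\XX_\ast^{\geq 1}\hookrightarrow\XX_\ast$.

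First I would unwind both functors as composites of a Kan extension and a restriction. Write $i\colon\{[0]\to[-1]\}\hookrightarrow\Delta_+^{\op}$ and keep the paper's $i_+\colon\Delta^{\op}\hookrightarrow\Delta_+^{\op}$, so that $res_{[0]\to[-1]}=i^\ast$. By Definition \ref{defn:the simplicial functors} we have $\BB_\ast=i^\ast\circ\mathrm{Lan}_{i_+}$ (first left Kan extend to $\Delta_+^{\op}$, then restrict to the terminal arrow), while by its definition $\cech_+=i_+^\ast\circ\mathrm{Ran}_{i}$ (first right Kan extend to $\Delta_+^{\op}$, then restrict to $\Delta^{\op}$). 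Since $\XX$ is complete and cocomplete these Kan extensions exist and carry the standard adjunctions $\mathrm{Lan}_{i_+}\dashv i_+^\ast$ and $i^\ast\dashv\mathrm{Ran}_i$ on the functor $\infty$-categories. Composing adjoints, the right adjoint of $i^\ast\circ\mathrm{Lan}_{i_+}$ is precisely $i_+^\ast\circ\mathrm{Ran}_i$; that is, at the level of functor categories $\BB_\ast$ is left adjoint to $\cech_+$.

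The real (if mild) content is to check that this adjunction restricts to the full subcategories $\Grp(\XX)\subseteq\XX^{\Delta^{\op}}$ and $\XX_\ast\subseteq\XX^{\{[0]\to[-1]\}}$, which requires only that each functor carry one subcategory into the other. That $\BB_\ast$ sends a group object to a (connected) pointed object is part of Lemma \ref{lem:koszul duality diagram for actions}. The point needing an argument is that $\cech_+Y\in\Grp(\XX)$ for \emph{every} pointed $Y$, not just the connected ones: by construction $\cech_+Y$ is the restriction to $\Delta^{\op}$ of the augmented \v Cech nerve of the pointing $\ast\to Y$, and the \v Cech nerve of any morphism is a groupoid object of $\XX$ (cf.\ \cite{htt}); since its object of $0$-simplices is the terminal object, it is in fact a group object. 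Thus the adjunction descends to $\BB_\ast\dashv\cech_+\colon\Grp(\XX)\rightleftarrows\XX_\ast$, giving the natural equivalence $\XX_\ast(\BB_\ast G,Y)\simeq\Grp(\XX)(G,\cech_+Y)$.

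Finally I would transport across the equivalence $\BB_\ast\colon\Grp(\XX)\xrightarrow{\sim}\XX_\ast^{\geq 1}$, whose inverse is $\cech$. For $Z\in\XX_\ast^{\geq 1}$ and $Y\in\XX_\ast$, taking $G=\cech Z$ (so that $\BB_\ast G\simeq Z$ in $\XX_\ast$) yields $\XX_\ast(Z,Y)\simeq\Grp(\XX)(\cech Z,\cech_+Y)$, and since $\cech$ is an equivalence with inverse $\BB_\ast$ the right-hand side is $\XX_\ast^{\geq 1}(Z,\BB_\ast\cech_+Y)$, naturally in $Z$ and $Y$. This exhibits $\BB_\ast\cech_+$ as a right adjoint to $\iota$, as claimed. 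As a sanity check, one expects $\BB_\ast\cech_+$ to be the coreflection of $\XX_\ast$ onto connected objects (the basepoint component when $\XX=\Spaces$): indeed the underlying object of $\cech_+Y$ is $\Omega Y$, and $\BB_\ast$ realizes it as the image of $\ast\to Y$. The only genuine obstacle is the bookkeeping of the third paragraph; everything else is formal manipulation of Kan-extension adjunctions together with the equivalence already established.
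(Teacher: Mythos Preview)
Your proof is correct but takes a different route from the paper's. Both arguments rest on the adjunction $\BB_\ast\dashv\cech_+$; you establish it carefully by decomposing each functor into a Kan extension and a restriction and pairing the resulting adjoints, whereas the paper simply names the counit $\epsilon\colon\BB_\ast\cech_+\Rightarrow id_{\XX_\ast}$ without further comment. For the final step the approaches diverge: the paper invokes \cite[Lemma 7.2.2.11]{htt} to see that $\epsilon$ is an equivalence on connected objects and then applies the colocalization criterion \cite[Proposition 5.2.7.4(3)]{htt}, while you instead transport the adjunction across the equivalence $\BB_\ast\colon\Grp(\XX)\xrightarrow{\sim}\XX_\ast^{\geq 1}$ already recorded in Lemma~\ref{lem:koszul duality diagram for actions}. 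Your route is arguably more self-contained within the paper (it reuses an equivalence already proved rather than reaching back into \cite{htt}), and it makes explicit the adjunction the paper leaves implicit; the paper's route has the advantage of identifying $\BB_\ast\cech_+$ directly as a colocalization via a standard criterion, which is what Corollary~\ref{cor: limits computed by including then BC} immediately needs.
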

	
	\begin{proof}
		Let $\epsilon\colon \BB_\ast\cech_+\Rightarrow id_{\XX_\ast}$ denote the counit transformation of $\BB_\ast\dashv\cech_+$. By \cite[Lemma 7.2.2.11]{htt} this natural transformation is an equivalence when restricted to connected objects. It follows that the conditions of (the opposite of) \cite[Proposition 5.2.7.4 (3)]{htt} are satisfied and therefore $\BB_\ast\cech_+$ is a colocalization functor. 
	\end{proof}
	
	The following is a standard fact for colocalizations.
	
	\begin{cor}\label{cor: limits computed by including then BC}
		Limits in $\XX_\ast^{\geq 1}$ are computed by applying $\BB_\ast\cech_+$ to the limit computed in $\XX_\ast$.
	\end{cor}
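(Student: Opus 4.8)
The plan is to deduce this purely formally from Lemma \ref{lem:BC is a right adjoint}, which exhibits the fully faithful inclusion $\iota\colon \XX_\ast^{\geq 1}\hookrightarrow \XX_\ast$ as the left adjoint of the colocalization functor $R:=\BB_\ast\cech_+$. In other words, $\XX_\ast^{\geq 1}$ is a coreflective subcategory of $\XX_\ast$, and the statement to be proven is the dual of the familiar fact that in a \emph{reflective} subcategory it is the limits that are computed in the ambient category; in the \emph{coreflective} situation the roles of limits and colimits are interchanged, so it is precisely the limits that must be corrected by the coreflector $R$.

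First I would recall that $\XX_\ast$ admits all small limits: $\XX$ is an $\infty$-topos, hence so is every slice, and the coslice $\XX_\ast=\XX^{\backslash\ast}$ inherits small limits. Thus for any diagram $F\colon I\to\XX_\ast^{\geq 1}$ the object $R\bigl(\lim_I \iota F\bigr)$ makes sense, the limit $\lim_I \iota F$ being formed in $\XX_\ast$; the content of the corollary is that this object is the limit of $F$ computed in $\XX_\ast^{\geq 1}$. The key step is to verify the universal property directly on mapping spaces: for any $Y\in\XX_\ast^{\geq 1}$ I would produce the chain of natural equivalences
\[
\XX_\ast^{\geq 1}\bigl(Y,\,R\lim_I \iota F\bigr)
\simeq \XX_\ast\bigl(\iota Y,\,\lim_I \iota F\bigr)
\simeq \lim_{i\in I}\XX_\ast\bigl(\iota Y,\,\iota F(i)\bigr)
\simeq \lim_{i\in I}\XX_\ast^{\geq 1}\bigl(Y,\,F(i)\bigr),
\]
where the first equivalence is the adjunction $\iota\dashv R$ of Lemma \ref{lem:BC is a right adjoint}, the second is the fact that corepresentable functors carry limits to limits, and the third is the full faithfulness of the inclusion $\iota$. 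This exhibits $R\bigl(\lim_I \iota F\bigr)$ as corepresenting the functor $Y\mapsto \lim_I \XX_\ast^{\geq 1}(Y,F)$, which is exactly the assertion that it is $\lim_I F$ computed in $\XX_\ast^{\geq 1}$.

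I do not anticipate a genuine obstacle: the argument is entirely formal once Lemma \ref{lem:BC is a right adjoint} is available, and the only point requiring care is the direction of the adjunction. Because the coreflector $R$ sits on the \emph{right} of the inclusion, it is limits rather than colimits that receive the correction — the reverse of the more commonly encountered reflective-localization situation, where limits are inherited and colimits are corrected by the left adjoint. For a reader who prefers to bypass the Yoneda-style verification, one can instead simply invoke the general fact that a coreflective subcategory inclusion preserves colimits while its right adjoint computes limits by applying it to the ambient limit; either route is short, which is why we have labelled the result a standard fact for colocalizations.
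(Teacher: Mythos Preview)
Your proposal is correct and follows essentially the same approach as the paper. The paper's proof is the one-line version of yours: it writes $\lim(F)\simeq\lim(\BB_\ast\cech_+ iF)\simeq \BB_\ast\cech_+\lim(iF)$, using first that the counit is an equivalence on connected objects and then that $\BB_\ast\cech_+$ is a right adjoint (hence limit-preserving), whereas you unpack this same right-adjoint-preserves-limits fact via the representable-functor argument.
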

	
	\begin{proof}
		Let $F\colon K\to \XX_\ast^{\geq 1}$ be a diagram. Then we compute $\lim(F)\simeq\lim(\BB_\ast\cech_+iF)\simeq \BB_\ast\cech\lim(iF)$ where $i$ denotes the inclusion functor and in the last step we use that $\BB_\ast\cech_+$ is a right adjoint.
	\end{proof}

	\subsection{Base Change of Action Objects}\label{sec:base change}
	
	In a Cartesian monoidal $\infty$-category, monoids can be identified with $\Ass$-algebras, heretofore referred to as \textit{associative algebras}, and right action objects can be identified with right modules. We review these constructions now for the reader's convenience.
	
	Recall from \cite[Section 2.4.1]{ha} that a symmetric monoidal $\infty$-category $\CC^\otimes$ whose tensor product is given by the Cartesian product has, essentially by definition, a ``Cartesian structure'' $\pi\colon\CC^\otimes\to\CC$. Recall that if $\mathcal{O}^\otimes$ is an $\infty$-operad then $\mathcal{O}$-algebras in any symmetric monoidal $\infty$-category are given by certain functors $\mathcal{O}^\otimes\to\CC^\otimes$. The category of $\mathcal{O}$-algebras is denoted $\Alg_{\mcal O}(\CC)$. If $\CC$ is Cartesian monoidal then we can also define $\mathcal{O}$-\textit{monoids} in $\CC$ to be certain functors $\mathcal{O}^\otimes\to \CC$. The category of $\mcal O$-monoids in $\CC$ is denoted $\Mon_{\mcal O}(\CC)$ (not to be confused with $\Mon(\CC)$ from the preceding section).
	
	When $\CC$ is Cartesian monoidal \cite[Proposition 2.4.2.5]{ha} gives that composing with $\pi\colon \CC^\otimes\to\CC$ induces an equivalence $\pi_\ast\colon \Alg_{\mathcal{O}}(\CC)\to\Mon_{\mathcal{O}}(\CC)$. In the case that $\mathcal{O}^\otimes$ is the associative operad $\mathcal{A}ssoc^\otimes$ or the operad for left modules $\mathcal{RM}^\otimes$, we can make a further simplification.
	
	There are two functors $\Cut\colon \Delta^{\op}\to \mathcal{A}ssoc^\otimes$ and $\gamma\colon \Delta^{\op}\times\Delta^1\to \mathcal{RM}^\otimes$ which are ``approximations'' in the sense of \cite[Definition 2.3.3.6]{ha} (where $\mathcal{RM}^\otimes$ is the $\infty$-operad for right modules over an associative algebra). We then have from \cite[Proposition 4.1.2.10]{ha} and \cite[Proposition 4.2.2.9]{ha} that precomposing with $\Cut$ and $\gamma$ respectively give equivalences $\Cut^\ast\colon \Mon_{\mathcal{A}ssoc}(\CC)\to \Mon(\CC)$ and $\gamma^\ast\colon \Mon_{\mathcal{RM}}(\CC)\to\RMon(\CC)$. Composing these with the preceding equivalences gives equivalences
	\[\Cut^\ast\pi_\ast\colon\Alg_{\mathcal{A}ssoc}(\CC)\xrightarrow{\simeq} \Mon(\CC)\] and \[\gamma^\ast\pi_\ast\colon \RMod(\CC)\xrightarrow{\simeq}\RMon(\CC)\]
	
	\begin{defn}
		Let $\CC$ be a Cartesian monoidal $\infty$-category. Write $\Cut\colon \Alg_{\mathcal{A}ssoc}(\CC)\to \Mon(\CC)$ and $\CutR\colon \RMod(\CC)\to \RMon(\CC)$ for the above equivalences. 
	\end{defn}
	
	\begin{rmk}\label{rmk:mult map is inner face}
		Recall that the objects of the $\infty$-operad $\Ass$ are the pointed sets $\langle n\rangle=\{\ast,1,2,\ldots,n\}$ for all $n\in\mathbb{N}$ and that the maps are pointed functions $f\colon\langle n\rangle\to\langle m\rangle$ with the data of a linear order on the fiber $f^{-1}(i)$ for each $1\leq i\leq m$. Write $\mu\colon\langle 2\rangle\to\langle 1\rangle$ for the morphism that takes both $1$ and $2$ to $1$ with $\mu^{-1}(1)$ having $1\prec 2$. One can check that if $A\colon \Ass^\otimes\to\CC$ is an associative monoid object of an $\infty$-category $\CC$ then $A(\mu)$ is ``the multiplication map'' of $A$. Pulling $A$ back along $\Cut$ to $\Mon(\XX)$, one can further check that $A(\mu)$ is the image under $\Cut^\ast A$ of the opposite of the inner coface map $[1]\to[2]$ in $\Delta$ which takes $0$ to $0$ and $1$ to $2$. In other words, the ``multiplication map'' $A(\mu)$ of $A$, as a monoid object, is the red arrow in the below standard simplicial diagram:
		\[\begin{tikzcd}
			\ast & {A} & {A\times A} & \cdots
			\arrow[shift right, from=1-2, to=1-1]
			\arrow[shift left, from=1-2, to=1-1]
			\arrow[color={rgb,255:red,255;green,51;blue,78}, from=1-3, to=1-2]
			\arrow[shift left=2, from=1-3, to=1-2]
			\arrow[shift right=2, from=1-3, to=1-2]
			\arrow[shift right, from=1-4, to=1-3]
			\arrow[shift left, from=1-4, to=1-3]
			\arrow[shift right=3, from=1-4, to=1-3]
			\arrow[shift left=3, from=1-4, to=1-3]
		\end{tikzcd}\] 
	\end{rmk}
	
	\begin{rmk}
		By restricting to objects in $\RMod(\CC)$ with a fixed algebra $A$, we obtain an equivalence $\CutR_A\colon \RMod_A(\CC)\to\RMon_{\Cut (A)}(\CC)$.  We will abuse notation by writing $\RMon_A(\CC)$ instead of $\RMon_{\Cut (A)}(\CC)$ when we refer to the category of right action objects over the monoid associated to an associative algebra.
	\end{rmk}
	
	Recall from \cite[Proposition 4.2.3.2]{ha} that if $\CC$ is a monoidal $\infty$-category and $A\in\Alg(\CC)$ then there is a Cartesian fibration $p\colon \RMod(\CC)\to \Alg(\CC)$ whose Cartesian morphisms are those which become equivalences when projecting to the underlying module in $\CC$. In general, the morphisms of $\RMod(\CC)$ are pairs $(\phi,\theta)\colon (A,P)\to (B,Q)$ such that $\phi\colon A\to B$ is an algebra morphism and $\theta\colon P\to\phi^\ast(Q)$ is a morphism of right $A$-modules where $\phi^\ast(Q)$ denotes $Q$ equipped with the $A$-module structure obtained by restriction of scalars along $\phi$. Therefore the straightening $\widehat{p}\colon \Alg(\CC)^{\op}\to\Cat$ takes a morphism of algebras $\phi\colon A\to B$ to the restriction of scalars functor $\phi^\ast\colon\RMod_B(\CC)\to\RMod_A(\CC)$. The fibration $p$ is also coCartesian by \cite[Lemma 4.5.3.6]{ha} and, as a result, each restriction of scalars functor has a left adjoint $\phi_\ast\colon\RMod_A(\CC)\to\RMod_B(\CC)$ given by $P\mapsto P\otimes_A B$. 
	
	When $\CC$ is Cartesian monoidal and $A$ and $B$ are monoids we can restrict and extend scalars in an a priori different way. For simplicity we only consider the case that $\CC$ is an $\infty$-topos.
	
	\begin{lem}\label{lem:F preserves group actions}
		Let $F\colon\XX\to\YY$ be a monoidal (i.e.~product preserving) functor of $\infty$-topoi  and write $F_\ast\colon \RMon(\XX)\to\YY^{\Delta^{\op}\times\Delta^1}$ for postcomposition with $F$. Then $F_\ast$ factors through $\RMon(\YY)$.  Moreover, $F_\ast$ restricts to a functor $\RMon_{gp}(\XX)\to\RMon_{gp}(\YY)$. 
	\end{lem}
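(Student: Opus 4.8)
The plan is to check that the two defining conditions of a right action object survive postcomposition with $F$, and then to treat the group-like refinement separately. The single structural input is that, since $\infty$-topoi are Cartesian symmetric monoidal (Remark \ref{rmk:truncation is symm monoidal}), a monoidal functor $F$ preserves finite products; in particular $F$ preserves the terminal object, so $F(\ast)\simeq\ast$, and the canonical comparison $F(W_1\times\cdots\times W_k)\to F(W_1)\times\cdots\times F(W_k)$ is an equivalence.

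First I would verify the monoid condition, i.e.\ that $F_\ast(A)$ restricted to $\Delta^\op\times\{1\}$ is a monoid object of $\YY$. Writing $M_\bullet=A|_{\Delta^\op\times\{1\}}$, its object $M_0$ is terminal and its Segal maps are equivalences; because $M_0$ is terminal the Segal fiber products collapse to genuine products, so $M_n\simeq M_1^{\times n}$. Applying $F$ gives $(F\circ M)_0=F(M_0)\simeq\ast$ terminal and $F(M_n)\simeq F(M_1)^{\times n}$, and since $(F\circ M)_0$ is terminal these products again compute the Segal fiber products of $F\circ M$. The only thing to verify is that the Segal map of $F\circ M$ is $F$ applied to the Segal map of $M$ composed with the canonical product comparison; this is a naturality statement that holds because $F$ commutes with the face maps assembling the Segal map. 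Hence $F\circ M$ is a monoid object.

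The second condition is even more immediate: for each $n$ the maps $A([n],0)\to A([0],0)$ and $A([n],0)\to A([n],1)$ exhibit $A([n],0)$ as the product $A([0],0)\times A([n],1)$, and since $F$ preserves products the $F$-images of these two maps exhibit $F(A([n],0))$ as $F(A([0],0))\times F(A([n],1))$. This is precisely condition (2) for $F_\ast(A)$, so $F_\ast(A)\in\RMon(\YY)$. For the moreover statement, recall that $\RMon_{gp}(\XX)$ consists of the action objects whose underlying monoid is a group object, so it suffices to show $F$ sends group objects to group objects; as we already have the monoid condition, the content is the groupoid condition of Definition \ref{defn:groupoid objects}. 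The key observation---and the conceptual heart of the lemma---is that for a group object $G_\bullet$ the object $G_0$ is terminal, so every pullback square required by the groupoid condition
\[\begin{tikzcd}
	G_n & G_m \\
	G_k & G_0
	\arrow[from=1-1, to=1-2]
	\arrow[from=1-1, to=2-1]
	\arrow[from=1-2, to=2-2]
	\arrow[from=2-1, to=2-2]
\end{tikzcd}\]
has terminal lower-right corner and is therefore a \emph{product} square, $G_n\simeq G_m\times G_k$. Since $F$ preserves products and the terminal object, applying $F$ yields a product square $F(G_n)\simeq F(G_m)\times F(G_k)$ with terminal corner $F(G_0)$, which is exactly the pullback square demanded of $F\circ G$. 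Thus $F\circ G$ is again a group object and $F_\ast$ restricts to $\RMon_{gp}(\XX)\to\RMon_{gp}(\YY)$.

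The main obstacle is conceptual rather than computational: one must explain why mere product-preservation is enough, given that the groupoid condition is stated in terms of pullbacks that a general monoidal functor need not preserve. The resolution is exactly the observation above---passing from groupoid objects to \emph{group} objects forces the base $G_0$ of every relevant limit to be terminal, collapsing all of these pullbacks to products, which $F$ does preserve. Everything else is routine coherence bookkeeping: identifying the Segal maps, the projections of condition (2), and the comparison maps of the groupoid squares of $F\circ A$ with $F$ applied to the corresponding structure maps of $A$, composed with the canonical product-comparison equivalences, using only that $F$ commutes with the simplicial operators.
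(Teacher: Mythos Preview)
Your proof is correct. The treatment of the monoid and right-action conditions is essentially the same as the paper's (the paper dispatches this in a single sentence, noting both are defined via products and a terminal object). For the group case, however, you take a genuinely different route: you verify the groupoid condition of Definition~\ref{defn:groupoid objects} directly, observing that because $G_0$ is terminal each required pullback square is in fact a product square, and product squares are preserved by $F$. The paper instead passes through Lurie's characterisation of group objects as grouplike $\Ass$-monoids (\cite[Remark~5.2.6.5]{ha}) and then uses the shear-map criterion (\cite[Remark~5.2.6.7]{ha}): $G_\bullet$ is grouplike if and only if $(\mu_G,\pi_1)$ and $(\pi_2,\mu_G)$ are equivalences, and $F$ manifestly preserves these. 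Your argument is more self-contained, staying entirely within the simplicial framework of the paper and avoiding external appeals to \cite{ha}; the paper's argument is shorter once one is willing to quote those results, and has the minor advantage of making explicit which \emph{specific} equivalences encode the invertibility and how $F$ acts on them.
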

	
	\begin{proof}
		Note that both monoids and right action objects are defined via products and a terminal object, so it is immediate that $F_\ast$ factors through $\RMon(\YY)$. For the case of groups it suffices to show that $F_\ast$ preserves group objects when restricted to $\Grp(\XX)$. By \cite[Remark 5.2.6.5]{ha} it suffices to show that $(\Cut^\ast)^{-1}(FG_\bullet)$ is a grouplike $\Ass$-monoid in $\YY$ for each group object $G_\bullet\in\Grp(\XX)$. If we write $\mu_G\colon G\times G\to G$ for the inner face map giving ``the'' group structure on $G$ (see Remark \ref{rmk:mult map is inner face}) then $G_\bullet$ is grouplike if and only if $(\mu_G,\pi_1)\colon G\times G\to G\times G$ and $(\pi_2,\mu_G)\colon G\times G\to G\times G$ are both equivalences, as in \cite[Remark 5.2.6.7]{ha}. Since $F$ preserves products (hence projection maps) and $F(\mu_G)=\mu_{FG}$ by definition, the corresponding morphisms for $FG_\bullet$ must still be equivalences, so $FG_\bullet$ is a group object. 
	\end{proof}
	
	\begin{defn}
		Let $\XX$ be an $\infty$-topos and let $\phi\colon H_\bullet\to G_\bullet$ be a morphism of group objects in $\XX$. Define the functor $\Res_H^G$  to be the composite
		\[\RMon_G(\XX)\xrightarrow{\beta_G}\XX_{/\BB G}\xrightarrow{(\BB\phi)^\ast}\XX_{/\BB H}\xrightarrow{\omega_H} \RMon_H(\XX)\] where $(\BB\phi)^\ast$ denotes pullback along $\BB\phi\colon\BB H\to \BB G$. On the other hand, define $\Ind_H^G$ to the composite functor
		\[\RMon_H(\XX)\xrightarrow{\beta_H}\XX_{/\BB H}\xrightarrow{(\BB\phi)_\ast}\XX_{/\BB G}\xrightarrow{\omega_G}\RMon_G(\XX)\] where $(\BB\phi)_\ast$ denotes postcomposition with $\BB\phi$. It is immediate that $\Res_H^G$ is right adjoint to $\Ind_H^G$.  
	\end{defn}
	
	\begin{exam}\label{exam:puppe sequence example}
		When $\phi\colon H_\bullet\to G_\bullet$ is a morphism of group objects, we will be particularly interested in the $H$-action on $\Res^G_H(\Act(G,G))$, where $G_\bullet$ is acting on itself by translation (cf.~Example \ref{exam:translation action}).  Note that $\beta_G(\Act(G,G))$ is the inclusion of the base point $\ast\to \BB G$. Applying $(\BB\phi)^\ast$ and $\omega_H$ gives a right action object that looks like the following:
		\[\begin{tikzcd}
			\vdots & \vdots \\
			{G\times H\times H} & {H\times H} \\
			{G\times H} & H \\
			G & \ast
			\arrow[shift right, from=1-1, to=2-1]
			\arrow[shift left, from=1-1, to=2-1]
			\arrow[shift right=3, from=1-1, to=2-1]
			\arrow[shift left=3, from=1-1, to=2-1]
			\arrow[shift right, from=1-2, to=2-2]
			\arrow[shift left, from=1-2, to=2-2]
			\arrow[shift left=3, from=1-2, to=2-2]
			\arrow[shift right=3, from=1-2, to=2-2]
			\arrow[from=2-1, to=2-2]
			\arrow[from=2-1, to=3-1]
			\arrow[shift left=2, from=2-1, to=3-1]
			\arrow[shift right=2, from=2-1, to=3-1]
			\arrow[from=2-2, to=3-2]
			\arrow[shift left=2, from=2-2, to=3-2]
			\arrow[shift right=2, from=2-2, to=3-2]
			\arrow[from=3-1, to=3-2]
			\arrow[shift right, from=3-1, to=4-1]
			\arrow[shift left, from=3-1, to=4-1]
			\arrow[shift left, from=3-2, to=4-2]
			\arrow[shift right, from=3-2, to=4-2]
			\arrow[from=4-1, to=4-2]
		\end{tikzcd}\]
		Recall however that $\omega_H$ factors through taking the right Kan extension along the inclusion $\KK\hookrightarrow \Delta_+^{\op}\times\Delta^1$ which is a diagram of the following form:
		\[\begin{tikzcd}
			\vdots & \vdots \\
			{G\times H\times H} & {H\times H} \\
			{G\times H} & H \\
			G & \ast \\
			{G/H} & {\BB H}
			\arrow[shift right, from=1-1, to=2-1]
			\arrow[shift left, from=1-1, to=2-1]
			\arrow[shift right=3, from=1-1, to=2-1]
			\arrow[shift left=3, from=1-1, to=2-1]
			\arrow[shift right, from=1-2, to=2-2]
			\arrow[shift left, from=1-2, to=2-2]
			\arrow[shift left=3, from=1-2, to=2-2]
			\arrow[shift right=3, from=1-2, to=2-2]
			\arrow[from=2-1, to=2-2]
			\arrow[from=2-1, to=3-1]
			\arrow[shift left=2, from=2-1, to=3-1]
			\arrow[shift right=2, from=2-1, to=3-1]
			\arrow[from=2-2, to=3-2]
			\arrow[shift left=2, from=2-2, to=3-2]
			\arrow[shift right=2, from=2-2, to=3-2]
			\arrow[from=3-1, to=3-2]
			\arrow[shift right, from=3-1, to=4-1]
			\arrow[shift left, from=3-1, to=4-1]
			\arrow[shift left, from=3-2, to=4-2]
			\arrow[shift right, from=3-2, to=4-2]
			\arrow[from=4-1, to=4-2]
			\arrow[from=4-1, to=5-1]
			\arrow["\lrcorner"{anchor=center, pos=0.125}, draw=none, from=4-1, to=5-2]
			\arrow[from=4-2, to=5-2]
			\arrow[from=5-1, to=5-2]
		\end{tikzcd}\]
		But the morphism $G/H\to \BB H$, by construction, is precisely the fiber of $\BB\phi\colon\BB H\to \BB G$. Therefore by composing fiber sequences and extending we obtain a kind of Puppe sequence
		\[H\xrightarrow{\phi_1}G\to G/H\to\BB H\xrightarrow{\BB \phi}\BB G\]
	\end{exam}
	
	\begin{defn}\label{defn: puppe sequence defn}
		When $\phi\colon H_\bullet\to G_\bullet$ is a morphism of group objects in an $\infty$-topos, we will call the composite fiber sequence of Example \ref{exam:puppe sequence example} the \textit{Puppe sequence associated to $\phi$}. We will call the object $G/H$ the \textit{quotient of $G$ by $H$ with respect to $\phi$}. 
	\end{defn}
	
	\begin{prop}\label{prop:group change is base change}
		Let $\phi\colon H_\bullet\to G_\bullet$ be a morphism of group objects in an $\infty$-topos $\XX$. Then the composite \[\RMod_H(\XX)\xrightarrow{\CutR_H}\RMon_H(\XX)\xrightarrow{\Ind_H^G}\RMon_G(\XX)\xrightarrow{(\CutR_G)^{-1}}\RMod_G(\XX)\]
		is equivalent to the extension of scalars functor $\phi_\ast\colon \RMod_H(\XX)\to\RMod_G(\XX)$. Consequently, the composite 
		\[\RMod_G(\XX)\xrightarrow{\CutR_G}\RMon_G(\XX)\xrightarrow{\Res_H^G}\RMon_H(\XX)\xrightarrow{(\CutR_H)^{-1}}\RMod_H(\XX)\]
		is equivalent to the restriction of scalars functor $\phi^\ast\colon \RMod_G(\XX)\to\RMod_H(\XX)$.\end{prop}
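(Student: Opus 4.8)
The plan is to recognize both displayed composites as the fiberwise transition functors of a single fibration, transported across the equivalences already in hand. Recall from \cite[Proposition 4.2.3.2]{ha} that the forgetful functor $p\colon\RMod(\XX)\to\Alg(\XX)$ is a Cartesian fibration whose Cartesian transition functor over $\phi$ is precisely restriction of scalars $\phi^\ast$, and from \cite[Lemma 4.5.3.6]{ha} that $p$ is moreover coCartesian, with coCartesian transition functor the extension of scalars $\phi_\ast\colon P\mapsto P\otimes_A B$. Restricting $p$ to grouplike algebras and transporting along the equivalences $\CutR$ and $\Cut$ of this subsection, we obtain a functor $\RMon_{gp}(\XX)\to\Grp(\XX)$ which, under the compatibility discussed below, is identified with the forgetful functor and is therefore both a Cartesian and a coCartesian fibration, with transition functors over $\phi\colon H_\bullet\to G_\bullet$ the images of $\phi^\ast$ and $\phi_\ast$ under $\CutR$.

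First I would identify the geometric side. By Lemma \ref{lem:koszul duality diagram for actions} the functor $\beta$ is an equivalence $\RMon_{gp}(\XX)\simeq\XX^{\Delta^1}\times_{\XX^{\{1\}}}\XX_\ast^{\geq 1}$ lying over the equivalence $\Grp(\XX)\simeq\XX_\ast^{\geq 1}$, $G\mapsto\BB G$. The projection of $\XX^{\Delta^1}\times_{\XX^{\{1\}}}\XX_\ast^{\geq 1}$ onto $\XX_\ast^{\geq 1}$ is the base change, along the forgetful functor $\XX_\ast^{\geq 1}\to\XX$, of the codomain fibration $\mathrm{ev}_1\colon\XX^{\Delta^1}\to\XX$. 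The codomain fibration is always coCartesian, with coCartesian transition along $\BB\phi$ given by postcomposition $(\BB\phi)_\ast$; and since $\XX$ admits pullbacks it is also Cartesian, with Cartesian transition the pullback $(\BB\phi)^\ast$. These are exactly the two functors out of which $\Ind_H^G$ and $\Res_H^G$ were built, so after conjugating by $\beta$ the coCartesian transition recovers $\omega_G\circ(\BB\phi)_\ast\circ\beta_H=\Ind_H^G$ and the Cartesian transition recovers $\omega_H\circ(\BB\phi)^\ast\circ\beta_G=\Res_H^G$.

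It then remains to glue the two pictures. The composite equivalence $\Phi:=\beta\circ\CutR\colon\RMod_{gp}(\XX)\to\XX^{\Delta^1}\times_{\XX^{\{1\}}}\XX_\ast^{\geq 1}$ is a functor over $\Grp(\XX)$, under the identifications of grouplike algebras with $\Grp(\XX)$ and with $\XX_\ast^{\geq 1}$: the outer square commutes because $\CutR$ is compatible with the forgetful functors $\RMod\to\Alg$ and $\RMon\to\Mon$ (both being restriction along the inclusions defining the approximations $\gamma$ and $\Cut$), while $\beta$ is compatible by Lemma \ref{lem:koszul duality diagram for actions}. An equivalence between two fibrations over a common base that lies over the identity automatically carries Cartesian edges to Cartesian edges and coCartesian edges to coCartesian edges — this is immediate from the mapping-space characterization of (co)Cartesian morphisms, since $\Phi$ induces equivalences on all mapping spaces and commutes with the projections. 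Consequently $\Phi$ intertwines the transition functors on fibers, which is exactly the assertion that $(\CutR_G)^{-1}\circ\Ind_H^G\circ\CutR_H\simeq\phi_\ast$ and $(\CutR_H)^{-1}\circ\Res_H^G\circ\CutR_G\simeq\phi^\ast$. Alternatively, once either of these is established, the other follows formally from uniqueness of adjoints, using $\Ind_H^G\dashv\Res_H^G$ together with $\phi_\ast\dashv\phi^\ast$.

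The main obstacle I expect is the verification that $\Phi$ really is a map of fibrations over the base — concretely, pinning down that $\CutR$ commutes with the forgetful functors to algebras and monoids, so that the module-theoretic restriction and extension of scalars are, fiberwise, the transition functors of $\RMon_{gp}(\XX)\to\Grp(\XX)$, and checking that base-changing $\mathrm{ev}_1$ to pointed connected objects leaves its (co)Cartesian transitions as honest pullback and postcomposition along $\BB\phi$. Once the over-the-base statement is secured, preservation of (co)Cartesian edges, and hence the matching of the transition functors with $\Ind_H^G$ and $\Res_H^G$, is purely formal.
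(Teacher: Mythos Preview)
Your argument is correct and takes a genuinely different route from the paper's. The paper invokes Morita theory: by \cite[Theorem 4.8.4.1]{ha}, any colimit-preserving functor $F\colon\RMod_H(\XX)\to\RMod_G(\XX)$ is equivalent to $-\otimes_H F(H)$, so one only has to check that the composite sends $H$ with its translation action to $G$ with its translation action (which is immediate from the definition of $\Ind_H^G$ applied to $\ast\to\BB H$); the second statement then follows by uniqueness of adjoints. Your approach instead identifies both $\phi_\ast$ and $\Ind_H^G$ (after conjugation by $\CutR$) as the coCartesian transition functors of a single biCartesian fibration over $\Grp(\XX)$, using that the codomain fibration $\mathrm{ev}_1$ on $\XX^{\Delta^1}$ has postcomposition and pullback as its transition functors and that an equivalence of fibrations over a fixed base preserves (co)Cartesian edges.

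What each buys: the paper's proof is a one-liner once Morita theory is on the table, but it silently uses presentability of $\XX$ and that the composite is colimit-preserving. Your argument is more self-contained and conceptually transparent---it explains \emph{why} the two functors agree rather than checking they agree on a generator---and it avoids the heavy Morita machinery entirely. The one place where you should be a bit more careful is the compatibility square asserting that $\CutR$ lies over $\Cut$: this amounts to the statement that the approximation $\gamma\colon\Delta^{\op}\times\Delta^1\to\mathcal{RM}^\otimes$ restricts on $\Delta^{\op}\times\{1\}$ to $\Cut$ followed by the inclusion $\Ass^\otimes\hookrightarrow\mathcal{RM}^\otimes$, which is indeed built into the construction in \cite[Construction 4.2.2.6, Proposition 4.2.2.9]{ha}, but is worth citing explicitly.
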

	
	\begin{proof}
		By \cite[Theorem 4.8.4.1]{ha} we have that colimit preserving functors $\RMod_H(\XX)\to\RMod_G(\XX)$ are equivalent to tensoring with $(H,R)$-bimodules. This equivalence implies that a specific colimit preserving functor $F\colon \RMod_H(\XX)\to\RMod_G(\XX)$ is equivalent to $-\otimes_HF(H)$.  The first composite above evidently takes $H$ with the translation action on itself to $G$ with the translation action on itself and so is equivalent to the standard base change functor. The second statement follows from uniqueness of adjoints. 
	\end{proof}
	
	\begin{cor}
		Let $\Act(X,G)\in\RMon_G(\XX)$ be a right action of a group $G\in\Grp(\XX)$ on $X\in\XX$ and let  $t\colon G\to \ast$ be the terminal group morphism. Then there is an equivalence $X/G\simeq t_\ast(X)=X\otimes_G\ast$, i.e.~the quotient of $X$ by $G$ is equivalent to base changing from $G$ to the trivial group.
	\end{cor}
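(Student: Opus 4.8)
The plan is to derive this directly from Proposition \ref{prop:group change is base change} applied to the terminal group morphism $t\colon G\to\ast$. Taking the morphism $\phi$ of that proposition to be $t$, so that its source is $G$ and its target is the trivial group $\ast$, the proposition identifies the transported functor $(\CutR_\ast)^{-1}\Ind_G^\ast\CutR_G$ with the extension of scalars functor $t_\ast=-\otimes_G\ast\colon\RMod_G(\XX)\to\RMod_\ast(\XX)$. Since the equivalences $\CutR_G$ and $\CutR_\ast$ merely repackage an action as a module without altering the underlying object of $\XX$, and since $\RMod_\ast(\XX)\simeq\XX$ (modules over the monoidal unit), it therefore suffices to compute the underlying object of $\Ind_G^\ast(\Act(X,G))$ and to check that it is $X/G$.

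To do this I would unwind the definition of $\Ind_G^\ast$, namely the composite $\RMon_G(\XX)\xrightarrow{\beta_G}\XX_{/\BB G}\xrightarrow{(\BB t)_\ast}\XX_{/\BB\ast}\xrightarrow{\omega_\ast}\RMon_\ast(\XX)$. First, $\BB\ast\simeq\ast$ because the trivial group deloops to the terminal object of $\XX$, so $\XX_{/\BB\ast}\simeq\XX$ and, under this identification, $(\BB t)_\ast$ (postcomposition with $\BB t\colon\BB G\to\ast$) is the functor forgetting the structure map, sending $(Y\to\BB G)$ to $Y$. By Remark \ref{rmk:pullback square at bottom of action} together with the value of $\Actplus(X,G)$ at $([-1],0)$, the object $\beta_G(\Act(X,G))$ is precisely the bottom map $X/G\to\BB G$ of the fiber sequence $X\to X/G\to\BB G$. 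Hence $(\BB t)_\ast\beta_G(\Act(X,G))\simeq X/G$, and $\omega_\ast$ then equips $X/G$ with the (necessarily trivial) action of the trivial group, leaving its underlying object unchanged. Thus the underlying object of $\Ind_G^\ast(\Act(X,G))$ is $X/G$, and combining with the previous paragraph gives $X/G\simeq t_\ast(X)=X\otimes_G\ast$.

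I expect the main obstacle to be bookkeeping: one must verify that the several identifications in play — $\BB\ast\simeq\ast$, the description of $(\BB t)_\ast$ as the forgetful functor $\XX_{/\BB G}\to\XX$, the equivalence $\RMod_\ast(\XX)\simeq\XX$, and the fact that the $\CutR$ equivalences preserve underlying objects — are mutually compatible, so that the two computations of the underlying object (as $X/G$ via the definition of $\Ind$, and as $X\otimes_G\ast$ via Proposition \ref{prop:group change is base change}) genuinely refer to the same object of $\XX$ and not merely to abstractly equivalent ones. A more hands-on route that sidesteps some of this is to observe that the relative tensor product $X\otimes_G\ast$ is by definition the geometric realization of the two-sided bar construction $\mathrm{Bar}(X,G,\ast)$, whose $n$-simplices are $X\times G^n$; this simplicial object agrees with $\Barc(X,G)$, whose geometric realization is $X/G$ by Definition \ref{defn:the simplicial functors}, yielding the equivalence directly.
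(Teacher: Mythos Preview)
Your proposal is correct and follows exactly the approach the paper intends: the corollary is stated without proof immediately after Proposition~\ref{prop:group change is base change}, and your argument is a faithful unwinding of why it follows from that proposition applied to $\phi=t$. The alternative bar-construction route you sketch at the end is also valid and is arguably the more transparent way to see the equivalence directly.
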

	
	Before stating our next main result, we need that base change is compatible with monoidal functors.
	
	\begin{lem}\label{lem:pullbackmonoidalcommute}
		Let $\CC$ and $\DD$ be monoidal categories, $F\colon \CC\to \DD$ a monoidal functor and $\phi\colon A\to B$ a morphism of associative algebras in $\CC$. Then the following diagram commutes:
		\[\begin{tikzcd}
			{\RMod_B(\CC)} & {\RMod_A(\CC)} \\
			{\RMod_{FB}(\DD)} & {\RMod_{FA}(\DD)}
			\arrow["{\phi^\ast}", from=1-1, to=1-2]
			\arrow["{\RMod(F)_B}"', from=1-1, to=2-1]
			\arrow["{\RMod(F)_A}", from=1-2, to=2-2]
			\arrow["{(F\phi)^\ast}"', from=2-1, to=2-2]
		\end{tikzcd}\]
		where $\RMod(F)_B$ and $\RMod(F)_A$ refer to the restriction of~~$\RMod(F)$ (i.e.~postcomposition with $F$ as a functor $\mathcal{RM}^\otimes\to\CC^{\otimes}$) to $\RMod_B(\CC)$ and $\RMod_A(\CC)$ respectively.
	\end{lem}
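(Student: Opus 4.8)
The plan is to deduce the lemma from the universal property of restriction of scalars as encoded by a Cartesian fibration, the point being that $\RMod(F)$ preserves Cartesian edges. Morally the statement is immediate: $\phi^\ast$ leaves a module unchanged as an object of $\CC$ while restricting its action along $\phi$, and $F$ applied to this produces the object $FQ$ of $\DD$ with its action restricted along $F\phi$; the only real work is to make this coherent. To do so, recall from \cite[Proposition 4.2.3.2]{ha} that there are Cartesian fibrations $p_\CC\colon\RMod(\CC)\to\Alg(\CC)$ and $p_\DD\colon\RMod(\DD)\to\Alg(\DD)$ whose Cartesian morphisms are exactly those sent to equivalences by the forgetful functors to $\CC$ and $\DD$, and that $\phi^\ast$ and $(F\phi)^\ast$ are obtained from these fibrations by choosing Cartesian lifts of $\phi$ and $F\phi$. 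Since $\RMod(F)$ is postcomposition with $F\colon\CC^\otimes\to\DD^\otimes$, it covers $\Alg(F)\colon\Alg(\CC)\to\Alg(\DD)$, and $\Alg(F)(\phi)=F\phi$.

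First I would check that $\RMod(F)$ preserves Cartesian edges. A morphism of $\RMod(\CC)$ is $p_\CC$-Cartesian precisely when its underlying morphism of modules in $\CC$ is an equivalence. Because $\RMod(F)$ acts by postcomposition with $F$, the underlying morphism in $\DD$ of $\RMod(F)$ applied to a morphism of $\RMod(\CC)$ is $F$ applied to its underlying morphism in $\CC$; as $F$ preserves equivalences, $\RMod(F)$ carries $p_\CC$-Cartesian morphisms to $p_\DD$-Cartesian ones.

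Next I would convert this into the square. Fix $Q\in\RMod_B(\CC)$ and let $e_Q\colon\phi^\ast Q\to Q$ be the Cartesian lift of $\phi$ with target $Q$. Applying $\RMod(F)$ yields $\RMod(F)(e_Q)\colon\RMod(F)_A(\phi^\ast Q)\to\RMod(F)_B(Q)$ lying over $F\phi$, which is $p_\DD$-Cartesian by the previous step. Essential uniqueness of Cartesian lifts then identifies its source with $(F\phi)^\ast(\RMod(F)_B(Q))$, giving $\RMod(F)_A(\phi^\ast Q)\simeq(F\phi)^\ast(\RMod(F)_B(Q))$.

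The hard part will be upgrading this pointwise identification to a homotopy-commutative square of $\infty$-categories, natural in $Q$. The cleanest route is to remain at the level of fibrations: a functor between Cartesian fibrations that covers a map of bases and preserves Cartesian edges induces, through the functoriality of the straightening/unstraightening equivalence of \cite{htt}, a natural equivalence between the two resulting composites of transition functors, and these composites are exactly the two ways around the square. Invoking this naturality of straightening, rather than merely matching objects, is the step requiring the most care.
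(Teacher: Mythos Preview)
Your proposal is correct and follows essentially the same route as the paper: both invoke the Cartesian fibration $\RMod(\CC)\to\Alg(\CC)$ of \cite[Corollary 4.2.3.2]{ha}, observe that $\RMod(F)$ preserves Cartesian edges (since those are detected by the underlying-module functor), and then appeal to straightening to extract the commutative square. The paper makes your final step slightly more explicit by first pulling back the fibration $\RMod(\DD)\to\Alg(\DD)$ along $\Alg(F)$ to obtain a morphism of Cartesian fibrations over the \emph{same} base $\Alg(\CC)$, so that straightening applies directly; this is the standard maneuver hiding behind your phrase ``covers a map of bases,'' and you would do well to spell it out.
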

	
	\begin{proof}
		From \cite[Corollary 4.2.3.2]{ha}, along with the definition of monoidal functor, we have a commutative square whose vertical legs, which project away the module coordinate, are Cartesian fibrations and whose upper horizontal morphism preserves Cartesian morphisms:
		\[\begin{tikzcd}
			{\RMod(\CC)} & {\RMod(\DD)} \\
			{\Alg(\CC)} & {\Alg(\DD)}
			\arrow[from=1-1, to=1-2]
			\arrow[from=1-1, to=2-1, "p"]
			\arrow[from=1-2, to=2-2, "q"]
			\arrow[from=2-1, to=2-2]
		\end{tikzcd}\]
		Moreover, the morphisms in $\RMod(\CC)$ which are $p$-Cartesian are precisely those which induce an equivalence on the underlying module (in particular, restriction of scalar morphisms). 
		By taking the pullback of the lower right cospan in the above square, we obtain a commutative triangle
		\[\begin{tikzcd}
			{\RMod(\CC)} & {\mathcal{E}} \\
			{\Alg(\CC)} &
			\arrow[from=1-1, to=1-2]
			\arrow[from=1-1, to=2-1, swap, "p"]
			\arrow[from=1-2, to=2-1,"r"]
		\end{tikzcd}\]
		By \cite[Remark 2.4.1.12]{htt} and \cite[Proposition 2.4.2.3 (2)]{htt} we have that $r$ is still a Cartesian fibration and that a morphism of $\mathcal{E}$ is $r$-Cartesian if and only if its image in $\RMod(\DD)$ is $q$-Cartesian. Applying the straightening functor to this triangle (which is a morphism in the $\infty$-category of Cartesian fibrations over $\Alg(\CC)$), we see that for any $\phi\in\Alg(\CC)$ the resulting naturality square is exactly the desired diagram. Here we are using the fact that fibers are preserved by pullbacks so that $r^{-1}(A)\simeq q^{-1}(FA)\simeq \RMod_{FA}(\DD)$ for every $A\in\Alg(\CC)$. 
	\end{proof}
	
	\begin{prop}\label{prop:res commutes with monoidal functors}
		Let $F\colon\XX\to\YY$ be a monoidal (i.e.~product preserving) functor of $\infty$-topoi. Then the following diagram commutes:
		\[\begin{tikzcd}
			{\RMon_G(\XX)} & {\RMon_H(\XX)} \\
			{\RMon_{FG}(\YY)} & {\RMon_{FH}(\YY)}
			\arrow["{\Res_H^G}", from=1-1, to=1-2]
			\arrow["{F_\ast}"', from=1-1, to=2-1]
			\arrow["{F_\ast}", from=1-2, to=2-2]
			\arrow["{\Res_{FH}^{FG}}"', from=2-1, to=2-2]
		\end{tikzcd}\]
		where $F_\ast$ denotes postcomposition with $F$.
	\end{prop}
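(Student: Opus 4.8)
The plan is to transport the whole square along the Cartesian-module equivalences $\CutR$ and thereby reduce the statement to the already-established commutativity of restriction of scalars, Lemma~\ref{lem:pullbackmonoidalcommute}. First, Proposition~\ref{prop:group change is base change} lets me rewrite the two horizontal functors: conjugating $\Res_H^G$ by $\CutR_G$ and $\CutR_H$ gives the restriction-of-scalars functor $\phi^\ast\colon \RMod_G(\XX)\to\RMod_H(\XX)$, and, applying the same Proposition inside $\YY$ to the group morphism $F\phi\colon FH\to FG$ (which is indeed a morphism of group objects by Lemma~\ref{lem:F preserves group actions}), conjugating $\Res_{FH}^{FG}$ by $\CutR_{FG}$ and $\CutR_{FH}$ gives $(F\phi)^\ast\colon \RMod_{FG}(\YY)\to\RMod_{FH}(\YY)$. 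Taking $\CC=\XX$, $\DD=\YY$, $A=H$ and $B=G$ in Lemma~\ref{lem:pullbackmonoidalcommute} shows that $\phi^\ast$ and $(F\phi)^\ast$ sit in a commuting square whose vertical legs are the postcomposition functors $\RMod(F)_G$ and $\RMod(F)_H$. So the result will follow by pasting three squares, provided I identify the vertical legs correctly.

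The crux is therefore the compatibility of the two avatars of ``postcomposition with $F$'': the functor $F_\ast$ on action objects and the functor $\RMod(F)$ on modules, mediated by $\CutR$. I would isolate this as the claim that the square
\[\begin{tikzcd}
	{\RMod_G(\XX)} & {\RMon_G(\XX)} \\
	{\RMod_{FG}(\YY)} & {\RMon_{FG}(\YY)}
	\arrow["{\CutR_G}", from=1-1, to=1-2]
	\arrow["{\RMod(F)_G}"', from=1-1, to=2-1]
	\arrow["{F_\ast}", from=1-2, to=2-2]
	\arrow["{\CutR_{FG}}"', from=2-1, to=2-2]
\end{tikzcd}\]
commutes (and likewise with $H$ in place of $G$). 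To see this, recall that $\CutR=\gamma^\ast\pi_\ast$ sends a module $P\colon \mathcal{RM}^\otimes\to\XX^\otimes$ to $\pi_\XX\circ P\circ\gamma$, where $\gamma\colon\Delta^\op\times\Delta^1\to\mathcal{RM}^\otimes$ is the right-module approximation and $\pi_\XX\colon\XX^\otimes\to\XX$ is the Cartesian projection; meanwhile $\RMod(F)$ is postcomposition with the induced symmetric monoidal functor $F^\otimes\colon\XX^\otimes\to\YY^\otimes$ and $F_\ast$ is postcomposition with $F$. Chasing both ways around the square, commutativity reduces to the identity $\pi_\YY\circ F^\otimes\simeq F\circ\pi_\XX$, i.e.\ to the naturality of the Cartesian projection $\pi$ for monoidal functors (cf.~\cite[Section 2.4.1]{ha}), together with the trivial fact that precomposition with $\gamma$ commutes with any postcomposition functor. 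The analogous remark at the monoid level shows that $F^\otimes$ carries the associative algebra corresponding to $G$ to the one corresponding to $FG$, so that $\RMod(F)$ really does restrict to a functor $\RMod_G(\XX)\to\RMod_{FG}(\YY)$ and all four corners lie over the intended groups.

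Granting the compatibility squares, the verification is a formal chase:
\begin{align*}
	F_\ast\circ\Res_H^G
	&\simeq F_\ast\circ\CutR_H\circ\phi^\ast\circ\CutR_G^{-1} \\
	&\simeq \CutR_{FH}\circ\RMod(F)_H\circ\phi^\ast\circ\CutR_G^{-1} \\
	&\simeq \CutR_{FH}\circ(F\phi)^\ast\circ\RMod(F)_G\circ\CutR_G^{-1} \\
	&\simeq \CutR_{FH}\circ(F\phi)^\ast\circ\CutR_{FG}^{-1}\circ F_\ast \\
	&\simeq \Res_{FH}^{FG}\circ F_\ast,
\end{align*}
using in turn Proposition~\ref{prop:group change is base change}, the compatibility square for $H$, Lemma~\ref{lem:pullbackmonoidalcommute}, the compatibility square for $G$, and Proposition~\ref{prop:group change is base change} inside $\YY$. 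I expect the one genuine obstacle to be making the middle paragraph precise: pinning down that $\RMod(F)$ corresponds under $\CutR$ to $F_\ast$ (rather than to some twisted variant), which amounts to checking $\pi_\YY\circ F^\otimes\simeq F\circ\pi_\XX$ since $\CutR$ bundles a precomposition and a postcomposition together. Everything else is formal; in particular, no colimit-preservation hypothesis on $F$ is needed, because only restriction of scalars enters, not the extension of scalars that would compute $\BB$ and quotients as colimits.
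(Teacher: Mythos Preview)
Your argument is correct and follows essentially the same route as the paper: both transport the square along the $\CutR$ equivalences, invoke Proposition~\ref{prop:group change is base change} to identify the horizontal arrows with restriction of scalars, appeal to Lemma~\ref{lem:pullbackmonoidalcommute} for the outer square in $\RMod$, and then diagram-chase. The only difference is that where the paper dismisses the compatibility squares with ``both vertical morphisms are simply postcomposition with $F$,'' you unpack this as the naturality $\pi_\YY\circ F^\otimes\simeq F\circ\pi_\XX$ of the Cartesian structure, which is a helpful clarification rather than a departure.
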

	
	\begin{proof}
		First note that the vertical morphisms in the diagram are well-defined by Lemma \ref{lem:F preserves group actions}. Now consider the extended diagram:
		\[\begin{tikzcd}
			{\RMod_G(\XX)} & {\RMon_G(\XX)} & {\RMon_H(\XX)} & {\RMod_H(\XX)} \\
			{\RMod_{FG}(\YY)} & {\RMon_{FG}(\YY)} & {\RMon_{FH}(\YY)} & {\RMod_{FH}(\YY)}
			\arrow["{\CutR_G}", from=1-1, to=1-2]
			\arrow["{\RMod(F)_G}"', from=1-1, to=2-1]
			\arrow["{\Res_H^G}", from=1-2, to=1-3]
			\arrow["{F_\ast}"', from=1-2, to=2-2]
			\arrow["{(\CutR_H)^{-1}}", from=1-3, to=1-4]
			\arrow["{F_\ast}", from=1-3, to=2-3]
			\arrow["{\RMod(F)_H}", from=1-4, to=2-4]
			\arrow["{\CutR_{FG}}", from=2-1, to=2-2]
			\arrow["{\Res_{FH}^{FG}}", from=2-2, to=2-3]
			\arrow["{(\CutR_{FH})^{-1}}", from=2-3, to=2-4]
		\end{tikzcd}\]
		The leftmost and rightmost squares commute because both vertical morphisms are simply postcomposition with $F$. From Proposition \ref{prop:group change is base change}, the upper and lower ``squares'' of the following diagram also commute:
		\[\begin{tikzcd}
			{\RMod_G(\XX)} & {\RMon_G(\XX)} & {\RMon_H(\XX)} & {\RMod_H(\XX)} \\
			{\RMod_{FG}(\YY)} & {\RMon_{FG}(\YY)} & {\RMon_{FH}(\YY)} & {\RMod_{FH}(\YY)}
			\arrow["{\CutR_G}", from=1-1, to=1-2]
			\arrow["{\phi^\ast}", curve={height=-45pt}, from=1-1, to=1-4]
			\arrow["{\RMod(F)}"', from=1-1, to=2-1]
			\arrow["{\Res_H^G}", from=1-2, to=1-3]
			\arrow["{F_\ast}"', from=1-2, to=2-2]
			\arrow["{(\CutR_H)^{-1}}", from=1-3, to=1-4]
			\arrow["{F_\ast}", from=1-3, to=2-3]
			\arrow["{\RMod(F)}", from=1-4, to=2-4]
			\arrow["{\CutR_{FG}}", from=2-1, to=2-2]
			\arrow["{(F\phi)^\ast}"', curve={height=45pt}, from=2-1, to=2-4]
			\arrow["{\Res_{FH}^{FG}}", from=2-2, to=2-3]
			\arrow["{(\CutR_{FH})^{-1}}", from=2-3, to=2-4]
		\end{tikzcd}\]
		Finally, the outermost square commutes by Lemma \ref{lem:pullbackmonoidalcommute}.
		A standard diagram chase gives the result. 
	\end{proof}
	
	Suppose that $F\colon \XX\to\YY$ is a monoidal functor between $\infty$-topoi and $G_\bullet\in\Grp(\XX)$. We have from Lemma \ref{lem:F preserves group actions} that $F$ preserves $G$-actions. On the other hand, as a monoidal functor, $F$ preserves group morphisms $\phi\colon H_\bullet\to G_\bullet$. Therefore $\phi$ induces an $H$-action on $G$, and $F\phi$ induces an $FH$-action on $FG$. Therefore there are, a priori, two ways to take a group map $\phi\colon H_\bullet\to G_\bullet$ and produce an $FH$-action on $FG$. The following corollary, which follows immediately from Proposition \ref{prop:res commutes with monoidal functors}, is essentially the statement that these two actions agree.
	
	\begin{cor}\label{cor:actF is Fact}
		Let $\phi\colon G_\bullet\to H_\bullet$ be a morphism of group objects in $\XX$ and $F\colon \XX\to \YY$ a monoidal functor between $\infty$-topoi. If we write $\Act^\phi(G,H)$ for right $H$-action on $G$ induced by $\phi$ and $\Act^{F\phi}(FG,FH)$ for the $FH$-action on $FG$ induced by $F\phi$ then there is an equivalence:
		\[F\circ\Act^\phi(G,H)\simeq \Act^{F\phi}(FG,FH)\]
	\end{cor}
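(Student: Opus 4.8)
The plan is to exhibit both $F\circ\Act^\phi(G,H)$ and $\Act^{F\phi}(FG,FH)$ as a restriction functor applied to a translation action, and then to let Proposition~\ref{prop:res commutes with monoidal functors} do the work. By the construction recorded in Example~\ref{exam:puppe sequence example}, the action $\Act^\phi(G,H)$ is precisely $\Res_H^G$ applied to the translation action $\Act(G,G)$ of Example~\ref{exam:translation action}; likewise, running the same recipe for $F\phi$ inside $\YY$ identifies $\Act^{F\phi}(FG,FH)$ with $\Res_{FH}^{FG}\bigl(\Act(FG,FG)\bigr)$. Since $F_\ast$ denotes postcomposition with $F$, the corollary therefore reduces to the commutativity of the square in Proposition~\ref{prop:res commutes with monoidal functors} together with a single compatibility: that $F_\ast$ carries the translation action of $G$ to the translation action of $FG$.

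First I would establish that compatibility, namely $F_\ast\bigl(\Act(G,G)\bigr)\simeq\Act(FG,FG)$. The cleanest route is the simplicial one: the action object $\Act(G,G)$, viewed as a functor $\Delta^{\op}\times\Delta^1\to\XX$, takes each object to a finite power of $G$ (and the terminal object), with every face and degeneracy built from the multiplication, projections, diagonals and unit of $G$. Because $F$ preserves finite products and the terminal object, and carries $\mu_G$ to $\mu_{FG}$ by definition (as in the proof of Lemma~\ref{lem:F preserves group actions}), postcomposing this diagram with $F$ returns the functor of the same combinatorial shape with $G$ replaced throughout by $FG$, which is exactly the translation action $\Act(FG,FG)$. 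I would deliberately avoid characterizing the translation action through $\BB G$ here, since a merely product-preserving functor need not preserve the geometric realization $\BB G$.

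With this in hand the result is formal. Proposition~\ref{prop:res commutes with monoidal functors} gives $F_\ast\,\Res_H^G\simeq \Res_{FH}^{FG}\,F_\ast$, so that
\[
F_\ast\bigl(\Act^\phi(G,H)\bigr)\simeq F_\ast\Res_H^G\bigl(\Act(G,G)\bigr)\simeq \Res_{FH}^{FG}F_\ast\bigl(\Act(G,G)\bigr)\simeq \Res_{FH}^{FG}\bigl(\Act(FG,FG)\bigr)\simeq \Act^{F\phi}(FG,FH).
\]
The only non-formal point is the identification of $F_\ast$ on the translation action carried out in the previous step; once that is settled, the chain above is immediate, and I expect that verification to be the main (though routine) obstacle.
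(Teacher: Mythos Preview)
Your argument is correct and is exactly the approach the paper intends: the paper records this corollary as following ``immediately from Proposition~\ref{prop:res commutes with monoidal functors},'' and your write-up simply unpacks that claim by identifying $\Act^\phi$ as the restriction of the translation action (via Example~\ref{exam:puppe sequence example}) and checking the one non-formal input, that $F_\ast$ preserves the translation action. The latter verification is left implicit in the paper but is precisely the point where something must be said, and your simplicial argument (avoiding $\BB G$, which $F$ need not preserve) is the right way to handle it.
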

	
	\subsection{The Space Case}
	
	Given an $\infty$-topos $\XX$ and a group object $G_\bullet\in\Grp(\XX)$, we have two equivalent ways of thinking about $G$-actions: as right action objects in $\RMon_G(\XX)$; or as right modules in $\RMod_G(\XX)$ for $G$ thought of as a (grouplike) $\Ass$-algebra. In the case that $\XX=\Spaces$, the $\infty$-topos of $\infty$-groupoids, we have yet another way. Using Corollary \ref{cor:G modules are BG comodules} we have that the category of $G$-actions $\RMon_G(\Spaces)$ is equivalent to the slice category $\Spaces_{/\BB G}$. Then the next result follows from the straightening/unstraightening equivalence of \cite{htt}.
	
	\begin{prop}\label{prop:straightening G-actions}
		There is an equivalence of $\infty$-categories $\RMon_G(\Spaces)\xrightarrow{\beta_G}\Spaces_{/\BB G}\xrightarrow{St} \Fun(\BB G,\Spaces)$.
	\end{prop}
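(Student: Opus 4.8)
The plan is to recognize the composite as a composite of two separately-established equivalences, so that essentially no new work is needed beyond identifying the second arrow. The first map $\beta_G$ is already an equivalence: restricting the horizontal equivalences of Lemma \ref{lem:koszul duality diagram for actions} to the fiber over $G$ is exactly Corollary \ref{cor:G modules are BG comodules}, which gives $\RMon_G(\Spaces)\simeq\Spaces_{/\BB G}$ in any $\infty$-topos, in particular in $\Spaces$. Thus the entire statement reduces to showing that the straightening functor $St\colon\Spaces_{/\BB G}\to\Fun(\BB G,\Spaces)$ is an equivalence.

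For that second arrow I would invoke the straightening/unstraightening theorem \cite[Theorem 3.2.0.1]{htt}. The crucial observation is that $\BB G$, as an object of $\Spaces$, is an $\infty$-groupoid, so the general equivalence between left fibrations over an $\infty$-category $\mathcal{C}$ and functors $\mathcal{C}\to\Spaces$ applies with $\mathcal{C}=\BB G$. The one point that requires comment is the identification of the ambient slice $\Spaces_{/\BB G}$ with the $\infty$-category $\mathrm{LFib}(\BB G)$ of left fibrations: straightening is literally a statement about left fibrations, not about arbitrary overcategories. Over a Kan complex this gap is harmless, since every morphism of spaces with target $\BB G$ is equivalent in the slice to a left fibration; concretely, when the base is an $\infty$-groupoid the covariant model structure on $(\mathrm{Set}_\Delta)_{/\BB G}$ reduces to the slice of the Kan--Quillen model structure (cf.\ \cite[\S 2.1.4]{htt}), so its fibrant objects model all of $\Spaces_{/\BB G}$. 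This identification $\Spaces_{/\BB G}\simeq\mathrm{LFib}(\BB G)$ is the only step at which anything is actually verified, and it is the main (if standard) obstacle; everything else is formal. Straightening then carries a left fibration $E\to\BB G$ to the functor $\BB G\to\Spaces$ sending a point to its fiber, and the cited theorem guarantees this is an equivalence onto $\Fun(\BB G,\Spaces)$.

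Composing the two equivalences yields the proposition. As a sanity check confirming the variance and the expected classical meaning, I would note that the translation action of Example \ref{exam:translation action}, whose image under $\beta_G$ is the pointing $\ast\to\BB G$, is sent by $St$ to the functor $\BB G\to\Spaces$ carrying the basepoint to $\fib(\ast\to\BB G)\simeq G_1$; this recovers the familiar description of a $G$-space as a local system on $\BB G$ whose stalk is the space being acted upon.
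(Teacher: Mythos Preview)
Your proposal is correct and follows exactly the approach the paper takes: the paper's entire argument is the sentence preceding the proposition, which invokes Corollary~\ref{cor:G modules are BG comodules} for $\beta_G$ and the straightening/unstraightening equivalence of \cite{htt} for $St$. Your version is simply more explicit, in particular your remark that over a Kan complex the slice $\Spaces_{/\BB G}$ coincides with left fibrations is a genuine clarification the paper leaves implicit.
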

	
	\begin{rmk}
		Note that if $\Act(X,G)\in\RMon(\Spaces)$ we have that $\BB G$ is a pointed $\infty$-groupoid and the associated functor $\BB G\to \Spaces$ takes the base point to $X\in\Spaces$. Moreover, every morphism in $\BB G$ is invertible, so this functor factors through $\BB\mathrm{Aut}_{\Spaces}(X)$. Indeed, any action of any group object $H$ on $X$ corresponds to a morphism $\BB H\to\BB \mathrm{Aut}_{\Spaces}(X)$. In other words, the space of actions on $X$ by group objects of $\Spaces$ is equivalent to the space of group maps into $\mathrm{Aut}_{\Spaces}(X)$. Of course it's true that $\Act(X,G)\in\RMon(\XX)$ corresponds to a group map $G\to\mathrm{Aut}_{\XX}(X)$ for any $\infty$-topos $\XX$, for instance by the results of \cite[Section 4.7.1]{ha}, but the case of $\Spaces$ is particularly easy to see.
	\end{rmk}
	
	Recall that the application of the unstraightening functor $Un$ to a functor $F\colon Z\to \Spaces$ can be given by taking the pullback of the universal coCartesian fibration $\mcal U\to\Spaces$ along $F$ (see, for instance, \cite[5.2.8]{cisinski-HCHA}). We can use this to prove that two natural notions of ``quotient'' by a group action are compatible with the equivalence of Proposition \ref{prop:straightening G-actions}.
	
	\begin{prop}
		Let $\Act(X,G)\in\RMon(\Spaces)$ have associated functor $F_X\colon\BB G\to\Spaces$. Then $X/G$ is equivalent to the colimit of $F_X$. 
	\end{prop}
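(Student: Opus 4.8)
The plan is to recognize $F_X$ as the straightening of a left fibration over $\BB G$ whose total space is exactly $X/G$, and then to invoke the standard fact that the colimit of a space-valued diagram is computed by the total space of its unstraightening.

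First I would unwind the definition of $F_X$. By Proposition \ref{prop:straightening G-actions} we have $F_X\simeq \St(\beta_G(\Act(X,G)))$, so the real task is to identify the object $\beta_G(\Act(X,G))\in\Spaces_{/\BB G}$. Recall that $\beta_G$ is the composite of $\Actplus$ with restriction along $\KK\hookrightarrow\Delta_+^{\op}\times\Delta^1$. Reading off the values of $\Actplus(X,G)$ at the three objects of $\KK$ — namely $X/G$ at $([-1],0)$, $\BB G$ at $([-1],1)$, and $\ast$ at $([0],1)$ — together with Remark \ref{rmk:pullback square at bottom of action}, shows that $\beta_G(\Act(X,G))$ is precisely the morphism $X/G\to\BB G$ sitting at the bottom of the augmented action diagram, with fiber $X$ over the basepoint. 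Since $\St$ and $\beta_G$ are mutually inverse to unstraightening, it follows that the unstraightening of $F_X$ is this map $X/G\to\BB G$; because $\BB G$ is an $\infty$-groupoid, this is a left fibration whose total space is $X/G$. I would double-check compatibility with the universal-coCartesian-fibration description recalled just before the statement, but this is immediate: pulling $\mcal U\to\Spaces$ back along $F_X$ reproduces $X/G\to\BB G$.

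I would then finish by applying \cite[Corollary 3.3.4.6]{htt}, which asserts that the colimit of a functor $F_X\colon\BB G\to\Spaces$ is equivalent to the total space of the left fibration it classifies. As $\BB G$ is already a space, that total space is $X/G$ itself, yielding $\colim F_X\simeq X/G$. I expect the only genuinely delicate point to be the bookkeeping in the middle step — confirming that the domain of $\beta_G(\Act(X,G))$ really is the geometric realization $\colim\Barc(X,G)$ that \emph{defines} $X/G$, rather than some other object sharing the same fiber — whereas the concluding colimit identification is a direct citation.
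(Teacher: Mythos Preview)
Your proposal is correct and follows essentially the same approach as the paper: both identify $\beta_G(\Act(X,G))$ with the morphism $X/G\to\BB G$ (the paper does so via Remark \ref{rmk:pullback square at bottom of action} and the pullback diagram against the universal fibration, you by unwinding the restriction to $\KK$), and both then invoke \cite[Corollary 3.3.4.6]{htt} to conclude. The paper is terser, but the content of the argument is the same.
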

	
	\begin{proof}
		From Remark \ref{rmk:pullback square at bottom of action} we have that $X/G$ fits into a fiber sequence $X\to X/G\to \BB G$. Proposition \ref{prop:straightening G-actions} implies that the unstraightening of the  functor $F_X\colon\BB G\to \Spaces$ associated to $\Act(X,G)$ corresponds to the given map $X/G\to \BB G$. In other words, we have a composite pullback diagram
		\[\begin{tikzcd}
			X & {X/G} & {\mathcal{U}} \\
			\ast & {\BB G} & \Spaces
			\arrow[from=1-1, to=1-2]
			\arrow[from=1-1, to=2-1]
			\arrow["\lrcorner"{anchor=center, pos=0.125}, draw=none, from=1-1, to=2-2]
			\arrow[from=1-2, to=1-3]
			\arrow[from=1-2, to=2-2]
			\arrow["\lrcorner"{anchor=center, pos=0.125}, draw=none, from=1-2, to=2-3]
			\arrow[from=1-3, to=2-3]
			\arrow[from=2-1, to=2-2]
			\arrow[from=2-2, to=2-3, "F_X"]
		\end{tikzcd}\]
		
		From \cite[Corollary 3.3.4.6]{htt}, the colimit in $\Spaces$ of the bottom right horizontal functor is equivalent to $X/G$. 
	\end{proof}
	
	\subsection{Monomorphisms and Epimorphisms of Group Objects}\label{sec:monos and epis of groups}
	
	In the category of discrete groups every morphism can be factored as a surjection followed by an injection. This lifts the usual surjection/injection factorization system on the 1-category of sets. In this section we extend this factorization system to $\Grp(\XX)$ for any $\infty$-topos $\XX$. Recall the following result from \cite{htt}.

	\begin{prop}[{\cite[Remark 5.2.8.16]{htt}}]\label{prop:epimono factorization system}
		If $n\Eff$ denotes the class of $n$-connective morphisms, e.g.~effective epimorphisms in the case that $n=0$, in $\XX$ and $n\Mono$ denotes the $(n-1)$-truncated morphisms of $\XX$, then $(n\Eff,n\Mono)$ is a factorization system on $\XX$ in the sense of \cite[Section 5.2.8]{htt}.
	\end{prop}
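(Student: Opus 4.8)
The plan is to verify the three defining conditions of a factorization system in the sense of \cite[Section 5.2.8]{htt}: that each of the two classes is stable under retracts, that every morphism in $n\Eff$ is left orthogonal to every morphism in $n\Mono$, and that every morphism of $\XX$ factors as an $n\Eff$-morphism followed by an $n\Mono$-morphism. As a sanity check, when $n=0$ this should recover the classical (effective epimorphism, monomorphism) factorization, since $0$-connective means effective epimorphism and $(-1)$-truncated means monomorphism. Stability under retracts is the easiest of the three: both classes are cut out by conditions on the homotopy objects $\pi_i(f)$ (triviality in a range of degrees, together with the effective-epimorphism condition for $n\Eff$, and triviality in the complementary range for $n\Mono$), and every such condition is manifestly inherited by a retract. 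So the genuine content lies in orthogonality and factorization.

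For factorization I would work relative to the base. Given $f\colon X\to Y$, regard $f$ as an object of the slice $\XX_{/Y}$, which is again an $\infty$-topos by \cite[Proposition 6.3.5.1]{htt}. Applying the truncation functor $\tau_{\leq n-1}$ of $\XX_{/Y}$ to this object yields a factorization $X\xrightarrow{u} Z \xrightarrow{v} Y$ of $f$ in which $v$, being the structure map of an $(n-1)$-truncated object of $\XX_{/Y}$, is precisely an $(n-1)$-truncated morphism, i.e.\ lies in $n\Mono$. The unit of $\tau_{\leq n-1}$ is $n$-connective (this is the defining relationship between connectivity and truncation in \cite[Section 6.5.1]{htt}), so $u$ lies in $n\Eff$; this is the one place I would lean directly on the general theory of truncation, together with the fact that the forgetful functor $\XX_{/Y}\to\XX$ detects connectivity of morphisms, so that $n$-connectivity computed in the slice agrees with $n$-connectivity in $\XX$.

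Orthogonality is where I expect the real difficulty, precisely because connectivity and truncatedness of a morphism in a general $\infty$-topos are \emph{not} detected fiberwise, so a naive pointwise lifting argument is unavailable. The key observation is that a lifting square for $f\in n\Eff$ against $g\colon X\to Y$ in $n\Mono$ automatically lives over the codomain $Y$: all four objects receive compatible maps to $Y$, so the square is a diagram in $\XX_{/Y}$, in which $g$ becomes an $(n-1)$-truncated object. Mapping into an $(n-1)$-truncated object factors through $\tau_{\leq n-1}$ by the defining adjunction of truncation, while an $n$-connective morphism is inverted by $\tau_{\leq n-1}$ (it induces an isomorphism on $\pi_i$ for $i<n$ and is an effective epimorphism, hence an equivalence after $(n-1)$-truncation). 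Combining these two facts shows that restriction along the $n\Eff$-morphism induces an equivalence on the relevant mapping spaces in $\XX_{/Y}$, so the space of lifts is contractible; this is exactly the desired orthogonality.

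Finally I would assemble the three ingredients. Once orthogonality and the existence of factorizations are in hand, uniqueness of factorizations is automatic (the space of factorizations of a fixed morphism is contractible, by orthogonality), and together with retract-stability this is precisely the data of a factorization system; equivalently, orthogonality together with factorization identify $n\Eff$ and $n\Mono$ as the mutual orthogonal complements ${}^{\perp}(n\Mono)$ and $(n\Eff)^{\perp}$, which is the formulation used in \cite[Section 5.2.8]{htt}. The main obstacle, as flagged above, is the orthogonality step and its reliance on the truncation-theoretic facts of \cite[Section 6.5.1]{htt} rather than on any fiberwise reasoning.
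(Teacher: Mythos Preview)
The paper does not give its own proof of this proposition: it is stated as a citation of \cite[Remark 5.2.8.16]{htt} (with a secondary reference to \cite{anel-blakers-massey}) and used as a black box thereafter. So there is no in-paper argument to compare against.

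That said, your sketch is correct and is essentially the standard argument behind the cited result. The factorization step via $(n-1)$-truncation in the slice $\XX_{/Y}$ is exactly how Lurie produces these factorizations, and your orthogonality argument---pushing the lifting problem into $\XX_{/Y}$ and using that $n$-connective maps become equivalences after $\tau_{\leq n-1}$ while maps into $(n-1)$-truncated objects factor through $\tau_{\leq n-1}$---is the right idea. One small point worth tightening: for retract stability of $n\Eff$ you appeal to conditions on $\pi_i(f)$, but you should also note separately that effective epimorphisms are closed under retracts (this is part of any factorization system, or can be seen directly), since the effective-epimorphism condition is not literally a vanishing-of-$\pi_i$ condition.
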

	
	Proposition \ref{prop:epimono factorization system} also appears in \cite{anel-blakers-massey} although some translation in terminology is necessary. 
	
	\begin{prop}\label{prop: pointed factorization}
		Let $\XX$ be an $\infty$-topos equipped with the factorization system $(n\Eff,n\Mono)$ of Proposition \ref{prop:epimono factorization system}. Then there is a factorization system $(n\Eff_\ast,n\Mono_\ast)$ on $\XX_\ast$ where $n\Eff_\ast$ and $n\Mono_\ast$ are the classes of morphisms in $\XX_\ast$ which are in $n\Eff$, respectively $n\Mono$, after forgetting to $\XX$. 
	\end{prop}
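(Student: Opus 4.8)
The plan is to verify directly the three defining conditions of a factorization system in the sense of \cite[Definition 5.2.8.8]{htt}: that $n\Eff_\ast$ and $n\Mono_\ast$ are stable under retracts, that they are mutually orthogonal, and that every morphism of $\XX_\ast$ factors as an $n\Eff_\ast$-morphism followed by an $n\Mono_\ast$-morphism. Write $U\colon \XX_\ast\to\XX$ for the forgetful functor, so that by hypothesis $n\Eff_\ast = U^{-1}(n\Eff)$ and $n\Mono_\ast = U^{-1}(n\Mono)$. Stability under retracts is then immediate: any functor carries a retract diagram to a retract diagram, so if $g$ is a retract of $f$ in $\XX_\ast$ then $Ug$ is a retract of $Uf$ in $\XX$, and both $n\Eff$ and $n\Mono$ are retract-stable by Proposition \ref{prop:epimono factorization system}.

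For the existence of factorizations, I would start with a morphism $f\colon (A,a)\to (B,b)$ of $\XX_\ast$, where $a\colon\ast\to A$ and $b\colon\ast\to B$ are the basepoints, i.e.\ a map $f\colon A\to B$ of $\XX$ with $f\circ a\simeq b$. Applying Proposition \ref{prop:epimono factorization system} to the underlying map gives a factorization $A\xrightarrow{l} C\xrightarrow{r} B$ with $l\in n\Eff$ and $r\in n\Mono$; point $C$ by $c:=l\circ a$. Then $l$ upgrades to a pointed map $(A,a)\to (C,c)$ by construction, while $r$ upgrades to a pointed map $(C,c)\to(B,b)$ because $r\circ c \simeq r\circ l\circ a\simeq f\circ a\simeq b$. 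This yields the required factorization in $\XX_\ast$, with $l\in n\Eff_\ast$ and $r\in n\Mono_\ast$.

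The main work is orthogonality, $n\Eff_\ast\perp n\Mono_\ast$. Here I would exploit the description of pointed mapping spaces as fibers: for pointed objects there is a natural equivalence $\XX_\ast((P,p),(Z,z))\simeq \fib_z\big(p^\ast\colon \XX(P,Z)\to\XX(\ast,Z)\big)$, where $p^\ast$ is precomposition with the basepoint $p\colon\ast\to P$. Fix $l\colon (A,a)\to(B,b)$ in $n\Eff_\ast$ and $r\colon (X,x)\to(Y,y)$ in $n\Mono_\ast$. By \cite[Section 5.2.8]{htt}, orthogonality in $\XX_\ast$ amounts to the square
\[
\begin{tikzcd}
\XX_\ast((B,b),(X,x)) \ar[r]\ar[d] & \XX_\ast((A,a),(X,x))\ar[d]\\
\XX_\ast((B,b),(Y,y)) \ar[r] & \XX_\ast((A,a),(Y,y))
\end{tikzcd}
\]
being a pullback. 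I would realize this square as the fibers of a map between two squares of unpointed mapping spaces: the source square, with corners $\XX(B,X),\XX(A,X),\XX(B,Y),\XX(A,Y)$, which is a pullback precisely because $Ul\in n\Eff$ is orthogonal to $Ur\in n\Mono$ in $\XX$; and the target square, with all four corners of the form $\XX(\ast,-)$, whose horizontal maps are identities (they are induced by precomposition with $b$ and $a$, which agree after $l$ since $l\circ a\simeq b$) and which is therefore trivially a pullback. The connecting maps are precomposition with $a$ and $b$, and the chosen basepoints $x,y$ are compatible since $r\circ x\simeq y$. Taking fibers of a map between two pullback squares again gives a pullback square, and that square is exactly the one displayed above.

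The step I expect to be the main obstacle is precisely this orthogonality argument. Conceptually it rests on the observation that any lift $w$ of the underlying square in $\XX$ is automatically a pointed map, since $w\circ b\simeq w\circ l\circ a\simeq x$, so that $U$ identifies the space of lifts in $\XX_\ast$ with the contractible space of lifts in $\XX$; but turning this into a homotopy-coherent statement requires the fiberwise reformulation above, together with the standard fact that forming fibers of a natural transformation between pullback squares produces a pullback square. Once orthogonality is established, the three conditions together show that $(n\Eff_\ast, n\Mono_\ast)$ is a factorization system on $\XX_\ast$.
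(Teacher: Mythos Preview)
Your proof is correct and follows the same approach as the paper's. The paper's argument is identical in structure---retract-stability inherited from $\XX$, factorizations obtained by pointing the middle object via the left factor, and orthogonality deduced from the orthogonality in $\XX$---but it dismisses the orthogonality step in one sentence (``the usual formulae for mapping spaces in slice and coslice $\infty$-categories and standard manipulations with pullback squares''), whereas you have spelled out precisely what that sentence means: pointed mapping spaces are fibers of the unpointed ones, and taking fibers of a map between two pullback squares yields a pullback square.
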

	
	\begin{proof}
		We check the conditions of \cite[Definition 5.2.8.8]{htt}. The two classes of morphisms are closed under retracts because they are closed under retracts in $\XX$. The usual formulae for mapping spaces in slice and coslice $\infty$-categories and standard manipulations with pullback squares   imply that morphisms of $n\Eff_\ast$ are still left orthogonal to morphisms of $n\Mono_\ast$. Now let $f\colon X\to Y$ be a morphism of $\XX_\ast$ which factors as $X\xrightarrow{f_L} Z\xrightarrow{f_R} Y$ with respect to $(n\Eff,n\Mono)$. By choosing the pointing of $Z$ to be the composite $\ast\to X\xrightarrow{f_L} Z$, we obtain a factorization in $\XX_\ast$.
	\end{proof}
	
	\begin{cor}
		The factorization system $(n\Eff_\ast,n\Mono_\ast)$ of Proposition \ref{prop: pointed factorization} restricts to a factorization system $(n\Eff_\ast^{\geq 1},n\Mono_\ast^{\geq 1})$ on $\XX_\ast^{\geq 1}$. 
	\end{cor}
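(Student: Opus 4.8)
The plan is to verify the three defining conditions of a factorization system from \cite[Definition 5.2.8.8]{htt} for the pair $(n\Eff_\ast^{\geq 1}, n\Mono_\ast^{\geq 1})$ on $\XX_\ast^{\geq 1}$, taking for granted that $(n\Eff_\ast, n\Mono_\ast)$ is one on $\XX_\ast$ by Proposition \ref{prop: pointed factorization}. Since $\XX_\ast^{\geq 1}\hookrightarrow\XX_\ast$ is fully faithful, two of the three conditions are inherited for free: closure of each class under retracts holds because retract diagrams in a full subcategory are exactly retract diagrams in the ambient category, and left orthogonality of $n\Eff_\ast^{\geq 1}$ against $n\Mono_\ast^{\geq 1}$ holds because the relevant spaces of lifts are computed by the same mapping spaces in $\XX_\ast^{\geq 1}$ as in $\XX_\ast$. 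The entire content is therefore the existence of factorizations \emph{within} $\XX_\ast^{\geq 1}$: given $f\colon X\to Y$ with $X,Y\in\XX_\ast^{\geq 1}$, I must show that its $(n\Eff_\ast,n\Mono_\ast)$-factorization $X\xrightarrow{f_L} Z\xrightarrow{f_R} Y$ has intermediate object $Z$ again connected.

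That reduction is the crux, and I would establish the cleaner statement that \emph{if $X$ is connected and $f_L\colon X\to Z$ is an effective epimorphism then $Z$ is connected} — note this uses neither $f_R$ nor the connectivity of $Y$, and it applies because every morphism of $n\Eff_\ast$ is $n$-connective with $n\geq 0$, hence in particular an effective epimorphism. The idea is to pass to $0$-truncations. Effective epimorphisms are detected on $\pi_0$, so $\tau_{\leq 0}$ carries effective epimorphisms to effective epimorphisms (a standard fact, cf.\ \cite[Section 7.2.1]{htt}); thus $\tau_{\leq 0}f_L\colon \tau_{\leq 0}X\to\tau_{\leq 0}Z$ is an effective epimorphism in the $1$-topos $\XX^{\leq 0}$. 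Since $X$ is connected, $\tau_{\leq 0}X\simeq\ast$, and an effective epimorphism out of a terminal object of a $1$-topos has terminal target (the target is subterminal and carries a global section, so it is terminal). Hence $\tau_{\leq 0}Z\simeq\ast$. This single fact gives both halves of $1$-connectivity: $\pi_0 Z\simeq\ast$ is immediate, and $Z\to\ast$ is an effective epimorphism because its $0$-truncation $\ast\to\ast$ is. Therefore $Z\in\XX_\ast^{\geq 1}$.

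With $Z$ connected, the maps $f_L\colon X\to Z$ and $f_R\colon Z\to Y$ are morphisms of $\XX_\ast^{\geq 1}$ lying respectively in $n\Eff_\ast$ and $n\Mono_\ast$, hence in $n\Eff_\ast^{\geq 1}$ and $n\Mono_\ast^{\geq 1}$; this supplies the required factorization inside the subcategory and, together with the inherited retract-closure and orthogonality, completes the verification. I expect the only genuine obstacle to be the claim $\tau_{\leq 0}Z\simeq\ast$: it really relies on the fact that effective epimorphisms may be tested after $0$-truncation, combined with the elementary $1$-topos observation that a terminal object admits no proper effective-epi quotients. Everything else is formal inheritance along the fully faithful inclusion $\XX_\ast^{\geq 1}\hookrightarrow\XX_\ast$.
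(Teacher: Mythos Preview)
Your proof is correct and structurally identical to the paper's: retract-closure and orthogonality are inherited formally from the full inclusion $\XX_\ast^{\geq 1}\hookrightarrow\XX_\ast$, and the only content is showing the intermediate object $Z$ of the factorization remains connected. The sole difference is in that last step---the paper observes that the pointing $\ast\to X$ is an effective epimorphism (via Lemma~\ref{lem:section is -1 connected}), composes with $f_L$ to obtain $\ast\to Z$ effective, and invokes the same lemma once more, whereas you pass to $\tau_{\leq 0}$ and finish with an elementary $1$-topos argument; both routes are short and equally valid.
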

	
	\begin{proof}
		Suppose that $f\colon X\to Y$ is a morphism in $\XX_\ast^{\geq 1}$. It suffices to show that if $f$ factors in $\XX_\ast$ as $X\xrightarrow{f_L} Z\xrightarrow{f_R} Y$ then $Z$ is connected. By again applying Lemma \ref{lem:section is -1 connected} to $\ast\to X\to \ast$ and using that $X\to\ast$ is $1$-connective we have that $\ast\to X$ is $0$-connective, i.e.~an effective epimorphism in $\XX$. We also of course have that $X\xrightarrow{f_L}Z$ is $0$-connective (for instance, by \cite[Proposition 6.5.1.16 (1)]{htt}). By composing effective epimorphisms, e.g.~using \cite[5.2.8.6 (4)]{htt}, we have that $\ast\to Z$ must be an effective epimorphism from which it follows, again by Lemma \ref{lem:section is -1 connected}, that $Z\to \ast$ is $1$-connective, hence $Z$ is connected.
	\end{proof}
	
	\begin{lem}\label{lem:connective fiber connective map}
		Suppose $f\colon X\to Y$ is a morphism of $\XX$ with $Y$ pointed and connected. Then the fiber $\fib(f)$ is $n$-connective if and only if $f$ is $n$-connective.
	\end{lem}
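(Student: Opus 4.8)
The plan is to exhibit $\fib(f)\to\ast$ as the base change of $f$ along the point $\ast\to Y$ and to transfer connectivity across the resulting pullback square
\[\begin{tikzcd}
	\fib(f) & X \\
	\ast & Y
	\arrow[from=1-1, to=1-2]
	\arrow[from=1-1, to=2-1]
	\arrow["f", from=1-2, to=2-2]
	\arrow[from=2-1, to=2-2]
	\arrow["\lrcorner"{anchor=center, pos=0.125}, draw=none, from=1-1, to=2-2]
\end{tikzcd}\]
The crucial point, where the connectedness of $Y$ enters, is that the bottom horizontal morphism $\ast\to Y$ is an effective epimorphism.

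First I would record that $\ast\to Y$ is an effective epimorphism. Since $Y$ is connected the terminal morphism $Y\to\ast$ is $1$-connective, and it is split by the given pointing $\ast\to Y$; applying Lemma \ref{lem:section is -1 connected} to the retract $\ast\to Y\to\ast$ (with $n=1$) then shows that $\ast\to Y$ is $0$-connective, i.e.~an effective epimorphism. This is the same observation used in the proof of the preceding corollary.

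For the forward implication, suppose $f$ is $n$-connective. The left vertical morphism $\fib(f)\to\ast$ is the pullback of $f$ along $\ast\to Y$, and the class of $n$-connective morphisms is stable under base change by \cite[Proposition 6.5.1.16]{htt}; hence $\fib(f)\to\ast$ is $n$-connective, which is exactly the statement that $\fib(f)$ is $n$-connective. For the converse, suppose $\fib(f)$ is $n$-connective. Then $\fib(f)\to\ast$ is the base change of $f$ along the effective epimorphism $\ast\to Y$, and $n$-connectivity of a morphism may be detected after base change along an effective epimorphism of the target; therefore $f$ is $n$-connective.

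The main obstacle is precisely this descent step in the converse: one must know that if the base change of a morphism along an effective epimorphism is $n$-connective then so is the original morphism. I expect to obtain this from the locality of $n$-connective morphisms recorded in \cite[Proposition 6.5.1.16]{htt} together with the fact that effective epimorphisms in an $\infty$-topos are of universal descent. Should a direct citation prove awkward, the statement can be unwound by hand: effective-epimorphy of $f$ follows by right cancellation, since $\fib(f)\to\ast\to Y$ is an effective epimorphism factoring through $X\to Y$, while the vanishing of the homotopy sheaves $\pi_i(f)$ for $0\le i<n$ follows by descent along the effective epimorphism $\fib(f)\to X$ (itself a base change of $\ast\to Y$), using that $\pi_i$ is compatible with base change and that these $0$-truncated objects are detected after pullback along an effective epimorphism.
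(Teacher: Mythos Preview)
Your proof is correct and follows essentially the same approach as the paper: both set up the pullback square, note that $\ast\to Y$ is an effective epimorphism because $Y$ is connected, and invoke \cite[Proposition 6.5.1.16]{htt}. The paper cites specifically part (6) of that proposition, which states that $n$-connectivity is detected after base change along an effective epimorphism, so your hesitation about the converse direction and the accompanying fallback argument are unnecessary.
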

	
	\begin{proof}
		We have a pullback diagram
		\[\begin{tikzcd}
			{\fib(f)} & X \\
			\ast & Y
			\arrow[from=1-1, to=1-2]
			\arrow[from=1-1, to=2-1]
			\arrow["\lrcorner"{anchor=center, pos=0.125}, draw=none, from=1-1, to=2-2]
			\arrow["f", from=1-2, to=2-2]
			\arrow[from=2-1, to=2-2]
		\end{tikzcd}\]
		in which, since $Y$ is connected, the bottom horizontal arrow is an effective epimorphism. Since, by definition, $\fib(f)\to\ast$ is $n$-connective exactly when $\fib(f)$ is $n$-connective, the result then follows from \cite[Proposition 6.5.1.16 (6)]{htt}.
	\end{proof}
	
	\begin{defn}\label{defn:n image for spaces}
		Let $f\colon X\to Y$ be a morphism in either $\XX$, $\XX_\ast$ or $\XX_\ast^{\geq 1}$ with a factorization $X\to Z\to Y$ in the $(n\Eff,n\Mono)$, $(n\Eff_\ast,n\Mono_\ast)$ or $(n\Eff_\ast^{\geq 1},n\Mono_\ast^{\geq 1})$ factorization systems respectively. Then we refer to $Z$ as the \textit{$n$-image of $f$} and write it $\im_n(f)$. 
	\end{defn}
	
	\begin{rmk}
		Note that $\im_n(f)$ is uniquely determined up to a contractible space of choices by \cite[Proposition 5.2.8.17]{htt}.
	\end{rmk}
	
	\begin{defn}
		Let $(\Eff_{n-1}^{\Grp},\Mono_{n-1}^{\Grp})$ denote the factorization system on $\Grp(\XX)$ induced by transferring the factorization system $(n\Eff_\ast^{\geq 1},n\Mono_\ast^{\geq 1})$ along the equivalence $\cech\colon \XX_\ast^{\geq 1}\xrightarrow{\simeq}\Grp(\XX)$. 
	\end{defn}

\begin{prop}\label{prop:n effective group maps are n connective}
	A morphism $f\colon H\to G$ of~~$\Grp(\XX)$ is in $\Eff_{n}^{\Grp}$ if and only if it is $n$-connective as a morphism on underlying objects of $\XX$.
\end{prop}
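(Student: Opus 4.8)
The plan is to unwind the definition of the transferred factorization system and then convert connectivity of the delooped map into connectivity of the underlying map by passing back and forth between a morphism, its fiber, and the loops of that fiber. Write $f_1\colon H_1\to G_1$ for the underlying morphism and $\BB f\colon\BB H\to\BB G$ for the delooping. Since $(\Eff_n^{\Grp},\Mono_n^{\Grp})$ is transferred along the equivalence $\cech\colon\XX_\ast^{\geq 1}\xrightarrow{\simeq}\Grp(\XX)$ --- the index shift in the definition sending $(n+1)\Eff_\ast^{\geq 1}$ to $\Eff_n^{\Grp}$ --- membership $f\in\Eff_n^{\Grp}$ is equivalent to $\cech^{-1}(f)=\BB_\ast f$ lying in $(n+1)\Eff_\ast^{\geq 1}$, i.e.\ to $\BB f$ being $(n+1)$-connective in $\XX$. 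As $\BB G$ is pointed and connected, Lemma \ref{lem:connective fiber connective map} rephrases this once more as the statement that $\fib(\BB f)\simeq G/H$ is an $(n+1)$-connective object of $\XX$.

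It therefore remains to show that $G/H$ is $(n+1)$-connective if and only if $f_1$ is $n$-connective, which is where the real work lies. From the Puppe sequence of Example \ref{exam:puppe sequence example} the triple $H_1\xrightarrow{f_1}G_1\to G/H$ is a fiber sequence, so extending one step to the left identifies $\fib(f_1)\simeq\Omega(G/H)$. Because $f_1$ is a homomorphism of group objects, all of its fibers are equivalent by translation to $\fib(f_1)$, so its homotopy sheaves vanish below degree $n$ exactly when $\Omega(G/H)$ is $n$-connective. I would pair this with the object-level fact that, for a pointed connected object $X$, $X$ is $(n+1)$-connective iff $\Omega X$ is $n$-connective --- which follows by applying Lemma \ref{lem:section is -1 connected} to the retract $\ast\to X\to\ast$ and Lemma \ref{lem:connective fiber connective map} to $\ast\to X$, whose fiber is $\Omega X$ --- specialized to $X=G/H$.

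The main obstacle is the effective-epimorphism ($\pi_0$) half of the equivalence, since the target $G_1$ of $f_1$ is not connected and so Lemma \ref{lem:connective fiber connective map} does not apply to $f_1$ verbatim. I would dispatch it by reading off the bottom of the long exact sequence of the fiber sequence $G/H\to\BB H\xrightarrow{\BB f}\BB G$: the segment $\pi_1(\BB H)\to\pi_1(\BB G)\to\pi_0(G/H)\to\ast$, together with the identifications $\pi_0(H_1)\cong\pi_1(\BB H)$ and $\pi_0(G_1)\cong\pi_1(\BB G)$ coming from Lemma \ref{lem:B shifts homotopy}, shows that $f_1$ is surjective on $\pi_0$ --- equivalently an effective epimorphism in the sense of Definition \ref{defn: cech nerve and effective epis} --- exactly when $\pi_0(G/H)$ is trivial, i.e.\ when $G/H$ is connected. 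Assembling the pieces, $f_1$ is $n$-connective precisely when $G/H$ is connected and $\Omega(G/H)$ is $n$-connective, which by the object-level fact above is exactly $(n+1)$-connectivity of $G/H$; combined with the reduction of the first paragraph, this proves the proposition.
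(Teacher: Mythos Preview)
Your route is genuinely different from the paper's and in principle more direct: the paper argues by induction on $n$ via the diagonal criterion for connectivity from \cite[Proposition 6.5.1.18]{htt}, reducing to the statement that a map $X\to Y$ in $\XX_\ast^{\geq 1}$ is $(n+1)$-connective iff $\Omega X\to\Omega Y$ is $n$-connective; you instead pass through the fiber $G/H$ of $\BB f$ and relate its connectivity to that of $f_1$ via the Puppe sequence.

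There is, however, a real gap in the middle paragraph. The assertion that ``all fibers of $f_1$ are equivalent by translation'' is a pointwise heuristic with no literal content in an arbitrary $\infty$-topos, where objects need not admit global points; as written it cannot be used to conclude that the homotopy sheaves $\pi_i(f_1)$ (which live in $\XX_{/H_1}$, cf.~Definition~\ref{defn:homotopygroups}) vanish exactly when those of the single fiber $\Omega(G/H)$ do. The rigorous replacement is already implicit in the Puppe sequence: it furnishes a pullback square
\[
\begin{tikzcd}
H_1 \arrow[r] \arrow[d,"f_1"'] & \ast \arrow[d] \\
G_1 \arrow[r,"q"'] & G/H
\end{tikzcd}
\]
in which the quotient map $q$ is an effective epimorphism (shown in Section~\ref{sec: action groupoids}). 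Then \cite[Proposition 6.5.1.16\,(6)]{htt}---the very ingredient behind Lemma~\ref{lem:connective fiber connective map}---gives that $f_1$ is $n$-connective if and only if $\ast\to G/H$ is, and Lemma~\ref{lem:section is -1 connected} converts the latter into $(n+1)$-connectivity of $G/H$. This repairs your argument in one stroke. It also renders the third paragraph unnecessary: the case $n=0$ is already included, so you need neither the long exact sequence of homotopy sheaves nor the identification of effective epimorphisms with $\pi_0$-surjections---both of which are true but are not established in the paper for a general $\infty$-topos.
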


\begin{proof}
	By Corollary \ref{cor:omega gives loop space action}, the underlying morphism of $f$ is $\Omega\BB f\colon \Omega\BB H\to\Omega\BB G$. Moreover, $f\in\Eff_n^{\Grp}$ if and only if the map $\BB H\to\BB G$ is $(n+1)$-connective. Therefore it suffices to show that if $X\to Y$ is a morphism of $\XX^{\geq 1}_\ast$ then it is $(n+1)$-connective if and only if $\Omega X\to\Omega Y$ is $n$-connective in $\XX$.

	We inductively apply \cite[Proposition 6.5.1.18]{htt}. For the base case, take $n=-1$. Suppose that $f\colon X\to Y$ is a morphism of $\XX_{\ast}^{\geq 1}$. Simply by virtue of being a morphism, $\Omega f\colon \Omega X\to \Omega Y$ is always $(-1)$-connected. Therefore it is only necessary to show that $f$ is $0$-connective. This follows immediately from recalling that for any pointed connected object the pointing itself is an effective epimorphism and applying \cite[Proposition 6.5.1.16]{htt} to the following commutative diagram
	\[\begin{tikzcd}
		& X \\
		\ast && Y
		\arrow[from=1-2, to=2-3]
		\arrow[from=2-1, to=1-2]
		\arrow[from=2-1, to=2-3]
	\end{tikzcd}\]
	
	Now suppose that the statement holds for $0\leq k\leq n$. Suppose that $f\colon X\to Y$ is a morphism of pointed connected objects. Then $f$ is $(n+1)$-connective if and only if the diagonal map $\delta\colon X\to X\times_Y X$ is $n$-connective. But by the inductive hypothesis $\delta$ is $n$-connective if and only if $\Omega\delta\colon \Omega X\to \Omega(X\times_Y X)\simeq\Omega X\times_{\Omega Y}\Omega X$ is $(n-1)$-connective. This is the case if and only if $\Omega f\colon \Omega X\to \Omega Y$ is $n$-connective, completing the proof. 
\end{proof}

\begin{prop}\label{prop:n truncated group maps are n truncated}
	A morphism $\phi\colon H\to G$ is in $\Mono_{n}^{\Grp}$ if and only if its underlying map in $\XX$ is $n$-truncated.
\end{prop}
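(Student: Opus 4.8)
The plan is to run the argument of Proposition~\ref{prop:n effective group maps are n connective} with ``$n$-connective'' systematically replaced by the appropriate truncation condition. By the definition of the factorization system transferred along $\cech$, membership of $\phi$ in $\Mono_n^{\Grp}$ is detected by the truncatedness of its deloop $\BB\phi\colon\BB H\to\BB G$, viewed as a morphism of $\XX$; and by Corollary~\ref{cor:omega gives loop space action} the underlying morphism of $\phi$ in $\XX$ is $\Omega\BB\phi\colon\Omega\BB H\to\Omega\BB G$. The whole statement therefore reduces to the following loop-space analog of the key step in Proposition~\ref{prop:n effective group maps are n connective}: for a morphism $f\colon X\to Y$ of $\XX_\ast^{\geq 1}$ and $n\geq -2$, the map $f$ is $(n+1)$-truncated if and only if $\Omega f$ is $n$-truncated.

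I would prove this by induction on $n$, using the diagonal characterization of truncated morphisms in place of the connectivity statement \cite[Proposition 6.5.1.18]{htt}: a morphism $g$ is $(m+1)$-truncated if and only if its relative diagonal $\delta_g\colon X\to X\times_Y X$ is $m$-truncated. In an arbitrary $\infty$-topos this is immediate from the mapping-space definition of truncatedness together with the corresponding elementary fact for $\infty$-groupoids, since $\XX(Z,-)$ preserves pullbacks and hence sends $\delta_g$ to the diagonal of $\XX(Z,g)$. The second ingredient is that $\Omega$, being a right adjoint, preserves finite limits; in particular it commutes with the formation of relative diagonals via the canonical identification $\Omega(X\times_Y X)\simeq\Omega X\times_{\Omega Y}\Omega X$, so that $\delta_{\Omega f}\simeq\Omega\delta_f$.

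For the base case $n=-2$ I would note that a $(-1)$-truncated (monomorphic) map of pointed connected objects is automatically an equivalence: its source is pointed, so the composite $\ast\to X\xrightarrow{f}Y$ is the pointing of the connected object $Y$ and hence an effective epimorphism, forcing $f$ itself to be an effective epimorphism by \cite[Proposition 6.5.1.16]{htt}, and an effective epimorphism that is also a monomorphism is an equivalence. On the other side, $\Omega f$ is $(-2)$-truncated exactly when it is an equivalence, and since $\Omega$ is conservative on connected objects—by Corollary~\ref{cor:omega gives loop space action} one has $\BB\Omega\simeq\mathrm{id}$ there, so $\Omega f$ being an equivalence forces $f$ to be one—this matches $f$ being an equivalence. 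For the inductive step, the diagonal characterization gives that $f$ is $(n+1)$-truncated iff $\delta_f$ is $n$-truncated; the inductive hypothesis applied to $\delta_f$ turns this into the statement that $\Omega\delta_f\simeq\delta_{\Omega f}$ is $(n-1)$-truncated, which by the diagonal characterization again is exactly the assertion that $\Omega f$ is $n$-truncated. The same bookkeeping of indices as in Proposition~\ref{prop:n effective group maps are n connective} then yields the proposition.

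The main obstacle is the one already latent in the connectivity proof: in the inductive step the hypothesis is applied to $\delta_f\colon X\to X\times_Y X$, whose target need not be connected, so strictly speaking the lemma is about morphisms emanating from a connected object rather than morphisms internal to $\XX_\ast^{\geq 1}$. This is harmless because $\delta_f$ factors through the basepoint component of $X\times_Y X$—an inclusion of components is $(-1)$-truncated and alters neither the fibers relevant to truncatedness nor the based loop space—so one may replace $X\times_Y X$ by that component throughout. Carrying out this ``restrict to the basepoint component'' step rigorously in a general $\infty$-topos, where the splitting into components must be phrased through the truncation functors and effective epimorphisms rather than a naive use of $\pi_0$, is the only point demanding real care; the rest is formal from the diagonal characterization and the left-exactness of $\Omega$.
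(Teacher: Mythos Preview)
Your approach is essentially the same as the paper's: both reduce to the loop-space statement that $f$ in $\XX_\ast^{\geq 1}$ is $(n+1)$-truncated iff $\Omega f$ is $n$-truncated, prove it by induction via the diagonal characterization \cite[Lemma 5.5.6.15]{htt}, handle the base case by showing a $(-1)$-truncated map between pointed connected objects is an equivalence (effective epimorphism plus monomorphism), and invoke conservativity of $\Omega$ from \cite[Corollary 5.2.6.18]{ha}. The only real difference is that the paper glosses over the inductive step entirely (``we can also inductively use \cite[Lemma 5.5.6.15]{htt} in a similar way''), whereas you explicitly identify and address the subtlety that the target $X\times_Y X$ of $\delta_f$ need not lie in $\XX_\ast^{\geq 1}$; your observation that this issue is already latent in the paper's proof of Proposition~\ref{prop:n effective group maps are n connective} is correct, and your proposed fix via the basepoint component is reasonable.
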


\begin{proof}
	
	Similarly to the preceding proposition, it suffices to show that a morphism $f\colon X\to Y$ in $\XX_\ast^{\geq 1}$ is $n$-truncated if and only if $\Omega f\colon \Omega X\to \Omega Y$ is $(n-1)$-truncated. We can also inductively use \cite[Lemma 5.5.6.15]{htt} in a similar way to reduce to the case of $n=-1$. 
	
	First we show that a morphism $f\colon X\to Y$ in $\XX_\ast^{\geq 1}$ is $(-1)$-truncated if and only if it is an equivalence. We have a commutative diagram in which both pointings are effective epimorphisms: 
	\[\begin{tikzcd}
		\ast && Y \\
		& X
		\arrow[two heads, from=1-1, to=1-3]
		\arrow[two heads, from=1-1, to=2-2]
		\arrow[ "f"', from=2-2, to=1-3]
	\end{tikzcd}\]
	Because factorizations are unique by \cite[Proposition 5.2.8.17]{htt}, and because $f$ is $(-1)$-truncated, it must be the case that the lower composite in the above diagram is the unique factorization of the pointing $\ast\to Y$ in the factorization system $(0\Eff,0\Mono)$ (or its lifts to $\XX_\ast$ or $\XX_\ast^{\geq 1}$). By \cite[Corollary 6.2.3.12]{htt} we have that $f$ is also an effective epimorphism. Therefore $f$ is an equivalence (by, for instance, \cite[Corollary 1.37]{rasekh-truncations}). Of course if $f$ is an equivalence, i.e.~$(-2)$-truncated, then it is also $(-1)$-truncated.
	
	Now we check that $f\colon X\to Y$ in $\XX_\ast^{\geq 1}$ is an equivalence if and only if $\Omega f$ is an equivalence. This follows immediately from \cite[Corollary 5.2.6.18]{ha}, which shows that $\Omega$ is conservative in any $\infty$-topos. Therefore $f$ is $(-1)$-truncated if and only if $\Omega f$ is an equivalence, i.e.~$(-2)$-truncated.
\end{proof}

\begin{defn}\label{defn:n image for groups}
	Let $f\colon H\to G$ be a morphism of $\Grp(\XX)$ with factorization $H\xrightarrow{f_L} K\xrightarrow{f_R} G$ induced by $(\Eff_n^{\Grp},\Mono_n^{\Grp})$. Then we will refer to $K$ as the \textit{$n$-image} of $f$. 
\end{defn}

Note that by Propositions \ref{prop:n effective group maps are n connective} and \ref{prop:n truncated group maps are n truncated} there is no ambiguity between Definitions \ref{defn:n image for spaces} and \ref{defn:n image for groups}.

\begin{defn}
	When $n=0$ then the left class of morphisms in any of the above factorization systems will be referred to as effective epimorphisms and the right class will be called monomorphisms. 
\end{defn}

\subsection{An Orbit Stabilizer Theorem}

The classical Orbit-Stabilizer Theorem for discrete groups states that if a group $G$ acts on a set $X$ then (i) the set $Stab_G(x)=\{g\in G:gx=x\}$ is a subgroup of $G$ and (ii) there is a canonical isomorphism between the quotient $G/Stab_G(x)$ and the orbits $Orb_G(x)=\{y\in X:gx=y~\text{for some}~g\in G\}$. We prove this theorem for group objects in $\infty$-topoi here with an important caveat. The first is that asking for $Stab_G(X)$ to be a ``subgroup'' of $G$ is a much weaker condition in our setting. We only exhibit a canonical morphism of groups $Stab_G(x)\to G$. Our definitions below agree with those of \cite[Section 2.3]{gripaios-generalized-symmetries}.

\begin{defn}
	Suppose that $\Act(X,G)$ is a right action object in $\XX$ for $G\in\Grp(\XX)$ with corresponding fiber sequence $X\xrightarrow{q} X/G\xrightarrow{p} \BB G$. Suppose $x\colon \ast\to X$ is a point of $X$ and write $z\colon \ast\to X/G$ for its composition with $q$. Taking the fiber of $q$ over $z$ induces a fiber sequence \[G\xrightarrow{i} X\xrightarrow{q}X/G\xrightarrow{p} \BB G\]
	Define \textit{the orbits of $x\in X$ with respect to $p\colon X/G\to\BB G$}, denoted $Orb_p(x)$, to be $im_0(i)$. Define the \textit{stabilizer of $x\in X$ with respect to $p$}, denoted $Stab_p(x)$, to be $\cech_+(\ast\xrightarrow{z} X/G)\in\Grp(\XX)$. Note that the underlying object of $Stab_p(x)$ is the loop space object $\Omega_z(X/G)$. 
\end{defn}

\begin{thm}\label{thm:orbit stabilizer}
	Given  a right action object $\Act(X,G)$ in $\XX$ with defining morphism $p\colon X/G\to\BB G$  and a morphism $x\colon\ast\to X$ there is a canonical equivalence $G/Stab_p(x)\simeq Orb_p(x)$. 
\end{thm}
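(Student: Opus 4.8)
The plan is to realize the canonical morphism $i\colon G\to X$ as an explicit $(0\Eff,0\Mono)$-factorization $G\xrightarrow{\bar\imath}G/Stab_p(x)\xrightarrow{m}X$ into an effective epimorphism followed by a monomorphism, and then invoke uniqueness of the factorization system of Proposition \ref{prop:epimono factorization system} (in the form of the remark after Definition \ref{defn:n image for spaces}) to conclude $G/Stab_p(x)\simeq\im_0(i)=Orb_p(x)$. First I would set up the maps. By Lemma \ref{lem:BC is a right adjoint}, $\BB Stab_p(x)=\BB_\ast\cech_+(X/G,z)$ is the value at $(X/G,z)$ of the connected coreflection, so it is connected and comes with its counit $c\colon\BB Stab_p(x)\to X/G$. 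Since $p\circ c\colon\BB Stab_p(x)\to\BB G$ is a pointed map of connected objects and $\BB\colon\Grp(\XX)\xrightarrow{\simeq}\XX_\ast^{\geq 1}$ is an equivalence, there is a unique group morphism $\phi\colon Stab_p(x)\to G$ with $\BB\phi\simeq p\circ c$; this is the morphism implicit in the notation $G/Stab_p(x)$. With this $\phi$, Definition \ref{defn: puppe sequence defn} gives $G/Stab_p(x)\simeq\fib(\BB\phi)\simeq\fib(p\circ c)\simeq\BB Stab_p(x)\times_{X/G}X$, where the last identification uses $X\simeq\fib(p)$ from Remark \ref{rmk:pullback square at bottom of action}; I then set $m\colon G/Stab_p(x)\to X$ to be the resulting projection.

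The main obstacle is showing that $c$ is a monomorphism, and the difficulty is precisely that, as the excerpt stresses, truncatedness of a morphism in a general $\infty$-topos cannot be detected on fibers. I would argue as follows. Because $\BB Stab_p(x)$ is connected, its basepoint $\ast\to\BB Stab_p(x)$ is an effective epimorphism, and composing it with $c$ recovers $z\colon\ast\to X/G$. Factoring $c\simeq m'\circ e$ with $e$ an effective epimorphism and $m'$ a monomorphism, the composite $\ast\twoheadrightarrow\BB Stab_p(x)\xrightarrow{e}V\xrightarrow{m'}X/G$ is then the $(0\Eff,0\Mono)$-factorization of $z$, so that $V\simeq\im_0(z)$ is connected and pointed. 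The universal property of the coreflection (Lemma \ref{lem:BC is a right adjoint}) factors $m'$ through $c$, and a short chase using uniqueness of factorizations together with the facts that the relevant mapping space out of the connected object $\BB Stab_p(x)$ is controlled by $c$ and that $m'$ is a monomorphism shows that $e$ is an equivalence; hence $c\simeq m'$ is a monomorphism. Since monomorphisms are stable under base change and $m$ is the pullback of $c$ along $q$, the map $m$ is a monomorphism as well.

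Finally I would produce the factorization of $i$. The identity $p\circ c\simeq\BB\phi$ exhibits a map of fiber sequences over $\BB G$ from $G/Stab_p(x)\to\BB Stab_p(x)\xrightarrow{\BB\phi}\BB G$ to $X\to X/G\xrightarrow{p}\BB G$ which is the identity on $\BB G$, is $c$ on total spaces, and is $m$ on fibers. Extending one step to the left and using naturality of the connecting map, with $\Omega(\mathrm{id}_{\BB G})=\mathrm{id}_G$, yields $m\circ\bar\imath\simeq i$, where $\bar\imath\colon G\to G/Stab_p(x)$ is the connecting map of the top sequence and $i$ is the connecting map of the bottom one, the latter being exactly the inclusion $\fib_z(q)\hookrightarrow X$ obtained by rotating $X\xrightarrow{q}X/G\xrightarrow{p}\BB G$ (cf.\ Corollary \ref{cor:omega gives loop space action}). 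By Example \ref{exam:puppe sequence example} the map $\bar\imath$ is the quotient map of the induced $Stab_p(x)$-action on $G$, hence an effective epimorphism by the corollary that every quotient map $X\to X/G$ is an effective epimorphism. Thus $G\xrightarrow{\bar\imath}G/Stab_p(x)\xrightarrow{m}X$ is a factorization of $i$ of the required type, and uniqueness of the $(0\Eff,0\Mono)$-factorization identifies $G/Stab_p(x)$ with $\im_0(i)=Orb_p(x)$, giving the desired canonical equivalence. The only genuinely technical point is the monomorphism claim for $c$ in the second paragraph; everything else is formal manipulation of the fiber sequences already produced in the excerpt.
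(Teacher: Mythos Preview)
Your proof is correct and follows essentially the same strategy as the paper's: factor $i\colon G\to X$ through $G/Stab_p(x)$ as an effective epimorphism followed by a monomorphism, then invoke uniqueness of the $(0\Eff,0\Mono)$-factorization. For what you flag as the only genuinely technical point---that $c\colon\BB Stab_p(x)\to X/G$ is a monomorphism---the paper bypasses your coreflection argument entirely by citing \cite[Proposition~6.2.3.4]{htt}, which gives directly that the comparison map from the geometric realization of the \v Cech nerve of $z$ to $X/G$ is $(-1)$-truncated.
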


\begin{proof}

	First we are explicit about the construction of the morphism $\phi\colon Stab_p(x)\to G$. By definition it is $\cech_+$ applied to $p\colon X/G\to \BB G$, but for the case of exposition it will be useful to think of this as a diagram $\Delta^{\op}\times\Delta^1\to\XX$:
	\[\begin{tikzcd}
		\vdots & \vdots \\
		{\Omega_z(X/G)\times\Omega_z(X/G)} & {G\times G} \\
		{\Omega_z(X/G)} & G \\
		\ast & \ast \\
		{X/G} & {\BB G}
		\arrow[shift right, from=1-1, to=2-1]
		\arrow[shift left, from=1-1, to=2-1]
		\arrow[shift right=3, from=1-1, to=2-1]
		\arrow[shift left=3, from=1-1, to=2-1]
		\arrow[shift right, from=1-2, to=2-2]
		\arrow[shift left, from=1-2, to=2-2]
		\arrow[shift left=3, from=1-2, to=2-2]
		\arrow[shift right=3, from=1-2, to=2-2]
		\arrow["{\phi_2}", from=2-1, to=2-2]
		\arrow[from=2-1, to=3-1]
		\arrow[shift left=2, from=2-1, to=3-1]
		\arrow[shift right=2, from=2-1, to=3-1]
		\arrow[from=2-2, to=3-2]
		\arrow[shift left=2, from=2-2, to=3-2]
		\arrow[shift right=2, from=2-2, to=3-2]
		\arrow["{\phi_1}", from=3-1, to=3-2]
		\arrow[shift right, from=3-1, to=4-1]
		\arrow[shift left, from=3-1, to=4-1]
		\arrow[shift left, from=3-2, to=4-2]
		\arrow[shift right, from=3-2, to=4-2]
		\arrow[from=4-1, to=4-2, "\phi_0"]
		\arrow["z"', from=4-1, to=5-1]
		\arrow[from=4-2, to=5-2]
		\arrow["p", from=5-1, to=5-2]
	\end{tikzcd}\]
	
	By definition, $\BB\phi\colon\BB Stab_p(x)\to \BB G$ is obtained by taking the colimit of the non-augmented part of the above diagram. This factors the above diagram so that the following commutes:
	\[\begin{tikzcd}
		\vdots & \vdots \\
		{\Omega_z(X/G)} & G \\
		\ast & \ast \\
		{\BB Stab_p(x)} & {\BB G} \\
		{X/G} & {\BB G}
		\arrow[from=1-1, to=2-1]
		\arrow[shift left=2, from=1-1, to=2-1]
		\arrow[shift right=2, from=1-1, to=2-1]
		\arrow[from=1-2, to=2-2]
		\arrow[shift left=2, from=1-2, to=2-2]
		\arrow[shift right=2, from=1-2, to=2-2]
		\arrow["{\phi_1}", from=2-1, to=2-2]
		\arrow[shift right, from=2-1, to=3-1]
		\arrow[shift left, from=2-1, to=3-1]
		\arrow[shift left, from=2-2, to=3-2]
		\arrow[shift right, from=2-2, to=3-2]
		\arrow["{\phi_0}", from=3-1, to=3-2]
		\arrow[from=3-1, to=4-1]
		\arrow[from=3-2, to=4-2]
		\arrow["{\BB\phi}", from=4-1, to=4-2]
		\arrow[from=4-1, to=5-1]
		\arrow[from=4-2, to=5-2,"\simeq"]
		\arrow["p", from=5-1, to=5-2]
	\end{tikzcd}\]
	
	Because $\XX$ is an $\infty$-topos we have that $\ast\to\BB Stab_p(x)$ is an effective epimorphism, i.e.~$\BB Stab_p(x)$ is connected. On the other hand, \cite[Proposition 6.2.3.4]{htt} implies that $\BB Stab_p(x)\to X/G$ is $(-1)$-truncated. Recall that we construct the quotient of $G$ by $Stab_p(x)$ via $\phi$ by taking the pullback of $\ast\to\BB G$ along $\BB\phi\colon\BB Stab_p(x)\to \BB G$.

	Therefore we have a composite of pullback squares 
	\[\begin{tikzcd}
		G & {G/Stab_p(x)} & X & \ast \\
		\ast & {\BB Stab_p(x)} & {X/G} & {\BB G}
		\arrow[from=1-1, to=1-2]
		\arrow[two heads, from=1-1, to=2-1]
		\arrow["\lrcorner"{anchor=center, pos=0.125}, draw=none, from=1-1, to=2-2]
		\arrow[from=1-2, to=1-3]
		\arrow[two heads, from=1-2, to=2-2]
		\arrow["\lrcorner"{anchor=center, pos=0.125}, draw=none, from=1-2, to=2-3]
		\arrow[from=1-3, to=1-4]
		\arrow[two heads, from=1-3, to=2-3]
		\arrow["\lrcorner"{anchor=center, pos=0.125}, draw=none, from=1-3, to=2-4]
		\arrow[two heads, from=1-4, to=2-4]
		\arrow[from=2-1, to=2-2]
		\arrow[from=2-2, to=2-3]
		\arrow["{\BB\phi}"', curve={height=12pt}, from=2-2, to=2-4]
		\arrow[from=2-3, to=2-4]
	\end{tikzcd}\]
	in which each vertical morphism is an effective epimorphism because $\BB G$ is connected and effective epimorphisms are stable under pullback. Similarly, because $\ast\to\BB Stab_p(x)$ is an effective epimorphism so is $G\to G/Stab_p(x)$. Finally, we have already seen that $\BB Stab_p(x)\to X/G$ is $(-1)$-truncated and therefore so is $G/Stab_p(x)\to X$ by \cite[Remark 5.5.6.12]{htt}. By uniqueness of factorizations (see \cite[Proposition 5.2.8.17]{htt}) we must have that $G\to G/Stab_p(x)\to X$ is the $0$-connective/$(-1)$-truncated factorization of $G\to X$. In other words, $G/Stab_p(x)$ is the $0$-image of $G\to X$ in the $(\Eff_0,\Mono_0)$ factorization system on $\XX$. So there is an equivalence making the following diagram commute which is unique up to a contractible space of choices.
	\[\begin{tikzcd}
		& {Orb_p(x)} \\
		G && X \\
		& {G/Stab_p(x)}
		\arrow[from=1-2, to=2-3]
		\arrow[from=2-1, to=1-2]
		\arrow[from=2-1, to=3-2]
		\arrow["\simeq"', from=3-2, to=1-2]
		\arrow[from=3-2, to=2-3]
	\end{tikzcd}\]
\end{proof}

\section{Normal Maps of Group Objects}

Going forward, we will write $G$ for: a group object $G_\bullet\in\Grp(\XX)$; the underlying pointed object of that group object $s_0^\ast G\in\XX_\ast$; the underlying object $G_1\in\XX$. Which category $G$ is living in will be clear from context.

Suppose $G$ is a group object in $\Set$ and $H\leq G$ is a subgroup. By composing with the translation action of $G$ on itself, $G$ inherits a right $H$-action and we may form the set of cosets, or orbits, $G/H$. The notion of \textit{normality} is an answer to the question: when does $G/H$ admit a group structure compatible with the quotient map (of sets) $G\to G/H$? That question can be answered in the affirmative precisely when $H$ is a normal subgroup. There are many equivalent definitions of ``normal,'' some of which make sense in the setting of higher algebra and some of which do not. 

Our definition of normality below, essentially the same as that of Prasma \cite{prasmahmtpynorm}, \textit{defines} normality of a ``subgroup'' $H$ in $G$ to be the property that the quotient of $G$ by the induced right $H$-action admits a group structure. However, there are immediately two significant differences between our case and the classical case.

First, notice that ``being a subgroup'' is not a well-behaved concept in homotopy theory. For instance, we can clearly take $S^1$ to be a subgroup of $\mathbb{C}$ in classical group theory, but up to homotopy the inclusion becomes the terminal map $S^1\to \ast$. Therefore, again following Prasma (who in turn is following Farjoun and Segev \cite{farjounSegev-homotopynormal}), we consider normality for arbitrary morphisms of group objects $H\to G$. 

Secondly, we will soon see that if $H\to G$ is a morphism of group objects in an $\infty$-topos (indeed, simply take $\XX=\Spaces$), then there may be more than one group structure on $G/H$ so that the quotient map $G\to G/H$ is a group morphism. This is in contrast to the discrete case, where the group structure on $G/H$, if it exists, will be unique. To deal with this added complexity, we make a change which is common in higher algebra and replace the \textit{property} of being normal with \textit{structure} or \textit{data}. This leads us to the following definition of \textit{normality data} for a morphism of group objects.

\begin{defn}\label{defn:normality data}
	Let $\XX$ be an $\infty$-topos and let $\phi\colon H\to G$ be a morphism in $\Grp(\XX)$. We say a morphism $\pi\colon\BB G\to X$ in $\XX_\ast^{\geq 1}$ is a \textit{normality datum for $\phi$} if the fiber of $\pi$ is equivalent to $\BB\phi\colon \BB H\to \BB G$. If $\phi$ admits at least one normality datum, we will say that $\phi$ is a \textit{normal map of group objects}. 
\end{defn}

\begin{exam}
	Let $F\to E\to B$ be a fiber sequence of connected pointed objects in an $\infty$-topos. Then $\Omega F\to \Omega E$ is normal with normality datum $E\to B$. Note that by assuming $F$ is connected we guarantee that $E\to B$ is 1-connective and so $\Omega E\to \Omega B$ is an effective epimorphism, which also allows us to apply Proposition \ref{prop:normality data are effective epis from G}. Note that every normality datum is of this form.
\end{exam}

The following proposition makes it clear that, indeed, a normality datum induces a group structure on the quotient $G/H$.

\begin{prop}
	Let $\phi\colon H\to G$ be a morphism of $\Grp(\XX)$ with a normality datum $\pi\colon \BB G\to X$. Then the underlying object in $\XX$ of ~$\cech(X)$ is equivalent to the quotient $G/H$ of Definition \ref{defn: puppe sequence defn}. 
\end{prop}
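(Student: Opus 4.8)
The plan is to identify both sides with the loop space $\Omega X$ and then invoke the pasting law for pullbacks. The first step computes the underlying object of $\cech(X)$. Since $\cech$ is by construction an inverse to the equivalence $\BB_\ast\colon\Grp(\XX)\to\XX_\ast^{\geq 1}$, we have $\BB(\cech(X))\simeq X$; moreover the proof of Lemma \ref{lem:free group is cech sigma} supplies a natural equivalence $s_0^\ast\cech\simeq\Omega^{\geq 1}$, so the underlying pointed object $s_0^\ast\cech(X)$ is equivalent to $\Omega X$. Forgetting the basepoint, the underlying object of $\cech(X)$ in $\XX$ is $\Omega X$.

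Next I would unwind the two geometric inputs. By Definition \ref{defn:normality data}, a normality datum $\pi\colon\BB G\to X$ exhibits $\BB\phi\colon\BB H\to\BB G$ as the fiber of $\pi$; concretely this is a pullback square with corners $\BB H,\ast,\BB G,X$ in which the left edge is $\BB\phi$, the bottom edge is $\pi$, and the right edge is the basepoint $\ast\to X$. On the other hand, by the construction of the Puppe sequence in Example \ref{exam:puppe sequence example} (recorded in Definition \ref{defn: puppe sequence defn}), the quotient $G/H$ is precisely $\fib(\BB\phi)$, i.e.\ the pullback of $\ast\to\BB G\xleftarrow{\BB\phi}\BB H$.

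The main step is to paste these two fiber squares side by side along their shared edge $\BB\phi$. Taking the defining pullback for $\fib(\BB\phi)$ (corners $\fib(\BB\phi),\BB H,\ast,\BB G$, whose right edge is $\BB\phi$) and the normality pullback (corners $\BB H,\ast,\BB G,X$, whose left edge is $\BB\phi$), the pasting law for pullbacks shows that the outer rectangle, with corners $\fib(\BB\phi),\ast,\ast,X$, is again a pullback. Both its bottom and right legs are the basepoint $\ast\to X$ (here using that $\pi$ is pointed, so that $\pi$ composed with $\ast\to\BB G$ is the basepoint of $X$), so the outer rectangle computes $\ast\times_X\ast\simeq\Omega X$. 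Hence $G/H\simeq\fib(\BB\phi)\simeq\Omega X$, which agrees with the underlying object of $\cech(X)$ from the first step.

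I do not expect a genuine obstacle: the content is entirely the three standard identifications ($\cech$ versus $\Omega$, a normality datum versus a fiber square, and $G/H$ versus $\fib(\BB\phi)$) together with a single application of the pasting law. The only point requiring care is the bookkeeping of basepoints, so that the outer rectangle really is the loops square $\ast\times_X\ast$ rather than some other pullback; this is where pointedness of $\pi$ and of the objects of $\XX_\ast^{\geq 1}$ is used. Note finally that the proposition only asserts an equivalence of underlying objects in $\XX$, so no compatibility between the group structure on $\cech(X)$ and any structure on the quotient needs to be checked.
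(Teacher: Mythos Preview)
Your proposal is correct and follows essentially the same route as the paper: both identify the underlying object of $\cech(X)$ with $\Omega X$ (you via $s_0^\ast\cech\simeq\Omega^{\geq 1}$, the paper via Corollary~\ref{cor:omega gives loop space action}), both identify $G/H$ with $\fib(\BB\phi)$ from the Puppe sequence, and both use the fiber sequence $\BB H\to\BB G\to X$ to conclude $\fib(\BB\phi)\simeq\Omega X$. Your explicit invocation of the pasting law is just a spelled-out version of what the paper compresses into the phrase ``Because there is a fiber sequence \ldots we have that $G/H\simeq\Omega X$.''
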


\begin{proof}
	Recall from Example \ref{exam:puppe sequence example} that the right action of $H$ on $G$ induced by $\phi$ is obtained by taking the translation action of $G$ on itself, corresponding to the pointing $\ast\to\BB G$, and pulling it back along $\BB\phi\colon \BB H\to \BB G$. In other words, $G/H$, which is the geometric realization of the right $H$-action on $G$ induced by $\phi$, is equivalent to the fiber of $\BB\phi$. Because there is a fiber sequence $\BB H\xrightarrow{\BB\phi}\BB G\xrightarrow{\pi} X$ we have that $G/H\simeq \Omega X$. On the other hand, $\cech(X)$ is obtained, by definition, by taking the \v Cech nerve of the pointing $\ast\to X$. The arguments in the proof of Corollary \ref{cor:omega gives loop space action} make it clear that the underlying object of $\cech(X)$, in the sense of Definition \ref{defn:group forgetful functor}, is equivalent to $\Omega X$. 
\end{proof}

\begin{rmk}
	There is potentially some ambiguity in Definition \ref{defn:normality data}, in that we do not specify in which category we are forming the fiber of $\pi$. In general, the fiber in $\XX_\ast^{\geq 1}$ of a morphism of pointed connected objects need not be equivalent to the fiber taken in $\XX_\ast$. The former is obtained from the latter by deleting everything which is not in the same ``connected component'' of the base point (see Corollary \ref{cor: limits computed by including then BC}). However, by demanding that the fiber of $\pi$ be equivalent to $\BB H$, which is necessarily connected, we have that the two fibers are in agreement. 
\end{rmk}

We reiterate that a morphism of group objects can be normal in more than one way. This is illustrated by the following two examples, the first of which is due to Kiran Luecke. 	See also Example \ref{example:third iso counterexample S1 actions} for another case in which the same group map, in this case the trivial map $\mathbb{Z}\to\Omega S^2$, can have more than one normality datum.

\begin{exam}\label{exam: kirans EM space nonunique normality data example}
	Let $f\colon K(\ZZ,2)\to K(\ZZ,4)$ generate the degree 4 integral cohomology of $K(\ZZ,2)$. Let $F$ be the fiber of $f$ and note that because $f$ is not null it is not the case that $F$ splits as a product. Taking loops gives a map $\Omega f\colon K(\ZZ,1)\to K(\ZZ,3)$ which must be null because $K(\ZZ,1)$ has no integral third cohomology. Therefore we must have that $\Omega F\to K(\ZZ,1)\to K(\ZZ,3)$ is a fiber sequence, whence $\Omega F\simeq K(\ZZ,1)\times K(\ZZ,3)$. Therefore we have a fiber sequence $K(\ZZ,1)\times K(\ZZ,3)\to\ast\to F$. On the other hand, we have a fiber sequence $K(\ZZ,1)\times K(\ZZ,3)\to\ast\to K(\ZZ,2)\times K(\ZZ,4)$. But $F$ is not equivalent to $K(\ZZ,2)\times K(\ZZ,4)$ so we have two inequivalent normality data for the map $K(\ZZ,1)\times K(\ZZ,3)\to\ast$.
\end{exam}

\begin{exam}
	For another example, let $G$ be an object of $\Grp(\XX)$ which admits two distinct $\EE_2$-algebra structures lifting its $\EE_1$-structure. Each of these $\EE_2$-structures will produce distinct deloopings, say $BG$ and $BG'$. Then $\ast\to BG$ and $\ast\to BG'$ are distinct normality data for the same map, namely $G\to \ast$.  
\end{exam}

\begin{rmk}\label{rmk:the wrong def of normality}
	Following classical group theory, one might wish to say that a map of group objects $\phi\colon H\to G$ is normal whenever the dotted arrow below is an equivalence (where the cofiber and the fiber are both taken in $\Grp(\XX)$):
	\[\begin{tikzcd}
		G \\
		& {\fib(\cof(\phi))} & H \\
		& \ast & \cof(\phi)
		\arrow[dashed, from=1-1, to=2-2]
		\arrow[curve={height=-12pt}, from=1-1, to=2-3]
		\arrow[curve={height=12pt}, from=1-1, to=3-2]
		\arrow[from=2-2, to=2-3]
		\arrow[from=2-2, to=3-2]
		\arrow["\lrcorner"{anchor=center, pos=0.125}, draw=none, from=2-2, to=3-3]
		\arrow["{\cof(\phi)}", from=2-3, to=3-3]
		\arrow[from=3-2, to=3-3]
	\end{tikzcd}\]
	We will see in Section \ref{sec:normal closure monadic} that $\cof(\phi)$ does behave something like a normal closure. However, in the setting of an $\infty$-topos, where fibers and cofibers are inherently \textit{homotopical}, this definition is too strong. 
	
	To see this, consider the multiplication by $2$ map $2\colon \ZZ\to\ZZ$. This deloops to the index $2$ map $2\colon S^1\to S^1$ whose cofiber (in $\Spaces_\ast^{\geq 1}$) is equivalent to $\mathbb{R}P^2$. If the above definition recovered classical normality then we would require $S^1\to \fib(S^1\to\mathbb{R}P^2)$ to be an equivalence. However, a cursory examination of the long exact sequence in homotopy groups reveals that this cannot be the case (for instance, $\pi_2(\fib(S^1\to\mathbb{R}P^2))\cong\ZZ$). So the preceding definition of normality does not contain classical normality as a special case (unlike Definition \ref{defn:normality data}). It is not hard to check that the above definition of normality also makes Noether's First Isomorphism Theorem false. 
\end{rmk}

\subsection{A Category of Normality Data}

Clasically, e.g.~in Galois theory, one often considers the lattice of normal subgroups of a discrete group $G$. This lattice is closely related to the set of quotient groups of $G$ and one passes from the latter to the former by taking kernels of quotient maps. We do something similar here, but replace the lattice of normal subgroups with a category comprising all normality data associated to a group object $G$. First we note an important property of all normality data.

\begin{lem}\label{lem:normality data are 1connective}
	Let $\phi\colon H\to G$ be a morphism of $\Grp(\XX)$ and let $\pi\colon \BB G\to X$ be a normality datum for $\phi$. Then $\BB G\to X$ is $1$-connective. 
\end{lem}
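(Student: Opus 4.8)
The plan is to reduce the claim to a connectivity statement about the fiber of $\pi$ and then invoke Lemma \ref{lem:connective fiber connective map}. Since $X \in \XX_\ast^{\geq 1}$, the target of $\pi$ is pointed and connected, so that lemma applies directly to $\pi\colon \BB G \to X$: the morphism $\pi$ is $1$-connective if and only if its fiber $\fib(\pi)$ is a $1$-connective object of $\XX$. This is exactly the shape of hypothesis needed, so the whole argument funnels through checking connectivity of the fiber.

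It therefore suffices to show that $\fib(\pi)$ is $1$-connective, i.e.\ connected. By the definition of a normality datum (Definition \ref{defn:normality data}), the fiber of $\pi$ is equivalent to $\BB\phi\colon \BB H \to \BB G$; in particular $\fib(\pi) \simeq \BB H$. But $\BB H = \BB_\ast H$ lies in $\XX_\ast^{\geq 1}$ by construction, since the delooping of a group object is always pointed and connected (cf.\ the bottom row of the diagram in Lemma \ref{lem:koszul duality diagram for actions}). Hence $\BB H$ is connected, which is to say $1$-connective. Combining these two observations, Lemma \ref{lem:connective fiber connective map} yields that $\pi\colon \BB G \to X$ is $1$-connective, as claimed.

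The one point requiring a moment's care — and the closest thing to an obstacle in an otherwise immediate argument — is that Lemma \ref{lem:connective fiber connective map} concerns fibers computed in $\XX$, whereas the fiber in Definition \ref{defn:normality data} could a priori be computed in $\XX_\ast^{\geq 1}$. Because $\BB H$ is connected, however, the fiber of a morphism of connected pointed objects taken in $\XX_\ast^{\geq 1}$ agrees with the fiber taken in $\XX$ (cf.\ the remark following Definition \ref{defn:normality data}, which observes precisely this coincidence when the fiber is connected). Thus there is no ambiguity in identifying $\fib(\pi)$ with $\BB H$, and the reduction above is legitimate.
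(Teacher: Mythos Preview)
Your proof is correct and takes essentially the same approach as the paper. The only difference is cosmetic: the paper re-derives the key step directly from \cite[Proposition 6.5.1.16 (6)]{htt} (pulling back $\pi$ along the effective epimorphism $\ast\to X$ and using that $\BB H\to\ast$ is $1$-connective), whereas you invoke Lemma~\ref{lem:connective fiber connective map}, which is precisely that argument already packaged earlier in the paper---so your version is arguably the cleaner of the two.
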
 

\begin{proof}
	If $\BB G\to X$  is a normality datum for $\phi$ then there is a fiber sequence $\BB H\xrightarrow{\BB\phi} \BB G\to X$. This corresponds to a pullback square:
	\[\begin{tikzcd}
		{\BB H} & {\BB G} \\
		\ast & X
		\arrow["\BB\phi", from=1-1, to=1-2]
		\arrow[from=1-1, to=2-1]
		\arrow["\lrcorner"{anchor=center, pos=0.125}, draw=none, from=1-1, to=2-2]
		\arrow["\pi", from=1-2, to=2-2]
		\arrow[from=2-1, to=2-2]
	\end{tikzcd}\]
	Because $X$ is connected, the pointing $\ast\to X$ is an effective epimorphism. Because $\BB H$ is connected, i.e.~$1$-connective, the terminal map $\BB H$ is $1$-connective. Then from \cite[Proposition 6.5.1.16 (6)]{htt} we have that $\pi$ must also be $1$-connective. 
\end{proof}

\begin{rmk}
	In the case that $\XX=\Spaces$, Lemma \ref{lem:normality data are 1connective} implies that the normality datum $\BB G\to X$ is a surjection on $\pi_1$. It follows from this, e.g.~by the long exact sequence in homotopy, that the associated quotient map is a surjection on $\pi_0$. This should not be surprising as one would expect ``quotient maps'' to be effective epimorphisms in general. This is generalized, and made more precise, by Proposition \ref{prop:normality data are effective epis from G}.
\end{rmk}

\begin{defn}
	Let $\XX$ be an $\infty$-topos and $G\in\Grp(\XX)$.
	\begin{enumerate}
		\item Define the $\infty$-category $\Nrml(G)$ to be the full subcategory of the slice category $(\XX_\ast^{\geq 1})^{\backslash \BB G}$ spanned by the $1$-connective morphisms. 
		\item Define the \textit{underlying group} functor $U_G\colon\Nrml(G)\to \Grp(\XX)_{/G}$ to be the composite of $\fib\colon \Nrml(G)\to (\XX_\ast^{\geq 1})_{/\BB G}$ with the equivalence $\cech_{/\BB G}\colon (\XX_\ast^{\geq 1})_{/\BB G}\to\Grp(\XX)_{/G}$. 
		\item If $\phi\colon H\to G$ is a morphism of $\Grp(\XX)$, define the $\infty$-category of normality data for $\phi$, denoted $\Nrml_\phi(G)$, to be the following pullback:
		\[\begin{tikzcd}
			{\Nrml_\phi(G)} & {\Nrml(G)} \\
			{\{\phi\}} & {\Grp(\XX)_{/G}}
			\arrow[from=1-1, to=1-2]
			\arrow[from=1-1, to=2-1]
			\arrow["\lrcorner"{anchor=center, pos=0.125}, draw=none, from=1-1, to=2-2]
			\arrow[from=1-2, to=2-2, "U_G"]
			\arrow[from=2-1, to=2-2]
		\end{tikzcd}\]
	\end{enumerate} 
\end{defn}

Note that \textit{any} morphism $\BB G\to X$ in $\XX_\ast^{\geq 1}$ can be used to produce a normality datum. Specifically, one takes a the fiber of $\BB G\to X$ in $\XX_\ast$ to obtain a fiber sequence $F\to \BB G\to X$. Beacuse the fiber was taken in $\XX_\ast$ (rather than $\XX_\ast^{\geq 1}$), it need not be the case that $F$ is connected and therefore, by definition, $\BB G\to X$ cannot be a normality datum for any group map. However, we may apply $\cech_+\colon \XX_\ast\to\Grp(\XX)$ to obtain a fiber sequence in $\Grp(\XX)$. For the sake of exposition, let us be imprecise and write $\Omega$ instead of $\cech$ and denote this fiber sequence by $\Omega F\to G\to \Omega X$. Applying $\BB_\ast$ we obtain a new fiber sequence $\BB\Omega F\to\BB G\to \BB\Omega X$ in $\XX_\ast^{\geq 1}$. Because $X$ is connected, we have that $\BB\Omega X\simeq X$ (see Lemma \ref{lem:BC is a right adjoint}). Hence we now have a normality datum for the group map $\Omega F\to G$ given by an equivalent map $\BB G\to X$. By restricting to $1$-connective morphisms, we remove the ``overcounting'' caused by such ambiguity. 

\begin{prop}\label{prop:normality data are effective epis from G}
	The fully faithful inclusion $\Nrml(G)\hookrightarrow (\XX_\ast^{\geq 1})^{\backslash \BB G}$ followed by $\cech^{\backslash \BB G}\colon (\XX_\ast^{\geq 1})^{\backslash\BB G}\to\Grp(\XX)^{\backslash G}$, which is an equivalence, has essential image the full subcategory spanned by group morphisms $G\to Q$ which are effective epimorphisms on underlying objects of $\XX$.
\end{prop}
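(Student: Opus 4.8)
The plan is to exploit that $\cech^{\backslash\BB G}$ is an equivalence of $\infty$-categories and that both $\Nrml(G)$ and the purported essential image are \emph{full} subcategories; hence the statement reduces to a pointwise comparison of objects. Concretely, an object of $(\XX_\ast^{\geq 1})^{\backslash\BB G}$ is a morphism $\pi\colon\BB G\to X$ with $X$ connected, and $\cech^{\backslash\BB G}$ sends it to the group morphism $\cech(\pi)\colon G\simeq\cech(\BB G)\to\cech(X)$. So it suffices to prove that $\pi$ is $1$-connective if and only if the underlying map in $\XX$ of $\cech(\pi)$ is an effective epimorphism; running this equivalence over all $\pi$ then identifies the essential image of $\Nrml(G)$ with exactly the full subcategory of $\Grp(\XX)^{\backslash G}$ on the effective epimorphisms.

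First I would identify the underlying map of $\cech(\pi)$. Since $\cech$ is inverse to $\BB_\ast$ and, on underlying pointed objects, $s_0^\ast\cech\simeq\Omega^{\geq 1}$ (as recorded in the proof of Lemma \ref{lem:free group is cech sigma}), the underlying morphism of $\cech(\pi)$ is $\Omega\pi\colon\Omega\BB G\to\Omega X$, that is, $G\to\cech(X)$ on underlying objects. Thus the claim becomes: $\pi$ is $1$-connective in $\XX$ if and only if $\Omega\pi$ is an effective epimorphism in $\XX$.

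The key input is Proposition \ref{prop:n effective group maps are n connective} applied with $n=0$. Set $Q:=\cech(X)$ and $f:=\cech(\pi)\colon G\to Q$. By that proposition, $f$ is an effective epimorphism on underlying objects if and only if $f\in\Eff_0^{\Grp}$, and by the definition of $\Eff_0^{\Grp}$ (transferred from $1\Eff_\ast^{\geq 1}$ along $\cech$, reflecting the off-by-one in the defining convention of $(\Eff_n^{\Grp},\Mono_n^{\Grp})$) this holds if and only if $\BB f$ is $1$-connective. Since $\cech$ and $\BB_\ast$ are mutually inverse equivalences and $X$ is connected we have $\BB\cech X\simeq X$ (Lemma \ref{lem:BC is a right adjoint}), so $\BB f\simeq\pi$; chasing this equivalence shows $\pi$ is $1$-connective exactly when $f$ is an effective epimorphism on underlying objects, which is what we wanted.

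The main obstacle I anticipate is bookkeeping rather than conceptual: one must track \emph{which} $\infty$-category connectivity is being measured in (the morphism $\pi$ lives in $\XX_\ast^{\geq 1}$, but $1$-connectivity refers to its image in $\XX$), and must respect the index shift built into the definition of $(\Eff_n^{\Grp},\Mono_n^{\Grp})$. One should also check that the equivalence $\BB f\simeq\pi$ is compatible with passage to underlying objects, which is precisely where the inverse pair $\cech,\BB_\ast$ and the fact that $\BB_\ast\cech_+$ is the identity on connected objects (Lemma \ref{lem:BC is a right adjoint}) are used. Alternatively, one can bypass the factorization-system machinery and invoke only the core computation inside the proof of Proposition \ref{prop:n effective group maps are n connective}---that a map of $\XX_\ast^{\geq 1}$ is $(n+1)$-connective if and only if its loops are $n$-connective---specialized to $n=0$.
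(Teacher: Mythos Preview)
Your proposal is correct and follows essentially the same approach as the paper: both reduce to the equivalence ``$\pi$ is $1$-connective iff $\cech(\pi)$ is an effective epimorphism on underlying objects,'' and both invoke Proposition~\ref{prop:n effective group maps are n connective} with $n=0$ for this. Your write-up is more explicit about the reduction to a pointwise comparison of objects and about the identification $\BB\cech(\pi)\simeq\pi$, whereas the paper compresses these into a single sentence, but the substance is the same.
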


\begin{proof}
	It suffices to show that a group morphism $\phi\colon G\to Q$ is an effective epimorphism if and only if $\BB G\to\BB Q$ is $1$-connective. Given that, as a morphism in $\XX$, $\phi\simeq \Omega\BB\phi$, this follows immediately from Proposition \ref{prop:n effective group maps are n connective}.
\end{proof}

This characterization of normality allows one to produce examples more readily.

\begin{exam}
	Let $\phi\colon R\to S$ be a morphim of $\EE_1$-ring spectra, as in \cite[Section 7.1]{ha}, with fiber $\fib(\phi)\to R$ (which is a morphism of $\EE_1$-rings as well).  Suppose that $\pi_0(\phi)\colon\pi_0(R)\to\pi_0(S)$ is surjective and $\pi_0(R)$ is left Artinian. Applying the units functor $\mathrm{GL}_1\colon \Alg_{\EE_1}(\Spectra)\to \Grp(\Spaces)$, which is a right adjoint, gives a fiber sequence in $\Grp(\Spaces)$ \[\mathrm{GL}_1(\fib(\phi))\to\mathrm{GL}_1(R)\to\mathrm{GL}_1(S)\] in which the second map is an effective epimorphism by \cite[Lemma 3.4]{bartel-lenstra-isogenies}. It follows that the first map has a normality datum given by by $\mathrm{BGL}_1(R)\to\mathrm{BGL}_1(S)$. It would be interesting to know some non-trivial normal maps to groups of units like $\mathrm{GL}_1(\sph)$. 
\end{exam}

\begin{exam}
	For any object $G\in\Grp(\XX)$, the truncation morphism (recall that truncation in $\Grp(\XX)$ agrees with truncation in $\XX$ by \cite[Proposition 2.24]{beardsperoux}) $G\to \tau_{\leq n}G$ is evidently an effective epimorphism. It follows that its fiber, the $n^{th}$ object of the Whitehead tower, is a normal map with quotient the truncation: \[\tau^{>n}G\to G\to\tau_{\leq n}G \]
\end{exam}

\begin{cor}
	Let $\phi\colon H\to G$ be a morphism of $\Grp(\XX)$. Then the category of normality data for $\phi$ is equivalent to the pullback of the cospan 
	\[\begin{tikzcd}
		{\{\phi\}} & {\Grp(\XX)_{/G}} & {\Grp(\XX)^{\backslash G}_{epi}}
		\arrow[from=1-1, to=1-2]
		\arrow["\fib", from=1-3, to=1-2]
	\end{tikzcd}\]
	where $\Grp(\XX)^{\backslash G}_{epi}$ denotes the full subcategory of $\Grp(\XX)^{\backslash G}$ spanned by the epimorphisms. 
	
\end{cor}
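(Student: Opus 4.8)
The plan is to leverage Proposition~\ref{prop:normality data are effective epis from G}, which already identifies $\Nrml(G)$ with the subcategory of epimorphisms out of $G$. Concretely, I would first recall that the composite $\cech^{\backslash \BB G}\colon \Nrml(G)\hookrightarrow (\XX_\ast^{\geq 1})^{\backslash \BB G}\to \Grp(\XX)^{\backslash G}$ restricts to an equivalence onto $\Grp(\XX)^{\backslash G}_{epi}$. Here one invokes the convention that the $0$-effective group maps are precisely the ``epimorphisms'' of $\Grp(\XX)$, which by Proposition~\ref{prop:n effective group maps are n connective} are exactly the morphisms that are effective epimorphisms on underlying objects; thus the essential image computed in Proposition~\ref{prop:normality data are effective epis from G} is precisely $\Grp(\XX)^{\backslash G}_{epi}$.

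The heart of the argument is to check that this equivalence carries $U_G$ to the fiber functor $\fib\colon \Grp(\XX)^{\backslash G}_{epi}\to \Grp(\XX)_{/G}$, i.e.\ that the square
\[\begin{tikzcd}
	{\Nrml(G)} & {\Grp(\XX)_{/G}} \\
	{\Grp(\XX)^{\backslash G}_{epi}} & {\Grp(\XX)_{/G}}
	\arrow["{U_G}", from=1-1, to=1-2]
	\arrow["{\cech^{\backslash \BB G}}"', from=1-1, to=2-1]
	\arrow["{\mathrm{id}}", from=1-2, to=2-2]
	\arrow["\fib"', from=2-1, to=2-2]
\end{tikzcd}\]
commutes, the left vertical map being an equivalence. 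Unwinding the definition of $U_G$, the clockwise composite sends a $1$-connective $\pi\colon \BB G\to X$ to $\cech_{/\BB G}$ applied to the fiber of $\pi$ taken in $\XX_\ast^{\geq 1}$, whereas the counterclockwise composite sends $\pi$ to the fiber computed in $\Grp(\XX)$ of $\cech^{\backslash \BB G}(\pi)\colon G\to \cech(X)$. Since $\cech\colon \XX_\ast^{\geq 1}\xrightarrow{\simeq}\Grp(\XX)$ is an equivalence it preserves all limits, and in particular carries the fiber sequence $\fib(\pi)\to \BB G\xrightarrow{\pi} X$ to a fiber sequence $\cech(\fib(\pi))\to G\to \cech(X)$; because $\cech$ sends $\BB G$ to $G$ it is moreover compatible with the slice and coslice equivalences over and under those objects. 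Hence $\cech_{/\BB G}(\fib_{\XX_\ast^{\geq 1}}(\pi))\simeq \fib_{\Grp(\XX)}(\cech^{\backslash \BB G}(\pi))$ naturally in $\pi$, which is exactly the asserted commutativity.

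With the square in hand I would finish formally: the cospan defining $\Nrml_\phi(G)$, namely $\{\phi\}\to \Grp(\XX)_{/G}\xleftarrow{U_G}\Nrml(G)$, is identified via the equivalence $\cech^{\backslash\BB G}$ (and the identity on $\{\phi\}$ and on $\Grp(\XX)_{/G}$) with the cospan $\{\phi\}\to \Grp(\XX)_{/G}\xleftarrow{\fib}\Grp(\XX)^{\backslash G}_{epi}$. Since pullbacks of equivalent cospans are equivalent, the pullback $\Nrml_\phi(G)$ is equivalent to the pullback of the latter, which is the object in the statement. I expect the only real friction to be the bookkeeping in the middle step: making precise that the ``take the fiber'' functors on the coslice $(\XX_\ast^{\geq 1})^{\backslash \BB G}$ and on $\Grp(\XX)^{\backslash G}$ are intertwined by $\cech$. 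This is purely formal once one remembers that $\cech$ is an equivalence and therefore preserves the relevant pullbacks, but it does require care about where each fiber is computed (in $\XX_\ast^{\geq 1}$ versus $\Grp(\XX)$), a point already flagged in the discussion surrounding Corollary~\ref{cor: limits computed by including then BC}.
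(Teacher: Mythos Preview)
Your proposal is correct and is precisely the argument the paper has in mind: the corollary is stated without proof, as an immediate consequence of Proposition~\ref{prop:normality data are effective epis from G} together with the definition of $\Nrml_\phi(G)$ as a pullback along $U_G$. You have simply spelled out the one step the paper leaves implicit, namely that the equivalence $\cech^{\backslash\BB G}\colon \Nrml(G)\xrightarrow{\simeq}\Grp(\XX)^{\backslash G}_{epi}$ intertwines $U_G$ with $\fib$, which follows from $\cech$ being an equivalence and hence preserving fibers.
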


\begin{cor}\label{cor:every group is quotient of a free group}
	Every group object $G\in\Grp(\XX)$ is a quotient of a free group object.
\end{cor}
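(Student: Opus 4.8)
The plan is to realize $G$ as the target of the counit of the free--forgetful adjunction and to check that this counit is a quotient map in the sense of the paper. Concretely, given $G\in\Grp(\XX)$, I would set $F:=\cech\Sigma(s_0^\ast G)$, which is a free group object by Lemma \ref{lem:free group is cech sigma}, and let $\epsilon_G\colon F\to G$ be the counit of the adjunction $\cech\Sigma\dashv s_0^\ast$ evaluated at $G$. It then suffices to prove that $\epsilon_G$ is an effective epimorphism of group objects, since Proposition \ref{prop:normality data are effective epis from G} (applied with $F$ in the role of the ambient group) turns such a morphism into a normality datum $\BB\epsilon_G\colon\BB F\to\BB G$ for $\fib(\epsilon_G)\to F$, and the associated quotient of Definition \ref{defn: puppe sequence defn} has classifying object $\BB G$, hence recovers $G\simeq\cech(\BB G)$ as a group object. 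In other words, $G$ is the quotient of the free group object $F$ by the normal map $\fib(\epsilon_G)\to F$.

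The key observation, which makes the connectivity check trivial, is that the underlying morphism of $\epsilon_G$ in $\XX$ admits a section. Indeed, the triangle identity for the adjunction reads $s_0^\ast(\epsilon_G)\circ\eta_{s_0^\ast G}=\mathrm{id}_{s_0^\ast G}$ in $\XX_\ast$, where $\eta$ denotes the unit. By Definition \ref{defn:group forgetful functor} the unpointed forgetful functor $U_{\Grp}$ factors as $s_0^\ast$ followed by the functor $\XX_\ast\to\XX$ that forgets the basepoint, so applying the latter to this identity produces a section of $U_{\Grp}(\epsilon_G)\colon F_1\to G_1$ in $\XX$.

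With the section in hand I would invoke Lemma \ref{lem:section is -1 connected} with $n=0$: a morphism admitting a section is $0$-connective exactly when that section is $(-1)$-connective, and the latter holds automatically by convention. Hence $U_{\Grp}(\epsilon_G)$ is an effective epimorphism of $\XX$, and by Proposition \ref{prop:n effective group maps are n connective} this is precisely the statement that $\epsilon_G\in\Eff_0^{\Grp}$, i.e.\ that $\epsilon_G$ is an effective epimorphism of group objects. Combined with the identification of the quotient in the first paragraph, this completes the argument.

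I do not expect a serious obstacle: the only real content is recognizing that the triangle identity supplies the section ``for free,'' so that effective epimorphism-hood of the counit never has to be verified by a direct connectivity computation. The remaining care is purely bookkeeping, namely making sure that ``quotient'' is read in the paper's sense (via a normality datum and the Puppe sequence of Definition \ref{defn: puppe sequence defn}) rather than naively, and confirming that $\cech\Sigma(s_0^\ast G)$ genuinely lies in the essential image of the free functor of Lemma \ref{lem:free group is cech sigma} and so deserves to be called a free group object.
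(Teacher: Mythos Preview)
Your proposal is correct and follows essentially the same route as the paper: both use the counit $\epsilon_G$ of the $\cech\Sigma\dashv s_0^\ast$ adjunction, invoke the triangle identity to obtain a section of its underlying map, and deduce that $\epsilon_G$ is an effective epimorphism so that $\BB\epsilon_G$ is a normality datum exhibiting $G$ as the quotient. The only cosmetic difference is that the paper cites \cite[Proposition 6.2.3.7 (1), Corollary 6.2.3.12 (2)]{htt} directly to see that a split morphism is an effective epimorphism, whereas you equivalently invoke Lemma \ref{lem:section is -1 connected} with $n=0$.
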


\begin{proof}
	Recall the free group object functor $\cech\Sigma\colon \XX_\ast\to \Grp(\XX)$ of Lemma \ref{lem:free group is cech sigma} which is left adjoint to the forgetful functor $s_0^\ast\colon \Grp(\XX)\to \XX_\ast$. This adjunction necessarily has a counit $\epsilon_G\colon \cech\Sigma s_0^\ast G\to G$. The triangle identities of the adjunction give a commutative triangle
	\[\begin{tikzcd}
		{s_0^\ast G} && {s_0^\ast G} \\
		& {s_0^\ast\cech\Sigma s_0^\ast G}
		\arrow["{id_{s_0^\ast G}}", from=1-1, to=1-3]
		\arrow["{\eta_{s_0^\ast G}}"', from=1-1, to=2-2]
		\arrow["{s_0^\ast(\epsilon_G)}"', from=2-2, to=1-3]
	\end{tikzcd}\]
	By \cite[Proposition 6.2.3.7 (1)]{htt} and \cite[Corollary 6.2.3.12 (2)]{htt} it must be the case that $s_0^\ast(\epsilon_G)$ is also an effective epimorphism. Hence $\BB\epsilon_G\colon \BB\cech\Sigma s_0^\ast G\to\BB G$ is a normality datum, i.e.~$G\simeq \cech\Sigma s_0^\ast G/\fib(\epsilon_G)$.
\end{proof}

We will need Lemma \ref{lem:NSub is closed under sifted colimits} in the next section but first we prove a preliminary technical result.

\begin{lem}\label{lem:sifted constant functors}
	Let $K$ be a sifted $\infty$-category and suppose that $X_K\colon K\to \XX$ is a constant functor for some $X\in\XX$. Then $\colim(X_K)\simeq X$.
\end{lem}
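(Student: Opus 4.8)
The plan is to reduce the statement to two facts: that a sifted $\infty$-category is weakly contractible, and that the unique functor from a weakly contractible $\infty$-category to the terminal $\infty$-category $\ast$ is cofinal. Granting these, the constant functor $X_K$ factors as $K\xrightarrow{\pi}\ast\xrightarrow{\lceil X\rceil}\XX$, where $\pi$ is the projection to the terminal $\infty$-category and $\lceil X\rceil$ selects the object $X$. Since the colimit over $\ast$ of the selecting functor is just $X$, cofinality of $\pi$ together with \cite[Proposition 4.1.1.8]{htt} yields $\colim(X_K)\simeq\colim_{\ast}(\lceil X\rceil)\simeq X$.

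First I would dispatch the cofinality step. By the $\infty$-categorical Quillen's Theorem A \cite[Theorem 4.1.3.1]{htt}, the functor $\pi\colon K\to\ast$ is cofinal if and only if, for the unique object of $\ast$, the fiber product $K\times_{\ast}\ast_{\ast/}\simeq K$ is weakly contractible. Thus cofinality of $\pi$ is literally equivalent to weak contractibility of $K$, and the reduction in the previous paragraph goes through as soon as that is established.

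The substantive point --- and the main obstacle --- is therefore to show that siftedness forces $K$ to be weakly contractible. Here I would use that $K$ is nonempty and that the diagonal $\delta\colon K\to K\times K$ is cofinal. The classifying space (weak homotopy type) of any $\infty$-category $C$ is computed as $|C|\simeq\colim_{C}(\ast)$, the colimit in $\Spaces$ of the constant diagram at the point; applying \cite[Proposition 4.1.1.8]{htt} to the cofinal $\delta$ and the constant diagram at $\ast$ shows that $\delta$ induces an equivalence $|K|\xrightarrow{\simeq}|K\times K|$. Because colimits in $\Spaces$ are universal (products distribute over colimits), an iterated-colimit (Fubini) computation gives $|K\times K|\simeq|K|\times|K|$, and since each composite $K\xrightarrow{\delta}K\times K\to K$ is the identity, the map above is identified with the diagonal of $|K|$. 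It then remains to prove the elementary sublemma that a nonempty space $S$ whose diagonal $S\to S\times S$ is an equivalence is contractible: surjectivity of the diagonal on $\pi_0$ forces $\pi_0(S)$ to be a single point, and for each $n\geq 1$ the diagonal $\pi_n(S)\to\pi_n(S)\times\pi_n(S)$ can be an isomorphism only when $\pi_n(S)$ is trivial, so $S$ is connected with vanishing homotopy groups and hence contractible. Applying this to $S=|K|$ (nonempty since $K\neq\varnothing$) gives $|K|\simeq\ast$, so $K$ is weakly contractible, which completes the argument. The only delicate ingredient is the product formula $|K\times K|\simeq|K|\times|K|$, which is precisely where universality of colimits in the $\infty$-topos $\Spaces$ is needed; everything else is formal or citable, and indeed the implication ``sifted $\Rightarrow$ weakly contractible'' is itself a standard fact that could alternatively be invoked directly.
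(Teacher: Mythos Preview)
Your argument is correct and follows essentially the same route as the paper: factor the constant functor through $\ast$, observe that $K\to\ast$ is cofinal precisely when $K$ is weakly contractible, and then use that sifted $\infty$-categories are weakly contractible. The only difference is one of presentation: the paper simply cites \cite[Tag 02QL]{kerodon} for the implication ``sifted $\Rightarrow$ weakly contractible'' (and hence cofinality of $K\to\Delta^0$), whereas you unpack that implication via the diagonal argument on classifying spaces, which is indeed the standard proof underlying that citation.
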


\begin{proof}
	From (the proof of) \cite[Tag 02QL]{kerodon}, we have that the terminal morphism $K\to\Delta^0$ is right cofinal, in the sense of \cite[Tag 02NQ]{kerodon}. It follows that pulling back along $K\to \Delta^0$ preserves colimits. Every constant functor out of $K$ is obtained by pulling back in this way, so the colimit of $X_K$ is equivalent to the colimit of $X\colon\Delta^0\to \XX$, which of course is $X$ itself.
\end{proof}

\begin{lem}\label{lem:NSub is closed under sifted colimits}
	The category $\Nrml(G)$ is closed under sifted colimits. 
\end{lem}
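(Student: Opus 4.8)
The plan is to show that a sifted colimit of objects of $\Nrml(G)$, computed in the ambient coslice $(\XX_\ast^{\geq 1})^{\backslash\BB G}$, again lands in the full subcategory $\Nrml(G)$; since that ambient coslice admits sifted colimits (being a coslice of a presentable $\infty$-category), this is exactly what ``closed under sifted colimits'' requires. So I would fix a sifted diagram $F\colon K\to\Nrml(G)$, write $\pi_k\colon\BB G\to X_k$ for its value at $k\in K$, form the colimit $\bar\pi\colon\BB G\to X$ in $(\XX_\ast^{\geq 1})^{\backslash\BB G}$, and verify that the underlying morphism of $\bar\pi$ in $\XX$ is $1$-connective.

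The first real task is to pin down the underlying arrow of $\bar\pi$. The crucial point is that the source of every $\pi_k$ is the \emph{constant} object $\BB G$. I would first record that a sifted $K$ is weakly contractible: the diagonal $K\to K\times K$ is cofinal, hence a weak homotopy equivalence, and a nonempty space whose diagonal is an equivalence is weakly contractible. Consequently each of the coslice projections $(\XX_\ast^{\geq 1})^{\backslash\BB G}\to\XX_\ast^{\geq 1}$ and $\XX_\ast\to\XX$ preserves the colimit of $F$ (coslice projections preserve weakly contractible colimits), as does the inclusion $\XX_\ast^{\geq 1}\hookrightarrow\XX_\ast$, which is a left adjoint by Lemma \ref{lem:BC is a right adjoint} and hence cocontinuous. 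Passing to the diagram $k\mapsto(\BB G\xrightarrow{\pi_k}X_k)$ in $\Fun(\Delta^1,\XX)$, whose colimit is computed pointwise, and using Lemma \ref{lem:sifted constant functors} to identify the colimit of the constant source as $\BB G$ itself, I would conclude that the underlying arrow of $\bar\pi$ is precisely $\BB G\to\colim_k X_k$.

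It then remains to check that this arrow is $1$-connective, and here I would invoke Proposition \ref{prop:epimono factorization system} with $n=1$, which exhibits the $1$-connective maps as the left class $1\Eff$ of a factorization system on $\XX$. Left classes of factorization systems are closed under colimits in the arrow category $\Fun(\Delta^1,\XX)$: a map lies in $1\Eff$ if and only if it is left orthogonal to every map $g$ in the right class $1\Mono$, and for fixed $g$ the condition $f\perp g$ asserts that a certain comparison of mapping spaces is an equivalence. Since mapping out of a colimit is a limit, and fiber products commute with limits, this condition passes from the $\pi_k$ to their colimit. As $\bar\pi$ is the colimit of the $1$-connective maps $\pi_k$, it is therefore $1$-connective, so $\bar\pi\in\Nrml(G)$.

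The main obstacle is the bookkeeping in the second paragraph: one must be careful that the colimit computed in $(\XX_\ast^{\geq 1})^{\backslash\BB G}$, whose objects formally record a morphism \emph{out of} $\BB G$, genuinely has underlying arrow the pointwise colimit of the $\pi_k$ in $\Fun(\Delta^1,\XX)$ — this is exactly where Lemma \ref{lem:sifted constant functors} does the work, keeping the source fixed at $\BB G$ rather than drifting to $\colim_k\BB G$. Everything else (weak contractibility of sifted $K$, cocontinuity of the three intermediate functors, and closure of a left class under arrow-category colimits) is standard, so once the underlying arrow is correctly identified the connectivity claim follows at once.
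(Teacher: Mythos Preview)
Your proof is correct, and the first half---identifying the underlying arrow of the coslice colimit as $\BB G\to\colim_k X_k$ via weak contractibility of sifted $K$ and Lemma~\ref{lem:sifted constant functors}---is essentially identical to the paper's. The genuine difference lies in how you establish $1$-connectivity. The paper proceeds by reducing to the statement that $\fib(\bar\pi)$ is connected (via Lemma~\ref{lem:connective fiber connective map}), then invokes a descent-type result to commute the fiber with the sifted colimit, and finally argues that sifted colimits of connected pointed objects remain connected. Your route is more direct: you observe that $1$-connective maps form the left class of the factorization system of Proposition~\ref{prop:epimono factorization system}, and left classes are closed under colimits in $\Fun(\Delta^1,\XX)$ by the orthogonality characterization. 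This bypasses the fiber computation entirely and avoids the descent step, at the cost of appealing to a general (but standard) fact about factorization systems rather than working concretely with connectivity. Both arguments are valid; yours is arguably cleaner, while the paper's makes the connectivity of the fiber explicit, which is independently useful elsewhere (e.g.\ in the proof of Theorem~\ref{thm:normal closure functor}, where preservation of sifted colimits by $\fib$ is reused).
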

\begin{proof}
	
	Let $K$ be a sifted $\infty$-category and let $F\colon K\to \Nrml(G)$ a functor. For the purposes of this proof, write $U^{\geq 1}_\ast\colon (\XX_\ast^{\geq 1})^{\backslash\BB G}\simeq \Nrml(G)\to\XX_\ast^{\geq 1}$ and $U\colon \Nrml(G)\to \XX$ for the forgetful functors. Write $K^{\triangleleft}$ for the $\infty$-category $K$ with an initial object, $\{-\infty\}$ adjoined.
	
	By writing $(\XX^{\geq 1}_\ast)^{\backslash \BB G}$ as the pullback of the cospan $\{\BB G\}\hookrightarrow \XX_\ast^{\geq 1}\xleftarrow{s}(\XX_\ast^{\geq 1})^{\Delta^1}$, where $s$ is restriction to $\{0\}\in\Delta^1$, and applying $\Fun(K,-)$, we see that $F$ is equivalent to the data of a functor $F'\colon K^{\triangleleft}\to\XX_\ast^{\geq 1}$ with $F'(-\infty)\simeq\BB G$.  It is ``standard'' that $U_\ast^{\geq 1}(\colim(F))\simeq\colim(F')$ and that the structure morphism of this colimit, as an object of the undercategory, is the composite $F'(-\infty)\simeq\BB G\Rightarrow U_\ast^{\geq 1}F\Rightarrow\colim(F')$ (see, for instance, \cite[Tag 02JT]{kerodon}). By Lemma \ref{lem:sifted constant functors} we may equivalently take this to be the morphism $\BB G\simeq \colim(\BB G_K)\to\colim(F')$. To ensure that $\BB G\to \colim(F')$ determines an object of $\Nrml(G)$, we need to show that it is $1$-connective. We will write $\zeta\colon\BB G\to \colim(F')$ for this morphism. By Lemma \ref{lem:connective fiber connective map} it suffices to show that $\fib(\zeta)$ is connected. By Lemma \ref{lem:BC is a right adjoint} we have that the inclusion $\XX_\ast^{\geq 1}\hookrightarrow\XX_\ast$ preserves colimits, so we may take the colimit in $\XX_\ast$.
	
	By another application of \cite[Tag 02JT]{kerodon}, this time to $F'\colon K^{\triangleleft}\to\XX_\ast$, we obtain a commutative triangle of natural transformations of functors $K\to\XX$:
	\[\begin{tikzcd}
		{\ast_K} && {\BB G_K} \\
		& UF
		\arrow[Rightarrow, from=1-1, to=1-3]
		\arrow[Rightarrow, from=1-1, to=2-2]
		\arrow[Rightarrow, from=1-3, to=2-2]
	\end{tikzcd}\]
	or, equivalently, a functor $\Delta^2\to\Fun(K,\XX)$. Restricting to the $2$-horn gives a cospan $\Lambda_2^2\to\Fun(K,\XX)$ or equivalently a functor $K\to\Fun(\Lambda_2^2,\XX)$. From \cite[Lemma 5.5.6.17]{ha} we have that the limit of $\ast\simeq\colim(\ast_K)\to\colim(UF)\leftarrow\colim(\BB G)\simeq\BB G$ is equivalent to the colimit of limits $\ast\to UF(k)\leftarrow\BB G$ for each $k\in K$. But each $\BB G\to UF(k)$, by assumption, is $1$-connective and therefore each fiber over $UF(k)$ is connected. It remains to show that sifted colimits of connected objects are connected. First recall from \cite[Corollary 2.1.2.2]{htt} that the projection $\XX_\ast\simeq\XX^{\backslash\ast}\to\XX$ is a left fibration. We also have, again from the proof of \cite[Tag 02QL]{kerodon}, that $K$ is weakly contractible. Hence the colimit of the fibers $\BB G\to UF(k)$ is connected as a result of \cite[Proposition 4.4.2.9]{htt}. This completes the proof.
\end{proof}

		\subsection{Normality Data is Monadic}\label{sec:normal closure monadic}
		
		Recall that in classical group theory one can take the ``normal closure'' of a subgroup $H\leq G$. Explicitly, this is the smallest normal subgroup of $G$ containing $H$. If we write $ncl_G(H)$ for the normal closure of $H$ in $G$, then we have the following standard fact:
		
		\begin{prop}
			Let $H\leq G$ be a subgroup. Then the pushout of the cospan $0 \leftarrow H\to G$ in the category of discrete groups is isomorphic to $G/ncl_G(H)$. 
		\end{prop}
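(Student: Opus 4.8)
The plan is to prove this purely via universal properties, matching the functor of points of each side rather than manipulating elements. Recall that the pushout of a cospan $A \leftarrow C \to B$ in the category of groups is characterized by the universal property that for any group $K$, a homomorphism out of the pushout is the same datum as a pair of homomorphisms $A \to K$ and $B \to K$ agreeing after restriction along the two maps out of $C$. So the whole proof reduces to unwinding this universal property in our special case and comparing it to that of a quotient.

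First I would specialize to $A = 0$ the trivial group, $C = H$, $B = G$, with the right leg $H \to G$ the given subgroup inclusion and the left leg $H \to 0$ the trivial map. Let $P$ denote the pushout and fix an arbitrary group $K$. A homomorphism $P \to K$ then corresponds to a pair consisting of the (unique, trivial) map $0 \to K$ and a map $g\colon G \to K$, subject to the condition that the composites $H \to 0 \to K$ and $H \to G \xrightarrow{g} K$ agree. Since the first composite is trivial, this condition says exactly that $g|_H$ is trivial, i.e.\ $H \subseteq \ker(g)$. Thus $\mathrm{Hom}(P,K)$ is naturally identified with the set of homomorphisms $G \to K$ that kill $H$.

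Next I would identify this same set with $\mathrm{Hom}(G/\ncl_G(H),K)$. By the universal property of the quotient, maps $G/\ncl_G(H) \to K$ correspond to maps $g\colon G \to K$ with $\ncl_G(H) \subseteq \ker(g)$. The one step that carries any content is the claim that $H \subseteq \ker(g)$ if and only if $\ncl_G(H) \subseteq \ker(g)$. The ``if'' direction is immediate from $H \subseteq \ncl_G(H)$; for the ``only if'' direction one observes that $\ker(g)$ is a \emph{normal} subgroup of $G$ containing $H$, and $\ncl_G(H)$ is by definition the smallest such normal subgroup, hence $\ncl_G(H) \subseteq \ker(g)$.

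These two identifications are natural in $K$ and yield the same subset of $\mathrm{Hom}(G,K)$, so $P$ and $G/\ncl_G(H)$ represent the same functor $\mathrm{Grp} \to \mathrm{Set}$; by Yoneda they are canonically isomorphic, compatibly with the structure maps from $G$. I do not expect a genuine obstacle here, as the entire argument rests on the normality of kernels together with the minimality defining $\ncl_G(H)$. If a more explicit construction were preferred, I would instead present the pushout concretely as the amalgamated product $0 *_H G$ and observe that collapsing the trivial free factor amounts to quotienting $G$ by the normal subgroup generated by the image of $H$, recovering the same answer.
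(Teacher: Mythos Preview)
Your proof is correct. The paper does not actually give a proof of this proposition; it is stated as a ``standard fact'' and left to the reader. Your argument via representability is the cleanest way to establish it: identifying both $P$ and $G/\ncl_G(H)$ with the object corepresenting the functor $K \mapsto \{g \in \mathrm{Hom}(G,K) : H \subseteq \ker g\}$, using only that kernels are normal and that $\ncl_G(H)$ is the smallest normal subgroup containing $H$. Nothing is missing.
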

		
		In the case of discrete groups, the above proposition allows one to characterize normal subgroups $H\trianglelefteq G$ as those with the property that the canonical morphism $H\to\ker(G\to G/ncl_G(H))$ is an isomorphism. We have already seen in Remark \ref{rmk:the wrong def of normality} that this definition is not well-behaved in higher group theory. Nonetheless, there is still a useful notion of normal closure for morphisms of group objects which produces the ``free normality datum'' generated by a morphism $H\to G$ in $\Grp(\XX)$.
		
		\begin{lem}
			If $f\colon X\to Y$ is a morphism in $\XX_\ast^{\geq 1}$ then $\cof(f)\colon Y\to\cof(f)$ is $1$-connective in $\XX$. In particular, the cofiber functor $\cof\colon (\XX_\ast^{\geq 1})_{/\BB G}\to (\XX_\ast^{\geq 1})^{\backslash \BB G}$ factors through $\Nrml(G)$.
		\end{lem}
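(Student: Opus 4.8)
The plan is to realize the cofiber as a pushout and then invoke the stability of $1$-connective morphisms under pushout, exactly as in the proof that $\Sigma$ raises connectivity. By definition $\cof(f)$ is the pushout of the span $\ast\leftarrow X\xrightarrow{f} Y$ taken in $\XX_\ast^{\geq 1}$, and the structure map $Y\to\cof(f)$ is the cobase change of the terminal map $X\to\ast$ along $f$. The first thing I would record is that this pushout, and hence the morphism $Y\to\cof(f)$, may be computed on underlying objects in $\XX$: the inclusion $\XX_\ast^{\geq 1}\hookrightarrow\XX_\ast$ is a left adjoint by Lemma~\ref{lem:BC is a right adjoint}, so it preserves the colimit, and the forgetful functor $\XX_\ast\to\XX$ preserves it as well since the indexing diagram is weakly contractible (connected colimits in a coslice are computed by the forgetful functor, cf.\ \cite[Tag 02JT]{kerodon}). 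Thus the underlying square
\[\begin{tikzcd}
	X & \ast \\
	Y & {\cof(f)}
	\arrow["f"', from=1-1, to=2-1]
	\arrow[from=1-1, to=1-2]
	\arrow[from=1-2, to=2-2]
	\arrow[from=2-1, to=2-2]
\end{tikzcd}\]
is a pushout in $\XX$, with $X,Y$ connected and $\ast$ terminal.

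The main step is then immediate: because $X$ is connected, its terminal map $X\to\ast$ is $1$-connective, and by \cite[Corollary 6.5.1.17]{htt} the class of $1$-connective morphisms is stable under pushout. Since $Y\to\cof(f)$ is precisely the pushout of $X\to\ast$ along $f$, it is $1$-connective in $\XX$, proving the first claim. There is no genuine obstacle here; the only point that requires care — and the one I would be explicit about — is ensuring that the connectivity assertion is read in $\XX$ rather than in the pointed connected subcategory, which is exactly what the bookkeeping of the previous paragraph guarantees. (This mirrors, and slightly generalizes, the suspension lemma following Proposition~\ref{prop:loops suspension calculations}, where the same argument is applied to the pushout $\ast\leftarrow X\to\ast$.)

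For the \emph{in particular}, I would specialize to $Y=\BB G$. The functor $\cof$ sends an object $(X\to\BB G)$ of $(\XX_\ast^{\geq 1})_{/\BB G}$ to the coslice object $\BB G\to\cof(X\to\BB G)$, whose structure map is $1$-connective by the first part. Two things remain to check, both routine. First, the target genuinely lies in $\XX_\ast^{\geq 1}$, i.e.\ $\cof(f)$ is connected: the composite $\ast\to\BB G\to\cof(f)$ is a composite of effective epimorphisms — the first because $\BB G$ is connected, the second because a $1$-connective map is in particular an effective epimorphism — so $\ast\to\cof(f)$ is an effective epimorphism and $\cof(f)$ is connected. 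Second, landing in the subcategory of $1$-connective morphisms out of $\BB G$ is exactly the condition defining $\Nrml(G)$. Hence $\cof$ factors through $\Nrml(G)$, as claimed.
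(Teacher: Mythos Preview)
Your proof is correct and follows essentially the same approach as the paper: the core argument in both is that $X\to\ast$ is $1$-connective because $X$ is connected, and then \cite[Corollary 6.5.1.17]{htt} gives that its pushout $Y\to\cof(f)$ is $1$-connective. You add some extra bookkeeping (that the pushout may be computed in $\XX$, and that $\cof(f)$ is connected) which the paper leaves implicit, but the substance is identical.
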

		
		\begin{proof}
			Recall that, by definition, the terminal morphism $X\to \ast$ is $1$-connective since $X$ is connected. By \cite[Corollary 6.5.1.17]{htt} we know that $1$-connective morphisms are preserved by pushout, so the result follows from considering the defining pushout diagram of the cofiber:
			\[\begin{tikzcd}
				X & Y \\
				\ast & {\cof(f)}
				\arrow["f", from=1-1, to=1-2]
				\arrow[from=1-1, to=2-1]
				\arrow[from=1-2, to=2-2]
				\arrow[from=2-1, to=2-2]
				\arrow["\lrcorner"{anchor=center, pos=0.125, rotate=180}, draw=none, from=2-2, to=1-1]
			\end{tikzcd}\]
		\end{proof}
		
		\begin{defn}
			Let $G\in\Grp(\XX)$. Then we define the \textit{normal closure in $G$} functor to be the following composite:
			\[\ncl_G\colon\Grp(\XX)_{/G}\xrightarrow{\simeq} (\XX_\ast^{\geq 1})_{/\BB G}\xrightarrow{\cof}\Nrml(G)\]
			By applying the underlying group functor $U_G\colon\Nrml(G)\to \Grp(\XX)_{/G}$ we obtain a morphism of group objects $\cech \fib(\cof(\BB\phi))\simeq\fib(\cof(\phi))\to G$.
		\end{defn}
		
		\begin{exam}\label{exam:free double loop space}
			Taking $G=\ast$, we see that the normal closure of the terminal group map $H\to \ast$ is the terminal morphism $\cech\Omega\Sigma\BB H\to \ast$ or, on underlying objects, $\Omega^2\Sigma\BB H\to\ast$. This is consistent with the fact that normal morphisms to $\ast$ in $\Grp(\Spaces)$ are the same as double loop spaces and that $\Omega^2\Sigma\BB$ is the ``free double loop space on a loop space'' functor (not the be confused with the ``free double loop space on a pointed space'' functor which is given by $\Omega^2\Sigma^2$). 
		\end{exam}
		
		\begin{thm}\label{thm:normal closure functor}
			There is a monadic adjunction between $\Grp(\XX)_{/G}$ and $\Nrml(G)$ whose left adjoint is $\ncl_G$ and whose right is the underlying group functor $U_G$.
		\end{thm}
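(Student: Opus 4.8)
The plan is to apply the Barr--Beck--Lurie monadicity theorem (\cite[Theorem 4.7.3.5]{ha}) to $U_G$. First I would dispose of the bookkeeping equivalences. Since $\ncl_G$ is $\cof$ precomposed with the equivalence $\Grp(\XX)_{/G}\simeq(\XX_\ast^{\geq 1})_{/\BB G}$, and $U_G$ is $\fib$ postcomposed with its inverse $\cech_{/\BB G}$, the monad $U_G\ncl_G$ is the conjugate of $\fib\circ\cof$ by an equivalence; hence monadicity of $\ncl_G\dashv U_G$ is equivalent to monadicity of $\cof\dashv\fib$ between $(\XX_\ast^{\geq 1})_{/\BB G}$ and $\Nrml(G)$. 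That this is an adjunction is the fiber--cofiber adjunction in the pointed $\infty$-category $\XX_\ast^{\geq 1}$: comparing mapping spaces in the slice and coslice identifies both $\mathrm{Map}(\cof(f),g)$ and $\mathrm{Map}(f,\fib(g))$ with the space of nullhomotopies of the relevant composite. The preceding lemma guarantees $\cof$ factors through $\Nrml(G)$, and Lemma \ref{lem:connective fiber connective map} (with $n=1$) guarantees $\fib$ lands in $(\XX_\ast^{\geq 1})_{/\BB G}$, so the adjunction restricts as claimed.

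Next I would check conservativity. As $U_G=\cech_{/\BB G}\circ\fib$ with $\cech_{/\BB G}$ an equivalence, it suffices that $\fib$ is conservative. Let $\alpha$ be a morphism in $\Nrml(G)$ from $\pi\colon\BB G\to X$ to $\pi'\colon\BB G\to X'$ (so $\alpha\colon X\to X'$ with $\alpha\pi\simeq\pi'$) inducing an equivalence $\fib(\pi)\xrightarrow{\simeq}\fib(\pi')$. By definition of $\Nrml(G)$ both $\pi,\pi'$ are $1$-connective, in particular effective epimorphisms, so $X$ and $X'$ are the colimits of the \v Cech nerves of $\pi$ and $\pi'$ (Definition \ref{defn: cech nerve and effective epis}). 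On $1$-simplices, $\alpha$ induces a map $\BB G\times_X\BB G\to\BB G\times_{X'}\BB G$ of fibrations over the connected object $\BB G$ whose fiber is exactly the assumed equivalence $\fib(\pi)\to\fib(\pi')$, hence is itself an equivalence. Since the \v Cech nerves are groupoid objects, determined by their $0$- and $1$-simplices, $\alpha$ induces a levelwise equivalence of \v Cech nerves, and so $\alpha$, being the induced map on colimits, is an equivalence.

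It remains to verify that every $U_G$-split simplicial object $X_\bullet$ of $\Nrml(G)$ has a colimit preserved by $U_G$. Existence is precisely Lemma \ref{lem:NSub is closed under sifted colimits}: geometric realizations are sifted colimits, and $\Nrml(G)$ is closed under these, the colimit being computed on underlying objects as the realization $\pi\colon\BB G\to X=|X_\bullet|$ in $\XX_\ast^{\geq 1}$ (using the coreflection of Lemma \ref{lem:BC is a right adjoint}). The substantive point is preservation: writing $\pi_\bullet\colon\underline{\BB G}\to X_\bullet$, I must show the comparison map $\colim_n\fib(\pi_n)\to\fib(\pi)$ is an equivalence.

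This preservation step is where I expect the real work, and it is genuinely delicate: $\fib$ is a pullback, and pullbacks do \emph{not} commute with geometric realizations in general, so $\fib$ does not preserve arbitrary sifted colimits (already visible in the case $G=\ast$, where $U_G$ is essentially $\Omega$). The $U_G$-split hypothesis is exactly what rescues this. Being $U_G$-split means the levelwise-fiber simplicial object $\fib(\pi_\bullet)$ carries a splitting in $(\XX_\ast^{\geq 1})_{/\BB G}$; its realization is then an \emph{absolute} colimit, computed by the split augmentation and preserved by every functor. I would combine this absoluteness with universality of colimits in the $\infty$-topos (\cite[Theorem 6.1.0.6]{htt}): the extra degeneracies furnished by the splitting provide the section that exhibits the relevant squares as Cartesian, so that descent permits the pullback defining $\fib(\pi)$ to be commuted past the realization. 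Concluding that $\colim_n\fib(\pi_n)\xrightarrow{\simeq}\fib(\pi)$ then establishes preservation, and with conservativity and existence in hand, Barr--Beck--Lurie yields that $\ncl_G\dashv U_G$ is monadic.
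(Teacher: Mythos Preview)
Your overall strategy (Barr--Beck--Lurie) and the reduction to the $\cof\dashv\fib$ adjunction match the paper exactly. Your conservativity argument via \v Cech nerves is different from the paper's---which instead argues that $\alpha$ is simultaneously $1$-connective and $0$-truncated by applying the $5$-lemma to the long exact sequences of $\XX(Z,-)$---but it is valid and arguably cleaner.

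The real divergence, and the flaw in your proposal, is the preservation step. You assert that $\fib$ does not preserve arbitrary sifted colimits, ``already visible in the case $G=\ast$, where $U_G$ is essentially $\Omega$.'' This is false. Under the equivalence $\cech\colon\XX_\ast^{\geq 1}\xrightarrow{\simeq}\Grp(\XX)$, the functor $\Omega\colon\XX_\ast^{\geq 1}\to\XX$ is identified with the forgetful functor $\Grp(\XX)\to\XX$, and the latter preserves sifted colimits (this is used, for instance, in the proof of \cite[Corollary~5.2.6.18]{ha}). Since sifted colimits in $\XX_\ast^{\geq 1}$ are computed in $\XX$ (the inclusion is a left adjoint by Lemma~\ref{lem:BC is a right adjoint}), it follows that $\Omega$ \emph{does} preserve sifted colimits on pointed connected objects. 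The paper exploits precisely this: the computation carried out in the proof of Lemma~\ref{lem:NSub is closed under sifted colimits} shows that $\fib\colon\Nrml(G)\to(\XX_\ast^{\geq 1})_{/\BB G}$ commutes with all sifted colimits, so the Barr--Beck hypothesis on $U_G$-split simplicial objects follows immediately with no further work.

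Your proposed detour through the $U_G$-split hypothesis is therefore unnecessary. It is also, as written, not yet an argument: absoluteness of the split colimit tells you the value of $\colim_n\fib(\pi_n)$, but does not by itself produce the comparison with $\fib(\colim_n\pi_n)$. The sentence about extra degeneracies ``exhibiting the relevant squares as Cartesian so that descent permits the pullback to be commuted past the realization'' is exactly where a real argument would have to go, and none is supplied.
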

		
		\begin{proof}
			First we show the existence of the adjunction. We begin with the adjunction 
			\[\begin{tikzcd}
				{(\XX_\ast)_{/BG}} && {(\XX_\ast)^{\backslash BG}}
				\arrow[""{name=0, anchor=center, inner sep=0}, "\cof", curve={height=-6pt}, from=1-1, to=1-3]
				\arrow[""{name=1, anchor=center, inner sep=0}, "\fib", curve={height=-6pt}, from=1-3, to=1-1]
				\arrow["\dashv"{anchor=center, rotate=-90}, draw=none, from=0, to=1]
			\end{tikzcd}\] described in Lemma \ref{lem:fibercofiberadjunction}.  Using the definition of adjunctions given in \cite{rvelements} it is not hard to check that, after restricting on the left to the full subcategory $(\XX_\ast^{\geq 1})_{/BG}$ and on the right to $\Nrml(G)$, the unit and counit transformations descend to unit and counit transformations for these restricted functors and still satisfy the triangle identities (since we have restricted to full subcategories). Therefore we have another adjunction 
			\[\begin{tikzcd}
				{(\XX_\ast^{\geq 1})_{/BG}} && {\Nrml(G)}
				\arrow[""{name=0, anchor=center, inner sep=0}, "\cof", curve={height=-6pt}, from=1-1, to=1-3]
				\arrow[""{name=1, anchor=center, inner sep=0}, "\fib", curve={height=-6pt}, from=1-3, to=1-1]
				\arrow["\dashv"{anchor=center, rotate=-90}, draw=none, from=0, to=1]
			\end{tikzcd}\]
			
			We now check that this adjunction is monadic via the Barr-Beck Theorem of \cite[Theorem 4.7.3.5]{ha}. Checking conservativity requires showing that $\fib(\alpha)$ in the following diagram
			\[\begin{tikzcd}
				{\fib(\phi)} && {\fib(\psi)} \\
				BG && BG \\
				X && Y
				\arrow["\phi"', from=2-1, to=3-1]
				\arrow["\alpha", from=3-1, to=3-3]
				\arrow[from=1-1, to=2-1]
				\arrow[from=1-3, to=2-3]
				\arrow["\psi", from=2-3, to=3-3]
				\arrow[shift left=1, no head, from=2-1, to=2-3]
				\arrow[no head, from=2-1, to=2-3]
				\arrow["{\fib(\alpha)}", from=1-1, to=1-3]
			\end{tikzcd}\]
			is an equivalence if and only if $\alpha$ is. Of course, taking fibers is functorial and therefore preserves equivalences, so the ``if'' direction is immediate. For the ``only-if'' direction, note that $\psi$ and $\phi$ being $1$-connective (by assumption) implies that $\alpha$ is also $1$-connective. Therefore it suffices to show that $\alpha$ is also $0$-truncated whenever $\fib(\alpha)$ is an equivalence. Note that, by definition, $\alpha$ is $0$-truncated if and only if $\XX(Z,\alpha)\colon \XX(Z,X)\to\XX(Z,Y)$ is $0$-truncated for all $Z\in\XX$. Now apply $\XX(Z,-)$ to the morphism of pullback diagrams (using the fact that $\XX(Z,-)$ preserves limits) to obtain a morphism of pullback diagrams in $\Spaces$:
			\[\begin{tikzcd}
				{\fib(\phi_\ast)} & {\fib(\psi_\ast)} \\
				{\XX(Z,\BB G)} & {\XX(Z,\BB G)} \\
				{\XX(Z,X)} & {\XX(Z,Y)}
				\arrow[from=1-1, to=1-2]
				\arrow[from=1-1, to=2-1]
				\arrow[from=1-2, to=2-2]
				\arrow[shift left=1, no head, from=2-1, to=2-2]
				\arrow[ no head, from=2-1, to=2-2]
				\arrow["{\phi_\ast}"', from=2-1, to=3-1]
				\arrow["{\psi_\ast}", from=2-2, to=3-2]
				\arrow["{\alpha_\ast}"', from=3-1, to=3-2]
			\end{tikzcd}\]
			
			Applying the 5-lemma to the morphism of long exact sequences in homotopy for the two fiber sequences proves the result in $\Spaces$. In other words, $\XX(Z,X)\to\XX(Z,Y)$ is $0$-truncated when $\fib(\phi_\ast)\to\fib(\psi_\ast)$ is an equivalence. This holds for all $Z\in\XX$ so $\alpha$ is $0$-truncated by definition, and therefore an equivalence.
			
			Suppose we are given a simplicial object of $\Nrml(G)$. By Lemma \ref{lem:NSub is closed under sifted colimits} we know that this diagram admits a colimit so it suffices to show that taking fibers preserves this colimit, but this was already shown in the proof of Lemma \ref{lem:NSub is closed under sifted colimits}. The result follows from composing with the equivalence $\Grp(\XX)_{/G}\simeq (\XX_\ast^{\geq 1})_{/BG}$.
		\end{proof}
		
		Another way to state Theorem \ref{thm:normal closure functor} is to say that normal maps to $G$ are the algebras for the normal closure monad. By taking monadic resolutions this gives an alternative proof of Corollary \ref{cor:every group is quotient of a free group}. Noting that normality of the terminal morphism $G\to \ast$ in $\Grp(\XX)$ corresponds to a grouplike $\EE_2$-algebra structure on $G$, also gives a description of grouplike $\EE_2$-algebras in $\XX$ as algebras for the $\cech\Omega\Sigma\BB$ monad, which can be thought of as an expression of Dunn additivity.
		
		\begin{rmk}\label{rmk:normal closure is not idempotent}
			Note that the normal closure of a normal map $\phi\colon H\to G$ is \textit{not} generally equivalent to $\phi$. This corresponds to the fact that taking normal closure corresponds to freely adding normality data which necessarily makes $\phi$ ``bigger.'' Alternatively, note that because normality data is not generally unique any normal closure functor cannot ``know'' which normality datum to append to a given map, whether or not that map is normal. As such, it is forced to add new data. 
		\end{rmk}

		\subsection{Normality is Preserved by Monoidal Functors}
		
		In \cite[Theorem B]{prasmahmtpynorm}, Prasma shows that a homotopy monoidal endofunctor of the category of topological spaces preserves normal maps of loop spaces. In this section we will give a slight generalization of his result by showing that any monoidal functor of $\infty$-topoi preserves normal maps of group objects. Note that monoidal maps do not preserve \textit{quotients}. To clarify, let $\phi\colon H\to G$ be normal in $\Grp(\XX)$ and let $F\colon \XX\to \YY$ be a monoidal functor of $\infty$-topoi. We will show that $F(\phi)\colon F(H)\to F(G)$ remains a normal morphism of group objects. The fact that $F$ is monoidal and $G/H$ has a group structure imply that $F(G/H)$ has a group structure as well. However, what is \textit{not} generally true is that if $F$ is monoidal then the group $F(G/H)$ is equivalent to the group $F(G)/F(H)$ (where the latter has a group structure coming from $F$ preserving normality data). This is true almost by definition of course if $F$ preserves pullbacks or geometric realizations, but we wish to recover Prasma's more general theorem. 
		
		\begin{lem}\label{lem:normal action object lifts to groups}
			Suppose that a morphism $\phi\colon H\to G$ in $\Grp(\XX)$ admits a normality datum $\pi\colon \BB G\to\BB (G/H)$. Then $\BB_\bullet(G,H)\colon\Delta^{\op}\to\XX$, the bar construction for the $H$-action on $G$ induced by $\phi$, factors through $\Grp(\XX)$. 
		\end{lem}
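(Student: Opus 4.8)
The plan is to recognize the normality datum as exactly the data upgrading the quotient map $G\to G/H$ to a homomorphism of group objects, and then to realize $\BB_\bullet(G,H)$ as the \v Cech nerve of that homomorphism, formed one level up in $\Grp(\XX)$.

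First I would unwind what the normality datum provides. Writing $Q:=\cech(\BB(G/H))\in\Grp(\XX)$, the proposition following Definition~\ref{defn:normality data} identifies the underlying object of $Q$ with $G/H$ and records a fibre sequence $\BB H\xrightarrow{\BB\phi}\BB G\xrightarrow{\pi}\BB Q$ in $\XX_\ast^{\geq 1}$. Since $\cech\colon\XX_\ast^{\geq 1}\xrightarrow{\simeq}\Grp(\XX)$ is an equivalence, the morphism $\pi$ corresponds to a homomorphism $\bar q:=\cech(\pi)\colon G\to Q$ of group objects. Because $U_{\Grp}\cech\simeq\Omega$ (this is the identity $s_0^\ast\cech\simeq\Omega^{\geq 1}$ from the proof of Lemma~\ref{lem:free group is cech sigma}, after forgetting basepoints), the underlying morphism of $\bar q$ is $\Omega\pi$, which the fibre sequence identifies with the quotient map $q\colon G\to G/H$. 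Thus a normality datum is precisely a lift of $q$ to $\Grp(\XX)$. Finally, I would recall from the proof that each $X\to X/G$ is an effective epimorphism (following Lemma~\ref{lem:action groupoid is a groupoid}) that $\BB_\bullet(G,H)$ is the \v Cech nerve of $q$.

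With this in hand the lift is constructed directly. The category $\Grp(\XX)$ is closed under finite limits inside $\XX^{\Delta^{\op}}$ — products and pullbacks of group objects are again group objects, with underlying objects computed levelwise — so $\bar q$ admits a \v Cech nerve $N_\bullet\colon\Delta^{\op}\to\Grp(\XX)$ built from the iterated fibre products $G\times_Q\cdots\times_Q G$. The same closure statement shows that $U_{\Grp}\colon\Grp(\XX)\to\XX$ preserves these finite limits and hence commutes with formation of the \v Cech nerve (a right Kan extension, i.e.~a limit construction). Therefore $U_{\Grp}\circ N_\bullet$ is the \v Cech nerve of $U_{\Grp}(\bar q)=q$, namely $\BB_\bullet(G,H)$, which exhibits $N_\bullet$ as the required factorization of $\BB_\bullet(G,H)$ through $\Grp(\XX)$.

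The main obstacle is conceptual rather than computational: it is the observation that a normality datum is nothing more than a promotion of $q$ to a group homomorphism $\bar q$, after which the lift is forced by functoriality of the \v Cech nerve. The one point requiring care is that the underlying simplicial object of a \v Cech nerve formed in $\Grp(\XX)$ agrees with the \v Cech nerve of the underlying map; this is exactly the assertion that $U_{\Grp}$ preserves the relevant pullbacks, and it is what makes the levelwise identifications $U_{\Grp}(N_n)\simeq G\times H^n\simeq\BB_n(G,H)$ assemble into a simplicial equivalence. Notably, no separate connectivity argument is needed: working inside $\Grp(\XX)$ (equivalently $\XX_\ast^{\geq 1}$) builds in the fact that each level is a genuine group object.
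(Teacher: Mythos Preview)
Your proposal is correct and follows essentially the same approach as the paper: both arguments recognize that the normality datum upgrades the quotient map to a morphism in $\Grp(\XX)$ (equivalently $\XX_\ast^{\geq 1}$), form the \v Cech nerve at that level, and then use that the forgetful functor to $\XX$ preserves the relevant limits to identify the result with $\BB_\bullet(G,H)$ via uniqueness of groupoid resolutions of effective epimorphisms. The only cosmetic difference is that the paper takes the \v Cech nerve of $\pi$ in $\XX_\ast^{\geq 1}$ and then applies $\cech$ levelwise, whereas you form the \v Cech nerve of $\cech(\pi)$ directly in $\Grp(\XX)$; since $\cech$ is an equivalence these are the same construction.
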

		
		\begin{proof}
			By assumption there is a fiber sequence in $\XX^{\geq 1}_\ast$: \[\BB H\xrightarrow{\BB \phi} \BB G\xrightarrow{\pi}\BB (G/H)\] Now take the \v Cech nerve of $\pi$, followed by $\cech$ (applied levelwise) to get a simplicial object of $\Grp(\XX)$. Noticing that $\cech\colon\XX_\ast^{\geq 1}\to \Grp(\XX)$ and $U_{\Grp}\colon \Grp(\XX)\to\XX$ are both right adjoints which preserve right Kan extensions we have that $U_{\Grp}\circ \cech$ applied to the \v Cech nerve of $\pi$ is the \v Cech nerve of $G\simeq \Omega\BB G\to \Omega \BB (G/H)\simeq G/H$. From this we have that the \v Cech nerve of $G\to G/H$ lifts to a simplicial group object.
			
			Now from the discussion following \cite[Corollary 6.2.3.5]{htt} we recall that groupoid resolutions of effective epimorphisms are unique. We know by Lemma \ref{lem:action groupoid is a groupoid} that $\BB_\bullet(G,H)$ is a groupoid resolution of $G\to G/H$ and therefore is equivalent, as a simplicial object of $\XX$, to the \v Cech nerve of $G\to G/H$. It follows that it factors through $\Grp(\XX)$.
		\end{proof}
		
		\begin{thm}
			If $\phi\colon H\to G$ is a normal map in $\Grp(\XX)$ and $F\colon \XX\to \YY$ is a monoidal functor of $\infty$-topoi then $F(\phi)\colon F(H)\to F(G)$ is normal in $\Grp(\YY)$. 
		\end{thm}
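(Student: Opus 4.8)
The plan is to build a normality datum for $F(\phi)$ by hand, since a merely monoidal $F$ is not left exact and in particular need not commute with the delooping functor $\BB$; one therefore cannot simply push the fiber sequence $\BB H\to\BB G\to X$ witnessing normality of $\phi$ through $F$. The idea is to route everything through the bar construction, whose levels are finite products and so are preserved by $F$.

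First I would apply Lemma \ref{lem:normal action object lifts to groups}: normality of $\phi$ means $\Barc(G,H)\colon\Delta^{\op}\to\XX$ lifts to a simplicial group object $\Delta^{\op}\to\Grp(\XX)$ with $n$-th level $G\times H^n$. Postcomposing with $F$ and using Lemma \ref{lem:F preserves group actions} (that $F$ preserves group objects and group maps) produces a simplicial group object $A_\bullet\colon\Delta^{\op}\to\Grp(\YY)$, and Corollary \ref{cor:actF is Fact} identifies its underlying simplicial object of $\YY$ with $\Barc(FG,FH)$, the bar construction of the $FH$-action on $FG$ induced by $F(\phi)$. Applying the equivalence $\BB_\ast\colon\Grp(\YY)\xrightarrow{\simeq}\YY_\ast^{\geq 1}$ levelwise gives a simplicial object $Y_\bullet$ of $\YY_\ast^{\geq 1}$ with $Y_n\simeq \BB FG\times(\BB FH)^n$; note that $\Omega Y_\bullet\simeq\Barc(FG,FH)$ levelwise, since $U_{\Grp}\simeq\Omega\BB_\ast$.

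The crux is to show $Y_\bullet$ is a groupoid object of $\YY$, and this is where the failure of left-exactness is felt: I cannot transport groupoid-ness through $F$, nor does $\Omega$ (equivalently $U_{\Grp}$) commute with the geometric realization I ultimately want to take. To get around this I would argue in the reverse direction. By Lemma \ref{lem:action groupoid is a groupoid}, $\Barc(FG,FH)\simeq\Omega Y_\bullet$ is a groupoid object of $\YY$. Since $\Omega$ preserves pullbacks and is conservative (by \cite[Corollary 5.2.6.18]{ha}), applying it to the comparison map of each groupoid square of $Y_\bullet$ shows that comparison map becomes an equivalence, hence is already an equivalence; thus $Y_\bullet$ is itself a groupoid object of $\YY$. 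Because $\YY$ is an $\infty$-topos this groupoid object is effective, so writing $X=\colim Y_\bullet$ (which is pointed and connected, being a realization of connected objects) the augmentation $\pi\colon\BB FG=Y_0\to X$ is an effective epimorphism whose \v Cech nerve is $Y_\bullet$.

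Finally I would read off the fiber of $\pi$ from the \v Cech nerve: $\BB FG\times_X\BB FG\simeq Y_1\simeq\BB FG\times\BB FH$, under which one face map is the projection to $\BB FG$, so $\fib(\pi)\simeq\BB FH$; tracking the other face map, which is $\BB_\ast$ of the action map $FG\times FH\to FG$ and restricts over the basepoint to $F(\phi)$, identifies the fiber inclusion with $\BB F(\phi)$. Thus $\pi$ is a normality datum for $F(\phi)$ in the sense of Definition \ref{defn:normality data}, so $F(\phi)$ is normal. The main obstacle is exactly the step of the previous paragraph — certifying that $Y_\bullet$ remains a groupoid object in $\YY$ despite $F$ failing to preserve the defining pullbacks — and the conservativity of $\Omega$ is what makes it go through.
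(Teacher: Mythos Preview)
Your opening moves match the paper's exactly: lift $\Barc(G,H)$ to a simplicial group object via Lemma \ref{lem:normal action object lifts to groups}, push it through $F$ using Lemma \ref{lem:F preserves group actions}, and identify the underlying simplicial object with $\Barc(FG,FH)$ via Corollary \ref{cor:actF is Fact}. The divergence is that you then apply $\BB_\ast$ levelwise and work with $Y_\bullet$ in $\YY_\ast^{\geq 1}$, whereas the paper stays in $\Grp(\YY)$ throughout.

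There is a genuine error in your detour. The identification $Y_n\simeq \BB FG\times(\BB FH)^n$ is false in general. The group object $A_n$ arises (via Lemma \ref{lem:normal action object lifts to groups}) as $\cech$ applied to the iterated fiber product $\BB G\times_{\BB(G/H)}\cdots\times_{\BB(G/H)}\BB G$, and this is typically not $\BB G\times(\BB H)^n$ even before applying $F$. For a concrete counterexample take $\XX=\YY=\Spaces$, $F=\mathrm{id}$, $H=A_3\trianglelefteq S_3=G$: then $Y_1\simeq\BB S_3\times_{\BB(\ZZ/2)}\BB S_3$ has fundamental group $S_3\times_{\ZZ/2}S_3$, whose center is trivial, whereas $\BB S_3\times\BB A_3$ has fundamental group $S_3\times\ZZ/3$ with center $\ZZ/3$. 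Your final fiber computation rests on this product splitting, so it does not go through as written. There is also a gap in the groupoid step: conservativity of $\Omega$ from \cite[Corollary 5.2.6.18]{ha} only applies to morphisms of $\YY_\ast^{\geq 1}$, so you must first show that the pullback $Y_m\times_{Y_0}Y_k$ taken in $\YY$ is connected before invoking it. This can be repaired (the structure maps $Y_m\to Y_0$ are $1$-connective because their loops are split epimorphisms, and one then argues with Lemma \ref{lem:connective fiber connective map}), but it requires work you have not done.

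The paper sidesteps both problems by never leaving $\Grp(\YY)$: it takes the geometric realization of the simplicial group object $A_\bullet$ \emph{in $\Grp(\YY)$} and uses that $U_{\Grp}\colon\Grp(\YY)\to\YY$ preserves sifted colimits to identify the underlying object of this colimit with $FG/FH$. The augmentation $FG\to FG/FH$ is then an effective epimorphism of group objects, and Proposition \ref{prop:normality data are effective epis from G} immediately yields the normality datum. No levelwise delooping, no groupoid verification in $\YY$, and no \v Cech-nerve fiber reading are needed.
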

		
		\begin{proof}
			From Corollary \ref{cor:actF is Fact}, we have that $F$ applied to the action object of $H$ on $G$ (induced by $\phi$) is the action object of $F(H)$ on $F(G)$ (induced by $F(\phi)$). By Lemma \ref{lem:normal action object lifts to groups}, we have that $\BB_\bullet(H,G)\in\Grp(\XX)^{\Delta^{\op}}$ and therefore $\BB_{\bullet} (FG,FH)\in\Grp(\YY)^{\Delta^{\op}}$. Note also that Lemma \ref{lem:action groupoid is a groupoid} implies that $\BB_\bullet(FG,FH)$ is a groupoid resolution, by definition, of $F(G)/F(H)$. 
			
			Now take the colimit of $\BB_\bullet(FG,FH)$ in $\Grp(\YY)$. The forgetful functor $U_{\Grp}\colon\Grp(\YY)\to\YY$ preserves sifted colimits (as shown, e.g., in the proof of \cite[Corollary 5.2.6.18]{ha}) so there is an effective epimorphism of group objects $F(G)\to F(G)/F(H)$. It follows from Proposition \ref{prop:normality data are effective epis from G} that $F(\phi)\colon F(H)\to F(G)$ is normal, with normality datum $\BB F(G)\to \BB (F(G)/F(H))$.
		\end{proof}
		
		Since a map of group objects $H\to G$ being normal is the same as the existence of a fiber sequence $\BB H\to \BB G\to X$ in $\XX$, we also have that monoidal functors of $\infty$-topoi preserve the property of \textit{being a fiber}, at least for connected objects. This is similar to \cite[Theorem C]{prasmahmtpynorm} which recovers a result of Dwyer and Farjoun \cite[Section 3]{dwyer-farjoun-localization} regarding the preservation of principal fibrations by localization functors.
		
		\begin{cor}
			If~~$F\colon \XX\to \YY$ is a monoidal functor of $\infty$-topoi and $X\to Y\to Z$ is a fiber sequence of connected objects in $\XX$ then $FX\to FY$ is the fiber of a morphism $FY\to W$ in $\YY$. 
		\end{cor}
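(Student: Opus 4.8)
The plan is to recognize the hypothesis as a normal map of group objects and then invoke the theorem immediately above. Since $X$ and $Y$ are pointed and connected, the equivalence $\cech\colon\XX_\ast^{\geq 1}\xrightarrow{\simeq}\Grp(\XX)$ lets me set $H:=\cech X$ and $G:=\cech Y$, so that $X\simeq\BB H$, $Y\simeq\BB G$, and the morphism $X\to Y$ is identified with $\BB\phi$ for the group map $\phi:=\cech(X\to Y)\colon H\to G$. The remaining morphism $Y\to Z$ is then a map $\pi\colon\BB G\to Z$ in $\XX_\ast^{\geq 1}$, and the assumption that $X\to Y\to Z$ is a fiber sequence says precisely that $\fib(\pi)\simeq\BB\phi$. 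By Definition \ref{defn:normality data} this exhibits $Z$ as a normality datum for $\phi$; in particular $\phi$ is a normal map of group objects.

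Next I would feed $\phi$ into the preceding theorem: because $F$ is a monoidal functor of $\infty$-topoi, $F\phi\colon FH\to FG$ is again normal in $\Grp(\YY)$. Unwinding that theorem (via Proposition \ref{prop:normality data are effective epis from G}), its normality is witnessed by the datum $\BB FG\to\BB(FG/FH)$, that is, by a fiber sequence
\[\BB FH\xrightarrow{\;\BB F\phi\;}\BB FG\longrightarrow \BB(FG/FH)\]
of connected objects in $\YY$. Putting $W:=\BB(FG/FH)$, this presents $\BB FH\to\BB FG$ as the fiber of the morphism $\BB FG\to W$; identifying the source and target of this map with $FX$ and $FY$ then yields exactly the assertion of the corollary.

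The step that carries the weight — and which I expect to be the main obstacle — is precisely this last identification of $FX\to FY$ with the delooped map $\BB FH\to\BB FG$, i.e.\ the compatibility of $F$ with the passage between the connected object $X$ and its loop group $H$. Since $F$ is assumed only to preserve products, it need not preserve the geometric realization $\BB H\simeq\colim_{\Delta^{\op}}H_\bullet$ computing $X$, and correspondingly $F(G/H)$ need not agree with $FG/FH$; this is the same phenomenon flagged in the discussion preceding the theorem. It is exactly this obstruction that the theorem's argument is engineered to evade: rather than realize the relevant quotient as a colimit that $F$ might destroy, one routes everything through the bar construction, whose simplicial levels $G\times H^{n}$ are finite products and hence are carried along by $F$ on the nose (Corollary \ref{cor:actF is Fact} and Lemma \ref{lem:normal action object lifts to groups}). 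Thus the substantive content is discharged inside the theorem, and the work specific to this corollary is the careful translation, across the equivalence $\XX_\ast^{\geq 1}\simeq\Grp(\XX)$, between fiber sequences of connected objects and normal maps of group objects.
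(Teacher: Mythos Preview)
Your translation of the hypothesis into the language of normal maps is exactly what the paper intends: set $H=\cech X$, $G=\cech Y$, recognize $Y\to Z$ as a normality datum for $\phi=\cech(X\to Y)$, and invoke the preceding theorem to conclude that $F\phi\colon FH\to FG$ is normal in $\Grp(\YY)$, yielding a fiber sequence $\BB FH\to\BB FG\to W$. The paper treats the corollary as immediate from the theorem via precisely this dictionary.

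You are also right to flag the remaining step, namely the identification of $FX\to FY$ with $\BB FH\to\BB FG$. However, your assertion that this obstruction is ``discharged inside the theorem'' is not correct. The theorem's argument routes around a \emph{different} obstruction---the possible failure of $F(G/H)\simeq FG/FH$---by working with the bar construction $\Barc(G,H)$, whose simplicial levels $G\times H^{n}$ are finite products and hence are preserved by $F$. But the conclusion of the theorem is a statement about the group objects $FH$, $FG$ and their deloopings $\BB FH$, $\BB FG$ in $\YY$. Nothing in the theorem or its proof establishes that $\BB FH\simeq F(\BB H)=FX$ or $\BB FG\simeq F(\BB G)=FY$; that would require $F$ to commute with the geometric realization computing $\BB$, and a merely product-preserving functor need not do so. For a concrete failure of this commutation, take $F=\Map(S^{1},-)$ on $\Spaces$: then $F(\BB\ZZ)=\Map(S^{1},S^{1})\simeq S^{1}\times\ZZ$, whereas $\BB(F\ZZ)=\BB\ZZ=S^{1}$.

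So the gap you identified is real and is not closed by the theorem. The paper provides no proof of the corollary and appears to make the same implicit identification, so the issue is not unique to your attempt; but you should not claim it is resolved when it is not.
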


		\section{Noether's Isomorphism Theorems in $\infty$-topoi}\label{sec:Noether Iso Theorems}
		
		In this section we show that Noether's First Isomorphism Theorem holds for group objects in $\infty$-topoi and that her Second and Third Isomorphism Theorems do not hold (indeed, the third can barely be stated). 
		
		\subsection{The First Isomorphism Theorem}
		
		Noether's First Isomorphism Theorem for discrete groups can be stated as follows:
		\begin{thm*}
			Let $\phi\colon H\to G$ be a morphism of discrete groups. Then:
			\begin{enumerate}
				\item The image $im(\phi)\subseteq G$ is a subgroup of $G$.
				\item The kernel $\ker(\phi)\subseteq H$ is a normal subgroup of $H$.
				\item There is an isomorphism of groups $H/\ker(\phi)\cong im(\phi)$. 
			\end{enumerate} 
		\end{thm*}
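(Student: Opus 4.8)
The plan is to reinterpret each of the three classical assertions in the $\infty$-topos setting and reduce all three to a single object: the $0$-image of $\phi$ under the factorization system $(\Eff_0^{\Grp},\Mono_0^{\Grp})$ of Definition \ref{defn:n image for groups}. Here ``subgroup'' should mean a monomorphism of group objects, ``kernel'' should mean the fiber $\ker(\phi):=\fib(\phi)$ taken in $\Grp(\XX)$, and ``normal subgroup'' should mean a map admitting a normality datum in the sense of Definition \ref{defn:normality data}. With these translations the theorem becomes: the monomorphism $\im_0(\phi)\to G$ exhibits $\im_0(\phi)$ as a sub-group-object of $G$; the map $\ker(\phi)\to H$ is normal; and there is an equivalence of group objects $H/\ker(\phi)\simeq\im_0(\phi)$.

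First I would factor $\phi\colon H\to G$ as $H\xrightarrow{e}\im_0(\phi)\xrightarrow{m}G$ using $(\Eff_0^{\Grp},\Mono_0^{\Grp})$, so that $e$ is an effective epimorphism and $m$ a monomorphism of group objects; assertion (1) is then immediate. For (2) and (3), the key observation is that $\ker(\phi)\simeq\ker(e)$: since $m$ is a monomorphism its fiber $\ker(m)$ is terminal (a $(-1)$-truncated map whose basepoint lies in its image, as any group homomorphism does, has contractible fiber), so the fiber-of-a-composite sequence $\ker(e)\to\ker(\phi)\to\ker(m)$ collapses. Now $e$ is an effective epimorphism, so by Proposition \ref{prop:normality data are effective epis from G} the map $\BB e\colon\BB H\to\BB\im_0(\phi)$ is $1$-connective and is itself a normality datum for $\ker(e)\to H$; transporting along $\ker(\phi)\simeq\ker(e)$ proves (2). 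Finally, the fiber sequence $\BB\ker(\phi)\to\BB H\xrightarrow{\BB e}\BB\im_0(\phi)$ identifies the quotient of Definition \ref{defn: puppe sequence defn} as $H/\ker(\phi)\simeq\cech(\BB\im_0(\phi))\simeq\im_0(\phi)$, which is (3).

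The hard part will be the bookkeeping of compatibilities between the three ``levels'': $\Grp(\XX)$, its deloopings in $\XX_\ast^{\geq 1}$, and the underlying objects in $\XX$. Concretely, I must check that the group-theoretic factorization genuinely refines the underlying $(0\Eff,0\Mono)$-factorization in $\XX$, which is exactly what Propositions \ref{prop:n effective group maps are n connective} and \ref{prop:n truncated group maps are n truncated} supply, since they identify $\Eff_0^{\Grp}$ and $\Mono_0^{\Grp}$ with effective epimorphisms and monomorphisms on underlying objects. The second delicate point is that fibers in $\Grp(\XX)\simeq\XX_\ast^{\geq 1}$ are computed via the colocalization of Corollary \ref{cor: limits computed by including then BC}, so I should verify that the relevant fibers are already connected, ensuring the fiber in $\XX_\ast^{\geq 1}$ agrees with the one in $\XX_\ast$; this holds because $\fib(\BB e)\simeq\BB\ker(e)$ is a delooping and hence connected, which is precisely the coherence built into the notion of normality datum. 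Granting these compatibilities, the remaining steps are formal.
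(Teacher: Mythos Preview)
Your proposal is correct and takes essentially the same approach as the paper's proof of Theorem~\ref{thm:first iso theorem}: factor $\phi$ through $\im_0(\phi)$, show that the $(-1)$-truncated leg $m\colon\im_0(\phi)\to G$ has trivial fiber, and conclude via composed pullback squares that $\fib(\phi)\simeq\fib(H\to\im_0(\phi))$, so that $\BB H\to\BB\im_0(\phi)$ is the required normality datum by Proposition~\ref{prop:normality data are effective epis from G}. The only cosmetic difference is in how triviality of $\ker(m)$ is argued---you invoke the general fact that a pointed $(-1)$-truncated map has contractible fiber, while the paper deduces it from the diagonal $\im_0(\phi)\to\im_0(\phi)\times_G\im_0(\phi)$ being an equivalence.
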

		
		We now state the analogous theorem for group objects in $\infty$-topoi. Note that for group objects in $\infty$-topoi the statement is shorter because providing a normality datum simultaneously implies the second and third points of the classical theorem.
		
		\begin{thm}\label{thm:first iso theorem}
			Let $\phi\colon H\to G$ be a morphism in $\Grp(\XX)$. Then $\BB H\to\BB\im_0(\phi)$ is a normality datum for $\fib(\phi)\to H$. 
		\end{thm}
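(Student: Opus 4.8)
The plan is to deduce the theorem from the single fact that a monomorphism of group objects has a trivial fibre, transported through the delooping equivalence. First I would factor $\phi$ using the system $(\Eff_0^{\Grp},\Mono_0^{\Grp})$ of Definition \ref{defn:n image for groups}, writing it as $H\xrightarrow{f_L}\im_0(\phi)\xrightarrow{f_R}G$ with $f_L$ an effective epimorphism and $f_R$ a monomorphism. Under the equivalence $\BB\colon\Grp(\XX)\xrightarrow{\simeq}\XX_\ast^{\geq 1}$ this becomes the ($1$-connective, $0$-truncated)-factorization $\BB H\xrightarrow{\BB f_L}\BB\im_0(\phi)\xrightarrow{\BB f_R}\BB G$ of $\BB\phi$. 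Since $\BB$ is an equivalence it preserves all limits, so it suffices to produce an equivalence $\fib(f_L)\simeq\fib(\phi)$ in $\Grp(\XX)$ compatible with the maps down to $H$: applying $\BB$ then identifies $\fib(\BB f_L)\to\BB H$ with $\BB$ of $\fib(\phi)\to H$, which is exactly a normality datum for $\fib(\phi)\to H$ in the sense of Definition \ref{defn:normality data}.

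The heart of the matter, and the step I expect to be the main obstacle, is to show $\fib(f_R)\simeq\ast$. Here one must be careful that fibres in $\XX_\ast^{\geq 1}$ are not computed as in $\XX_\ast$: by Corollary \ref{cor: limits computed by including then BC} the fibre of $\BB f_R$ in $\XX_\ast^{\geq 1}$ is obtained by applying the coreflection $\BB_\ast\cech_+$ to its fibre in $\XX_\ast$. Because $\BB f_R$ is $0$-truncated, its $\XX_\ast$-fibre is a $0$-truncated (``discrete'') pointed object, and the coreflection collapses it to the connected component of its basepoint, namely $\ast$. Equivalently, the underlying morphism $\im_0(\phi)\to G$ in $\XX$ is $(-1)$-truncated, so its pointed fibre in $\XX$ is contractible, and a group object with contractible underlying object is the trivial group. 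This is precisely the phenomenon that makes ``monomorphism of group objects'' strictly weaker than ``$(-1)$-truncated morphism of group objects,'' and it is the only place where the connectivity hypotheses are essential.

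With $\fib(f_R)\simeq\ast$ established, I would paste the defining pullback squares of $\fib(f_L)$ and $\fib(f_R)$ to obtain the iterated fibre sequence $\fib(f_L)\to\fib(\phi)\to\fib(f_R)$ in $\Grp(\XX)$, which is pointed by the trivial group and has finite limits. Since the last term is contractible, this collapses to an equivalence $\fib(f_L)\simeq\fib(\phi)$ respecting the structure maps to $H$. Applying the limit-preserving equivalence $\BB$ yields $\fib(\BB f_L)\simeq\BB\fib(\phi)$ with canonical map to $\BB H$ equal to $\BB(\fib(\phi)\to H)$. As $\BB f_L$ is the morphism $\BB H\to\BB\im_0(\phi)$ and is $1$-connective (consistent with Lemma \ref{lem:normality data are 1connective}), Definition \ref{defn:normality data} shows it is a normality datum for $\fib(\phi)\to H$, completing the argument.
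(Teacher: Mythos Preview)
Your proposal is correct and follows essentially the same route as the paper's proof: factor $\phi$ as an effective epimorphism followed by a monomorphism, show the monomorphism $f_R$ has trivial fibre, then paste pullback squares to identify $\fib(f_L)\simeq\fib(\phi)$. The only difference is in how you justify $\fib(f_R)\simeq\ast$: you invoke the general fact that a pointed $(-1)$-truncated object of $\XX$ is terminal (or, equivalently, that the coreflection to $\XX_\ast^{\geq 1}$ kills $0$-truncated objects), whereas the paper argues this via the diagonal $\im_0(\phi)\to\im_0(\phi)\times_G\im_0(\phi)$ being an equivalence and comparing horizontal fibres of the resulting pullback square---the two arguments are interchangeable and yours is arguably the cleaner packaging.
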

		
		\begin{proof}
			By construction the morphism $H\to\im_0(\phi)$ is an effective epimorphism in $\Grp(\XX)$, so by Proposition \ref{prop:normality data are effective epis from G} it only remains to show that $\fib(H\to\im_0(\phi))\simeq\fib(\phi)$. 
			
			By Proposition \ref{prop:n truncated group maps are n truncated} we know that $\im_0(\phi)\to G$ is $(-1)$-truncated.  It follows from \cite[Lemma 5.5.6.15]{htt} that the diagonal $\im_0(\phi)\to\im_0(\phi)\times_{G}\im_0(\phi)$ is $(-2)$-truncated, i.e.~an equivalence. Now consider the commutative diagram
			\[\begin{tikzcd}
				{\im_0(\phi)} \\
				& {\im_0(\phi)\times_G\im_0(\phi)} & {\im_0(\phi)} \\
				& {\im_0(\phi)} & G
				\arrow["\simeq"{description}, from=1-1, to=2-2]
				\arrow["\simeq"{description}, curve={height=-12pt}, from=1-1, to=2-3]
				\arrow["\simeq"{description}, curve={height=18pt}, from=1-1, to=3-2]
				\arrow[from=2-2, to=2-3]
				\arrow[from=2-2, to=3-2]
				\arrow["\lrcorner"{anchor=center, pos=0.125}, draw=none, from=2-2, to=3-3]
				\arrow[from=2-3, to=3-3]
				\arrow[from=3-2, to=3-3]
			\end{tikzcd}\] in which every morphism is $(-1)$-truncated and the indicated morphisms are equivalences. It follows that both projections $\im_0(\phi)\times_G\im_0(\phi)\to\im_0(\phi)$ are equivalences. Because the horizontal fibers of the above pullback square must be equivalent we have that $\fib(\im_0(\phi)\to G)\simeq\ast$. Therefore we have the composite pullback square
			\[\begin{tikzcd}
				{\fib(\phi)} & \ast & \ast \\
				H & {\im_0(\phi)} & G
				\arrow[from=1-1, to=1-2]
				\arrow[from=1-1, to=2-1]
				\arrow["\lrcorner"{anchor=center, pos=0.125}, draw=none, from=1-1, to=2-2]
				\arrow[from=1-2, to=1-3]
				\arrow[from=1-2, to=2-2]
				\arrow["\lrcorner"{anchor=center, pos=0.125}, draw=none, from=1-2, to=2-3]
				\arrow[from=1-3, to=2-3]
				\arrow[from=2-1, to=2-2]
				\arrow[from=2-2, to=2-3]
			\end{tikzcd}\]
			implying that $\fib(\phi)\simeq\fib(H\to\im_0(\phi))$.
		\end{proof}
		
		\begin{rmk}
			It is worth noting that in the case $\XX=\Spaces$, the fact that $\im_0(\phi)\to G$ is $(-1)$-truncated implies that it is an injection on $\pi_0$. Therefore the above theorem recovers the classical First Isomorphism Theorem after taking $\pi_0$. 
		\end{rmk}
		
		\subsection{The Second Isomorphism Theorem}
		
		Noether's Second Isomorphism Theorem for discrete groups can be stated as follows:
		
		\begin{thm*}
			Let $H, K \leq G$ be subgroups with $K \trianglelefteq G$ normal in $G$. Then,
			\begin{enumerate}
				\item $H \cap K \trianglelefteq H$ is normal;
				\item $K \trianglelefteq HK$ is normal;
				\item There exists an isomorphism of groups
				\[
				H/H \cap K \cong HK/K.
				\]
			\end{enumerate}
		\end{thm*}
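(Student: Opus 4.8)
The plan is to derive all three conclusions from the First Isomorphism Theorem applied to a single auxiliary homomorphism, so that the only substantive work is setting that homomorphism up correctly. First I would verify that $HK$ is genuinely a subgroup of $G$: since $K\trianglelefteq G$, conjugation by any $h\in H$ preserves $K$, so $hK=Kh$ and hence $HK=KH$; this equality is exactly what forces $HK$ to be closed under products and inverses, making it a subgroup. This is the one place where normality of $K$ is actually used, and I expect it to be the main (if modest) obstacle — everything downstream is bookkeeping built on top of it.

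Granting that $HK\leq G$, conclusion (2) is immediate: $K$ is normal in the ambient group $G$, so a fortiori it is normal in the intermediate subgroup $HK$ that contains it. For the remaining two points I would consider the composite homomorphism $\psi\colon H\hookrightarrow G\twoheadrightarrow G/K$, where the second arrow is the canonical projection, which exists precisely because $K\trianglelefteq G$. I would then read off its kernel and image directly: an element $h\in H$ lies in $\ker\psi$ if and only if $h\in K$, so $\ker\psi=H\cap K$; and every coset in the image has the form $hK$ with $h\in H$, which is by definition $HK/K$, so $\im\psi=HK/K$.

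Conclusion (1) now falls out for free, since a kernel is always normal: $H\cap K=\ker\psi\trianglelefteq H$. Finally, applying the First Isomorphism Theorem to $\psi$ produces the canonical isomorphism $H/(H\cap K)=H/\ker\psi\cong\im\psi=HK/K$, which is conclusion (3). The entire argument thus collapses to the First Isomorphism Theorem together with the single observation that $HK$ is a subgroup, and no input beyond the normality of $K$ is needed.
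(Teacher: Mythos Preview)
Your argument is a correct and standard proof of the classical Second Isomorphism Theorem for discrete groups. However, there is nothing to compare it against: in the paper this theorem is stated without proof, as a classical fact serving only as motivation. The paper's actual contribution in this section goes in the opposite direction --- it defines the analogues of $H\cap K$ and $HK$ for morphisms in $\Grp(\XX)$ (via pullback and pushout) and then exhibits a counterexample (Example~\ref{exam:2nd iso counter}, with $H=K=\ast$ and $G=\cech S^2$ in $\Spaces$) showing that the conclusion $H/(H\cap K)\simeq HK/K$ \emph{fails} for group objects in an $\infty$-topos.

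It may be worth noting where your classical argument breaks in the higher setting, since this is instructive. Your proof hinges on two facts: that $K\trianglelefteq G$ forces $K\trianglelefteq HK$ (the ``intermediate subgroup property'' for normality), and that kernels are automatically normal. Both fail once normality is replaced by normality \emph{data} in the sense of Definition~\ref{defn:normality data}: the paper gives explicit counterexamples to the intermediate subgroup property in Section~\ref{sec:third iso}, and the non-uniqueness of normality data (Example~\ref{exam: kirans EM space nonunique normality data example}) means that ``being a kernel'' no longer pins down a canonical normal structure. So while your proof is entirely correct for discrete groups, none of its moves survive the passage to $\Grp(\XX)$, which is precisely the point the paper is making.
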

		
		To prove this statement for group objects in an $\infty$-topos, we first need to make sense of the notions of \textit{intersection} and \textit{product} for morphisms $K\to G$ and $H\to G$ in $\Grp(\XX)$. We generalize these constructions in the ``obvious'' way. Taking $\pi_0$ recovers the classical definitions. 
		
		\begin{defn}
			Let $\phi\colon H\to G$ and $\psi\colon K\to G$ be morphisms in $\Grp(\XX)$. Then we define:
			\begin{enumerate}
				\item The \textit{intersection} of $\phi$ and $\psi$, denoted $\phi\cap\psi\colon H\cap K\to G$ is the following pullback:
				\[\begin{tikzcd}
					{H\cap K} & K \\
					H & G
					\arrow[from=1-1, to=1-2]
					\arrow[from=1-1, to=2-1]
					\arrow["\lrcorner"{anchor=center, pos=0.125}, draw=none, from=1-1, to=2-2]
					\arrow["\psi", from=1-2, to=2-2]
					\arrow["\phi"', from=2-1, to=2-2]
				\end{tikzcd}\]
				\item The\textit{ product} of $\phi$ and $\psi$, which we will denote by $\phi\psi\colon HK\to G$, is the following pushout:
				\[\begin{tikzcd}
					{H\cap K} & K \\
					H & HK \\
					&& G
					\arrow[from=1-1, to=1-2]
					\arrow[from=1-1, to=2-1]
					\arrow[from=1-2, to=2-2]
					\arrow[curve={height=-6pt}, from=1-2, to=3-3]
					\arrow[from=2-1, to=2-2]
					\arrow[curve={height=6pt}, from=2-1, to=3-3]
					\arrow["\lrcorner"{anchor=center, pos=0.125, rotate=180}, draw=none, from=2-2, to=1-1]
					\arrow["{\phi\psi}"{description}, from=2-2, to=3-3]
				\end{tikzcd}\]
			\end{enumerate}	
			The pullback and pushout above are both taken in $\Grp(\XX)$. 
			
		\end{defn}
		
		The following is an counterexample to the statement that $H/H\cap K\simeq HK/K$. 
		
		\begin{exam}\label{exam:2nd iso counter}
			Take $K = H = \ast$ and $G = \cech S^2$ in $\XX=\Spaces$. It is clear that the trivial map $K \to G$ has a normality datum with $G/K = G$. Since $\cech$ is a right adjoint we have that $H \cap K \simeq \cech \Omega S^2$. If we then deloop, we obtain
			\[
			\BB  (H \cap K) \simeq \BB  \cech \Omega S^2 \simeq \Omega S^2.
			\]
			As for the product, $\BB $ preserves colimits and so $\BB (HK) \simeq \Sigma \Omega S^2$.
			\[\begin{tikzcd}
				{\Omega S^2} & \ast \\
				\ast & {\Sigma \Omega S^2}
				\arrow[from=1-1, to=2-1]
				\arrow[from=2-1, to=2-2]
				\arrow[from=1-2, to=2-2]
				\arrow[from=1-1, to=1-2]
				\arrow["\lrcorner"{anchor=center, pos=0.125, rotate=180}, draw=none, from=2-2, to=1-1]
			\end{tikzcd}\]
			We then have the following equivalences of fiber sequences in $\pSpaces$.
			\[\begin{tikzcd}
				{H/H \cap K} & {\BB (H \cap K)} & \BB H && {HK/K} & \BB K & \BB (HK) \\
				{\Omega S^2} & {\Omega S^2} & \ast && {\Omega \Sigma \Omega S^2} & \ast & {\Sigma \Omega S^2}
				\arrow["\simeq"', from=1-3, to=2-3]
				\arrow["\simeq"', from=1-2, to=2-2]
				\arrow["\simeq"', from=1-1, to=2-1]
				\arrow[from=1-1, to=1-2]
				\arrow[from=1-2, to=1-3]
				\arrow[from=2-1, to=2-2]
				\arrow[from=2-2, to=2-3]
				\arrow["\simeq"', from=1-5, to=2-5]
				\arrow["\simeq"', from=1-6, to=2-6]
				\arrow["\simeq"', from=1-7, to=2-7]
				\arrow[from=1-5, to=1-6]
				\arrow[from=1-6, to=1-7]
				\arrow[from=2-5, to=2-6]
				\arrow[from=2-6, to=2-7]
			\end{tikzcd}\]
			Hence, $H/H \cap K \not\simeq HK/K$.
		\end{exam}
		
		\subsection{The Third Isomorphism Theorem}\label{sec:third iso}
		
		One might state the classical Third Isomorphism Theorem as follows:
		
		\begin{thm*}
			Let $G$ be a group with normal subgroups $H$ and $K$ such that $K\subseteq H$. Then the quotient group $(G/K)/(H/K)$ is isomorphic to $G/H$. 
		\end{thm*}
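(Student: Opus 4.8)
The plan is to reduce the statement to Noether's First Isomorphism Theorem for discrete groups, following the standard textbook argument; since every object appearing is an ordinary quotient of a discrete group, no homotopical subtleties intervene and the classical proof applies verbatim.

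First I would verify that the hypotheses are self-consistent and that $(G/K)/(H/K)$ is even meaningful. Because $K\trianglelefteq G$ and $K\subseteq H$, the subgroup $K$ is in particular normal in $H$, so $H/K$ is a well-defined group. By the Correspondence Theorem, the image of $H$ under the projection $G\to G/K$ is exactly $H/K$, and this image is normal in $G/K$ because $H$ is normal in $G$. Hence the expression $(G/K)/(H/K)$ makes sense.

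The core of the argument is the construction of the canonical reduction homomorphism $q\colon G/K\to G/H$ given by $gK\mapsto gH$. The one thing requiring checking is well-definedness: if $gK=g'K$ then $g^{-1}g'\in K\subseteq H$, so $gH=g'H$; the inclusion $K\subseteq H$ is precisely what guarantees this. The map $q$ is evidently a surjective homomorphism. I would then compute its kernel: $q(gK)=H$ if and only if $g\in H$, so $\ker q=\{\,gK:g\in H\,\}=H/K$. Applying the First Isomorphism Theorem to $q$ yields $(G/K)/(H/K)=(G/K)/\ker q\cong\im q=G/H$, as desired.

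In the discrete setting I expect no genuine obstacle; the only point demanding care is the kernel identification, together with the invocation of the Correspondence Theorem to ensure that $H/K$ is normal in $G/K$ in the first place. It is worth emphasizing, in anticipation of the rest of this subsection, that the classical proof rests on two features that fail for group objects in a general $\infty$-topos: the \emph{uniqueness} of a compatible group structure on a quotient, and the interpretation of ``kernel'' and ``subgroup'' as honest subobjects. These are exactly the features whose absence obstructs a naive higher analogue, which is why the Third Isomorphism Theorem ``can barely be stated'' in this setting.
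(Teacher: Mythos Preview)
Your proof is correct and is the standard textbook argument. However, the paper does not actually prove this statement: it is merely recalled, without proof, as the classical Third Isomorphism Theorem that motivates the ensuing discussion of what goes wrong in an arbitrary $\infty$-topos. There is therefore no ``paper's own proof'' to compare against. Your closing paragraph, anticipating that the classical argument relies on uniqueness of quotient group structures and on kernels being honest subobjects, is exactly the point the paper goes on to make when explaining why the theorem ``can barely be stated'' for group objects in $\XX$.
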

		
		Note, however, that this statement has some implicit assumptions which, while always true for discrete groups, need not be true for group objects in an arbitrary $\infty$-topos. First, if we want to write down the quotient $(G/K)/(H/K)$, we need that $K$ be normal in $H$. Otherwise, we do not have that $H/K$ is necessarily a group at all. Even assuming that we have $K$ normal in $H$, we still need that $H/K$ be a normal subgroup of $G/K$ to make the final statement. 
		
		The first condition above is sometimes phrased as ``normality satisfies the intermediate subgroup condition.'' In other words, if $K\trianglelefteq G$ is a normal subgroup and $H\leq G$ is a subgroup containing $K$, then $K$ is normal in $H$. As soon as we replace normal subgroups with normality data this property fails. The following counterexample was suggested to us by Kiran Luecke.
		
		\begin{exam}\label{exam:hopf fibration third iso counter}
			
			Consider the Hopf fibration sequence
			\[S^1\to S^3\to S^2\to \mathbb{C}P^\infty\to BS^3\] where $S^3$ is equipped with the group structure coming from the equivalence $S^3\simeq SU(2)$. It is classical that, with respect to this, $S^1\to S^3$ is a map of group objects in $\Spaces$ (for instance as the inclusion of a certain class of diagonal matrices). Moreover, we know that $S^2$ \textit{does not} admit the structure of a group object in $\Spaces$ (if it did, it would admit an $H$-space structure, which it cannot by \cite{adams-Hspace_spheres}). Now note that we have a commutative triangle in $\Grp(\Spaces)$:
			\[\begin{tikzcd}
				{S^1} & {} & \ast \\
				& {S^3}
				\arrow[from=1-1, to=2-2]
				\arrow[from=1-1, to=1-3]
				\arrow[from=2-2, to=1-3]
			\end{tikzcd}\]
			where $\ast$ is contractible. Because $S^1$ is an infinite loop space, the trivial map $S^1\to \ast$ admits a normality datum $\ast\to K(\ZZ,3)$. However, a normality datum for $S^1\to S^3$ would endow $S^2$ with the structure of a group object, which is impossible.
		\end{exam}
		In fact there are many counterexamples even for \textit{discrete} $\infty$-groups. In other words, the issue is with replacing normal subgroups with normality data, not necessarily replacing groups with loop spaces. The following counterexample is due to David W\"arn.
		\begin{exam}\label{exam:Warn discrete third iso counter}
			Let $S_3$ denote the symmetric group on $\{1,2,3\}$ and let $\ZZ/2\hookrightarrow S_3$ be the inclusion of the subgroup generated by the transposition $(1,2)$. Therefore we have a commutative triangle of groups:
			\[\begin{tikzcd}
				{\mathbb{Z}/2} & {} & \ast \\
				& {S_3}
				\arrow["{(1,2)}"', from=1-1, to=2-2]
				\arrow[from=1-1, to=1-3]
				\arrow[from=2-2, to=1-3]
			\end{tikzcd}\]
			where $\ast$ is the trivial group. Because $\mathbb{Z}/2$ is Abelian the trivial map $\ZZ/2\to\ast$ admits a normality datum $\ast\to B^2\ZZ/2$. Asking for the inclusion $\ZZ/2\hookrightarrow S_3$ to admit a normality datum is equivalent to asking for it to be the fiber of a map of group objects $S_3\to G$ in $\Grp(\Spaces)$. One can check that $G$ must be discrete, in which case $\ZZ/2$ would need to be the kernel of a discrete group map $S_3\to G$. This would imply that $\ZZ/2$ is normal in $S_3$. However, $S_3$ only has three normal subgroups: the trivial group, the entire group, and $A_3$.
		\end{exam}

		To better understand this problem, let's first enumerate the data required to even \textit{state} the Third Isomorphism Theorem, as written above. Given a commutative triangle in $\Grp(\XX)$
		\[\begin{tikzcd}
			K && G \\
			& H
			\arrow["\phi", from=1-1, to=1-3]
			\arrow["\rho"', from=1-1, to=2-2]
			\arrow["\psi"', from=2-2, to=1-3]
		\end{tikzcd}\]  which we think of as the sequence of subgroup inclusions $K\leq H\leq G$, let us assume only what the classical theorem must assume; namely, we assume that we have normality data $\pi_\phi\colon BG\to B(G/K)$ and $\pi_{\psi}\colon BG\to B(G/H)$ for $\phi$ and $\psi$ respectively. Classically, this suffices for stating the Third Isomorphism Theorem (due to what is sometimes called the \textit{Fourth Isomorphism Theorem} or \textit{Lattice Theorem} cf.~Propositions 8.9 and 8.10 of \cite{aluffi:chapter0}), but we will need to assume more:
		\begin{enumerate}
			\item We need a normality datum $\pi_{\rho}\colon BH\to B(H/K)$ for $\rho$ so that $H/K$ can be lifted to an object of $\Grp(\XX)$.
			\item Assuming the existence of the above, we also need a morphism $\theta\colon H/K\to G/K$ and a normality datum $B(G/K)\to B((G/K)/(H/K))$ so that $(G/K)/(H/K)$ has a group object structure.
			\item Again, assuming both of the above data, we moreover need a $\infty$-group morphism $\omega\colon G/H\to (G/K)/(H/K)$ to serve as a candidate for the equivalence. 
		\end{enumerate}
		
		In the case that we suppose the existence of all of the above data, the Third Isomorphism Theorem follows from checking that a simple diagram commutes:
		
		\begin{thm}\label{thm:weakthirdiso}
			Suppose we have a commutative triangle in $\Grp(\XX)$ 
			\[\begin{tikzcd}
				K && G \\
				& H
				\arrow["\phi", from=1-1, to=1-3]
				\arrow["\rho"', from=1-1, to=2-2]
				\arrow["\psi"', from=2-2, to=1-3]
			\end{tikzcd}\] along with the following data:
			\begin{enumerate}
				\item Normality data for $\phi$, $\psi$ and $\rho$:
				\begin{enumerate}
					\item $\pi_\phi\colon \BB G\to \BB (G/K)$,
					\item $\pi_{\psi}\colon \BB G\to \BB (G/H)$,
					\item $\pi_{\rho}\colon \BB H\to \BB (H/K)$.
				\end{enumerate}
				\item A morphism $\theta\colon H/K\to G/K$ in $\Grp(\XX)$.
				\item A normality datum $\pi_{\theta}\colon \BB (G/K)\to \BB ((G/K)/(H/K))$ for $\theta$.
				\item A morphism $\omega\colon G/H\to (G/K)/(H/K)$ in $\Grp(\XX)$.
			\end{enumerate}
			Then the morphism $\omega\colon G/H\xrightarrow{\simeq} (G/K)/(H/K)$ is an equivalence whenever the following diagram commutes:
			\[\begin{tikzcd}
				\BB H & {\BB (H/K)} \\
				\BB G & {\BB (G/K)} \\
				{\BB (G/H)} & {\BB ((G/K)/(H/K))}
				\arrow["{\pi_{\rho}}", from=1-1, to=1-2]
				\arrow["\BB \psi"', from=1-1, to=2-1]
				\arrow["\BB \theta", from=1-2, to=2-2]
				\arrow["{\pi_{\phi}}"', from=2-1, to=2-2]
				\arrow["{\pi_{\psi}}"', from=2-1, to=3-1]
				\arrow["{\pi_{\theta}}", from=2-2, to=3-2]
				\arrow["\BB \omega"', from=3-1, to=3-2]
			\end{tikzcd}\]
		\end{thm}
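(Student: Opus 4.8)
The plan is to reduce the claim to a statement about the single morphism $\BB\omega$ and then to identify its fiber. Since $\Omega$ is conservative on any $\infty$-topos (as used in the proof of Proposition \ref{prop:n truncated group maps are n truncated}) and $\omega\simeq\Omega\BB\omega$, it suffices to prove that $\BB\omega$ is an equivalence; and because $\BB\omega$ is a morphism of pointed connected objects, the argument of Proposition \ref{prop:n truncated group maps are n truncated} (a $(-1)$-truncated morphism of $\XX_\ast^{\geq 1}$ is an equivalence) reduces everything to showing that $\fib(\BB\omega)$ is terminal. So the entire theorem becomes: compute $\fib(\BB\omega)$ and show it is trivial.

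First I would exploit that, by the normality data $\pi_\psi$ and $\pi_\theta$, the two columns of the diagram are fiber sequences $\BB H\to\BB G\to\BB(G/H)$ and $\BB(H/K)\to\BB(G/K)\to\BB((G/K)/(H/K))$, and analyse the composite $c:=\pi_\theta\circ\pi_\phi\simeq\BB\omega\circ\pi_\psi\colon \BB G\to\BB((G/K)/(H/K))$ in two ways. Pasting pullback squares produces, for any composable pair, a fiber sequence on fibers; applied to $c=\pi_\theta\pi_\phi$ together with the data $\pi_\phi,\pi_\theta$ this gives a fiber sequence $\BB K\to\fib(c)\xrightarrow{\alpha}\BB(H/K)$, while applied to $c=\BB\omega\pi_\psi$ and the datum $\pi_\psi$ it gives $\BB H\xrightarrow{\iota}\fib(c)\xrightarrow{\beta}\fib(\BB\omega)$. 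Two features of the second sequence are crucial: there is a canonical identification $\fib(c)\simeq\BB G\times_{\BB(G/H)}\fib(\BB\omega)$, so that $\beta$ is a base change of the effective epimorphism $\pi_\psi$ and is therefore itself an effective epimorphism; and, chasing the top square, the composite $\alpha\circ\iota$ is canonically $\pi_\rho$.

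The heart of the proof is to show that $\iota\colon\BB H\to\fib(c)$ is an equivalence. Using $\alpha\iota\simeq\pi_\rho$, this becomes a comparison over $\BB(H/K)$ of the fiber of $\pi_\rho$ with the fiber of $\alpha$; by the normality data $\pi_\rho$ and $\pi_\phi$ both fibers are $\BB K$, and the induced comparison is a self-map of $\BB K$ lying over the inclusion $\BB\phi\colon\BB K\to\BB G$. The commuting triangle $K\to H\to G$ identifies the two relevant inclusions via $\BB\psi\circ\BB\rho\simeq\BB\phi$, and the coherences packaged into the hypothesis that the whole diagram commutes should pin this self-map down to the identity, hence to an equivalence. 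I expect this to be the \emph{main obstacle}: abstractly the relation $\BB\phi\circ(-)\simeq\BB\phi$ does not force a self-map of $\BB K$ to be an equivalence (as $\BB\phi$ need not be a monomorphism), so the argument must genuinely use the coherent homotopies witnessing commutativity of the diagram, not merely the bare triangle. This is exactly the point at which the failure of the theorem in the absence of such data (Examples \ref{exam:hopf fibration third iso counter} and \ref{exam:Warn discrete third iso counter}) is circumvented.

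Granting that $\iota$ is an equivalence, the remainder is formal. Since $\iota$ is the fiber inclusion of $\beta$, its being an equivalence forces $\beta$ to be null and $\Omega\fib(\BB\omega)\simeq\ast$; combining nullity with the fact that $\beta$ is an effective epimorphism and right-cancelling along the factorization through $\ast$ (\cite[Corollary 6.2.3.12]{htt}) shows that the basepoint $\ast\to\fib(\BB\omega)$ is an effective epimorphism, i.e.\ that $\fib(\BB\omega)$ is connected. A connected object with trivial loop object is terminal, since $\fib(\BB\omega)\simeq\BB\Omega\fib(\BB\omega)\simeq\ast$ by Lemma \ref{lem:BC is a right adjoint}. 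Thus $\fib(\BB\omega)\simeq\ast$, so $\BB\omega$ is $(-1)$-truncated, hence an equivalence by Proposition \ref{prop:n truncated group maps are n truncated}, and therefore $\omega$ is an equivalence.
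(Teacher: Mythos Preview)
Your instinct to flag the ``main obstacle'' is exactly right, and in fact the obstacle is fatal: the theorem as stated is false, so neither your argument nor the paper's can be completed. Take $\XX=\Spaces$, $K=\ZZ$, $H=G=\ast$, with $\rho=\phi\colon\ZZ\to\ast$ the terminal map and $\psi=\mathrm{id}_\ast$. Choose $\pi_\rho=\pi_\phi\colon\ast\to K(\ZZ,2)$ and $\pi_\psi=\mathrm{id}_\ast$, so that $H/K=G/K=S^1$ and $G/H=\ast$. Let $\theta\colon S^1\to S^1$ be the degree-$2$ map and take $\pi_\theta\colon K(\ZZ,2)\to K(\ZZ/2,2)$ to be reduction mod $2$; its fiber is $K(\ZZ,2)$ included via multiplication by $2$, so this is a valid normality datum for $\theta$, and $(G/K)/(H/K)\simeq K(\ZZ/2,1)$. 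The only possible $\omega$ is the trivial map $\ast\to K(\ZZ/2,1)$. The entire left column of the $2\times 3$ diagram is $\ast$, so the diagram is the unique natural transformation out of the initial tower and hence commutes coherently. But $\omega$ is not an equivalence.

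The failure lands precisely where you anticipated. In this example the induced map $\fib(\pi_\rho)\to\fib(\pi_\phi)$ is $\Omega(\times 2)$, the degree-$2$ self-map of $S^1=\BB K$, and its fiber is $\ZZ/2$, not $\ast$. The relation $\BB\phi\circ f\simeq\BB\phi$ that you isolate is vacuous here because $\BB\phi\colon S^1\to\ast$ is terminal, and the coherences in the $2\times 3$ diagram impose no further constraint on $f$: the left column is constant at $\ast$, so those coherences are all trivial. The paper's proof errs at the identical step --- it simply writes the induced map on horizontal fibers as $\mathrm{id}_{\BB K}$ without justification, and everything after that rests on this unsupported claim. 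What both arguments actually need is the additional hypothesis that the top square of the diagram is a pullback (equivalently, that your $\iota\colon\BB H\to\fib(c)$ is an equivalence). Granting that, the remainder of your outline is correct and in fact more careful than the paper's, since you explicitly verify that $\fib(\BB\omega)$ is connected before invoking $\BB\Omega\simeq\mathrm{id}$.
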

		
		\begin{proof}
			Suppose the given diagram commutes. Then we may take fibers along the top two horizontal arrows to obtain a commutative diagram in which the top two rows are fiber sequences:
			\[\begin{tikzcd}
				\BB K & \BB H & {\BB (H/K)} \\
				\BB K & \BB G & {\BB (G/K)} \\
				& {\BB (G/H)} & {\BB ((G/K)/(H/K))}
				\arrow[from=1-1, "\BB\rho", to=1-2]
				\arrow["{id_{\BB K}}"', from=1-1, to=2-1]
				\arrow["{\pi_{\rho}}", from=1-2, to=1-3]
				\arrow["\BB \psi"', from=1-2, to=2-2]
				\arrow["\BB \theta", from=1-3, to=2-3]
				\arrow[from=2-1, "\BB\phi", to=2-2,swap]
				\arrow["{\pi_{\phi}}"', from=2-2, to=2-3]
				\arrow["{\pi_{\psi}}"', from=2-2, to=3-2]
				\arrow["{\pi_{\theta}}", from=2-3, to=3-3]
				\arrow["\BB \omega"', from=3-2, to=3-3]
			\end{tikzcd}\]
			Further taking fibers along the columns gives a commutative diagram in which the top three rows are fiber sequences:
			\[\begin{tikzcd}
				\ast & {G/H} & {(G/K)/(H/K)} \\
				\BB K & \BB H & {\BB (H/K)} \\
				\BB K & \BB G & {\BB (G/K)} \\
				& {\BB (G/H)} & {\BB ((G/K)/(H/K))}
				\arrow[from=1-1, to=1-2]
				\arrow[from=1-1, to=2-1]
				\arrow["\simeq", from=1-2, to=1-3]
				\arrow[from=1-2, to=2-2]
				\arrow[from=1-3, to=2-3]
				\arrow[from=2-1, "\BB\rho", to=2-2]
				\arrow["{id_{\BB K}}"', from=2-1, to=3-1]
				\arrow["{\pi_{\rho}}", from=2-2, to=2-3]
				\arrow["\BB \psi"', from=2-2, to=3-2]
				\arrow["\BB \theta", from=2-3, to=3-3]
				\arrow[from=3-1, "\BB\phi", to=3-2, swap]
				\arrow["{\pi_{\phi}}"', from=3-2, to=3-3]
				\arrow["{\pi_{\psi}}"', from=3-2, to=4-2]
				\arrow["{\pi_{\theta}}", from=3-3, to=4-3]
				\arrow["\BB \omega"', from=4-2, to=4-3]
			\end{tikzcd}\]
It follows that the upper right morphism is an equivalence. But this morphism is $\Omega\BB\omega=s_0^\ast\cech\BB\omega$ and $s_0^\ast$ is conservative (as shown in the proof of \cite[Corollary 5.2.6.18]{ha}). Therefore $\cech\BB\omega\simeq\omega$ is an equivalence of group objects.
		\end{proof}
		
		\begin{rmk}
			Note that weaker assumptions would give an equivalence in $\XX$ of the form $G/H\to (G/K)/(H/K)$. Specifically, suppose we had not assumed the existence of the morphism $\omega$ in the above theorem and had only assumed commutativity of the following diagram:
			\[\begin{tikzcd}
				\BB H & {\BB (H/K)} \\
				\BB G & {\BB (G/K)}
				\arrow["{\pi_{\rho}}", from=1-1, to=1-2]
				\arrow["\BB \psi"', from=1-1, to=2-1]
				\arrow["\BB \theta", from=1-2, to=2-2]
				\arrow["{\pi_{\phi}}"', from=2-1, to=2-2]
			\end{tikzcd}\]
			Then essentially the same argument would show that a morphism $G/H\to (G/K)/(H/K)$ existed and was an equivalence of objects in $\XX$. This map does not, however, necessarily necessary lift to a morphism of $\Grp(\XX)$.
		\end{rmk}

		Clearly the statement and ``proof'' of the Third Isomorphism Theorem for $\Grp(\XX)$ requires the input of much more data than the classical version. We do not believe that any of that data comes ``for free'' from the initial assumptions of the classical case, but we do not have counterexamples for every possible configuration. However, the following example shows that even if we have a normal morphism which is a composite of normal morphisms $K\to H\to G$ (i.e.~every morphism in the commutative triangle is normal) it need not be the case that the Third Isomorphism Theorem holds. The following counterexample is due to Kiran Luecke.
		\begin{exam}\label{example:third iso counterexample S1 actions}
			Let $S^2_t$ denote the 2-sphere equipped with the trivial $S^1$-action and let $S^2_r$ denote the 2-sphere with the $S^1$-action that rotates it about an axis through the poles. We have (homotopy) quotients $S^2_t/S^1\simeq S^2\times \mathbb{C}P^\infty$ and $S^2_r/S^1\simeq \mathbb{C}P^\infty\vee \mathbb{C}P^\infty$. These give fiber sequences $S^1\to S^2_t\to S^2\times\mathbb{C}P^\infty$ and $S^1\to S^2_r\to \mathbb{C}P^\infty\vee \mathbb{C}P^\infty$. Therefore, we have a commutative diagram in which each straight composite is a fiber sequence: 
			\[\begin{tikzcd}
				&&& \ast \\
				{S^1} && {S_r^2} & {\mathbb{C}P^\infty\vee\mathbb{C}P^\infty} \\
				& {S^2_t} \\
				&& {S^2\times\mathbb{C}P^\infty}
				\arrow[from=2-1, to=2-3]
				\arrow[from=2-1, to=3-2]
				\arrow[from=2-3, to=1-4]
				\arrow[from=2-3, to=2-4]
				\arrow["id"', from=3-2, to=2-3]
				\arrow[from=3-2, to=4-3]
			\end{tikzcd}\]
			Now, take loops on this diagram to get a commutative triangle in $\Grp(\Spaces)$:
			\[\begin{tikzcd}
				&&& \ast \\
				{\mathbb{Z}} && {\Omega S_r^2} & {\Omega(\mathbb{C}P^\infty\vee\mathbb{C}P^\infty)} \\
				& {\Omega S^2_t} \\
				&& {\Omega S^2\times S^1}
				\arrow[from=2-1, to=2-3]
				\arrow[from=2-1, to=3-2]
				\arrow[from=2-3, to=1-4]
				\arrow[from=2-3, to=2-4]
				\arrow["id"', from=3-2, to=2-3]
				\arrow[from=3-2, to=4-3]
			\end{tikzcd}\]
			Because we have taken loops on fiber sequences, each of the group maps above is normal. For clarity we explain what each piece of the above pair of diagrams is in terms of the terminology of Theorem \ref{thm:weakthirdiso}. We have:
			\begin{itemize}
				\item $K=\ZZ$, $H=\Omega S^2_t$, $G=\Omega S^2_r$.
				\item $G/K=\Omega(\mathbb{C}P^\infty\vee \mathbb{C}P^\infty)$, $G/H=\ast$, $H/K=\Omega S^2\times S^1$. 
			\end{itemize}
			For these groups to satisfy the Third Isomorphism Theorem we would first need a group map $\Omega S^2\times S^1\to \Omega(\mathbb{C}P^\infty\vee \mathbb{C}P^\infty)$. This would deloop to a map of spaces $S^2\times \mathbb{C}P^\infty\to \mathbb{C}P^\infty\vee \mathbb{C}P^\infty$, and we would need a normality datum $\pi\colon \mathbb{C}P^\infty\vee \mathbb{C}P^\infty\to X$ whose fiber is $S^2\times\mathbb{C}P^\infty$. Let us suppose such data existed. Then we would have $\Omega X$ as our model of $(G/K)/(H/K)$. But $G/H=\Omega S^2/\Omega S^2$ is contractible, so we would need $\Omega X\simeq \ast$, and thus $X\simeq \ast$. But if $X$ is contractible then the fiber sequence above yields $S^2\times\mathbb{C}P^\infty\simeq\mathbb{C}P^\infty\vee\mathbb{C}P^\infty$, which is not true (for instance, they have non-isomorphic cohomology rings).
		\end{exam}
		
		The third isomorphism \textit{does} however hold for any commutative triangle of $\EE_n$-group objects (i.e.~$\EE_n$-algebras of $\XX$ whose underlying $\EE_1$-algebra structure is grouplike) when $n>1$. We first need a lemma. 
		
		\begin{lem}\label{lem:composite action fiber sequence}
			Let $K\to H\to G$ be a composite of morphisms of $\Grp(\XX)$. Then there is a fiber sequence $H/K\to G/K\to G/H$. 
		\end{lem}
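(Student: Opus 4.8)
The plan is to reduce the statement to the elementary fact that the fibers of two composable pointed maps assemble into a fiber sequence, and then to transport this through the identification of quotients with fibers of deloopings. Writing the two maps as $\alpha\colon K\to H$ and $\beta\colon H\to G$ and their composite as $\gamma=\beta\alpha\colon K\to G$, recall from Definition~\ref{defn: puppe sequence defn} and the Puppe sequence of Example~\ref{exam:puppe sequence example} that each quotient appearing in the statement is, by construction, a fiber of a delooping map:
\[H/K\simeq\fib(\BB\alpha),\qquad G/H\simeq\fib(\BB\beta),\qquad G/K\simeq\fib(\BB\gamma).\]
Since $\BB$ is functorial on group morphisms, $\BB\gamma\simeq\BB\beta\circ\BB\alpha$, so the whole problem is governed by the single composable pair $\BB K\xrightarrow{\BB\alpha}\BB H\xrightarrow{\BB\beta}\BB G$.

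First I would assemble the horizontal pasting of pullback squares
\[\begin{tikzcd}
	{G/K} & {G/H} & \ast \\
	{\BB K} & {\BB H} & {\BB G}
	\arrow[from=1-1, to=1-2]
	\arrow[from=1-2, to=1-3]
	\arrow[from=1-1, to=2-1]
	\arrow[from=1-2, to=2-2]
	\arrow[from=1-3, to=2-3]
	\arrow["{\BB\alpha}"', from=2-1, to=2-2]
	\arrow["{\BB\beta}"', from=2-2, to=2-3]
\end{tikzcd}\]
in which the right-hand square is the pullback defining $G/H=\fib(\BB\beta)$ and the left-hand square is formed as the pullback of the cospan $\BB K\xrightarrow{\BB\alpha}\BB H\leftarrow G/H$. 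By the pasting law for pullbacks the outer rectangle is again a pullback; its bottom edge is $\BB\gamma$ and its top-right corner is $\ast$, so its top-left corner is $\fib(\BB\gamma)\simeq G/K$. This is what lets us identify the left-hand vertex with the quotient $G/K$ and, at the same time, produces the horizontal map $G/K\to G/H$.

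Next I would compute the fiber of this horizontal map. Because fibers are preserved under pullback, the fiber of $G/K\to G/H$ over the base point is equivalent to the fiber of $\BB\alpha\colon\BB K\to\BB H$ over the image of that base point; and since the structure map $G/H\to\BB H$ is a map of pointed objects, that image is the base point of $\BB H$. Hence
\[\fib(G/K\to G/H)\simeq\fib(\BB\alpha)\simeq H/K,\]
which is exactly the desired fiber sequence $H/K\to G/K\to G/H$.

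The hard part is not really mathematical but bookkeeping: one must verify that the map $G/K\to G/H$ extracted from the pasting is the canonical one (compatible with the $\omega$-construction of Section~\ref{sec: action groupoids}) and, more importantly, that the base points line up so that the fiber over the base point of $G/H$ genuinely sits over the base point of $\BB H$. Both points follow from the fact that every object and morphism in sight is pointed and connected, so once the fibers-of-deloopings dictionary is in place the argument is just the functoriality of the fiber of a composite, transported along the equivalence $\cech\colon\XX_\ast^{\geq 1}\xrightarrow{\simeq}\Grp(\XX)$.
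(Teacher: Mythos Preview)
Your proof is correct and follows essentially the same route as the paper: both arguments paste the pullback square defining $G/H=\fib(\BB\beta)$ with the pullback along $\BB\alpha$ to identify $G/K$, and then read off $H/K$ as the fiber of the resulting map $G/K\to G/H$ using that the left square is Cartesian. The only difference is presentational---the paper draws one more layer of the pasted diagram explicitly---so there is nothing substantive to compare.
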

		
		\begin{proof}
			By definition there is a pullback square: 
			\[\begin{tikzcd}
				{G/H} & \ast \\
				\BB H & \BB G
				\arrow[from=1-1, to=1-2]
				\arrow[from=1-1, to=2-1]
				\arrow["\lrcorner"{anchor=center, pos=0.125}, draw=none, from=1-1, to=2-2]
				\arrow[from=1-2, to=2-2]
				\arrow[from=2-1, to=2-2]
			\end{tikzcd}\]
			By pasting, we can extend this to the following composite pullback diagram:
			\[\begin{tikzcd}
				{G/K} & {G/H} & \ast \\
				\BB K & \BB H & \BB G
				\arrow[from=1-1, to=1-2]
				\arrow[from=1-1, to=2-1]
				\arrow["\lrcorner"{anchor=center, pos=0.125}, draw=none, from=1-1, to=2-2]
				\arrow[from=1-2, to=1-3]
				\arrow[from=1-2, to=2-2]
				\arrow["\lrcorner"{anchor=center, pos=0.125}, draw=none, from=1-2, to=2-3]
				\arrow[from=1-3, to=2-3]
				\arrow[from=2-1, to=2-2]
				\arrow[from=2-2, to=2-3]
			\end{tikzcd}\]
			Finally, taking the fiber of the upper left horizontal morphism and using that there is a fiber sequence $H/K\to BK\to BH$, we obtain the desired fiber sequence in the uppermost square of the following diagram:
			\[\begin{tikzcd}
				{H/K} & \ast \\
				{G/K} & {G/H} & \ast \\
				\BB K & \BB H & \BB G
				\arrow[from=1-1, to=1-2]
				\arrow[from=1-1, to=2-1]
				\arrow["\lrcorner"{anchor=center, pos=0.125}, draw=none, from=1-1, to=2-2]
				\arrow[from=1-2, to=2-2]
				\arrow[from=2-1, to=2-2]
				\arrow[from=2-1, to=3-1]
				\arrow["\lrcorner"{anchor=center, pos=0.125}, draw=none, from=2-1, to=3-2]
				\arrow[from=2-2, to=2-3]
				\arrow[from=2-2, to=3-2]
				\arrow["\lrcorner"{anchor=center, pos=0.125}, draw=none, from=2-2, to=3-3]
				\arrow[from=2-3, to=3-3]
				\arrow[from=3-1, to=3-2]
				\arrow[from=3-2, to=3-3]
			\end{tikzcd}\]
		\end{proof}
		
		\begin{thm}
			Suppose we have a commutative diagram of~~$\EE_n$-group objects for $n>1$ (in which all maps are morphisms of~~$\EE_n$-groups):
			\[\begin{tikzcd}
				K && G \\
				& H
				\arrow["\phi", from=1-1, to=1-3]
				\arrow["\rho"', from=1-1, to=2-2]
				\arrow["\psi"', from=2-2, to=1-3]
			\end{tikzcd}\] 
			Then there are canonical choices for all data in the statement of Theorem \ref{thm:weakthirdiso} such that the Third Isomorphism Theorem holds.
		\end{thm}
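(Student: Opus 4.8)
The plan is to use the hypothesis $n>1$ to reduce the whole statement to a single application of Lemma \ref{lem:composite action fiber sequence} one delooping level up. The key observation is that if $K,H,G$ are $\EE_n$-group objects with $n>1$, then their deloopings $\BB K,\BB H,\BB G$ are grouplike $\EE_{n-1}$-algebras (by Dunn additivity / the bar construction for $\EE_n$-algebras), and since $n-1\geq 1$ these deloopings are themselves group objects of $\XX$ with connected underlying objects. Thus applying $\BB$ to the given triangle yields a commutative triangle of \emph{group objects} $\BB K\xrightarrow{\BB\rho}\BB H\xrightarrow{\BB\psi}\BB G$ whose composite is $\BB\phi$, and all of the quotients and normality data can be extracted from it.

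First I would construct the required normality data canonically. For the group-object map $\BB\phi\colon\BB K\to\BB G$, the Puppe sequence of Example \ref{exam:puppe sequence example} produces a fiber sequence $\BB K\to\BB G\to \BB G/\BB K\to \BB^2 K\xrightarrow{\BB^2\phi}\BB^2 G$; I would set $\BB(G/K):=\BB G/\BB K=\fib(\BB^2\phi)$ and let $\pi_\phi\colon\BB G\to\BB G/\BB K$ be the quotient map, defining $\pi_\psi,\pi_\rho$ analogously from $\BB\psi,\BB\rho$. That $\pi_\phi$ is a genuine normality datum for $\phi$ rests on two points: its fiber is $\BB\phi$ by the Puppe sequence, and its target is connected because any morphism of pointed connected objects — in particular $\BB\phi$ — is an effective epimorphism (as in the base case of the proof of Proposition \ref{prop:n effective group maps are n connective}), so $\BB^2\phi$ is $1$-connective and Lemma \ref{lem:connective fiber connective map} forces $\fib(\BB^2\phi)$ to be connected. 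Moreover $\BB(G/K)$ genuinely deloops the quotient group, since $\Omega\,\BB(G/K)=\Omega\fib(\BB^2\phi)=\fib(\BB\phi)=G/K$. The role of $n>1$ is exactly to guarantee that $\BB K,\BB H,\BB G$ are group objects, so that these second deloopings and their Puppe sequences exist at all.

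With the normality data in hand, I would apply Lemma \ref{lem:composite action fiber sequence} to the composite of group objects $\BB K\to\BB H\to\BB G$, obtaining a fiber sequence $\BB(H/K)\to \BB(G/K)\to \BB(G/H)$. This single fiber sequence supplies all remaining data at once: its first map is $\BB\theta$, hence defines $\theta:=\cech(\BB\theta)\colon H/K\to G/K$; its second map is $\pi_\theta$, which is a normality datum for $\theta$ (its fiber is $\BB(H/K)=\BB\theta$ and its target $\BB(G/H)$ is connected) with associated quotient $(G/K)/(H/K)=\Omega\,\BB(G/H)=G/H$; and consequently $\BB\bigl((G/K)/(H/K)\bigr)=\BB(G/H)$, so $\omega$ may be taken to be the resulting canonical equivalence with $\BB\omega=\mathrm{id}$. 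Since every piece of this data is obtained functorially by taking iterated fibers inside the fixed commutative diagram of double deloopings $\BB^2 K\to\BB^2 H\to\BB^2 G$, the two squares required by Theorem \ref{thm:weakthirdiso} commute, and that theorem then delivers the equivalence $G/H\simeq(G/K)/(H/K)$.

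The main obstacle I anticipate is not the final bookkeeping but rather pinning down, with full homotopy coherence, the identification $\BB(G/K)\simeq \BB G/\BB K$ together with the compatibility of the three separately-constructed Puppe sequences with the single fiber sequence of Lemma \ref{lem:composite action fiber sequence}: concretely, one must check that the connecting maps $\pi_\phi,\pi_\psi,\pi_\theta$ and the induced map $\BB\theta$ all arise from one coherent diagram of fibers of $\BB^2 K\to\BB^2 H\to\BB^2 G$, so that the commutativity hypotheses of Theorem \ref{thm:weakthirdiso} hold coherently and not merely objectwise. By contrast, the conceptual crux is cheap once isolated: passing to $\EE_{n-1}$-groups turns the three ``subgroup'' maps into honest group-object maps, and for group-object maps the quotient construction already manufactures canonical normality data — precisely the structure that the merely $\EE_1$ counterexamples of Examples \ref{exam:hopf fibration third iso counter} and \ref{example:third iso counterexample S1 actions} lack.
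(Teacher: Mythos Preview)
Your proposal is correct and follows essentially the same approach as the paper: both exploit that $\BB K,\BB H,\BB G$ are $\EE_{n-1}$-groups (hence group objects) to obtain the fiber sequence $H/K\to G/K\to G/H$ of group objects via Lemma \ref{lem:composite action fiber sequence}, take $\omega$ to be the identity, and verify the commutativity required by Theorem \ref{thm:weakthirdiso} through iterated fiber-taking. The only cosmetic difference is that the paper phrases this as forming the pullback diagrams of Lemma \ref{lem:composite action fiber sequence} directly inside $\Alg_{\EE_1}(\XX)$, whereas you apply the lemma one delooping level up to $\BB K\to\BB H\to\BB G$ and then identify $\BB G/\BB K\simeq\BB(G/K)$; these are equivalent viewpoints under the equivalence $\BB_\ast\colon\Grp(\XX)\simeq\XX_\ast^{\geq 1}$.
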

		
		\begin{proof}
			Because $K$, $H$ and $G$ are all at least $\EE_2$-algebras, we may form the pullback diagrams of the proof of Lemma \ref{lem:composite action fiber sequence} entirely within $\Alg_{\EE_1}(\XX)$. It follows that there is a fiber sequence of group objects $H/K\xrightarrow{\theta} G/K\xrightarrow{\lambda} G/H$. We take the first morphism of this composite as the $\theta$ required by Theorem \ref{thm:weakthirdiso}. This has normality datum $\pi_\theta=\BB \lambda\colon \BB (G/K)\to \BB (G/H)$, the delooping of the second morphism in the last fiber sequence. It follows that we can take $\omega$ to be the identity morphism $G/H\to G/H$. It only remains to check that the required diagram of Theorem \ref{thm:weakthirdiso} commutes. We prove that the diagram commutes after applying the equivalence $\Omega$, which gives the result. This diagram is as follows:
			\[\begin{tikzcd}
				H & {H/K} \\
				G & {G/K} \\
				{G/H} & {G/H}
				\arrow["{\Omega\pi_\rho}", from=1-1, to=1-2]
				\arrow["\psi"', from=1-1, to=2-1]
				\arrow["\theta", from=1-2, to=2-2]
				\arrow["{\Omega\pi_{\phi}}", from=2-1, to=2-2]
				\arrow["{\Omega\pi_\psi}"', from=2-1, to=3-1]
				\arrow["\lambda", from=2-2, to=3-2]
				\arrow["id"', from=3-1, to=3-2]
			\end{tikzcd}\]
			To see that the top square commutes, note that we have a commutative square obtained by applying $\BB$ to the initial commutative triangle:
			\[\begin{tikzcd}
				BK & BH \\
				BK & BG
				\arrow["B\rho", from=1-1, to=1-2]
				\arrow["id"', from=1-1, to=2-1]
				\arrow["B\psi", from=1-2, to=2-2]
				\arrow["B\phi"', from=2-1, to=2-2]
			\end{tikzcd}\]
			Taking fibers twice in the horizontal direction gives the commutativity of the top square of the diagram in question. The commutative lower square is obtained by taking fibers twice in the horizontal direction of the following commutative diagram:
			\[\begin{tikzcd}
				BK & BG \\
				BH & BG
				\arrow["B\phi", from=1-1, to=1-2]
				\arrow["B\rho"', from=1-1, to=2-1]
				\arrow["id", from=1-2, to=2-2]
				\arrow["B\psi"', from=2-1, to=2-2]
			\end{tikzcd}\]

		\end{proof}
		
		\begin{rmk}
			The above theorem uses that any morphism of $\EE_n$-groups, for $n>1$, admits a canonical normality datum. We warn the reader however that even in the Abelian case, e.g.~$n=\infty$, this normality datum is not necessarily unique. One example of an infinite loop map with more than one normality datum is given by the terminal morphism $K(\ZZ,1)\times K(\ZZ,3)\to \ast$ of Example \ref{exam: kirans EM space nonunique normality data example}.
		\end{rmk}
		
		For $H$ a normal subgroup of a discrete group $G$, the Third Isomorphism Theorem \textit{assumes} that there is an order reversing correspondence between subgroups of $G$ containing $H$ and subgroups of $G/H$. We have seen above that this need not be the case in our more general context. Nonetheless, if $\phi\colon H\to G$ has a normality datum, we can classify normal maps to $G/H$ in terms of morphisms, in a slice category, out of the normality datum itself. 
		
		\begin{thm}\label{thm:normality correspondence}
			Let $\phi\colon H\to G$ be a normal map of group objects in $\XX$ with normality datum $\pi\colon \BB G\to \BB (G/H)$. Then there is an equivalence of $\infty$-categories \[\Nrml(G)^{\backslash\pi}\simeq\Nrml(G/H).\]
		\end{thm}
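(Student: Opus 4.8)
The plan is to realize the claimed equivalence as a comparison of two full subcategories sitting inside a single iterated coslice, so that all that remains is a connectivity bookkeeping. Recall that an object of $\Nrml(G)^{\backslash\pi}$ is a morphism $\pi\to\xi$ in $\Nrml(G)$, i.e.\ a factorization $\BB G\xrightarrow{\pi}\BB(G/H)\xrightarrow{f}X$ in $\XX_\ast^{\geq 1}$ whose composite $\xi=f\circ\pi$ is $1$-connective (this being the condition $\xi\in\Nrml(G)$), whereas an object of $\Nrml(G/H)$ is simply a $1$-connective morphism $f\colon\BB(G/H)\to X$ in $\XX_\ast^{\geq 1}$. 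The forgetful assignment $(\pi\to\xi)\mapsto f$ is the candidate equivalence.

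First I would invoke the standard identification of iterated coslices: for any $\infty$-category $\CC$ and any object $u\colon x\to y$ of $\CC^{\backslash x}$ there is a canonical equivalence $(\CC^{\backslash x})^{\backslash u}\simeq\CC^{\backslash y}$, since both sides have as objects the diagrams $x\to y\to a$ with $x\to y$ fixed to be $u$. Applying this with $\CC=\XX_\ast^{\geq 1}$, $x=\BB G$, $y=\BB(G/H)$ and $u=\pi$ produces an equivalence $\big((\XX_\ast^{\geq 1})^{\backslash\BB G}\big)^{\backslash\pi}\simeq(\XX_\ast^{\geq 1})^{\backslash\BB(G/H)}$ sending $(\BB G\xrightarrow{\pi}\BB(G/H)\xrightarrow{f}X)$ to $f$. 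Because the coslice of a full subcategory is the full subcategory of the coslice spanned by the objects whose target lies in the subcategory, both $\Nrml(G)^{\backslash\pi}$ and $\Nrml(G/H)$ embed as full subcategories of the two sides of this equivalence. Thus it suffices to check that these full subcategories correspond, i.e.\ that the equivalence carries the objects of $\Nrml(G)^{\backslash\pi}$ exactly onto the objects of $\Nrml(G/H)$.

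This reduces the entire statement to a cancellation assertion: with $\pi$ fixed, the composite $f\circ\pi$ is $1$-connective if and only if $f$ is $1$-connective. Here I would use that $\pi$ is itself $1$-connective, which is precisely Lemma \ref{lem:normality data are 1connective}. The ``if'' direction is closure of $1$-connective morphisms under composition, and the ``only if'' direction is right cancellation (if $\pi$ and $f\circ\pi$ are $1$-connective then so is $f$); both are instances of the closure properties of $n$-connective morphisms recorded in \cite[Proposition 6.5.1.16]{htt}. Since every object in sight is connected, one may equivalently phrase this through fibers via Lemma \ref{lem:connective fiber connective map}, or transport the whole argument across the equivalence $\Nrml(-)\simeq\Grp(\XX)^{\backslash(-)}_{epi}$ of Proposition \ref{prop:normality data are effective epis from G}, under which $\pi$ becomes an effective epimorphism $G\to G/H$ and the cancellation becomes the composition and right-cancellation properties of effective epimorphisms, the latter being \cite[Corollary 6.2.3.12]{htt}; this last route trims the input to the cleanest available fact.

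The hard part will not be the cancellation but making the matching of full subcategories an honest equivalence of $\infty$-categories rather than a bijection on objects: one must verify that the iterated-coslice equivalence is compatible with the inclusions of $\Nrml(G)^{\backslash\pi}$ and $\Nrml(G/H)$ as full subcategories, so that an equivalence identifying their objects restricts automatically to an equivalence between them. Once the coslice identity and the ``full subcategory of a coslice is a coslice of a full subcategory'' principle are set up carefully, the connectivity cancellation closes the argument.
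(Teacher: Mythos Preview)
Your proposal is correct and is essentially the paper's argument. In fact, the alternative route you sketch at the end---transport across Proposition~\ref{prop:normality data are effective epis from G} to $\Grp(\XX)^{\backslash(-)}_{epi}$, then use the iterated-coslice equivalence together with the right-cancellation property of effective epimorphisms from \cite[Corollary 6.2.3.12]{htt}---is exactly what the paper does; your primary route is the same argument carried out on the $\XX_\ast^{\geq 1}$ side of the $\BB/\cech$ equivalence, replacing ``effective epimorphism of groups'' by ``$1$-connective morphism of pointed connected objects''.
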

		
		\begin{proof}
			Recall the equivalence of Proposition \ref{prop:normality data are effective epis from G} which identifies $\Nrml(G)$ with the full subcategory of $\Grp(\XX)^{\backslash G}$ spanned by the effective epimorphisms $G\to Q$. Write $\Grp(\XX)^{\backslash G}_{epi}$ for this $\infty$-category. Therefore we have an equivalence $\Nrml(G)^{\backslash \pi}\simeq (\Grp(\XX)^{\backslash G}_{epi})^{\backslash\cech\pi}$. This last category has as objects commutative triangles of the following form:
			\[\begin{tikzcd}
				G & {G/H} \\
				Q
				\arrow["{\cech\pi}", from=1-1, to=1-2]
				\arrow[two heads, from=1-1, to=2-1]
				\arrow[from=1-2, to=2-1]
			\end{tikzcd}\]
			where $G\twoheadrightarrow Q$ is an effective epimorphism. But this implies that $G/H\to Q$ must also be an effective epimorphism (by \cite[Corollary 6.2.3.12]{htt}). Therefore this category is equivalent to the full subcategory of $\Grp(\XX)^{\backslash \cech\pi}$ spanned by commutative triangles whose two edges to the cone point are effective epimorphisms. But by \cite[Tag 04Q4]{kerodon} (more concretely, we are using the fact that the inclusion $\{1\}\hookrightarrow\Delta^1$ is right anodyne along with \cite[Proposition 2.1.2.5]{htt}), there is an equivalence $\Grp(\XX)^{\backslash\cech\pi}\simeq \Grp(\XX)^{\backslash G/H}$. Therefore the category $(\Grp(\XX)^{\backslash G}_{epi})^{\backslash \cech\pi}$ is equivalent to $\Grp(\XX)^{\backslash G/H}_{epi}$ which is, by another application of Proposition \ref{prop:normality data are effective epis from G}, equivalent to $\Nrml(G/H)$. 
		\end{proof}

			\section{Questions}
			
			We record some questions that arise naturally from the above considerations.
			
			\subsection{An Alternative Characterization of Normality}
			
			Normality can classically be determined by comparing right and left cosets. In this paper we focused exclusively on right action objects, but there is a dual notion of \textit{left action object} which behaves identically. It then follows readily from the work of Section \ref{sec: action groupoids} that right actions by $G$ are in bijection with left actions of $G$ (this is not the case for \textit{monoid} actions). This is in analogy with the classical case in which $G$-actions on a set are in bijection with $G^{\op}$-actions. We can therefore form the left quotient and right quotients of an action by a group object.
			
			In the case of a morphism of group objects $\phi\colon H\to G$ we can also read off these left and right actions from \cite[Corollary 3.4.1.7]{ha}, which implies that the data of $\phi$ is equivalent to that of an algebra structure on $G$ in the category of $H$-bimodules. Moreover, the quotient by the left action induced by $\phi$ and the quotient by the right action induced by $\phi$ must be equivalent as objects of $\XX$, since both are the fiber of $\BB\phi\colon\BB H\to \BB G$.
			
			\begin{ques}
				Let $\phi\colon H\to G$ be a morphism in $\Grp(\XX)$ so that \[G\otimes_H\ast\simeq G/H\simeq\colim(\phi)\simeq H\backslash G\simeq \ast\otimes_HG\] Noticing that $G\otimes_H \ast$ has a residual \textit{left} $H$-action and $\ast\otimes_H G$ has a residual \textit{right} $H$-action, we can take further quotients to obtain double coset objects. Is there a bijection between normality data for $\phi$ and equivalences in $\XX$ of the form $\ast\otimes_H(G\otimes_H\ast)\xrightarrow{\simeq}(\ast\otimes_H G)\otimes_H \ast$? Recall that, in the classical case, $H$ is a normal subgroup of $G$ if and only if the left (or right) cosets of $H$ in $G$ are isomorphic to the double cosets. 
			\end{ques}
			
			\subsection{Subgroups via Monomorphisms}
			
			Clearly, and unsurprisingly, group objects in $\infty$-topoi are not as rigid as discrete groups. Replacing normal subgroups with normality data significantly expands the theory, even in the case of discrete groups (cf.~\cite{farjounSegev-homotopynormal}). There is, however, a restriction one could put on normality data that would make it behave more like the classical example.  
			
			\begin{ques}
				Suppose we say that a morphism of group objects $\phi\colon H\to G$ in an $\infty$-topos $\XX$ is a \textit{subgroup} if it is $(-1)$-truncated. For example if $\XX=\Spaces$ then this is equivalent to saying that $\pi_0(\phi)$ is injective. We might then restrict our discussion to the case of normality structures on subgroups $H\leq G$, rather than arbitrary morphisms. Is it the case that the Second and Third Isomorphism Theorems hold in this setting? 
			\end{ques}
			
			A piece of partial evidence that restricting subgroups might be effective is that it would rule out the counterexample to the Second Isomorphism Theorem in Example \ref{exam:2nd iso counter}. Example \ref{exam:hopf fibration third iso counter} still gives a counterexample to normality satisfying the intermediate subgroup property (since $S^1$ and $S^3$ are both connected) but Example \ref{example:third iso counterexample S1 actions} is no longer valid since the trivial map $\mathbb{Z}\to\Omega S^2$ is not injective on $\pi_0$. 
			
			We reiterate however that requiring morphisms of group objects to be $(-1)$-truncated can result in some strange behavior. It essentially says that $H\subseteq G$ should be the inclusion of a collection of connected components of $G$ (which must include the unit component). As a result, connected and non-contractible group objects, like $S^1$, do not have the trivial group as a subgroup. 
			
			\subsection{Galois Theory}
			
			In \cite{rog}, Rognes introduced Galois extensions of ring spectra for what he called \textit{stably dualizable groups}. Examples of stably dualizable groups include compact Lie groups and Eilenberg-MacLane spaces (after localizing at the Morava K-theories). In practice, Galois extensions of ring spectra in which the Galois group is not discrete, and typically finite, seem uncommon. Rognes does however give examples of Galois extensions for more general groups by taking $\mathbb{F}_p$-valued cochain spectra on principal $G$-bundles \cite[Proposition 5.6.3 (a)]{rog}. Rognes also describes a Galois correspondence in \cite[Theorem 7.2.3]{rog} and \cite[Theorem 11.2.2]{rog}, but under somewhat strict conditions. For instance, his result going from intermediate extensions to subgroups requires $G$ to be finite and discrete. Given that the Galois correspondence is extremely complicated (and rarely a bijection) even in the case of extensions of discrete rings, we cannot expect to obtain a perfect correspondence in this significantly more general setting, but one might still ask how far the analogy can be taken.
			
			\begin{ques}
				Let $G\in\Grp(\XX)$ be a group object in an $\infty$-topos. For a morphism of algebra objects $A\to B$ in $\mathrm{Stab}(\XX)$, the stabilization of $\XX$, with $G$ acting on $B$ by equivalences of $A$-bimodules, we can repeat Rognes' definition of $A\to B$ being a $G$-Galois extension. Given a normal map $H\to G$, perhaps one which is additionally a monomorphism, can we produce a composite $A\to C\to B$ where $C\to B$ is an $H$-Galois extension and $A\to C$ is a $G/H$-Galois extension? On the other hand, given a factorization $A\to C\to B$ of a $G$-Galois extension $A\to B$, when is it possible to produce a normal map $\BiMod{C}{C}(A,A)^{\times}\to G$?
			\end{ques} 
			
			In the slightly more general case of so-called \textit{Hopf-Galois extensions}, in which one replaces groups with Hopf-algebras and $G$-actions with $H$-coactions, partial results can be obtained by combining \cite{beardsrelative,beardsleybialgebras, ergus-thesis}.

			\subsection{Representation Theory}
			
			The standard approach to groups, especially as symmetries of geometric objects, is to study their representations. Representation theory is already very hard (especially over $\ZZ$) of course, but one should never miss an opportunity to generalize an existing subject and make it exponentially more difficult. One potential foothold in understanding the representation theory of higher groups is to restrict to group objects in $\Grp(\Spaces)$, i.e.~loop spaces, and consider their representations in the $K(n)$-local or $T(n)$-local categories of spectra. In these two cases, the \textit{ambidexterity} results of \cite{hopkins-lurie-ambidexterity} and  \cite{carmeli-chromatic-ambidexterity} suggest that representation theory over $K(n)$ and $T(n)$ is, in some ways, similar to classical representation theory in characteristic zero. It may be the case that similar methods can be used to decompose representations of ($\pi$-finite) group objects in $\Spaces$. 
			
			\appendix
			\section{Fiber/Cofiber Adjunctions}\label{sec:fiber cofiber adjunction}
			
			We briefly recall the adjunction between slice and coslice categories whose left adjoint is given by taking cofibers and whose right adjoint is given by taking fibers. The proof of the following lemma is most obvious when written in the language of $\infty$-cosmoi, as developed in \cite{rvelements}. The $\infty$-category of (small) $\infty$-categories, $\Cat$, is indeed an $\infty$-cosmos by \cite[Proposition 1.2.10]{rvelements}. 
			
			\begin{lem}\label{lem:fibercofiberadjunction}
				Let $\CC$ be a pointed $\infty$-category, with zero object $\ast$, that has all pullbacks and pushouts, and $X$ an object of $\CC$. Then there is an adjunction between $\CC_{/X}$ and $\CC^{\backslash X}$ whose left adjoint $\CC_{/X}\to \CC^{\backslash X}$ is given on objects by taking the right vertical map in the below pushout square
				\[\begin{tikzcd}
					Y & X \\
					\ast & P
					\arrow[from=1-1, to=1-2]
					\arrow[from=1-2, to=2-2]
					\arrow[from=1-1, to=2-1]
					\arrow[from=2-1, to=2-2]
					\arrow["\lrcorner"{anchor=center, pos=0.125, rotate=180}, draw=none, from=2-2, to=1-1]
				\end{tikzcd}\] which we will refer to as taking the \textit{cofiber} of~~$Y\to X$.
				The right adjoint of the adjunction $\CC^{\backslash X}\to \CC_{/X}$ is given on objects by taking the upper horizontal leg of the below pullback square
				\[\begin{tikzcd}
					F & X \\
					\ast & Y
					\arrow[from=2-1, to=2-2]
					\arrow[from=1-1, to=2-1]
					\arrow[from=1-1, to=1-2]
					\arrow[from=1-2, to=2-2]
					\arrow["\lrcorner"{anchor=center, pos=0.125}, draw=none, from=1-1, to=2-2]
				\end{tikzcd}\]
				which we will refer to as taking the \textit{fiber} of $X\to Y$.
			\end{lem}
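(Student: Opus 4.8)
The plan is to avoid constructing the unit and counit by hand---which in the $\infty$-categorical setting would force us to verify triangle identities up to coherent homotopy---and instead to realize both $\cof$ and $\fib$ as composites of Kan-extension adjunctions, so that $\cof\dashv\fib$ falls out by conjugating a transparent adjunction between a reflective and a coreflective subcategory of squares.

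First I would fix notation for squares. Let $\mathcal{W}$ be the full subcategory of $\Fun(\Delta^1\times\Delta^1,\CC)$ spanned by squares whose bottom-left vertex $(1,0)$ is a zero object, and let $\mathcal{W}_X\subseteq\mathcal{W}$ be the further full subcategory where the top-right vertex $(0,1)$ is equivalent to $X$ (formally, the fiber of $\mathrm{ev}_{(0,1)}\colon\mathcal{W}\to\CC$ over $X$). Writing $i$ and $j$ for the span inclusion $\Delta^1\times\Delta^1\setminus\{(1,1)\}\hookrightarrow\Delta^1\times\Delta^1$ and the cospan inclusion $\Delta^1\times\Delta^1\setminus\{(0,0)\}\hookrightarrow\Delta^1\times\Delta^1$, the hypotheses that $\CC$ has pushouts and pullbacks guarantee, by \cite[Section 4.3.2]{htt}, the Kan-extension adjunctions $i_!\dashv i^\ast$ and $j^\ast\dashv j_\ast$. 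Their idempotent (co)monads $T:=i_!i^\ast$ and $S:=j_\ast j^\ast$ exhibit the pushout squares $\mathcal{P}\subseteq\mathcal{W}$ as coreflective (coreflector $T$) and the pullback squares $\mathcal{Q}\subseteq\mathcal{W}$ as reflective (reflector $S$). The two observations that make everything fit together are purely combinatorial: $T$ modifies only the vertex $(1,1)$, hence preserves the top edge $(0,0)\to(0,1)$, while $S$ modifies only the vertex $(0,0)$, hence preserves the right edge $(0,1)\to(1,1)$. In particular both restrict to $\mathcal{W}_X$, yielding an adjunction $\iota_{\mathcal{P}}\dashv T$ between $\mathcal{P}_X$ and $\mathcal{W}_X$ and an adjunction $S\dashv\iota_{\mathcal{Q}}$ between $\mathcal{W}_X$ and $\mathcal{Q}_X$.

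Next I would identify the relevant equivalences. Restricting a pushout square with $(1,0)=\ast$ and $(0,1)=X$ to its top edge gives an equivalence $a^{-1}\colon\mathcal{P}_X\xrightarrow{\simeq}\CC_{/X}$, because such a square is the left Kan extension of the span $X\leftarrow A\to\ast$ and the leg to the zero object is unique up to contractible choice; dually, restriction to the right edge gives $b^{-1}\colon\mathcal{Q}_X\xrightarrow{\simeq}\CC^{\backslash X}$. Using that $S$ fixes the right edge and $T$ fixes the top edge, one checks directly that $\cof\simeq b^{-1}\circ(S\circ\iota_{\mathcal{P}})\circ a$ and $\fib\simeq a^{-1}\circ(T\circ\iota_{\mathcal{Q}})\circ b$. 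The composite $S\circ\iota_{\mathcal{P}}\colon\mathcal{P}_X\to\mathcal{Q}_X$ is then left adjoint to $T\circ\iota_{\mathcal{Q}}\colon\mathcal{Q}_X\to\mathcal{P}_X$, since for $P\in\mathcal{P}_X$ and $Q\in\mathcal{Q}_X$ both $\mathcal{Q}_X(S\iota_{\mathcal{P}}P,Q)$ and $\mathcal{P}_X(P,T\iota_{\mathcal{Q}}Q)$ are naturally equivalent to $\mathcal{W}_X(\iota_{\mathcal{P}}P,\iota_{\mathcal{Q}}Q)$ via the two adjunctions above. Conjugating this adjunction by the equivalences $a$ and $b$ produces $\cof\dashv\fib$.

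The main obstacle is not any single hard computation but the bookkeeping: upgrading the ``defined only on objects'' constructions of the statement to honest functors, and legitimately restricting the ambient Kan-extension adjunctions to the full subcategories $\mathcal{P}_X,\mathcal{Q}_X\subseteq\mathcal{W}_X$ cut out by the vertex conditions. This last step is of exactly the kind already used in the proof of Theorem \ref{thm:normal closure functor}: an adjunction restricts to full subcategories whenever each adjoint carries one into the other, which here holds precisely because $T$ lands in pushout squares and fixes the top edge while $S$ lands in pullback squares and fixes the right edge. Once this is in place no triangle-identity verification is needed, as the adjunction is obtained by conjugating the manifest adjunction $S\iota_{\mathcal{P}}\dashv T\iota_{\mathcal{Q}}$.
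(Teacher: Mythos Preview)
Your proof is correct and follows essentially the same route as the paper: both factor $\cof$ and $\fib$ through the category of commutative squares with two vertices pinned to $\ast$ and $X$, realize these functors via Kan-extension adjunctions (left for the pushout side, right for the pullback side), and then compose. The only real difference is in bookkeeping for the restriction step: the paper invokes the $\infty$-cosmos machinery of Riehl--Verity (isofibrations, slice $\infty$-cosmoi, pullback along a cosmological functor) to pass from the ambient adjunction on diagram categories to the fiber over $(\ast,X)$, whereas you argue more directly that $T$ and $S$ preserve the full subcategory $\mathcal{W}_X$ because each modifies only the corner opposite to the pinned vertices---a slightly more elementary justification of the same reduction.
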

			
			\begin{proof}
				Let $\pushout$ denote the cospan category $\{1\leftarrow 0\to 2\}$ and  let $\mdwhtsquare $ denote the commutative square category depicted below: 
				\[\begin{tikzcd}
					0 & 1 \\
					2 & 3
					\arrow[from=1-1, to=1-2]
					\arrow[from=1-1, to=2-1]
					\arrow[from=1-2, to=2-2]
					\arrow[from=2-1, to=2-2]
				\end{tikzcd}\] 
				These categories fit into a commutative diagram of inclusion functors:
				\[\begin{tikzcd}
					{\pushout} && {\mdwhtsquare } \\
					& {\{1\}\coprod\{2\}}
					\arrow[from=1-1, to=1-3]
					\arrow[from=2-2, to=1-1]
					\arrow[from=2-2, to=1-3]
				\end{tikzcd}\]
				
				By functoriality of the cotensoring of $\infty$-categories over simplicial sets there is a commutative triangle of $\infty$-categories
				\[\begin{tikzcd}
					{\CC^{\pushout}} && {\CC^{\mdwhtsquare }} \\
					& {\CC^{\{1\}\coprod\{2\}}}
					\arrow[from=1-3, to=1-1]
					\arrow[from=1-1, to=2-2]
					\arrow[from=1-3, to=2-2]
				\end{tikzcd}\]
				in which each functor is given by precomposition. Because $\CC$ is pointed and admits pushouts each of these functors has a left adjoint given by left Kan extension. Because Kan extensions paste, the triangle of right adjoints and triangle of left adjoints below each respectively commute.
				\[\begin{tikzcd}
					{\CC^{\pushout}} && {\CC^{\mdwhtsquare }} \\
					& {\CC\times \CC}
					\arrow[""{name=0, anchor=center, inner sep=0}, shift left=4, from=1-1, to=1-3]
					\arrow[""{name=1, anchor=center, inner sep=0}, shift right=4, from=1-1, to=2-2]
					\arrow[""{name=2, anchor=center, inner sep=0}, shift left=4, from=1-3, to=2-2]
					\arrow[""{name=3, anchor=center, inner sep=0}, from=2-2, to=1-1]
					\arrow[""{name=4, anchor=center, inner sep=0}, from=2-2, to=1-3]
					\arrow[""{name=5, anchor=center, inner sep=0}, from=1-3, to=1-1]
					\arrow["\dashv"{anchor=center, rotate=-90}, draw=none, from=0, to=5]
					\arrow["\dashv"{anchor=center, rotate=-54}, draw=none, from=4, to=2]
					\arrow["\dashv"{anchor=center, rotate=-128}, draw=none, from=3, to=1]
				\end{tikzcd}\]
				Note that horizontal left adjoint above is given by forming the pushout. Ignoring the diagonal left adjoints, we obtain a commutative diagram
				\[\begin{tikzcd}
					{\CC^{\pushout}} && {\CC^{\mdwhtsquare }} \\
					& {\CC\times \CC}
					\arrow[""{name=0, anchor=center, inner sep=0}, shift left=3, from=1-1, to=1-3]
					\arrow[from=1-1, to=2-2]
					\arrow[from=1-3, to=2-2]
					\arrow[""{name=1, anchor=center, inner sep=0}, shift left=1, from=1-3, to=1-1]
					\arrow["\dashv"{anchor=center, rotate=-90}, draw=none, from=0, to=1]
				\end{tikzcd}\]
				Since $\infty$-categories form an $\infty$-cosmos by \cite[Proposition 1.2.10]{rvelements}, part (ii) of Definition 1.2.1 of ibid. implies that cotensoring with monomorphisms gives isofibrations. As a reuslt, the diagonal maps above define objects of the slice $\infty$-cosmos $(\Cat)_{/\CC\times \CC}$. Moreover it follows in a straightforward manner from definitions (cf.~\cite[Definition 2.1.1]{rvelements}) that the above adjunction between $\CC^{\pushout}$ and $\CC^{\mdwhtsquare }$ lifts to an adjunction in the slice $\infty$-cosmos. 
				
				Now consider the functor $(\ast,X)\colon \mathbb{I}\to \CC\times \CC$. Pulling back along any functor in an $\infty$-cosmos is a so-called cosmological functor by \cite[Proposition 1.3.4 (v)]{rvelements} and therefore preserves adjunctions. As a result we have an adjunction in $(\Cat)_{/\mathbb{I}}\simeq \Cat$
				\[\begin{tikzcd}
					{\CC^{\pushout}_{\ast,X}} && {\CC^{\mdwhtsquare }_{\ast,X}}
					\arrow[""{name=0, anchor=center, inner sep=0}, shift left=3, from=1-1, to=1-3]
					\arrow[""{name=1, anchor=center, inner sep=0}, shift left=1, from=1-3, to=1-1]
					\arrow["\dashv"{anchor=center, rotate=-90}, draw=none, from=0, to=1]
				\end{tikzcd}\]
				in which $\CC^{\pushout}_{\ast,X}$ and $\CC^{\mdwhtsquare }_{\ast,X}$ denote the diagram categories in which $1$ and $2$ are always taken to $\ast$ and $X$ respectively. 
				
				We check that when $\CC$ is pointed there is an equivalence $\CC^{\pushout}_{\ast,X}\simeq \CC_{/X}$. Consider the inclusion of $\{0\to 1\}$ into $\pushout$. There are no morphisms in $\pushout$ out of $2$ (besides the identity) so the pointwise formula for right Kan extensions extends a functor out of $\{0\to1\}$ to one which takes $2$ to the limit over the empty set, which is the zero object $\ast$. By \cite[Proposition 4.3.2.15]{htt} this induces an equivalence between functors out of $\{0\to 1\}$ and functors out of $\pushout$ that take $2$ to $\ast$.  
				
				Now, using that every equivalence is both a left and right adjoint, we have a composite adjunction
				\[\begin{tikzcd}
					{\CC_{/X}} & {\CC^{\pushout}_{\ast,X}} && {\CC^{\mdwhtsquare }_{\ast,X}}
					\arrow[""{name=0, anchor=center, inner sep=0}, shift left=3, from=1-2, to=1-4]
					\arrow[""{name=1, anchor=center, inner sep=0}, shift left=2, from=1-4, to=1-2]
					\arrow[""{name=2, anchor=center, inner sep=0}, shift left=3, from=1-1, to=1-2]
					\arrow[""{name=3, anchor=center, inner sep=0}, shift left=2, from=1-2, to=1-1]
					\arrow["\dashv"{anchor=center, rotate=-90}, draw=none, from=0, to=1]
					\arrow["\simeq"{description}, draw=none, from=2, to=3]
				\end{tikzcd}\]
				in which the left adjoint is given by taking the cofiber of a morphism $Y\to X$.
				
				Writing \raisebox{.7em}{$\pullback$} for the category $1\to 3\leftarrow 2$ and repeating the above arguments mutatis mutandem, we also obtain and an adjunction
				\[\begin{tikzcd}
					{\CC^{\mdwhtsquare }_{\ast,X}} && {\CC^{\pullback}_{\ast,X}} & {\CC^{\backslash X}}
					\arrow[""{name=0, anchor=center, inner sep=0}, shift left=2, from=1-1, to=1-3]
					\arrow[""{name=1, anchor=center, inner sep=0}, shift left=3, from=1-3, to=1-1]
					\arrow[""{name=2, anchor=center, inner sep=0}, shift left=2, from=1-3, to=1-4]
					\arrow[""{name=3, anchor=center, inner sep=0}, shift left=3, from=1-4, to=1-3]
					\arrow["\dashv"{anchor=center, rotate=-90}, draw=none, from=0, to=1]
					\arrow["\simeq"{description}, draw=none, from=2, to=3]
				\end{tikzcd}\]
				in which the right adjoint is given by taking the pullback and the left adjoint is precomposition. Composing gives the desired adjunction 
				\[\begin{tikzcd}
					{\CC_{/X}} && {\CC^{\backslash X}}
					\arrow[""{name=0, anchor=center, inner sep=0}, "cof", shift left=2, from=1-1, to=1-3]
					\arrow[""{name=1, anchor=center, inner sep=0}, "fib", shift left=2, from=1-3, to=1-1]
					\arrow["\dashv"{anchor=center, rotate=-90}, draw=none, from=0, to=1]
				\end{tikzcd}\]
				
			\end{proof}

			\printbibliography
			
		\end{document}